\documentclass[]{amsart}

\pdfoutput=1

\usepackage{amssymb}
\usepackage{amsfonts}
\usepackage{amsmath}
\usepackage{amsthm}
\usepackage{mathtools}
\usepackage{graphicx}
\usepackage{mathrsfs}
\usepackage{dsfont}
\usepackage{amscd}
\usepackage{multirow}

\usepackage[all]{xy}
\normalfont
\usepackage[T1]{fontenc}
\usepackage{calligra}
\usepackage{verbatim}
\usepackage{fourier}
\usepackage[usenames,dvipsnames]{xcolor}
\usepackage[colorlinks=true,linkcolor=Blue,citecolor=Violet]{hyperref}
\usepackage{enumerate}
\usepackage{slashed}
\usepackage{nicematrix}
\usepackage{microtype}
\usepackage{tikz}
\usetikzlibrary{snakes,math}
\usepackage{caption}
\usepackage{subcaption}
\usepackage{faktor}

\newcommand{\N}{{\mathds{N}}}
\newcommand{\Z}{{\mathds{Z}}}

\newcommand{\R}{{\mathds{R}}}
\newcommand{\C}{{\mathds{C}}}
\newcommand{\T}{{\mathds{T}}}

\newcommand{\A}{{\mathfrak{A}}}
\newcommand{\B}{{\mathfrak{B}}}

\newcommand{\Lip}[1][L]{{\mathsf{#1}}}
\newcommand{\TLip}{{\mathsf{T}}}
\newcommand{\SLip}{{\mathsf{S}}}

\newcommand{\Hilbert}[1][H]{{\mathscr{#1}}}

\newcommand{\dpropinquity}[1]{{\mathsf{\Lambda}^\ast_{#1}}}

\newcommand{\spectralpropinquity}[1]{{\mathsf{\Lambda}^{\mathsf{spec}}_{#1}}}

\newcommand{\Kantorovich}[1]{{\mathsf{mk}_{#1}}}

\newcommand{\Haus}[1]{{\mathsf{Haus}_{{#1}}\,}}

\newcommand{\StateSpace}{{\mathscr{S}}}

\newcommand{\MongeKant}{{Mon\-ge-Kan\-to\-ro\-vich metric}}

\newcommand{\qcms}{quantum compact metric space}

\newcommand{\sa}[1]{{\mathfrak{sa}\left({#1}\right)}}

\newcommand{\inner}[3]{{\left<{#1},{#2}\right>_{#3}}}

\newcommand{\bridge}[1]{#1}
\newcommand{\dom}[1]{{\operatorname*{dom}\left({#1}\right)}}
\newcommand{\domsa}[1]{{\operatorname*{dom_{\mathsf{sa}}}\left({#1}\right)}}

\newcommand{\diam}[2]{{\mathrm{diam}\left({#1},{#2}\right)}}

\newcommand{\norm}[2]{\left\|{#1}\right\|_{#2}}

\newcommand{\bridgereach}[2]{{\varrho\left(\bridge{#1}\middle\vert {#2}\right)}}

\newcommand{\bridgeheight}[1]{{\operatorname{height}\left(\bridge{#1}\right)}}

\newcommand{\bridgelength}[2]{{\lambda\left(\bridge{#1}\middle\vert{#2}\right)}}

\newcommand{\CDN}{{\mathsf{DN}}}
\newcommand{\TDN}{{\mathsf{TN}}}

\newcommand{\worknote}[1]{}
\newcommand{\opnorm}[3]{{\left|\mkern-1.5mu\left|\mkern-1.5mu\left| {#1} \right|\mkern-1.5mu\right|\mkern-1.5mu\right|_{#3}^{#2}}}

\newcommand{\tunnelextent}[1]{{\chi\left({#1}\right)}}

\newcommand{\Aut}[1]{{\mathrm{Aut}\left({#1}\right)}}

\newcommand{\supp}[1]{\mathrm{supp}\left({#1}\right)}
\newcommand{\ModState}[1]{\widehat{\StateSpace}}

\renewcommand{\geq}{\geqslant}
\renewcommand{\leq}{\leqslant}

\newcommand{\Dirac}[1][D]{{\slashed{#1}}}

\theoremstyle{plain}
\newtheorem{theorem}{Theorem}[section]
\newtheorem*{theorem*}{Theorem}

\newtheorem{corollary}[theorem]{Corollary}

\newtheorem{lemma}[theorem]{Lemma}
\newtheorem{proposition}[theorem]{Proposition}

\newtheorem{theorem-definition}[theorem]{Theorem-Definition}

\theoremstyle{definition}
\newtheorem{definition}[theorem]{Definition}
\newtheorem*{definition*}{Definition}

\newtheorem{notation}[theorem]{Notation}

\newtheorem{hypothesis}[theorem]{Hypothesis}

\theoremstyle{remark}

\newtheorem{example}[theorem]{Example}

\newtheorem{remark}[theorem]{Remark}

\numberwithin{equation}{section}

\let\oldtocsection=\tocsection
\let\oldtocsubsection=\tocsubsection
\let\oldtocsubsubsection=\tocsubsubsection

\renewcommand{\tocsection}[2]{\bfseries \hspace{0em}\oldtocsection{#1}{#2}}
\renewcommand{\tocsubsection}[2]{\itshape \hspace{1em}\oldtocsubsection{#1}{#2}}
\renewcommand{\tocsubsubsection}[2]{\hspace{2em}\oldtocsubsubsection{#1}{#2}}

\begin{document}

  \title{Metric Spectral Geometry of $\Z^d$-Crossed Products beyond Equicontinuity}

  \author{Fr\'{e}d\'{e}ric Latr\'{e}moli\`{e}re}
  \email{frederic@math.du.edu}
  \urladdr{http://fredericlatremoliere.com/}
  \address{Department of Mathematics \\ University of Denver \\ Denver CO 80208}

  \date{\today}
  \subjclass[2000]{Primary: 58B34, 46L89, 46L30.}
  \keywords{}

  \begin{abstract}
    We construct linearly twisted spectral triples on $C^\ast$-crossed products $\A\rtimes_\alpha\Z^d$, starting from a spectral triple $(\A,\Hilbert,\Dirac)$ and an action $\alpha$ admitting an adapted weight. The weight compensates for the growth of the base Dirac operator under the dynamics while preserving the canonical differential structure in the $\Z^d$-directions. The construction takes place directly on the original crossed product and does not require the action to be equicontinuous. We prove that, whenever the base spectral triple is metric, the resulting twisted spectral triple is $\kappa$-metric for every $\kappa>0$: its associated Monge--Kantorovich metric metrizes the weak* topology on the state space of the crossed product. Our framework applies, in particular, to every action preserving the domain of the Lipschitz seminorm of the base triple. In this case, an exponential weight may be chosen so that the induced metric geometry restricts exactly to the original metric geometry on the base algebra. In the equicontinuous case, the weight may be chosen constant, the twist disappears, and the standard crossed-product spectral triple is recovered. We also establish a general finite-mode criterion for convergence in the spectral propinquity. As an application, we prove joint continuity, with respect to the deformation parameter and suitable fluctuations of the flat metric, for the twisted spectral triples associated with crossed products of quantum $2$-tori by fixed automorphisms in $\operatorname{SL}(2,\Z)$, including Anosov automorphisms.
  \end{abstract}
  \maketitle
  \tableofcontents


  \section{Introduction}

  Noncommutative metric geometry studies noncommutative analogues of metric spaces and, in particular, of Riemannian manifolds, in a setting where the preferred encoding of geometry is provided by spectral triples, a far-reaching abstraction of the role played by Dirac operators in Riemannian geometry \cite{Connes89,Connes}. A spectral triple is metric when its Connes' distance indeed metrizes the weak* topology on the state space \cite{Rieffel98a,Rieffel00,Latremoliere18g}, thus extending the Monge--Kantorovich distance \cite{Kantorovich40,Kantorovich58} to the noncommutative realm. An emerging core tool for the study of the space of metric spectral triples is the spectral propinquity \cite{Latremoliere18g,Latremoliere20a,Latremoliere21a,Latremoliere23a,Latremoliere23b,Latremoliere24a,Latremoliere24b,Latremoliere26a,Latremoliere26b}, which metrizes that space up to unitary equivalence, and for which many spectral data become continuous \cite{Latremoliere22}. Our purpose in this paper is to bring $C^\ast$-crossed products \cite{Zeller-Meier68}, seen as noncommutative analogues of orbit spaces, within this framework of metric spectral geometry and the spectral propinquity.

  $C^\ast$-dynamical systems constitute an important source of $C^\ast$-algebras through the crossed-product construction. Thus, if an action is defined on the algebra of a spectral triple, it is natural to ask whether the spectral geometry of the base algebra can be extended to the associated crossed product. In this work, we answer this question in the affirmative for a broad and natural class of actions of $\Z^d$ by constructing directly on the associated crossed product a linearly twisted spectral triple, in the sense of \cite{Latremoliere26b}, and by proving that our construction yields a metric twisted spectral triple whenever the base spectral triple is metric. Our framework includes all actions of $\Z^d$ which preserve the domain of the Lipschitz seminorm associated with the base spectral triple. Our construction thus substantially extends the usual construction for equicontinuous actions of $\Z^d$ \cite{Bellissard10,Hawkins13,Paterson14}, which it recovers when the twist disappears. Moreover, we prove convergence for the spectral propinquity for the Anosov actions of $\Z$ on quantum tori as the deformation parameter and an appropriate fluctuation of the underlying metric vary.

  \medskip

  Connes introduced in \cite{Connes89} the notion of a spectral triple. Spectral triples are modeled after certain elliptic operators, most notably Dirac operators, and define unbounded $K$-homology cycles. Much of the early success of this theory arose from the extension of index theorems to the noncommutative setting. In this sense, Connes' approach to noncommutative geometry may be regarded as a noncommutative extension of spectral geometry. Our interest is in the additional metric information carried by certain spectral triples and in the geometry of the space formed by these triples. We moreover work with the linearly twisted spectral triples introduced in \cite{Latremoliere26b}, whose definition arose from our study of approximations of quantum tori by physically motivated matrix models.

  \begin{definition}\label{twisted-spectral-def}
    A \emph{twisted spectral triple} $(\A,\Hilbert,\Dirac,\rho)$ is given by a unital $C^\ast$-algebra $\A$ acting on a Hilbert space $\Hilbert$, a self-adjoint operator $\Dirac$ defined on a dense subspace $\dom{\Dirac}\subseteq\Hilbert$, and a linear map
    \begin{equation*}
      \rho:\A_\rho\longrightarrow\B(\Hilbert)
    \end{equation*}
    defined on a dense subspace $\A_\rho$ of $\A$, such that:
    \begin{enumerate}
      \item $\Dirac$ has compact resolvent,
      \item for all $a\in\A_\rho$,
      \begin{equation*}
        a\dom{\Dirac}\subseteq\dom{\Dirac} \qquad\text{and}\qquad [\Dirac,a]_\rho\coloneqq\Dirac a-\rho(a)\Dirac
      \end{equation*}
      extends to a bounded operator on $\Hilbert$.
    \end{enumerate}
  \end{definition}

  \begin{definition}
    A \emph{spectral triple} $(\A,\Hilbert,\Dirac)$ is a twisted spectral triple whose twist is the identity map on its domain.
  \end{definition}

  The fact that our twists are linear, rather than necessarily multiplicative, is an essential feature of this framework. Linearly twisted spectral triples first arose in \cite{Latremoliere26b} from the problem of approximating the flat spectral geometry of quantum tori by fuzzy tori. They arise again, independently, in the present work: the correction forced by the renormalization of the base Dirac operator is natural and explicit, but is generally not multiplicative. Thus, the same enlargement of the notion of a spectral triple resolves two distinct obstructions, one originating from finite-dimensional approximation and the other from non-equicontinuous dynamics. This additional flexibility does not come at the expense of metric control. The displacement term involving
  \begin{equation*}
    \norm{a-\rho(a)}{\B(\Hilbert)}
  \end{equation*}
  provides the estimates needed for the module action, while the modular and inner Leibniz inequalities required by the spectral propinquity remain available. Consequently, neither the metric theory nor the convergence theory requires the twist to be an automorphism. Requiring multiplicativity would exclude the natural twists arising in these constructions without providing an additional advantage for the metric questions considered here. In the present context, the principal consequence of requiring the twist to be multiplicative is that twisted commutators satisfy a twisted Leibniz identity, and hence that their natural domain is an algebra. Neither this twisted derivation property nor the resulting algebraic closure is required for the coincidence property of the spectral propinquity (namely, distance zero implies unitary equivalence).

  The first matter we address in this work is the construction of a (possibly twisted) spectral triple on the C*-crossed product
  \begin{equation*}
    \A\rtimes_\alpha\Z^d
  \end{equation*}
  induced by a spectral triple $(\A,\Hilbert,\Dirac)$ and an action $\alpha$ of $\Z^d$ on $\A$, in the broadest generality we can. Our construction immediately involves the analogue of the Lipscthiz seminorm induced by a spectral triple.

  \begin{notation}
    If $\Hilbert$ is a Hilbert space, the $C^\ast$-algebra of its bounded linear endomorphisms is denoted by $\B(\Hilbert)$.
  \end{notation}

  \begin{notation}
    If $E$ is a normed vector space, then its norm is denoted by $\norm{\cdot}{E}$ unless specified otherwise. If $\Hilbert$ is a Hilbert space, its inner product is denoted by $\inner{\cdot}{\cdot}{\Hilbert}$.
  \end{notation}

  Let
  \begin{equation*}
    \dom{\Lip} \coloneqq \left\{ a \in \A : a\dom{\Dirac}\subseteq\dom{\Dirac}\text, [\Dirac,a]\text{ bounded } \right\}
  \end{equation*}
  and, for all $a \in \dom{\Lip}$, we set
  \begin{equation*}
    \Lip(a)\coloneqq\norm{[\Dirac,a]}{\B(\Hilbert)}
  \end{equation*}
  The obstruction to the usual crossed-product construction is the possible growth of
  \begin{equation*}
    z \in \Z^d \mapsto \Lip(\alpha^z(a))
  \end{equation*}
  We compensate for this growth by introducing, whenever possible, a function
  \begin{equation*}
    w:\Z^d\longrightarrow(0,1]
  \end{equation*}
  such that $w(0)=1$ and, for all $k\in\Z^d$,
  \begin{equation*}
    \sup_{z\in\Z^d}\frac{w(z+k)}{w(z)}<\infty\text,
  \end{equation*}
  and such that the space
  \begin{equation*}
    \A_w\coloneqq\left\{a\in\A:\sup_{z\in\Z^d}w(z)\Lip(\alpha^{-z}(a))<\infty\right\}
  \end{equation*}
  is dense in $\A$. Such a function is called an adapted weight. Such a weight defines a new seminorm
  \begin{equation*}
    \Lip_w(a)\coloneqq\sup_{z\in\Z^d}w(z)\Lip(\alpha^{-z}(a))
  \end{equation*}
  on $\A_w$, which is essentially a renormalized Lipschitz seminorm for our action. The existence of such a weight is our only assumption at this point.

  We then construct a twisted spectral triple on the canonical representation of the C*-crossed-product on the Hilbert space:
  \begin{equation*}
    \Hilbert[S]\coloneqq\ell^2(\Z^d,\Hilbert)\text,
  \end{equation*}
  where $\Z^d$ acts by the left regular representation tensored with the identity on $\Hilbert$, while, for all $\xi = (\xi_z)_{z\in \Z^d}$, for all $a\in \A$, and for all $(z_1,\ldots,z_d) \in \Z^d$,
  \begin{equation*}
    a(\xi_z)_{z\in\Z} = (\alpha^{-z}(a)\xi_z)_{z\in\Z^d}\text, \qquad (\Dirac_w\xi)_z\coloneqq w(z)\Dirac\xi_z \qquad\text{and}\qquad (\partial_j\xi)_z\coloneqq z_j\xi_z \quad(j\in\{1,\ldots,d\})
  \end{equation*}

  The Dirac operator of our twisted spectral triple is then given on $\Hilbert[S]\otimes\C^q$ by:
  \begin{equation*}
    \nabla_{w,\varepsilon} \coloneqq \Dirac_w\otimes\gamma_0 + \sum_{j=1}^d\partial_j\otimes\gamma_j + \varepsilon 1_{\Hilbert[S]}\otimes\gamma_{d+1}\text.
  \end{equation*}
  for some anticommuting, self-adjoint unitaries $\gamma_0,\ldots,\gamma_{d+1}$ on $\C^q$ for any appropriate $q$, and for any $\varepsilon > 0$.

  The purpose of this construction is to keep the calculus on $C^\ast(\Z^d) \subseteq \A\rtimes_\alpha\Z^d$ the standard one, but to renormalize the Dirac operator from the base to induce the renormalized Lipschitz seminorm on the base. The cost of this renormalization is that we must introduce a twist to be able to define a Lipscthiz seminorm on the C*-crossed product. Thus, a key result of this paper is Theorem \ref{twisted-spectral-triple-thm} which establishes the existence of a twist $\rho_\varepsilon$ such that
  \begin{equation*}
    \left( \A\rtimes_\alpha\Z^d, \Hilbert[S]\otimes\C^q, \Dirac[\nabla]_{w,\varepsilon}, \rho_\varepsilon \right)
  \end{equation*}
  is a twisted spectral triple, per Definition \ref{twisted-spectral-def}. Moreover, with our choice of twist,
  \begin{equation*}
    [\nabla_{w,\varepsilon},\pi(a)U^k]_{\rho_\varepsilon} = [\Dirac_w,\pi(a)]U^k\otimes\gamma_0 + \sum_{j=1}^d k_j\pi(a)U^k\otimes\gamma_j
  \end{equation*}
  for all $a\in\A_w$ and $k=(k_1,\ldots,k_d)\in\Z^d$.

  This first result already extends the classical crossed-product construction beyond the equicontinuous setting: it produces spectral data directly on the original crossed product without requiring the action to be equicontinuous. If the action is equicontinuous, we may choose $w=1$, in which case the twist is simply the identity and we recover the usual crossed-product spectral triple, up to the Clifford representation (and the added $\varepsilon$ added to our Dirac operator).

  The existence of a twisted spectral triple is only the first part of our construction, however. We wish our twisted spectral triples to be points in the space of the spectral propinquity, so that discussions of their stability, continuity of various families of such triples, and more, can be discussed. A twisted spectral triple is metric when it satisfies the following additional property: the distance induced on the state space of its C*-algebra metrizes the weak* topology. This notion was designed in \cite{Latremoliere26b} to fit our construction of the spectral propinquity, originally developed in \cite{Latremoliere18g} for ordinary metric spectral triples. In fact, we realized in \cite{Latremoliere26b} the importance of introducing twists, which were essential to obtain the convergence of fuzzy tori to quantum tori for their usual, ``flat'', spectral triples. This was the root of the current work.

  \begin{notation}
    If $\A$ is a $C^\ast$-algebra, then the state space of $\A$ is denoted by $\StateSpace(\A)$, and its self-adjoint $\R$-subspace is
    \begin{equation*}
      \sa{\A}\coloneqq\left\{a\in\A:a=a^\ast\right\}\text.
    \end{equation*}
  \end{notation}

  \begin{definition}\label{mongekant-def}
    If $\Lip$ is a seminorm defined on a subspace $\dom{\Lip}$ of a unital $C^\ast$-algebra $\A$, then the {\MongeKant} $\Kantorovich{\Lip}$ is the extended pseudo-metric defined on the state space $\StateSpace(\A)$ of $\A$ by setting, for all $\varphi,\psi\in\StateSpace(\A)$,
    \begin{equation*}
      \Kantorovich{\Lip}(\varphi,\psi) \coloneqq \sup\left\{ \left|\varphi(a)-\psi(a)\right|: a\in\domsa{\Lip},\ \Lip(a)\leq1 \right\}\text,
    \end{equation*}
    where
    \begin{equation*}
      \domsa{\Lip}\coloneqq\dom{\Lip}\cap\sa{\A}\text.
    \end{equation*}
  \end{definition}

  \begin{definition}\label{metric-twisted-spectral-def}
    A \emph{$\kappa$-metric twisted spectral triple} $(\A,\Hilbert,\Dirac,\rho)$, with $\kappa>0$, is a twisted spectral triple such that:
    \begin{enumerate}
      \item if we define
      \begin{equation*}
        \domsa{\Lip}\coloneqq\left\{a\in\A_\rho:a=a^\ast\right\}
      \end{equation*}
      and, for all $a\in\domsa{\Lip}$,
      \begin{equation*}
        \Lip(a)\coloneqq \max\left\{ \norm{[\Dirac,a]_\rho}{\B(\Hilbert)}, \kappa\norm{a-\rho(a)}{\B(\Hilbert)} \right\}\text,
      \end{equation*}
      then $\domsa{\Lip}$ is dense in $\sa{\A}$, and $\Kantorovich{\Lip}$ metrizes the weak* topology on the state space $\StateSpace(\A)$ of $\A$,
      \item if $(a_j)_{j\in J}$ is a net in $\A_\rho$, $a\in\A$, and $T\in\B(\Hilbert)$, and if
      \begin{equation*}
        a_j\xrightarrow{j\in J}a\text{ in norm,} \qquad \rho(a_j)\xrightarrow{j\in J}T\text{ in WOT,} \qquad\text{and}\qquad \sup_{j\in J}\norm{[\Dirac,a_j]_\rho}{\B(\Hilbert)}<\infty\text,
      \end{equation*}
      then
      \begin{equation*}
        a\in\A_\rho \qquad\text{and}\qquad \rho(a)=T\text;
      \end{equation*}
      \item $\rho(1)=1$.
    \end{enumerate}
  \end{definition}

  \begin{definition}
    A \emph{metric spectral triple} is a $\kappa$-metric twisted spectral triple whose twist is the identity map on its domain. This notion does not depend on the choice of $\kappa>0$.
  \end{definition}

  Our second main result, Theorem \ref{metric-thm}, is that if, in addition to the above, $(\A,\Hilbert,\Dirac)$ is a metric spectral triple, then, for all $\varepsilon>0$ and $\kappa>0$, our twisted spectral triple constructed above is metric.
  \begin{equation*}
    \left( \A\rtimes_\alpha\Z^d, \Hilbert[S]\otimes\C^q, \nabla_{w,\varepsilon}, \rho_\varepsilon \right)
  \end{equation*}
  is a $\kappa$-metric twisted spectral triple. In particular, the seminorm
  \begin{equation*}
    \Lip_{w,\varepsilon,\kappa}(b) \coloneqq \max\left\{ \norm{[\nabla_{w,\varepsilon},b]_{\rho_\varepsilon}}{\B(\Hilbert[S]\otimes\C^q)}, \kappa\norm{b-\rho_\varepsilon(b)}{\B(\Hilbert[S]\otimes\C^q)} \right\}
  \end{equation*}
  induces a Monge--Kantorovich metric which metrizes the weak* topology on
  \begin{equation*}
    \StateSpace(\A\rtimes_\alpha\Z^d)\text.
  \end{equation*}

  The restriction of this seminorm to the base algebra is
  \begin{equation*}
    \Lip_{w,\varepsilon,\kappa}(\pi(a))=\Lip_w(a)
  \end{equation*}
  for all $a\in\A_w$. Thus, the construction retains explicit control of the original metric geometry. In particular, if
  \begin{equation*}
    \alpha^z(\dom{\Lip})\subseteq\dom{\Lip}
  \end{equation*}
  for all $z\in\Z^d$, then there exists $C\geq1$ such that
  \begin{equation*}
    w(z)=C^{-|z|}
  \end{equation*}
  is admissible and adapted to $\alpha$, while
  \begin{equation*}
    \A_w=\dom{\Lip} \qquad\text{and}\qquad \Lip_w=\Lip\text.
  \end{equation*}
  Consequently, whenever the action preserves the Lipschitz domain, our construction not only produces a metric twisted spectral triple on the crossed product, but also recovers exactly the initial metric geometry on the base algebra.

  The proof that our twisted spectral triples are metric uses regularization under the dual action. We prove that Fourier truncations approximate the Lipschitz unit ball uniformly without increasing its Lipschitz seminorm. The Fourier coefficients of these truncations lie in uniformly bounded subsets of the Lipschitz ball of the base algebra, whose compactness follows from the metricity of the base spectral triple.

  We now place this conclusion within the framework of quantum compact metric spaces introduced by Rieffel \cite{Rieffel98a,Rieffel00}.

  \begin{definition}\label{g-qcms-def}
    A \emph{generalized \qcms} $(\A,\Lip)$ is given by a unital $C^\ast$-algebra $\A$ and a seminorm $\Lip$ defined on a dense subspace $\domsa{\Lip}$ of $\sa{\A}$ such that:
    \begin{enumerate}
      \item $\Lip(1)=0$,
      \item $\Kantorovich{\Lip}$ is a metric which induces the weak* topology on the state space $\StateSpace(\A)$ of $\A$,
      \item
      \begin{equation*}
        \left\{a\in\domsa{\Lip}:\Lip(a)\leq1\right\}
      \end{equation*}
      is closed in norm in $\A$.
    \end{enumerate}

    The seminorm $\Lip$ is then referred to as a Lipschitz seminorm.
  \end{definition}

  \begin{definition}\label{qcms-def}
    A {\qcms} $(\A,\Lip)$ is a generalized {\qcms} such that, for all $a,b\in\domsa{\Lip}$, both
    \begin{equation*}
      \Re(ab)=\frac{ab+ba}{2} \qquad\text{and}\qquad \Im(ab)=\frac{ab-ba}{2i}
    \end{equation*}
    belong to $\domsa{\Lip}$, and
    \begin{equation*}
      \max\left\{ \Lip(\Re(ab)), \Lip(\Im(ab)) \right\} \leq \Lip(a)\norm{b}{\A} + \norm{a}{\A}\Lip(b)\text.
    \end{equation*}
  \end{definition}

  By \cite[Theorem 4.9]{Latremoliere26b}, if $(\A,\Hilbert,\Dirac,\rho)$ is a $\kappa$-metric twisted spectral triple, then $(\A,\Lip_{|\sa{\A}})$, with $\Lip$ defined in Definition \ref{metric-twisted-spectral-def}, is a generalized {\qcms}. If $\rho$ is the identity map on its domain, so that $(\A,\Hilbert,\Dirac)$ is a metric spectral triple, then $(\A,\Lip_{|\sa{\A}})$ is a {\qcms}.

  In the remainder of this article, when a Lipschitz seminorm arises from a twisted spectral triple, we use the same notation $\Lip$ for its extension to its natural complex domain and for its restriction to the self-adjoint part of this domain. Replacing $\Lip_{|\sa{\A}}$ with $\Lip$ in Definition \ref{mongekant-def} does not change the associated metric, since the Monge--Kantorovich metric only involves
  \begin{equation*}
    \domsa{\Lip}=\dom{\Lip}\cap\sa{\A}\text.
  \end{equation*}

  Having constructed metric twisted spectral triples on the crossed products, we next study the continuity of this construction for the spectral propinquity. We first establish a general finite-mode convergence criterion. We apply this criterion to quantum $2$-tori. For every fixed
  \begin{equation*}
    A\in\operatorname{SL}(2,\Z),
  \end{equation*}
  including every hyperbolic matrix, let $\alpha_{A,\theta}$ be the associated action of $\Z$ on the quantum torus $\A_\theta$. We construct a family of weighted twisted spectral triples
  \begin{equation*}
    \left( \A_\theta\rtimes_{\alpha_{A,\theta}}\Z, \Hilbert[K], \Dirac[\nabla]_G, \rho_{\theta,G} \right)_{(\theta,G)\in\R\times\mathcal G}
  \end{equation*}
  and prove that this family is jointly continuous for the twisted spectral propinquity. Thus, in particular, the crossed products associated with an Anosov automorphism vary continuously as both the deformation parameter of the quantum torus and the chosen fluctuation of its flat metric vary. This result establishes that our construction is natural and well-behaved in the space of the spectral propinquity: not merely a means to avoid a standard obstuction at all cost, but rather, a natural construction which can then be used fruitfully to estabslish new continuity results for varying spectral triples.

  \medskip

  Our construction answers a delicate question raised for a long time, which motivated other approaches to the noncommutative geometry of C*-dynamical systems. The usual construction of a spectral triple on a crossed product requires the action to be equicontinuous for the Lipschitz seminorm of the base spectral triple \cite{Bellissard10, Hawkins13, Paterson14}. Under this assumption, the natural crossed-product Dirac operator has bounded commutators on a dense subalgebra. Whether this construction gives a quantum metric has subsequently been studied under various assumptions on the acting group choices such as length functions \cite{Klisse23}. Our construction recovers this spectral triple when $w=1$, but its main purpose is to address actions for which equicontinuity fails.

  Bellissard, Marcolli, and Reihani proposed a different method for treating non-equicontinuous actions of $\Z$ \cite{Bellissard10}. Their construction replaces the original dynamical system with a metric bundle on which the induced action becomes isometric. It thus makes the usual equicontinuous construction available, but over a different nonunital algebra. In particular, it does not construct a spectral triple directly on the crossed product of the original system. By contrast, our construction remains on the original unital crossed product and modifies its spectral geometry through the weight and the resulting twist. In turns, this allows us to see our twisted spectral triples as points in the spectral propinquity space.

  Iochum and Masson constructed extensions of spectral triples to reduced crossed products which, in appropriate examples, can be promoted to modular-type twisted spectral triples \cite{Iochum14}. Their twists are modular or automorphic in nature and are particularly suited to affine and conformal actions. The linear twist used here is of a different type: it need not be multiplicative and is determined by the weight used to renormalize the base Dirac operator. Moreover, unlike our work, theirs does not address the quantum metric aspects of their constructions.

  Kaad and Kyed constructed quantum compact metric spaces on crossed products by $\Z$ for classes of actions which need not be equicontinuous \cite{Kaad21}. Their construction uses ordinary and twisted derivations and provides convergence results for Rieffel's quantum Gromov--Hausdorff distance, including applications to families of $C^1$-diffeomorphisms (which we will address in a companion paper). These derivations do not arise as the commutators of a single self-adjoint operator with compact resolvent and therefore do not define (twisted) spectral triples. Consequently, the spectral propinquity does not apply directly to these structures. Moreover, the associated seminorms need not satisfy a Leibniz inequality, so the dual propinquity does not apply directly either.

  More recently, Austad, Kaad, and Kyed constructed quantum metrics on crossed products by groups of polynomial growth \cite{Kaad25}. They work with more general groups of polynomial growth, but for equicontinuous actions, and obtain interesting {\qcms s} and some convergence results in that context. We expect that the work presented here can be extended to these more general groups, removing the equicontinuity restriction.

  The contribution of the present paper is therefore not merely the construction of a quantum metric on a crossed product. We construct, directly on the original crossed product, a twisted spectral triple for a broad class of non-equicontinuous actions; we prove that this triple is metric whenever the base spectral triple is metric; we recover both the usual crossed-product spectral triple and the initial metric on the base whenever they should be recovered; and we prove that the construction is compatible with convergence for the spectral propinquity.

  \medskip

  This work is organized in two parts. In the first part, we develop the twisted spectral triple comstruction, including establishing the existence and fundamental properties of the associated linear twist, and proving that the resulting twisted spectral triple is metric whenever the base spectral triple is metric. We also discuss several classes of actions to which our construction applies. In the second part, we establish a general strategy to prove convergence of metric twisted spectral triples and apply this criterion to the crossed products of quantum tori by fixed automorphisms of $\Z^2$, including Anosov automorphisms. Further applications of the construction and of our convergence criterion will be considered separately.

  \section{Twisted Spectral Triples for C*-crossed-products}

  For this section, we fix the following data. Let $(\A,\Hilbert,\Dirac)$ be a metric spectral triple. Let $\alpha : \Z^d \to \mathrm{Aut}(\A)$ be a group morphism, i.e. an action of $\Z^d$ on the unital C*-algebra $\A$. This section addresses the difficult question: how does one construct a twisted spectral triple on the C*-crossed product $\A\rtimes_\alpha\Z^d$

  Throughout this section, we let
  \begin{equation*}
    \A_{\Dirac} \coloneqq \{ a \in \A : a\dom{\Dirac}\subseteq\dom{\Dirac}\text, [\Dirac,a] \text{ bounded } \}
  \end{equation*}
  and for all $a \in \A_{\Dirac}$:
  \begin{equation*}
    \Lip(a)=\norm{[\Dirac,a]}{\B(\Hilbert)}\text.
  \end{equation*}
  Note that in this section, as is common when working with spectral triples, we define $\Lip$ on the dense subalgebra $\A_{\Dirac}$ of $\A$ rather than just its self-adjoint part (which is dense in $\sa{\A}$ since closed under $*$). Also note that $\Kantorovich{\Lip} = \Kantorovich{\Lip\left|_{\sa{\A}}\right.}$.

  \subsection{Weights and Adjusted metric}

  We introduce numerical weights to normalize the Lipschitz seminorm along orbits for the action $\alpha$.

  \begin{definition}\label{weight-def}
    An \emph{admissible weight} $w : \Z^d \to (0,1]$ is a function such that $w(0)=1$ and, for all $k \in \Z^d$,
    \begin{equation}\label{r-eq}
      r_w(k) \coloneqq \sup_{z \in \Z^d} \frac{w(z+k)}{w(z)} < \infty \text.
    \end{equation}
  \end{definition}

  \begin{definition}\label{adapted-def}
    Let $\Lip$ be a lower semicontinuous, $*$-invariant seminorm defined on a dense $*$-subspace $\dom{\Lip}$ of $\A$ such that $(\A,\Lip_{|\sa{\A}})$ is a generalized {\qcms}. An admissible weight $w$ is \emph{adapted} to an action $\alpha : \Z^d \to \Aut{\A}$ and to $\Lip$ when, with the convention that $\Lip(a)=\infty$ for $a\notin\dom{\Lip}$,
    \begin{equation*}
      \A_w \coloneqq \left\{ a \in \A : \sup_{z \in \Z^d} w(z) \Lip(\alpha^{-z}(a)) < \infty \right\}
    \end{equation*}
    is dense in $\A$.
  \end{definition}

  We thus can renormalize the Lipschitz seminorm of a generalized {\qcms} to obtain a new quantum metric adapted to the action.
  \begin{lemma}\label{Lipw-lemma}
    Let $\Lip$ be a lower semicontinuous, $*$-invariant seminorm defined on a dense $*$-subspace $\dom{\Lip}$ of $\A$ such that $(\A,\Lip_{|\sa{\A}})$ is a generalized {\qcms}. Let $\alpha:\Z^d\to\Aut{\A}$ be a group morphism, and let $w$ be an adapted admissible weight. If, for all $a \in \A$, we set (allowing for $\infty$),
    \begin{equation*}
      \Lip_w(a) \coloneqq \sup\left\{ w(n) \Lip(\alpha^{-n}(a)) : n \in \Z^d \right\} \text,
    \end{equation*}
    then $(\A,(\Lip_w)_{|\sa{\A}})$ is a generalized {\qcms} such that, for all $z\in\Z^d$,
    \begin{equation*}
      \Lip_w(\alpha^{z}(a)) \leq r_w(z) \; \Lip_w(a) \text.
    \end{equation*}
    Moreover, if $\Lip$ is Leibniz, so is $\Lip_w$.
  \end{lemma}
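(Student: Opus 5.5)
The plan is to check the three axioms of Definition \ref{g-qcms-def} for $\Lip_w$ on its domain $\A_w$, and then to derive the covariance inequality and the Leibniz property by reindexing the supremum.

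First, the elementary properties. $\Lip_w$ is a seminorm on $\A_w$, since it is a supremum of seminorms, and $\A_w$ is dense by adaptedness. It is $\ast$-invariant because each $\alpha^{-n}$ is a $\ast$-automorphism and $\Lip$ is $\ast$-invariant. Consequently $\A_w\cap\sa{\A}$ is dense in $\sa{\A}$, by taking real parts. We have $\Lip_w(1)=\sup_n w(n)\Lip(1)=0$. Lower semicontinuity follows because $a\mapsto w(n)\Lip(\alpha^{-n}(a))$ is lower semicontinuous, as $\alpha^{-n}$ is norm continuous and $\Lip$ is lower semicontinuous, and a supremum of lower semicontinuous functions is lower semicontinuous. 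Hence the unit ball of $\Lip_w$ is norm closed.

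For the covariance bound, fix $z$. Reindexing with $m=n-z$ gives
\begin{equation*}
  \Lip_w(\alpha^z(a))=\sup_n w(n)\Lip(\alpha^{-(n-z)}(a))=\sup_m \frac{w(m+z)}{w(m)}\,w(m)\Lip(\alpha^{-m}(a))\leq r_w(z)\Lip_w(a)\text.
\end{equation*}
For the Leibniz property, each $\alpha^{-n}$ is multiplicative, so $\alpha^{-n}(\Re(ab))=\Re(\alpha^{-n}(a)\alpha^{-n}(b))$, and similarly for $\Im$. Since $\alpha^{-n}$ is isometric, the Leibniz inequality for $\Lip$ gives
\begin{equation*}
  w(n)\Lip(\alpha^{-n}\Re(ab))\leq w(n)\Lip(\alpha^{-n}a)\norm{b}{\A}+\norm{a}{\A}w(n)\Lip(\alpha^{-n}b)\text.
\end{equation*}
Taking the supremum over $n$ yields both the Leibniz inequality and the closure of $\domsa{\Lip_w}$ under $\Re$ and $\Im$ of products.

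The main step is showing that $\Kantorovich{\Lip_w}$ metrizes the weak* topology. The key observation is that $w(0)=1$, so $\Lip\leq\Lip_w$ on $\A_w$, and therefore the $\Lip_w$-unit ball is contained in the $\Lip$-unit ball. I would then invoke Rieffel's characterization: $\Kantorovich{\Lip}$ metrizes the weak* topology if and only if $\{a\in\domsa{\Lip}:\Lip(a)\leq1\}$ is totally bounded modulo $\R1$, i.e. the set $\{a:\Lip(a)\leq1,\ \varphi(a)=0\}$ is totally bounded for some state $\varphi$. This criterion is usually stated with density of the domain and the condition that $\Lip(a)=0$ only for scalars.

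Applying the criterion to $\Lip_w$ requires two things:
\begin{itemize}
  \item[(i)] \emph{Total boundedness.} The set $\{a:\Lip_w(a)\leq1,\ \varphi(a)=0\}$ is a subset of a totally bounded set, hence totally bounded.
  \item[(ii)] \emph{Kernel.} If $\Lip_w(a)=0$, then $\Lip(a)=0$, so $a\in\R1$ because $\Kantorovich{\Lip}$ is a metric.
\end{itemize}
Together with the density of $\domsa{\Lip_w}$, this gives that $\Kantorovich{\Lip_w}$ is a metric inducing the weak* topology. The only care needed is to apply the characterization in the generalized (non-Leibniz) setting. Since that characterization uses only the seminorm, its density, and the kernel condition, no Leibniz property is required there.
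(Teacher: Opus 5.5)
Your proof is correct. The lower semicontinuity, covariance and Leibniz steps match the paper. Your Leibniz step, phrased with $\Re(ab)$ and $\Im(ab)$, actually fits Definition \ref{qcms-def} more closely than the paper's computation on $ab$.

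The difference is in the metrization step, where the paper does not use Rieffel's total boundedness criterion at all:
\begin{itemize}
\item \emph{Paper's route.} From $\Lip\leq\Lip_w$ it gets $\Kantorovich{\Lip_w}\leq\Kantorovich{\Lip}$, so $\Kantorovich{\Lip_w}$ is finite. It separates points by density of $\domsa{\Lip_w}$. The identity map from $\StateSpace(\A)$ with the weak* topology (compact, and equal to the $\Kantorovich{\Lip}$-topology) to $\StateSpace(\A)$ with the $\Kantorovich{\Lip_w}$-topology (Hausdorff) is then continuous, hence a homeomorphism.
\item \emph{Your route.} You apply the criterion twice: in the ``only if'' direction for $\Lip$, then in the ``if'' direction for $\Lip_w$, using the inclusion of the centered unit balls.
\end{itemize}

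The paper's argument is more elementary and purely topological. Yours costs an appeal to Rieffel's theorem, and your kernel check (ii) is in fact automatic once the centered $\Lip_w$-ball is totally bounded. In return, your route shows directly that the centered $\Lip_w$-unit ball is totally bounded, and with lower semicontinuity, compact. That is exactly what the paper later uses for the compactness of $K_\mu$ and $K_1$ in the proof of Proposition \ref{Lip-metric-prop}.
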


  \begin{proof}
    Since $\Lip \leq \Lip_w$ as $w(0)=1$, and since $\A_w = \dom{\Lip_w}$ is a dense $*$-subspace, the distance $\Kantorovich{\Lip_w}$ is a metric and $\Kantorovich{\Lip_w}\leq\Kantorovich{\Lip}$. The weak* topology is compact and the topology induced by $\Kantorovich{\Lip_w}$ is Hausdorff, so $\Kantorovich{\Lip_w}$ metrizes the weak* topology on $\StateSpace(\A)$. Moreover, for each $z\in\Z^d$, the seminorm $w(z)\Lip\circ\alpha^{-z}$ is lower semi-continuous since $\Lip$ is, and since $\alpha^{-z}$ is continuous. As the supremum of lower semicontinuous functions, $\Lip_w$ is lower semicontinuous. Thus $(\A,(\Lip_w)_{|\sa{\A}})$ is a generalized {\qcms}.

    Let $z \in \Z^d$. By definition of $\Lip_w$, for all $a \in \A_w$,
    \begin{align*}
      \Lip_w(\alpha^{z}(a)) &= \sup_{k \in \Z^d} w(k)\Lip(\alpha^{z}(\alpha^{-k}(a))) \\
      &= \sup_{k \in \Z^d} w(k)\Lip(\alpha^{z-k}(a)) \\
      &= \sup_{m \in \Z^d} w(m+z) \Lip(\alpha^{-m}(a)) \\
      &\leq \sup_{m \in \Z^d} w(m+z) \cdot \frac{1}{w(m)} \Lip_w(a) \\
      &\leq r_w(z) \Lip_w(a) \text.
    \end{align*}

    Assume now that $\Lip$ is Leibniz. Since $w(0)=1$,
    \begin{align*}
      \Lip_w(ab) &= \sup_{m\in\Z^d} w(m) \Lip(\alpha^{-m}(ab)) \\
      &= \sup_{m\in\Z^d} w(m) \Lip(\alpha^{-m}(a)\alpha^{-m}(b)) \\
      &\leq \sup_{m\in\Z^d} w(m) \bigl(\Lip(\alpha^{-m}(a))\norm{\alpha^{-m}(b)}{\A} + \norm{\alpha^{-m}(a)}{\A} \Lip(\alpha^{-m}(b))\bigr) \\
      &\leq \left(\sup_{m\in\Z^d} w(m) \Lip(\alpha^{-m}(a))\norm{b}{\A} + \norm{a}{\A} \sup_{m\in\Z^d} w(m) \Lip(\alpha^{-m}(b))\right) \\
      &\leq \Lip_w(a)\norm{b}{\A} + \norm{a}{\A}\Lip_w(b) \text.
    \end{align*}
    This concludes our proof.
  \end{proof}

  We note that $\Lip_w$, like $\Lip$, is defined on a dense $\C$-subspace of $\A$, rather than a dense $\R$-subspace of $\sa{\A}$. Again, however, this is a convenience for our construction, and when discussing metric properties, one simply work with the restriction $\Lip_w\left|_{\sa{\A}}\right.$ to $\sa{\A}$, which is defined on $\A_w\cap\sa{\A}$; since $\A_w$ is closed under $*$, it is easily checked that $\A_w\cap \sa{\A}$ is dense in $\sa{\A}$.

  In the literature, an action $\alpha$ on a {\qcms} $(\A,\Lip)$ is called equicontinuous when for all $a \in \dom{\Lip}$,
  \begin{equation*}
    \sup_{z \in \Z^d} \Lip(\alpha^z(a)) < \infty \text.
  \end{equation*}
  For an equicontinuous action, the natural weight to consider is simply $w:z\in\Z^d\mapsto 1$. Now, more generally, any action which preserves the domain of the L-seminorm admits an admissible weight, of the exponential type.
  \begin{lemma}\label{exponential-weight-lemma}
    Let $|z| = \sum_{j=1}^d |z_j|$ for all $z=(z_1,\ldots,z_d) \in \Z^d$.

    If $\alpha^z(\dom{\Lip}) \subseteq \dom{\Lip}$ for all $z \in \Z^d$, then there exists $C \geq 1$ such that the weight
    \begin{equation*}
      w : z \in \Z^d \mapsto C^{-|z|}
    \end{equation*}
    is admissible and adapted to $\alpha$; moreover $\dom{\Lip} = \A_w$ and $\Lip_w = \Lip$.
  \end{lemma}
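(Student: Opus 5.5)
The plan is to use the closed graph theorem to make each generator $\alpha^{\pm e_j}$ a bounded operator for the Lipschitz norm. Then a scalar-subtraction argument turns norm bounds into seminorm bounds, which we iterate along words in the generators. First, on $\dom{\Lip}$ we put the norm $\norm{a}{\Lip}\coloneqq\norm{a}{\A}+\Lip(a)$. Since $\Lip$ is lower semicontinuous, $(\dom{\Lip},\norm{\cdot}{\Lip})$ is a Banach space. Indeed, if $(a_n)$ is $\norm{\cdot}{\Lip}$-Cauchy, it converges in $\A$ to some $a$. Lower semicontinuity then gives $\Lip(a-a_n)\leq\liminf_m\Lip(a_m-a_n)$, which tends to $0$, so $a\in\dom{\Lip}$ and $a_n\to a$ for $\norm{\cdot}{\Lip}$.

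Second, fix $j\in\{1,\ldots,d\}$ and a sign $s=\pm1$. By hypothesis $\alpha^{se_j}$ maps $\dom{\Lip}$ into itself, and its graph for $\norm{\cdot}{\Lip}$ is closed. Indeed, if $a_n\to a$ and $\alpha^{se_j}(a_n)\to b$ in $\norm{\cdot}{\Lip}$, then both convergences hold in $\norm{\cdot}{\A}$. Since $\alpha^{se_j}$ is an isometric automorphism of $\A$, this forces $b=\alpha^{se_j}(a)$. By the closed graph theorem there is $M_{j,s}\geq1$ with $\norm{\alpha^{se_j}(a)}{\Lip}\leq M_{j,s}\norm{a}{\Lip}$. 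This step is the crux, since it is the only place where the mere inclusion $\alpha^z(\dom{\Lip})\subseteq\dom{\Lip}$ is upgraded to a quantitative estimate.

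Third, we convert this into an estimate for the seminorm alone. Since $(\A,\Lip_{|\sa{\A}})$ is a generalized {\qcms}, the state space has finite $\Kantorovich{\Lip}$-diameter $\delta$. Fix a state $\varphi$ and $a\in\sa{\A}\cap\dom{\Lip}$. The standard estimate $\norm{a-\varphi(a)1}{\A}\leq\delta\Lip(a)$ holds, because $|\psi(a)-\varphi(a)|\leq\delta\Lip(a)$ for every state $\psi$. For general $a\in\dom{\Lip}$, we apply this to $\Re a$ and $\Im a$, which lie in $\dom{\Lip}$ by $*$-invariance with $\Lip(\Re a),\Lip(\Im a)\leq\Lip(a)$. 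We obtain $t\in\C$ with $\norm{a-t1}{\A}\leq2\delta\Lip(a)$. We also have $\Lip(1)=0$, so $\Lip(b+t1)=\Lip(b)$, and $\alpha$ fixes $1$. Hence
\begin{equation*}
\Lip(\alpha^{se_j}(a))=\Lip(\alpha^{se_j}(a-t1))\leq M_{j,s}\left(\norm{a-t1}{\A}+\Lip(a)\right)\leq M_{j,s}(2\delta+1)\Lip(a)\text.
\end{equation*}
Set $C\coloneqq\max_{j,s}M_{j,s}(2\delta+1)\geq1$. Writing $-z$ as a word of length $|z|$ in the generators $\pm e_j$ and iterating gives $\Lip(\alpha^{-z}(a))\leq C^{|z|}\Lip(a)$ for all $z\in\Z^d$ and $a\in\dom{\Lip}$.

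Finally, let $w(z)=C^{-|z|}$. It takes values in $(0,1]$ and satisfies $w(0)=1$. By the triangle inequality,
\begin{equation*}
\frac{w(z+k)}{w(z)}=C^{|z|-|z+k|}\leq C^{|k|}\text,
\end{equation*}
so $r_w(k)\leq C^{|k|}<\infty$ and $w$ is admissible. The bound from the third step yields $w(z)\Lip(\alpha^{-z}(a))\leq\Lip(a)$, so $\dom{\Lip}\subseteq\A_w$ and $\Lip_w\leq\Lip$. Conversely, the term $n=0$ in the definition of $\Lip_w$ gives $\Lip\leq\Lip_w$, so $\A_w\subseteq\dom{\Lip}$. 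Hence $\A_w=\dom{\Lip}$, which is dense, so $w$ is adapted, and $\Lip_w=\Lip$.
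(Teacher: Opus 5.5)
Your proof is correct and follows the same route as the paper. Both obtain constants with $\Lip\circ\alpha^{\pm e_j}\leq C_{\pm j}\Lip$, take $C\geq 1$ as their maximum, iterate along words of length $|z|$, check admissibility via $|z|-|z+k|\leq|k|$, and read off $\A_w=\dom{\Lip}$ and $\Lip_w=\Lip$ from the $n=0$ term. The only difference is that the paper simply cites \cite{Latremoliere16c} for the constants $C_{\pm j}$, whereas you derive them yourself: you apply the closed graph theorem on the Banach space $(\dom{\Lip},\norm{\cdot}{\A}+\Lip)$ and then use the finite-diameter normalization $\norm{a-\varphi(a)1}{\A}\leq 2\delta\Lip(a)$ together with $\Lip(1)=0$, which makes your argument self-contained.
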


  \begin{proof}
    By \cite{Latremoliere16c}, there exists $C_j > 0$ such that $\Lip\circ\alpha^{e_j} \leq C_j \Lip$ and $C_{-j} > 0$ such that $\Lip\circ\alpha^{-e_j} \leq C_{-j}\Lip$. Let $C \coloneqq \max\{ 1, C_j, C_{-j} : j \in \{1,\ldots,d\} \}$.

    We set $w(z) = C^{-|z|}$. Note that
    \begin{equation*}
      \frac{w(m+z)}{w(m)} = C^{ |m|-|z+m|} \leq C^{|z|} \quad\text{ so }\quad \sup_{m\in\Z^d}\frac{w(m+z)}{w(m)} \leq C^{|z|} < \infty \text,
    \end{equation*}
    so $w$ is an admissible weight.

    We then note that, for all $a \in \dom{\Lip}$,
    \begin{equation*}
      \Lip(\alpha^z(a)) \leq \prod_{j=1}^d C_j^{z_j^+} C_{-j}^{z_j^-} \Lip(a) \leq C^{|z|}\Lip(a) \text,
    \end{equation*}
    where $z_j^+=\max\{z_j,0\}$ and $z_j^-=\max\{-z_j,0\}$. Therefore,
    \begin{equation*}
      C^{-|z|} \Lip(\alpha^{z}(a)) \leq \Lip(a) \text.
    \end{equation*}
    Therefore, for all $z \in \Z^d$ and $a\in \dom{\Lip}$,
    \begin{equation*}
      w(z) \Lip(\alpha^{-z}(a)) \leq \Lip(a) \text{ so }\Lip_w(a) = \sup_{z\in\Z^d} w(z)\Lip(\alpha^{-z}(a)) \leq \Lip(a) \text;
    \end{equation*}
    since $w(0) = 1$, we have $\Lip(a) \leq \Lip_w(a)$ by construction. So $w$ is adapted to $\alpha$ and $\Lip$, and moreover, $\A_w = \dom{\Lip}$ and $\Lip_w=\Lip$.
  \end{proof}

  \begin{remark}
    In this section, we do \emph{not} assume that the action $\alpha$ preserves the domain $\dom{\Lip}$ of the Lipschitz seminorm. We only require that, for some weight $w$, the algebra $\A_w$ is dense. As seen above, if the action preserves the domain of $\Lip$, then it fits our framework for some exponential weight.
  \end{remark}

  \begin{remark}
    For $d=1$, and $\mu,\lambda \geq 1$, Kaad and Kyed \cite{Kaad21} used
    \begin{equation*}
      n \in \Z \mapsto
      \begin{cases}
        \lambda^{-n} \text{ if }n \in \N \text, \\
        \mu^{n} \text{ if }n \in -\N \text,
      \end{cases}
    \end{equation*}
    as what they called a modular weight; it is indeed an admissible weight in our sense. Moreover, as seen in Lemma \ref{exponential-weight-lemma}, if the action of $\Z$ preserves the domain of $\Lip$, then there always exists such a weight adapted to the action.
  \end{remark}

  \subsection{Weighted Dirac Operator}

  The natural, induced covariant representation $(\A,\alpha,\Z^d)$ acts on the Hilbert space:
  \begin{equation*}
    \Hilbert[S] \coloneqq \ell^2(\Z^d,\Hilbert) = \ell^2(\Z^d)\otimes\Hilbert
  \end{equation*}
  as follows. For all $a \in \A$, and for all $\xi\coloneqq (\xi_z)_{z\in\Z^d} \in \Hilbert[S]$, we set
  \begin{equation*}
    \pi(a) \xi = \left( \alpha^{-z}(a) \xi_z \right)_{z \in \Z^d} \text.
  \end{equation*}
  Moreover, for each $z \in \Z^d$, we set:
  \begin{equation*}
    U^z \xi = (\xi_{k-z})_{k \in \Z^d} \text.
  \end{equation*}
  Since
  \begin{equation*}
    U^z \pi(a) U^{-z} = \pi(\alpha^z(a))
  \end{equation*}
  for all $z \in \Z^d$, the pair $(\pi,U)$ is a covariant representation of the C*-dynamical system $(\A,\alpha,\Z^d)$.

  Since $\Z^d$ is Abelian, it is amenable, and therefore, the C*-crossed product $\A\rtimes_\alpha\Z^d$ is *-isomorphic to
  \begin{equation*}
    C^\ast\left( \pi(a), U^z : a \in \A, z \in \Z^d \right)
  \end{equation*}
  and we shall work with this identification for the remainder of this paper.

  Our construction begins with a natural extension of the operator $\Dirac$ to $\Hilbert[S]$. For any $\xi = (\xi_z)_{z\in \Z^d} \in \dom{\Dirac_w}$, where $\dom{\Dirac_w}$ is the dense subspace of $\Hilbert[S]$:
  \begin{equation}\label{Dw-domain-eq}
    \dom{\Dirac_w} \coloneqq \left\{ (\xi_z)_{z\in\Z^d} \in \Hilbert[S] : \forall z \in \Z^d \quad \xi_z \in \dom{\Dirac}, \sum_{z \in \Z^d} w(z)^2 \norm{\Dirac\xi_z}{\Hilbert}^2 < \infty \right\} \text,
  \end{equation}
  we define
  \begin{equation}\label{Dw-eq}
    \Dirac_w \xi \coloneqq \left( w(z) \Dirac \xi_z \right)_{z \in \Z^d} \text.
  \end{equation}
  We now connect the operator $\Dirac_w$ to $\Lip_w$ introduced in the previous section.
  \begin{lemma}\label{Dw-Lipw-lemma}
    For all $a\in \A_w$, we have $\pi(a) \dom{\Dirac_w} \subseteq \dom{\Dirac_w}$, and
    \begin{equation*}
      \Lip_w(a) = \norm{[\Dirac_w,\pi(a)]}{\B(\Hilbert[S])} \text.
    \end{equation*}
  \end{lemma}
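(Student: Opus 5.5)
The operator $\pi(a)$ acts diagonally on $\Hilbert[S]=\ell^2(\Z^d,\Hilbert)$, with $z$-th component $\alpha^{-z}(a)$, and $\Dirac_w$ acts diagonally with $z$-th component $w(z)\Dirac$. The plan is therefore to reduce everything to a componentwise computation. On each fiber, the commutator is $w(z)[\Dirac,\alpha^{-z}(a)]$. The norm of a bounded diagonal operator on $\ell^2(\Z^d,\Hilbert)$ is the supremum of the norms of its components, and that supremum is exactly $\Lip_w(a)$.

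\textbf{Domain invariance.} Fix $a\in\A_w$. By definition of $\A_w$, we have $\Lip(\alpha^{-z}(a))<\infty$ for every $z$, so $\alpha^{-z}(a)\in\A_{\Dirac}$. Hence $\alpha^{-z}(a)\dom{\Dirac}\subseteq\dom{\Dirac}$ and
\begin{equation*}
  \Dirac\alpha^{-z}(a)\xi_z=[\Dirac,\alpha^{-z}(a)]\xi_z+\alpha^{-z}(a)\Dirac\xi_z .
\end{equation*}
Let $\xi\in\dom{\Dirac_w}$. Multiplying by $w(z)$ and using $w(z)\norm{[\Dirac,\alpha^{-z}(a)]}{\B(\Hilbert)}\leq\Lip_w(a)$ together with $\norm{\alpha^{-z}(a)}{\A}=\norm{a}{\A}$, we get
\begin{equation*}
  w(z)\norm{\Dirac\alpha^{-z}(a)\xi_z}{\Hilbert}\leq \Lip_w(a)\norm{\xi_z}{\Hilbert}+\norm{a}{\A}\,w(z)\norm{\Dirac\xi_z}{\Hilbert}.
\end{equation*}
Squaring and summing over $z$ (with $(x+y)^2\leq 2x^2+2y^2$) gives finiteness, so $\pi(a)\xi\in\dom{\Dirac_w}$. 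At the same time we obtain, for $\xi\in\dom{\Dirac_w}$,
\begin{equation*}
  ([\Dirac_w,\pi(a)]\xi)_z=w(z)[\Dirac,\alpha^{-z}(a)]\xi_z ,
\end{equation*}
and hence $\norm{[\Dirac_w,\pi(a)]\xi}{\Hilbert[S]}\leq\Lip_w(a)\norm{\xi}{\Hilbert[S]}$. Since $\dom{\Dirac_w}$ is dense, the commutator extends to a bounded operator with norm at most $\Lip_w(a)$.

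\textbf{Reverse inequality.} Fix $z_0\in\Z^d$ and $\eta\in\dom{\Dirac}$. Let $\xi$ be the vector supported at $z_0$ with value $\eta$; it lies in $\dom{\Dirac_w}$ because the defining sum has one term. The commutator applied to $\xi$ has norm $w(z_0)\norm{[\Dirac,\alpha^{-z_0}(a)]\eta}{\Hilbert}$. Taking the supremum over $\eta$ in the unit ball of the dense subspace $\dom{\Dirac}$ gives $w(z_0)\Lip(\alpha^{-z_0}(a))\leq\norm{[\Dirac_w,\pi(a)]}{\B(\Hilbert[S])}$, and then taking the supremum over $z_0$ yields the equality.

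\textbf{Main obstacle.} The only delicate point is the domain bookkeeping. The definition \eqref{Dw-domain-eq} requires the weighted square-summability, which is why the uniform bound $\sup_z w(z)\Lip(\alpha^{-z}(a))<\infty$ is needed, and not merely finiteness of each term. We must also check that the fiberwise identity $[\Dirac,b]=\Dirac b-b\Dirac$ on $\dom{\Dirac}$ assembles correctly into the operator identity on $\dom{\Dirac_w}$.
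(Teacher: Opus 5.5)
Your proof is correct and follows the paper's argument. The same componentwise estimate with $(x+y)^2\leq 2x^2+2y^2$ gives domain invariance, and the commutator is identified as the diagonal operator with fibers $w(z)[\Dirac,\alpha^{-z}(a)]$. Your reverse inequality, using vectors supported on a single fiber, just spells out a fact the paper uses without comment: a bounded diagonal operator has norm equal to the supremum of the norms of its fibers.
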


  \begin{proof}
    Let $a \in \A_w$. If $\xi = (\xi_z)_{z\in\Z^d} \in \dom{\Dirac_w}$, then $\pi(a)\xi = (\alpha^{-z}(a)\xi_z)_{z\in\Z^d}$. Now $\alpha^{-z}(a) \dom{\Dirac}\subseteq\dom{\Dirac}$ by assumption, and (since $(x+y)^2 \leq 2x^2 + 2y^2$ for $x,y\in\R$):
    \begin{align*}
      \sum_{z\in\Z^d} w(z)^2 \norm{\Dirac (\pi(a)\xi)_z}{\Hilbert}^2 &= \sum_{z\in\Z^d} w(z)^2 \norm{\Dirac \alpha^{-z}(a)\xi_z}{\Hilbert}^2 \\
      &= \sum_{z\in\Z^d} w(z)^2 \norm{\alpha^{-z}(a) \Dirac\xi_z + [\Dirac,\alpha^{-z}(a)]\xi_z}{\Hilbert}^2 \\
      &\leq 2\norm{a}{\A}^2\sum_{z\in\Z^d} w(z)^2 \norm{\Dirac\xi_z}{\Hilbert}^2 + 2 \sum_{z\in\Z^d} \norm{w(z)[\Dirac,\alpha^{-z}(a)]\xi_z}{\Hilbert}^2 \\
      &\leq 2\norm{a}{\A}^2 \norm{\Dirac_w\xi}{\Hilbert[S]}^2 + 2 \sup_{z\in\Z^d}\norm{w(z)[\Dirac,\alpha^{-z}(a)]}{\B(\Hilbert)}^2 \sum_{z\in\Z^d} \norm{\xi_z}{\Hilbert}^2 \\
      &\leq 2 \norm{a}{\A}^2 \norm{\Dirac_w\xi}{\Hilbert[S]}^2 + 2 \Lip_w(a)^2 \norm{\xi}{\Hilbert[S]}^2 < \infty \text.
    \end{align*}
    So $\pi(a)\xi \in \dom{\Dirac_w}$. Moreover, for all $\xi\in\dom{\Dirac_w}$,
    \begin{equation*}
      ([\Dirac_w,\pi(a)]\xi)_{z\in\Z^d}= \left( w(z)[\Dirac,\alpha^{-z}(a)]\xi_z \right)_{z\in\Z^d}\text.
    \end{equation*}
    Thus this commutator extends to a bounded diagonal operator, whose norm is
    \begin{equation*}
      \opnorm{[\Dirac_w,\pi(a)]}{}{\Hilbert[S]}=\sup_{z\in\Z^d}w(z) \opnorm{[\Dirac,\alpha^{-z}(a)]}{}{\Hilbert} =\Lip_w(a)\text.
    \end{equation*}
    This completes our lemma.
  \end{proof}

  We introduce the usual calculus on $C^\ast(\Z^d) = C(\T^d)$ on $\ell^2(\Z^d)$, extended for our current presentation to $\Hilbert[S]$. For each $j \in \{1,\ldots,d\}$, we define
  \begin{equation*}
    \dom{\partial_j} \coloneqq \left\{ (\xi_z)_{(z_1,\ldots,z_d) \in \Z^d} \in\Hilbert[S] : \sum_{z \in \Z^d} |z_j|^2 \norm{\xi_z}{\Hilbert}^2 < \infty \right\}
  \end{equation*}
  and for all $\xi = (\xi_z)_{z \in \Z^d} \in \dom{\partial_j}$,
  \begin{equation*}
    \partial_j \xi = \left( z_j \xi_z \right)_{(z_1,\ldots,z_d)\in \Z^d} \text.
  \end{equation*}

  Let now $\gamma_0,\gamma_1,\ldots,\gamma_d,\gamma_{d+1}$ be $d+2$ anticommuting self-adjoint unitaries on $\C^q$ for some appropriate $q$, for instance $q = 2^{\lfloor \frac{d+2}{2}\rfloor}$. Fix $q$ henceforth. By assumption,
  \begin{equation*}
    \gamma_j \gamma_k + \gamma_k \gamma_j =
    \begin{cases}
      2 I_q\text{ if }k=j\text, \\
      0 \text{ otherwise.}
    \end{cases}
  \end{equation*}

  We now have all the components needed to construct our Dirac operator for the C*-crossed-product $\A\rtimes_\alpha\Z^d$. We will work on the Hilbert space $\Hilbert[S]\otimes \C^q$, and by abuse of notation, we use the same notation $\pi(a)$ (for $a\in\A$) and $U^k$ (for $k\in \Z^d$) for the operators $\pi(a)\otimes I_q$ and $U^k\otimes I_q$ on $\Hilbert[S]\otimes\C^q$, respectively.

  For all $\varepsilon\geq 0$, we define
  \begin{equation}\label{nabla-def-eq}
    \nabla_{w,\varepsilon} = \Dirac_w \otimes \gamma_0 + \sum_{j=1}^d \partial_j \otimes \gamma_j + \varepsilon (1_{\Hilbert[S]}\otimes\gamma_{d+1})
  \end{equation}
  on
  \begin{equation}\label{nabla-domain-eq}
    \dom{\nabla_w} \coloneqq \left( \dom{\Dirac_w}\cap\bigcap_{j=1}^d \dom{\partial_j} \right) \otimes\C^q \text,
  \end{equation}
  which is easily checked to be a dense subspace of $\Hilbert[S]\otimes\C^q$ (note that this domain is the same for all $\varepsilon\geq 0$, hence our choice of notation).

  The dense subspace $\Hilbert[F]$ of $\Hilbert[S]$ of eigencombinations, defined as the linear span of the elements $\phi_z\otimes e_n$, where $(e_n)_{n\in\N}$ is an orthonormal basis of eigenvectors of $\Hilbert$ for the self-adjoint operator $\Dirac$ with compact resolvent, and
  \begin{equation*}
    \phi_z : m \in \Z^d \mapsto
    \begin{cases}
      1 \text{ if }z=m \text,\\
      0 \text{ otherwise,}
    \end{cases}
  \end{equation*}
  will be helpful in our study of our operators $\nabla_{w,\varepsilon}$. We shall prove that $\Hilbert[F]\otimes\C^q$ is in fact a core for these operators. We fix these notations for this section, as well.

  The parameter $\varepsilon$ is added so that, for $\varepsilon > 0$, our operator $\nabla_{w,\varepsilon}$ is invertible. In fact, we obtain the following properties.

  \begin{proposition}\label{nabla-compact-res-prop}
    With the notation of this section, the operators $\nabla_{w,\varepsilon}$ are self-adjoint operators with compact resolvent such that, on the common core $\Hilbert[F]\otimes\C^q$,
    \begin{equation*}
      \nabla_{w,\varepsilon}^2 = \Dirac_w^2 + \sum_{j=1}^d \partial_j^2 + \varepsilon^2 \text.
    \end{equation*}
    If $\varepsilon > 0$ then $\nabla_{w,\varepsilon}$ is invertible from $\dom{\nabla_w}$ onto $\Hilbert[S]\otimes\C^q$, and moreover its inverse is a bounded operator on $\Hilbert[S]\otimes\C^q$ with norm:
    \begin{equation*}
      \opnorm{\nabla_{w,\varepsilon}^{-1}}{}{\Hilbert[S]\otimes\C^q} \leq \frac{1}{\varepsilon}\text.
    \end{equation*}
  \end{proposition}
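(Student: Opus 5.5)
The plan is to diagonalize everything simultaneously on the basis $\{\phi_z\otimes e_n\}$ of $\Hilbert[S]$, so that $\nabla_{w,\varepsilon}$ becomes a direct sum of finite-dimensional Clifford blocks. Write $\Dirac e_n=\lambda_n e_n$ with $\lambda_n\in\R$ and $|\lambda_n|\to\infty$, which is possible since $\Dirac$ is self-adjoint with compact resolvent. Then $\Dirac_w(\phi_z\otimes e_n)=w(z)\lambda_n\,\phi_z\otimes e_n$ and $\partial_j(\phi_z\otimes e_n)=z_j\,\phi_z\otimes e_n$. Hence on each $q$-dimensional subspace $H_{z,n}\coloneqq \C(\phi_z\otimes e_n)\otimes\C^q$, the operator $\nabla_{w,\varepsilon}$ acts as the Hermitian matrix
\begin{equation*}
  M_{z,n}\coloneqq w(z)\lambda_n\gamma_0+\sum_{j=1}^d z_j\gamma_j+\varepsilon\gamma_{d+1}\text.
\end{equation*}
By the anticommutation relations,
\begin{equation*}
  M_{z,n}^2=\mu_{z,n}^2 I_q \quad\text{with}\quad \mu_{z,n}^2\coloneqq w(z)^2\lambda_n^2+\sum_{j=1}^d z_j^2+\varepsilon^2\text.
\end{equation*}
This gives the formula for $\nabla_{w,\varepsilon}^2$ on $\Hilbert[F]\otimes\C^q$ immediately, and shows that the eigenvalues of $M_{z,n}$ are $\pm\mu_{z,n}$.

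Next I will show that $\nabla_{w,\varepsilon}$, with the domain $\dom{\nabla_w}$ of \eqref{nabla-domain-eq}, equals the self-adjoint operator $T\coloneqq\bigoplus_{z,n}M_{z,n}$ on the Hilbert direct sum $\bigoplus_{z,n}H_{z,n}=\Hilbert[S]\otimes\C^q$. Its natural maximal domain is $\{\xi:\sum_{z,n}\norm{M_{z,n}\xi_{z,n}}{}^2<\infty\}=\{\xi:\sum_{z,n}\mu_{z,n}^2\norm{\xi_{z,n}}{}^2<\infty\}$. A direct sum of Hermitian matrices is self-adjoint on this domain, and the finite sums, namely $\Hilbert[F]\otimes\C^q$, form a core.

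The main obstacle is the domain identification. I must check that $\sum\mu_{z,n}^2\norm{\xi_{z,n}}{}^2<\infty$ is equivalent to $\xi\in\dom{\Dirac_w}\cap\bigcap_j\dom{\partial_j}$ tensored with $\C^q$. The direction ``$\Leftarrow$'' holds term by term. For ``$\Rightarrow$'', expand $\xi_z=\sum_n\xi_{z,n}e_n$; then $\xi_z\in\dom{\Dirac}$ exactly when $\sum_n\lambda_n^2\norm{\xi_{z,n}}{}^2<\infty$, which follows by taking $w(z)>0$ fixed. The remaining conditions $\sum_z w(z)^2\norm{\Dirac\xi_z}{}^2<\infty$ and $\sum_z z_j^2\norm{\xi_z}{}^2<\infty$ are then read off the three nonnegative summands of $\mu_{z,n}^2$. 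The $\varepsilon$-term is bounded and does not affect the domain. On this common domain $\nabla_{w,\varepsilon}$ agrees with $T$, because each of $\Dirac_w$, $\partial_j$ and $\gamma_{d+1}$ acts diagonally in these blocks. This identification also shows that the domain does not depend on $\varepsilon$.

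For compact resolvent, the eigenvalues of $T$ are $\pm\mu_{z,n}$, each with finite multiplicity. For any $R>0$, the set $\{(z,n):\mu_{z,n}\leq R\}$ is finite: the bound $\sum_j z_j^2\leq R^2$ leaves finitely many $z$, and for each such $z$ the bound $w(z)|\lambda_n|\leq R$ with $w(z)>0$ leaves finitely many $n$. Thus $(T\pm i)^{-1}$ is a norm limit of finite-rank operators, so it is compact. Finally, if $\varepsilon>0$, then $\mu_{z,n}\geq\varepsilon$ for all $(z,n)$. So each $M_{z,n}$ is invertible with $\norm{M_{z,n}^{-1}}{}=\mu_{z,n}^{-1}\leq 1/\varepsilon$. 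The operator $\bigoplus M_{z,n}^{-1}$ is bounded with norm at most $1/\varepsilon$ and maps $\Hilbert[S]\otimes\C^q$ onto the maximal domain, which we identified with $\dom{\nabla_w}$. This gives the inverse and the bound $\opnorm{\nabla_{w,\varepsilon}^{-1}}{}{\Hilbert[S]\otimes\C^q}\leq 1/\varepsilon$.
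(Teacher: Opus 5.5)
Your proposal is correct and follows essentially the paper's route: both diagonalize simultaneously on the blocks $\C(\phi_z\otimes e_n)\otimes\C^q$, and your $\mu_{z,n}^2$ is the paper's $t_{n,z}$. The difference is only in packaging. You get self-adjointness, the core and the inverse directly from the direct sum of the Hermitian blocks $M_{z,n}$ on its maximal domain, and you make explicit the identification of that maximal domain with $\dom{\nabla_w}$, which the paper uses tacitly. The paper instead checks that $\nabla_{w,\varepsilon}\pm i$ are surjective via the explicit diagonal operator $K=(1+\nabla_{w,\varepsilon}^2)^{-1}$, and obtains the inverse from the lower bound $\|\nabla_{w,\varepsilon}\eta\|\geq\varepsilon\|\eta\|$.
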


  \begin{proof}
    Since $\Dirac$ is self-adjoint, it is easy to check that $\Dirac_w$ is symmetric. Similarly, since $\partial_1$,\ldots,$\partial_d$ are also self-adjoint, $\nabla_{w,\varepsilon}$ is symmetric on its domain.

    Let $\lambda_n \in \R$ be defined by $\Dirac e_n = \lambda_n e_n$ for all $n\in\N$. We also will repeatedly use the following notation. If $\xi \in \Hilbert[S]\otimes\C^q$, then we write $\xi = (\xi_z)_{z\in\Z^d}$ with $\xi_z = \sum_{n\in\N} e_n\otimes \xi_{n,z}$ where $\xi_{n,z} \in\C^q$ for all $(n,z)\in\N\times\Z^d$.

    A direct computation shows that $\nabla_{w,\varepsilon}(\Hilbert[F]\otimes\C^q)\subseteq\Hilbert[F]\otimes\C^q$, specifically
    \begin{equation*}
      \nabla_{w,\varepsilon}(\phi_z\otimes e_n \otimes c) = \left(w(z) \lambda_n (1_{\Hilbert[S]}\otimes \gamma_0) + \sum_{j=1}^d z_j (1_{\Hilbert[S]}\otimes \gamma_j) + \varepsilon(1_{\Hilbert[S]}\otimes\gamma_{d+1})\right) \phi_z\otimes e_n \otimes c \text.
    \end{equation*}
    Since $\gamma_j \gamma_k + \gamma_k \gamma_j =
    \begin{cases}
      2 \text{ if }j=k\text, \\
      0 \text{ otherwise}
    \end{cases}$ for all $j,k \in \{0,\ldots,d+1\}$, and since $\Dirac_w$ commutes with $\partial_j$ for all $j \in \{1,\ldots,d\}$, we then obtain that on $\Hilbert[F]\otimes\C^q$:
    \begin{equation}
      \nabla_{w,\varepsilon}^2 = \Dirac_w^2 + \sum_{j=1}^{d} \partial_j^2 + \varepsilon^2 \text,
    \end{equation}\label{nabla-eq} and in particular,
    \begin{equation*}
      \nabla_{w,\varepsilon}^2 (\phi_z\otimes e_n\otimes c) = t_{n,z} (\phi_z\otimes e_n\otimes c) \text,
    \end{equation*}
    where
    \begin{equation*}
      t_{n,z} = w(z)^2 \lambda_n^2 + \sum_{j=1}^d |z_j|^2 + \varepsilon^2 \text.
    \end{equation*}

    This shows that $\Hilbert[F]\otimes\C^q$ is a core for $\nabla_{w,\varepsilon}$. Indeed, let $\eta\in\dom{\nabla_w}$. If $P_F$ is the projection onto the span of $\{ \phi_z\otimes e_n \otimes c : (n,z) \in F, c \in \C^q \}$ for $F\subseteq \N\times\Z^d$ finite, then $P_F\eta\in\Hilbert[F]\otimes\C^q$ and
    \begin{equation*}
      \norm{\eta-P_F\eta}{\Hilbert[S]\otimes\C^q}^2+ \norm{\nabla_{w,\varepsilon}(\eta-P_F\eta)}{\Hilbert[S]\otimes\C^q}^2 =\sum_{(n,z)\notin F}(1+t_{n,z})\norm{\eta_{n,z}}{\C^q}^2 \xrightarrow{F\subseteq \N\times\Z^d \text{ finite}\to\N\times\Z^d} 0
    \end{equation*}
    where convergence is for a net indexed by the directed set of finite subsets of $\N\times\Z^d$, ordered by inclusion.

    We define the operator $K$ as the unique linear operator on $\Hilbert[F]\otimes\C^q$ such that
    \begin{equation*}
      K(\phi_z\otimes e_n\otimes c) = (1+t_{n,z})^{-1} \phi_z\otimes e_n\otimes c \text.
    \end{equation*}
    By construction, since $(1+t_{n,z})^{-1} \leq 1$, the operator $K$ is bounded, of norm at most $1$, and thus extends by continuity to an operator on $\Hilbert[S]\otimes\C^q$ which we still denote by $K$.

    For each $M>0$, the condition $t_{n,z}\leq M$ first restricts $z$ to a finite set. For each such $z$, the inequality $w(z)^2\lambda_n^2\leq M$, together with $w(z)>0$ and compactness of the resolvent of $\Dirac$, restricts $n$ to a finite set. Consequently, the diagonal operator $K$ is compact.

    Let $\xi \in \Hilbert[S]\otimes\C^q$. Since
    \begin{equation}\label{K-eq}
      \sum_{(n,z)\in\N\times\Z^d}t_{n,z}^{\,2}\norm{(K\xi)_{n,z}}{\C^q}^2 =\sum_{(n,z)\in\N\times\Z^d}\frac{t_{n,z}^{\,2}}{(1+t_{n,z})^2} \norm{\xi_{n,z}}{\C^q}^2 \leq\norm{\xi}{\Hilbert[S]\otimes\C^q}^2\text,
    \end{equation}
    the definition of $\dom{\nabla_w}$ shows that $K\xi\in\operatorname{dom}(\nabla_{w,\varepsilon}^2)$: since $t\leq 1+t^2$ for $t\geq 0$, Expression \eqref{K-eq} shows that $\sum_{(n,z)\in\N\times\Z^d} t_{n,z} \norm{(K\xi)_{n,z}}{\C^q}^2 < \infty$, so $K\xi \in \dom{\nabla_w}$; then Expression \eqref{K-eq} allows us to conclude again that $\nabla_{w,\varepsilon}K\xi \in \dom{\nabla_w}$.

    The identity
    \begin{equation*}
      (\nabla_{w,\varepsilon}^2+1)K\xi=\xi
    \end{equation*}
    therefore holds for every $\xi$. Hence both $\nabla_{w,\varepsilon}+i$ and $\nabla_{w,\varepsilon}-i$ are surjective. Since $\nabla_{w,\varepsilon}$ is symmetric, it is self-adjoint \cite[Theorem VIII3]{ReedSimon}. The compactness of $K=(1+\nabla_{w,\varepsilon}^2)^{-1}$ then gives compact resolvent.

    Finally, if $\varepsilon>0$, Expression \eqref{nabla-eq} and the fact $(\phi_z\otimes e_n)_{z\in\Z^d,n\in\N}$ is an orthonormal basis of $\Hilbert[S]$, give, for every $\eta\in\operatorname{dom}(\nabla_{w,\varepsilon})$,
    \begin{equation*}
      \|\nabla_{w,\varepsilon}\eta\|^2 =\sum_{n,z}t_{n,z}\norm{\eta_{n,z}}{}^2 \geq\varepsilon^2\norm{\eta}{}^2 \text.
    \end{equation*}
    Thus $\nabla_{w,\varepsilon}$ has closed range and trivial kernel. Since it is also self-adjoint, its range is dense, so it is invertible and
    \begin{equation*}
      \|\nabla_{w,\varepsilon}^{-1}\|\leq\varepsilon^{-1} \text.
    \end{equation*}
    This concludes our proof.
  \end{proof}

  In our current framework, there may be no dense *-subalgebra of $\A$ such that the commutator $\norm{[\nabla_{w,\varepsilon},\pi(\cdot)]}{\B(\Hilbert[S]\otimes\C^q)}$ is defined and finite. Instead, we use the framework defined in \cite{Latremoliere26b} and introduce a twist, and associated twisted spectral triple.

  \subsection{The linear twist}

  In this subsection, we fix an admissible weight $w$ adapted to $\alpha$ and $\Lip$. For all $k \in \Z^d$, we define, for all $z \in \Z^d$,
  \begin{equation*}
    \mu_k(z) = 1 - \frac{w(z-k)}{w(z)} \text,
  \end{equation*}
  and let $M_k$ be defined by setting, for all $\xi \in \Hilbert[S]$ and for all $z \in \Z^d$,
  \begin{equation*}
    (M_k\xi)(z) = \mu_k(z)\xi(z) \text.
  \end{equation*}
  Note that $M_k$ is bounded by assumption on our weight $w$, namely
  \begin{equation*}
    \norm{M_k}{\B(\Hilbert[S])}\leq 1+r_w(-k) \text,
  \end{equation*}
  where $r_w$ was defined in Expression \eqref{r-eq}.

  Admissibility of the weight $w$ also implies that $U^k$ preserves $\dom{\Dirac_w}$. Indeed, for $\eta\in\dom{\Dirac_w}$,
  \begin{equation}\label{Uk-domDw-eq}
    \sum_{z\in\Z^d} w(z)^2\norm{\Dirac\eta_{z-k}}{\Hilbert}^2 = \sum_{z\in\Z^d} w(z+k)^2\norm{\Dirac\eta_{z}}{\Hilbert}^2 \leq r_w(k)^2\norm{\Dirac_w\eta}{\Hilbert[S]}^2\text,
  \end{equation}
  so $U^k\eta = (\eta_{z-k})_{z\in\Z^d} \in \dom{\Dirac_w}$.

  We begin with a norm bound for a particular operator which will appear as a ``defect'' in the commutator with our candidates $\nabla_{w,\varepsilon}$ for Dirac operators on the C*-crossed-product.
  \begin{lemma}
    Let $\varepsilon>0$ and $k\in\Z^d$. If
    \begin{equation*}
      B_k \coloneqq M_k\Dirac_w U^k\otimes\gamma_0
    \end{equation*}
    on $\dom{\nabla_w}$, then $B_k\nabla_{w,\varepsilon}^{-1}$ is a bounded operator and
    \begin{equation*}
      \opnorm{B_k\nabla_{w,\varepsilon}^{-1}}{}{\Hilbert[S]\otimes\C^q} \leq \sup_{z\in\Z^d} \left|\frac{w(z+k)}{w(z)}-1\right| <\infty \text.
    \end{equation*}
  \end{lemma}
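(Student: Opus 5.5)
The plan is to compute everything explicitly in the orthonormal basis $\phi_z\otimes e_n$ used in the proof of Proposition \ref{nabla-compact-res-prop}. With $\lambda_n$ the eigenvalues of $\Dirac$, the operators $\nabla_{w,\varepsilon}$, $\Dirac_w$, $U^k$ and $M_k$ all act blockwise on the finite-dimensional subspaces $E_{n,z}\coloneqq \C\phi_z\otimes e_n\otimes\C^q$. The operator $B_k\nabla_{w,\varepsilon}^{-1}$ should then be a ``weighted shift'' sending $E_{n,z}$ into $E_{n,z+k}$, and its norm is the supremum of the block norms.

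\emph{Step 1: the inverse on each block.} On $E_{n,z}$, $\nabla_{w,\varepsilon}$ acts as $1\otimes X_{n,z}$, where
\begin{equation*}
X_{n,z}=w(z)\lambda_n\gamma_0+\sum_{j=1}^d z_j\gamma_j+\varepsilon\gamma_{d+1}.
\end{equation*}
The anticommutation relations give $X_{n,z}^2=t_{n,z}I_q$, with $t_{n,z}\geq\varepsilon^2>0$. Hence $\nabla_{w,\varepsilon}^{-1}$, which exists by Proposition \ref{nabla-compact-res-prop}, acts on $E_{n,z}$ as $t_{n,z}^{-1}X_{n,z}$. Since $X_{n,z}$ is self-adjoint with $X_{n,z}^2=t_{n,z}I_q$, this block has norm $t_{n,z}^{-1/2}$, and it maps $E_{n,z}$ into itself.

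\emph{Step 2: $B_k$ on each block.} We have $U^k(\phi_z\otimes e_n)=\phi_{z+k}\otimes e_n$. Then $\Dirac_w$ multiplies by $w(z+k)\lambda_n$, and $M_k$ multiplies by
\begin{equation*}
\mu_k(z+k)=1-\frac{w(z)}{w(z+k)}.
\end{equation*}
Therefore
\begin{equation*}
B_k(\phi_z\otimes e_n\otimes c)=\bigl(w(z+k)-w(z)\bigr)\lambda_n\,\phi_{z+k}\otimes e_n\otimes\gamma_0c .
\end{equation*}
Composing with Step 1, $B_k\nabla_{w,\varepsilon}^{-1}$ maps $E_{n,z}$ into $E_{n,z+k}$ with norm
\begin{equation*}
\frac{|w(z+k)-w(z)|\,|\lambda_n|}{\sqrt{t_{n,z}}}\leq\frac{|w(z+k)-w(z)|\,|\lambda_n|}{w(z)|\lambda_n|}=\left|\frac{w(z+k)}{w(z)}-1\right| .
\end{equation*}
The inequality uses $t_{n,z}\geq w(z)^2\lambda_n^2$. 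When $\lambda_n=0$ the block is simply zero.

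\emph{Step 3: assembling the blocks.} The map $(n,z)\mapsto(n,z+k)$ is injective, so the images of distinct blocks are mutually orthogonal. For $\xi=\sum\xi_{n,z}$ we therefore get
\begin{equation*}
\norm{B_k\nabla_{w,\varepsilon}^{-1}\xi}{}^2=\sum_{n,z}\norm{(B_k\nabla_{w,\varepsilon}^{-1})\xi_{n,z}}{}^2\leq\sup_{z}\left|\frac{w(z+k)}{w(z)}-1\right|^2\norm{\xi}{}^2 .
\end{equation*}
This supremum is at most $1+r_w(k)<\infty$ by admissibility.

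The only real care needed, and the step I expect to be the main obstacle, is justifying that this blockwise computation describes $B_k\nabla_{w,\varepsilon}^{-1}$ on all of $\Hilbert[S]\otimes\C^q$, not just on $\Hilbert[F]\otimes\C^q$. For this, note that $\nabla_{w,\varepsilon}^{-1}\xi\in\dom{\nabla_w}$. Next, $U^k$ preserves $\dom{\Dirac_w}$ by \eqref{Uk-domDw-eq}, and $M_k$ is bounded, so $B_k$ is defined on $\dom{\nabla_w}$. Finally, each of $U^k$, $\Dirac_w$ and $M_k$ acts coordinatewise in the basis, with $\Dirac_w$ diagonal and closed. Hence the coefficients of $B_k\nabla_{w,\varepsilon}^{-1}\xi$ are exactly the blockwise ones computed above, and the estimate of Step 3 holds for every $\xi$.
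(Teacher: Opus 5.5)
Your proposal is correct and follows essentially the same route as the paper. The paper also works in the eigenbasis $\phi_z\otimes e_n$ and computes $(B_k\eta)_{n,z+k}=(w(z+k)-w(z))\lambda_n\gamma_0\eta_{n,z}$ for $\eta=\nabla_{w,\varepsilon}^{-1}\xi$; it then uses $\norm{\xi_{n,z}}{\C^q}^2=t_{n,z}\norm{\eta_{n,z}}{\C^q}^2$ together with $w(z)^2\lambda_n^2\leq t_{n,z}$, which is exactly your block-inverse computation with $t_{n,z}^{-1}X_{n,z}$, while your extra justification of the coefficientwise formula for general $\xi$ is a harmless addition that the paper leaves implicit.
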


  \begin{proof}
    We will use the notation introduced in the proof of Proposition \ref{nabla-compact-res-prop}. In particular, for all $(n,z) \in \N\times\Z^d$, we set:
    \begin{equation*}
      t_{n,z}=w(z)^2\lambda_n^2+\sum_{j=1}^d|z_j|^2 +\varepsilon^2 \text.
    \end{equation*}
    Let $\xi\in\Hilbert[S]\otimes\C^q$ and set $\eta=\nabla_{w,\varepsilon}^{-1}\xi$ (which is well-defined by Proposition \ref{nabla-compact-res-prop}). Write $\xi=(\xi_z)_{z\in\Z^d}$ and $\eta = (\eta_z)_{z\in\Z^d}$, then $\xi_z=\sum_{n\in\N} e_n\otimes\xi_{n,z}$ and $\eta_z=\sum_{n\in\N} e_n\otimes\eta_{n,z}$ for all $z \in \Z^d$, with $\xi_{n,z},\eta_{n,z} \in \C^q$.

    By the computation in Proposition \ref{nabla-compact-res-prop},
    \begin{equation*}
      \nabla_{w,\varepsilon}^2(\phi_z\otimes e_n\otimes c) = t_{n,z}(\phi_z\otimes e_n\otimes c)
    \end{equation*}
    for all $(n,z)\in\N\times\Z^d$ and $c\in \C^q$. Therefore, since $\nabla_{w,\varepsilon}$ is self-adjoint, and $(\phi_z\otimes e_n)_{z \in \Z^d, n \in \N}$ is an orthonormal basis of $\Hilbert[S]$,
    \begin{equation*}
      \norm{\xi_{n,z}}{\C^q}^2=t_{n,z}\norm{\eta_{n,z}}{\C^q}^2 \text.
    \end{equation*}

    By definition, for all $(n,z) \in \N\times\Z^d$, and again using $\Dirac e_n = \lambda_n e_n$,
    \begin{align*}
      (B_k\eta)_{n,z+k} &=\bigl((M_k\Dirac_wU^k)\otimes\gamma_0\;\eta\bigr)_{n,z+k}\\
      &=\mu_k(z+k)w(z+k)\lambda_n\gamma_0\eta_{n,z}\\
      &=\left(1-\frac{w(z)}{w(z+k)}\right) w(z+k)\lambda_n\gamma_0\eta_{n,z}\\
      &=\bigl(w(z+k)-w(z)\bigr) \lambda_n\gamma_0\eta_{n,z}\text.
    \end{align*}
    Consequently,
    \begin{align*}
      \norm{B_k\nabla_{w,\varepsilon}^{-1}\xi}{}^2 &=\sum_{(n,z)\in\N\times\Z^d} |w(z+k)-w(z)|^2\lambda_n^2 \norm{\eta_{n,z}}{\C^q}^2\\
      &=\sum_{(n,z)\in\N\times\Z^d} \left|\frac{w(z+k)}{w(z)}-1\right|^2 \frac{w(z)^2\lambda_n^2}{t_{n,z}} \norm{\xi_{n,z}}{\C^q}^2\\
      &\leq \left(\sup_{z\in\Z^d} \left|\frac{w(z+k)}{w(z)}-1\right|\right)^2 \norm{\xi}{\Hilbert[S]\otimes\C^q}^2 \text,
    \end{align*}
    since $w(z)^2\lambda_n^2\leq t_{n,z}$. Our bound follows since the square root is increasing. The supremum is finite because it is at most $1+r_w(k)$.
  \end{proof}

  We now study the commutator between $\nabla_{w,\varepsilon}$ and some elements in our C*-crossed-product. For $\xi \in \dom{\Dirac_w}$, we observe that:
  \begin{align*}
    [\Dirac_w,U^k]\xi(m) &= (\Dirac_wU^k\xi)(m)-(U^k\Dirac_w\xi)(m)\\
    &= w(m)\Dirac(U^k\xi)(m)-(\Dirac_w\xi)(m-k)\\
    &= w(m)\Dirac\xi(m-k)-w(m-k)\Dirac\xi(m-k)\\
    &=\left(1-\frac{w(m-k)}{w(m)}\right) (\Dirac_wU^k\xi)(m) \text,
  \end{align*}
  so, on $\dom{\Dirac_w}$,
  \begin{equation*}
    [\Dirac_w,U^k]=M_k\Dirac_w U^k \text.
  \end{equation*}

  Moreover, $\partial_j U^k=U^k(\partial_j+k_j)$, so $U^k$ preserves $\dom{\nabla_w}$. Since $[\partial_j,\pi(a)]=0$ for all $a\in\A$, and $\pi(a)$ preserves $\dom{\Dirac_w}$ for $a\in\A_w$, we then obtain, on $\dom{\nabla_w}$, for $a\in\A_w$ and $k\in\Z^d$:
  \begin{align*}
    [\nabla_{w,\varepsilon},\pi(a)U^k] &=[\nabla_{w,\varepsilon},\pi(a)]U^k +\pi(a)[\nabla_{w,\varepsilon},U^k]\\
    &=[\Dirac_w,\pi(a)]U^k\otimes\gamma_0 +\pi(a)\left( [\Dirac_w,U^k]\otimes\gamma_0 +\sum_{j=1}^d[\partial_j,U^k]\otimes\gamma_j \right)\\
    &=[\Dirac_w,\pi(a)]U^k\otimes\gamma_0 +\pi(a)\left( M_k \Dirac_w U^k \otimes\gamma_0 +\sum_{j=1}^d[\partial_j,U^k]\otimes\gamma_j \right)\\
    &=[\Dirac_w,\pi(a)]U^k\otimes\gamma_0 +\pi(a)B_k +\sum_{j=1}^d k_j\pi(a)U^k\otimes\gamma_j \text.
  \end{align*}
  This is typically an unbounded operator. Our desired expression, instead, is
  \begin{equation*}
    [\Dirac_w,\pi(a)]U^k\otimes\gamma_0 +\sum_{j=1}^d k_j\pi(a)U^k\otimes\gamma_j \text,
  \end{equation*}
  which we seek to express as the twisted commutator
  \begin{equation*}
    [\nabla_{w,\varepsilon},\pi(a)U^k]_{\psi_\varepsilon} =\nabla_{w,\varepsilon}\pi(a)U^k -\psi_\varepsilon(\pi(a)U^k)\nabla_{w,\varepsilon}
  \end{equation*}
  for a linear map $\psi_\varepsilon$ initially defined on Fourier polynomials in $\A\rtimes_\alpha\Z^d$ and valued in $\B(\Hilbert[S]\otimes\C^q)$.

  The difference thus suggests that, for $\varepsilon>0$, we set
  \begin{equation}\label{psi-eq}
    \psi_\varepsilon(\pi(a)U^k) =\pi(a)\left(U^k+B_k\nabla_{w,\varepsilon}^{-1}\right) \text,
  \end{equation}
  for all $a\in\A$ and $k\in\Z^d$. We now wish to extend $\psi_\varepsilon$, and proceed as follows.

  The dual action $\beta$ of $\T^d$, where $\T = \{ z\in \C : |z| = 1 \}$, on $\A\rtimes_\alpha\Z^d$, is implemented spatially by
  \begin{equation*}
    z\in\T^d \mapsto \beta^z\coloneqq \mathrm{Ad}_{V^z}\text,
  \end{equation*}
  where, for all $(z_1,\ldots,z_d) \in \T^d$, $(m_1,\ldots,m_d) \in \Z^d$ and $\xi \in \Hilbert[S]\otimes\C^q$:
  \begin{equation*}
    V^{z_1,\ldots,z_d} \xi(m_1,\ldots,m_d) = \prod_{j=1}^d z_j^{m_j} \xi(m_1,\ldots,m_d)\text,
  \end{equation*}
  and, for all $a\in \B(\Hilbert[S]\otimes\C^q)$,
  \begin{equation*}
    \mathrm{Ad}_{V^z}(a) = V^{z}a(V^{z})^* \text.
  \end{equation*}
  Of course, $V^z$ is unitary for all $z\in \T^d$.

  Let $\mathds{P}$ be the Haar probability measure on the compact Abelian group $\T^d$, and define, for all $k \in \Z^d$ and for all $T \in \B(\Hilbert[S]\otimes\C^q)$, the Fourier coefficient:
  \begin{equation*}
    \mathcal{F}_k(T) \coloneqq \int_{z \in \T^d} z^{-k} \beta^z(T) \, d\mathds{P}(z) \text,
  \end{equation*}
  where we use the weak integral. Note that $\mathcal{F}_k$ is well-defined and valued in $\B(\Hilbert[S]\otimes\C^q)$ : in fact, it is a linear map of norm $1$. We record that $\mathcal{F}_0$ is the usual conditional expectation defined by the action $\beta$ from $\A\rtimes_\alpha\Z^d$ onto $\pi(\A)$, and for all $k\in \Z^d$ and $b \in \A\rtimes_\alpha\Z^d$:
  \begin{equation*}
    \mathcal{F}_k(b) = \mathcal{F}_0(b U^{-k}) U^k \text.
  \end{equation*}

  For every $b\in\A\rtimes_\alpha\Z^d$ and $k\in\Z^d$,
  \begin{equation*}
    \mathcal{F}_k(b)U^{-k}\in\pi(\A)\text.
  \end{equation*}
  Indeed, this holds for Fourier polynomials and extends to all $b$ by norm continuity of $\mathcal{F}_k$ and closedness of $\pi(\A)$. Consequently, Expression \eqref{psi-eq} may be rewritten as:
  \begin{equation}\label{psi-Fourier-formula}
    \psi_\varepsilon(\mathcal{F}_k(b)) =\mathcal{F}_k(b)U^{-k}\psi_\varepsilon(U^k)\text.
  \end{equation}
  We now define:
  \begin{equation*}
    \dom{\psi_\varepsilon} \coloneqq \left\{ b \in \A\rtimes_{\alpha}\Z^d : \exists T \in \B(\Hilbert[S]\otimes\C^q)\quad \forall k \in \Z^d \qquad \mathcal{F}_k(T) = \psi_\varepsilon\left(\mathcal{F}_k(b)\right) \right\} \text.
  \end{equation*}
  Let $b \in \dom{\psi_\varepsilon}$. Let $T,S \in \B(\Hilbert[S]\otimes\C^q)$ such that for all $k \in \Z^d$,
  \begin{equation*}
    \mathcal{F}_k(T) = \mathcal{F}_k(S) = \psi_\varepsilon( \mathcal{F}_k(b) )\text.
  \end{equation*}
  Then $T = S$ : indeed, let $P_m$ be the projection onto $\C \phi_m\otimes \Hilbert\otimes\C^q$ for $m\in\Z^d$. Since $V^z$ acts on this subspace as multiplication by $z^m$, we have, for all $m,n,k\in\Z^d$,
  \begin{equation*}
    P_n\mathcal{F}_k(T)P_m = \int_{\T^d}z^{n-m-k}\,d\mathds{P}(z)\;P_n T P_m =
    \begin{cases}
      P_n T P_m & \text{if }n-m=k,\\
      0 & \text{otherwise.}
    \end{cases}
  \end{equation*}
  Thus, choosing $k=n-m$, we obtain $P_n T P_m=P_n S P_m$ for every $n,m \in \Z^d$. By density of the sum of the ranges of $P_m$ over $m\in\Z^d$, and by continuity, $T=S$. So if such an element exists, it is unique.

  Therefore, for all $b \in \dom{\psi_\varepsilon}$, we define
  \begin{equation}\label{psi-def-eq}
    \psi_\varepsilon(b) = T \quad \text{ such that }\quad \forall k \in \Z^d\quad \mathcal{F}_k(T) = \psi_\varepsilon(\mathcal{F}_k(b)) \text,
  \end{equation}
  which is equivalent to $\mathcal{F}_k(T) = \mathcal{F}_k(b)U^{-k}\psi_\varepsilon(U^k)$ for all $k\in\Z^d$.

  Moreover, if $b = \pi(a) U^k$ with $a\in \A$, then we can easily check that $b \in \dom{\psi_\varepsilon}$ with $T = \pi(a)(U^k+B_k\nabla_{w,\varepsilon}^{-1})$, as in Expression \eqref{psi-eq}: indeed, $V^z$ commutes with $\pi(a)$, $M_k$, $\Dirac_w$ and $\nabla_{w,\varepsilon}^{-1}$ by construction for all $z\in \T^d$, while $\beta^z(U^k)=z^kU^k$, so
  \begin{equation*}
    \beta^z(T) = z^k T \text.
  \end{equation*}
  Thus, for every $m\in\Z^d$,
  \begin{equation*}
    \mathcal{F}_m(T) =
    \begin{cases}
      T \text{ if }m=k \text,\\
      0 \text{ otherwise}\end{cases}
    = \psi_\varepsilon(\mathcal{F}_m(b)) \text.
  \end{equation*}
  By uniqueness, $\psi_\varepsilon(b)=T$, so this definition indeed extends Expression \eqref{psi-eq}.

  For $b\in\dom{\psi_\varepsilon}$, the defining condition of the domain therefore reads
  \begin{equation}\label{psi-Fourier-compatibility}
    \mathcal{F}_k(\psi_\varepsilon(b)) =\psi_\varepsilon(\mathcal{F}_k(b))=\mathcal{F}_k(b)U^{-k}\psi_\varepsilon(U^k)\text.
  \end{equation}

  \begin{proposition}\label{psi-closed-graph-prop}
    The set $\dom{\psi_\varepsilon}$ is a subspace of $\A\rtimes_\alpha\Z^d$, and the map $\psi_\varepsilon$ is a linear map whose graph is closed from the norm topology on $\A\rtimes_\alpha\Z^d$ to the weak operator topology on $\B(\Hilbert[S]\otimes\C^q)$.
  \end{proposition}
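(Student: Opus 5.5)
The plan is to check that the maps $\mathcal{F}_k$ interact linearly and continuously with the defining condition, in two steps.

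\emph{Linearity and the subspace property.} Each $\mathcal{F}_k$ is linear on $\B(\Hilbert[S]\otimes\C^q)$. The map $b\mapsto \psi_\varepsilon(\mathcal{F}_k(b))=\mathcal{F}_k(b)U^{-k}\psi_\varepsilon(U^k)$ is linear in $b$ by Expression \eqref{psi-Fourier-formula}. Take $b,c\in\dom{\psi_\varepsilon}$ with witnesses $T=\psi_\varepsilon(b)$ and $S=\psi_\varepsilon(c)$, and a scalar $t\in\C$. Then for every $k$,
\begin{equation*}
\mathcal{F}_k(T+tS)=\psi_\varepsilon(\mathcal{F}_k(b))+t\psi_\varepsilon(\mathcal{F}_k(c))=\psi_\varepsilon(\mathcal{F}_k(b+tc)).
\end{equation*}
So $b+tc\in\dom{\psi_\varepsilon}$. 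By the uniqueness established just before the statement, $\psi_\varepsilon(b+tc)=T+tS$. Since $0\in\dom{\psi_\varepsilon}$ with witness $0$, the domain is a linear subspace and $\psi_\varepsilon$ is linear.

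\emph{Closed graph.} Let $(b_j)_{j\in J}$ be a net in $\dom{\psi_\varepsilon}$ with $b_j\to b$ in norm and $\psi_\varepsilon(b_j)\to T$ in the weak operator topology, for some $T\in\B(\Hilbert[S]\otimes\C^q)$. We must show $b\in\dom{\psi_\varepsilon}$ with $\psi_\varepsilon(b)=T$, that is, $\mathcal{F}_k(T)=\mathcal{F}_k(b)U^{-k}\psi_\varepsilon(U^k)$ for all $k$.

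The right-hand side is easy. $\mathcal{F}_k$ is norm continuous of norm $1$, and right multiplication by the fixed bounded operator $U^{-k}\psi_\varepsilon(U^k)$ is norm continuous. Hence $\psi_\varepsilon(\mathcal{F}_k(b_j))\to\psi_\varepsilon(\mathcal{F}_k(b))$ in norm, and so in WOT.

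The left-hand side is the main obstacle: $\mathcal{F}_k$ must be WOT-continuous, at least along nets that need not be bounded. I would avoid boundedness issues by passing to matrix blocks. With $P_m$ the projection onto $\C\phi_m\otimes\Hilbert\otimes\C^q$, the computation preceding the statement gives
\begin{equation*}
P_n\mathcal{F}_k(X)P_m=\delta_{n-m,k}\,P_nXP_m
\end{equation*}
for every bounded $X$. This identity is plainly WOT-continuous in $X$. So for vectors $\xi\in\operatorname{ran}P_m$ and $\eta\in\operatorname{ran}P_n$ we get
\begin{equation*}
\inner{\mathcal{F}_k(\psi_\varepsilon(b_j))\xi}{\eta}{}\to\inner{\mathcal{F}_k(T)\xi}{\eta}{}.
\end{equation*}
On the other hand, $\mathcal{F}_k(\psi_\varepsilon(b_j))=\psi_\varepsilon(\mathcal{F}_k(b_j))$, and this converges to $\psi_\varepsilon(\mathcal{F}_k(b))$. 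Hence $P_n\mathcal{F}_k(T)P_m=P_n\psi_\varepsilon(\mathcal{F}_k(b))P_m$ for all $n,m$. Both operators are bounded and the ranges of the $P_m$ span a dense subspace, so $\mathcal{F}_k(T)=\psi_\varepsilon(\mathcal{F}_k(b))$ for every $k$. This gives $b\in\dom{\psi_\varepsilon}$ and $\psi_\varepsilon(b)=T$, which is the closed graph property.
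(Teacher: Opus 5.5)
Your proof is correct and follows essentially the same route as the paper. Linearity comes from the linear formula $\psi_\varepsilon(\mathcal{F}_k(b))=\mathcal{F}_k(b)U^{-k}\psi_\varepsilon(U^k)$ together with uniqueness. The closed graph comes from testing $\mathcal{F}_k(\cdot)$ on the fibers $\C\phi_m\otimes\Hilbert\otimes\C^q$, then using norm convergence of $\psi_\varepsilon(\mathcal{F}_k(b_j))$ and density. The paper writes the fiber step as $\mathcal{F}_k(S)\xi=P_{m+k}S\xi$ for $\xi=\phi_m\otimes e_\ell\otimes c$, which is your block identity, and your remark that $\mathcal{F}_k$ only needs to be WOT-continuous blockwise is exactly what makes the argument work for unbounded nets.
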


  \begin{proof}
    Let $b,c\in\dom{\psi_\varepsilon}$ and $x,y\in\C$. For every $k\in\Z^d$, the defining formula gives
    \begin{align*}
      \mathcal{F}_k\bigl(x\psi_\varepsilon(b)+y\psi_\varepsilon(c)\bigr) &=\bigl(x\mathcal{F}_k(b)+y\mathcal{F}_k(c)\bigr) U^{-k}\psi_\varepsilon(U^k)\\
      &=\mathcal{F}_k(xb+yc)U^{-k}\psi_\varepsilon(U^k)\\
      &=\psi_\varepsilon(\mathcal{F}_k(xb+yc))\text.
    \end{align*}
    Thus $xb+yc\in\dom{\psi_\varepsilon}$, and uniqueness in the definition yields $\psi_\varepsilon(xb+yc)=x\psi_\varepsilon(b)+y\psi_\varepsilon(c)$.

    Now, we again let $P_m$ be the projection on $\C \phi_m\otimes \Hilbert\otimes\C^q$ for all $m\in \Z^d$. Let $(b_j)_{j \in J}$ be a net in $\dom{\psi_\varepsilon}$ converging in norm to $b$, and suppose that $(\psi_\varepsilon(b_j))_{j \in J}$ converges to $T$ in the weak operator topology.

    Fix $k,m\in\Z^d$, $\ell\in\N$, and $c\in\C^q$, and set $\xi=\phi_m\otimes e_\ell\otimes c$. By definition of $\mathcal{F}_k$, for every $S \in \B(\Hilbert[S]\otimes\C^q)$, we have
    \begin{equation*}
      \mathcal{F}_k(S)\xi=P_{m+k}S\xi \text.
    \end{equation*}
    Consequently,
    \begin{align*}
      \mathcal{F}_k(T)\xi &=P_{m+k}T\xi\\
      &={\mathrm{weak-}\lim_{j\in J}}\, P_{m+k}\psi_\varepsilon(b_j)\xi\\
      &={\mathrm{weak-}\lim_{j \in J}}\, \mathcal{F}_k(\psi_\varepsilon(b_j))\xi\\
      &={\mathrm{weak-}\lim_{j \in J}}\, \psi_\varepsilon(\mathcal{F}_k(b_j))\xi \text.
    \end{align*}

    Moreover, by the defining formula for $\psi_\varepsilon$, and since $\mathcal{F}_k$ has norm $1$, we conclude:
    \begin{align*}
      \norm{\psi_\varepsilon(\mathcal{F}_k(b_j))-\psi_\varepsilon(\mathcal{F}_k(b))}{\B(\Hilbert[S]\otimes\C^q)} &=\norm{\mathcal{F}_k(b_j-b)U^{-k}\psi_\varepsilon(U^k)}{\B(\Hilbert[S]\otimes\C^q)}\\
      &\leq\norm{b_j-b}{\A\rtimes_\alpha\Z^d}\,\norm{\psi_\varepsilon(U^k)}{\B(\Hilbert[S]\otimes\C^q)} \xrightarrow{j \in J} 0 \text.
    \end{align*}

    The preceding estimate implies that $\psi_\varepsilon(\mathcal{F}_k(b_j))\xi$ converges in norm to $\psi_\varepsilon(\mathcal{F}_k(b))\xi$. By uniqueness of the weak limit, we conclude:
    \begin{equation*}
      \mathcal{F}_k(T)\xi =\psi_\varepsilon(\mathcal{F}_k(b))\xi \text.
    \end{equation*}

    The vectors $\phi_m\otimes e_\ell\otimes c$ span a dense subspace, so these bounded operators agree. Since $k$ was arbitrary, $b\in\dom{\psi_\varepsilon}$ and $\psi_\varepsilon(b)=T$, as required.
  \end{proof}

  \subsection{Twisted commutators}

  We now check that our choice of a twist in the previous section does lead to the desired twisted commutators. We continue to use the notation of the previous section.

  \begin{lemma}
    For all $a \in\A_w$, and for all $k\in\Z^d$,
    \begin{equation*}
      \pi(a)\dom{\nabla_w}\subseteq\dom{\nabla_w} \qquad \text{ and }\qquad U^k\dom{\nabla_w}\subseteq\dom{\nabla_w} \text.
    \end{equation*}
  \end{lemma}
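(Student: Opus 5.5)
Since $\dom{\nabla_w}=\left(\dom{\Dirac_w}\cap\bigcap_{j=1}^d\dom{\partial_j}\right)\otimes\C^q$, and $\pi(a)$ and $U^k$ act as $\pi(a)\otimes I_q$ and $U^k\otimes I_q$, it suffices to check that $\pi(a)$ and $U^k$ each preserve $\dom{\Dirac_w}$ and each $\dom{\partial_j}$ on $\Hilbert[S]$. We then tensor with $\C^q$, using that the algebraic tensor product with the finite-dimensional $\C^q$ is just $q$ copies of the subspace.

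For $\pi(a)$ with $a\in\A_w$, Lemma \ref{Dw-Lipw-lemma} already gives $\pi(a)\dom{\Dirac_w}\subseteq\dom{\Dirac_w}$. For $\partial_j$, we use that $\pi(a)$ is diagonal: $(\pi(a)\xi)_z=\alpha^{-z}(a)\xi_z$. Hence
\begin{equation*}
  \sum_{z\in\Z^d}|z_j|^2\norm{\alpha^{-z}(a)\xi_z}{\Hilbert}^2\leq\norm{a}{\A}^2\sum_{z\in\Z^d}|z_j|^2\norm{\xi_z}{\Hilbert}^2<\infty,
\end{equation*}
since each $\alpha^{-z}$ is isometric. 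So $\pi(a)\dom{\partial_j}\subseteq\dom{\partial_j}$; this part holds for every $a\in\A$.

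For $U^k$, Expression \eqref{Uk-domDw-eq} shows that $U^k\dom{\Dirac_w}\subseteq\dom{\Dirac_w}$. This is where admissibility of $w$, namely $r_w(k)<\infty$, is used, together with $U^k$ preserving the condition $\xi_z\in\dom{\Dirac}$ componentwise. For $\partial_j$, the substitution $z\mapsto z+k$ gives
\begin{equation*}
  \sum_{z}|z_j|^2\norm{\xi_{z-k}}{\Hilbert}^2=\sum_z|z_j+k_j|^2\norm{\xi_z}{\Hilbert}^2\leq 2\sum_z|z_j|^2\norm{\xi_z}{\Hilbert}^2+2|k_j|^2\norm{\xi}{\Hilbert[S]}^2<\infty.
\end{equation*}
This is consistent with the relation $\partial_jU^k=U^k(\partial_j+k_j)$ noted above.

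Intersecting these facts and tensoring with $\C^q$ gives the lemma. There is no real obstacle here; the only substantive input is the domain invariance of $\dom{\Dirac_w}$, which has already been established in Lemma \ref{Dw-Lipw-lemma} and Expression \eqref{Uk-domDw-eq}. The rest is routine bookkeeping.
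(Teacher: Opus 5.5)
Your proposal is correct and follows essentially the same route as the paper's proof. You check that $\pi(a)$ and $U^k$ each preserve $\dom{\Dirac_w}$, citing Lemma \ref{Dw-Lipw-lemma} and Expression \eqref{Uk-domDw-eq}, and that they preserve each $\dom{\partial_j}$ via the same $2|z_j|^2+2|k_j|^2$ bound. You also spell out the diagonal estimate for $\pi(a)$ that the paper leaves as ``by construction''.
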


  \begin{proof}
    Let $a\in\A_w$. By Lemma \ref{Dw-Lipw-lemma}, we know that $\pi(a)\dom{\Dirac_w}\subseteq\dom{\Dirac_w}$. By construction, $\pi(a)\dom{\partial_j}\subseteq\dom{\partial_j}$ for all $j\in\{1,\ldots,d\}$. So $\pi(a)\dom{\nabla_w}\subseteq\dom{\nabla_w}$.

    Let now $k\in\Z^d$. By Expression \eqref{Uk-domDw-eq}, we have $U^k\dom{\Dirac_w}\subseteq\dom{\Dirac_w}$. On the other hand, for all $j\in\{1,\ldots,d\}$ and $\xi = (\xi_z)_{z\in\Z^d} \in\dom{\partial_j}$,
    \begin{align*}
      \sum_{z\in\Z^d}|z_j|^2\norm{(U^k\xi)_z}{\Hilbert}^2 &=\sum_{z\in\Z^d}|z_j+k_j|^2\norm{\xi_z}{\Hilbert}^2\\
      &\leq 2\norm{\partial_j\xi}{\Hilbert[S]}^2 +2|k_j|^2\norm{\xi}{\Hilbert[S]}^2<\infty\text.
    \end{align*}
    Thus $U^k\dom{\partial_j}\subseteq\dom{\partial_j}$, and hence $U^k\dom{\nabla_w}\subseteq\dom{\nabla_w}$.

    This completes our proof.
  \end{proof}

  Now, while the domain of the twist $\psi_\varepsilon$ is $\dom{\psi_\varepsilon}$, Definition \ref{metric-twisted-spectral-def} involves a (possibly smaller) domain where not only the twist is defined, but also the twisted commutators. We thus introduce
  \begin{equation*}
    \dom{\rho_\varepsilon} = \Bigl\{ b \in \dom{\psi_\varepsilon} : b \dom{\nabla_{w}}\subseteq\dom{\nabla_w}\text{, } [\nabla_{w,\varepsilon},b]_{\psi_\varepsilon} \text{ is bounded } \Bigr\} \text.
  \end{equation*}
  In particular, all Fourier polynomials with coefficients in $\A_w$ are contained in $\dom{\rho_{\varepsilon}}$, so this $\C$-subspace of $\A\rtimes_\alpha\Z^d$ is dense in norm.

  We then define
  \begin{equation}\label{rho-eq}
    \rho_\varepsilon \coloneqq \psi_{\varepsilon}\left|_{\dom{\rho_\varepsilon}}\right. \text.
  \end{equation}
  Since $M_0=0$, we have $1\in\dom{\rho_\varepsilon}$ and $\rho_\varepsilon(1)=1$.

  We now reach a first milestone in our construction.
  \begin{theorem}\label{twisted-spectral-triple-thm}
    Let $(\A,\Hilbert,\Dirac)$ be a spectral triple. Let $\alpha$ be an action of $\Z^d$ on $\A$, and let $w$ be an admissible weight such that $\A_w$ is dense in $\A$. Let $\Hilbert[S] \coloneqq \ell^2(\Z^d)\otimes\Hilbert$, and let $q$ be chosen so that there exist $d+2$ self-adjoint anticommuting unitaries $\gamma_0,\ldots,\gamma_{d+1}$ acting on $\C^q$.

    For all $\varepsilon > 0$, if $\rho_\varepsilon : \dom{\rho_\varepsilon} \to \B(\Hilbert[S]\otimes\C^q)$ is defined in Expressions \eqref{rho-eq}, and if $\nabla_{w,\varepsilon}$ is defined on $\dom{\nabla_w}$ by Expressions \eqref{nabla-def-eq} and \eqref{nabla-domain-eq}, then
    \begin{equation*}
      \left( \A\rtimes_\alpha\Z^d , \Hilbert[S]\otimes\C^q, \nabla_{w,\varepsilon}, \rho_\varepsilon \right)
    \end{equation*}
    is a twisted spectral triple such that, for all $a \in \A_w$ and for all $k \coloneqq (k_1,\ldots,k_d)\in \Z^d$,
    \begin{equation*}
      [\nabla_{w,\varepsilon}, \pi(a) U^k]_{\rho_\varepsilon} = [\Dirac_w,\pi(a)]U^k \otimes\gamma_0 + \sum_{j=1}^d k_j \pi(a)U^k\otimes\gamma_j \text.
    \end{equation*}
    In particular, $\Lip_w(a) = \norm{[\nabla_{w,\varepsilon},\pi(a)]}{\B(\Hilbert[S]\otimes\C^q)}$ and $\norm{[\nabla_{w,\varepsilon},U^k]_{\rho_\varepsilon}}{\B(\Hilbert[S]\otimes\C^q)} = \norm{k}{\R^d}$.
  \end{theorem}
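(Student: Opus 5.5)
We check the two conditions of Definition \ref{twisted-spectral-def} in turn and then compute the twisted commutators on Fourier monomials. Condition (1), self-adjointness with compact resolvent of $\nabla_{w,\varepsilon}$, is Proposition \ref{nabla-compact-res-prop}. Density of $\dom{\rho_\varepsilon}$ in $\A\rtimes_\alpha\Z^d$ is the claim recorded before Expression \eqref{rho-eq}. It holds once we know that every Fourier polynomial $\sum_{k\in F}\pi(a_k)U^k$ with $F$ finite and $a_k\in\A_w$ lies in $\dom{\rho_\varepsilon}$. Such polynomials are dense because $\A_w$ is dense in $\A$ and Fourier polynomials are dense in the crossed product. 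Linearity of $\psi_\varepsilon$ (Proposition \ref{psi-closed-graph-prop}) reduces the membership claim to monomials $\pi(a)U^k$. For these, we already know $\pi(a)U^k\in\dom{\psi_\varepsilon}$ with $\psi_\varepsilon(\pi(a)U^k)=\pi(a)(U^k+B_k\nabla_{w,\varepsilon}^{-1})$. The lemma preceding the definition of $\dom{\rho_\varepsilon}$ shows that $\pi(a)$ and $U^k$ preserve $\dom{\nabla_w}$. Hence $\pi(a)U^k\dom{\nabla_w}\subseteq\dom{\nabla_w}$.

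The central computation is on $\dom{\nabla_w}$. From the expansion obtained before Expression \eqref{psi-eq},
\begin{equation*}
  \nabla_{w,\varepsilon}\pi(a)U^k - \pi(a)U^k\nabla_{w,\varepsilon} = [\Dirac_w,\pi(a)]U^k\otimes\gamma_0 + \pi(a)B_k + \sum_{j=1}^d k_j\pi(a)U^k\otimes\gamma_j\text.
\end{equation*}
We subtract $\pi(a)B_k\nabla_{w,\varepsilon}^{-1}\nabla_{w,\varepsilon}$. Since $\nabla_{w,\varepsilon}^{-1}\nabla_{w,\varepsilon}$ is the identity on $\dom{\nabla_w}$ (Proposition \ref{nabla-compact-res-prop}), this term equals $\pi(a)B_k$ there, so the $B_k$ term cancels and we obtain exactly the stated formula for $[\nabla_{w,\varepsilon},\pi(a)U^k]_{\rho_\varepsilon}$. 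The one point needing care is that the expansion above used the Leibniz rule on $\dom{\nabla_w}$. This requires that $U^k$ maps $\dom{\nabla_w}$ into $\dom{\Dirac_w}$, where $[\Dirac_w,U^k]=M_k\Dirac_wU^k$ holds, and that $\pi(a)$ preserves the domain. Both follow from the preceding lemma and Expression \eqref{Uk-domDw-eq}, together with the relation $\partial_jU^k=U^k(\partial_j+k_j)$.

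Boundedness of the resulting operator follows from Lemma \ref{Dw-Lipw-lemma}, which gives $\norm{[\Dirac_w,\pi(a)]}{}=\Lip_w(a)<\infty$ for $a\in\A_w$, and from unitarity of $U^k$. This completes condition (2) on monomials, and hence on all Fourier polynomials by linearity, so the quadruple is a twisted spectral triple.

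For the two norm identities, set $k=0$. Then $B_0=0$ since $M_0=0$, so $\rho_\varepsilon(\pi(a))=\pi(a)$ and the twisted commutator is $[\Dirac_w,\pi(a)]\otimes\gamma_0$. Since $\gamma_0$ is a self-adjoint unitary, its norm equals $\norm{[\Dirac_w,\pi(a)]}{}=\Lip_w(a)$.

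Next set $a=1$. The twisted commutator is $U^k\otimes\Gamma_k$ with $\Gamma_k\coloneqq\sum_j k_j\gamma_j$, because $[\Dirac_w,1]=0$. The anticommutation relations give $\Gamma_k^2=\norm{k}{\R^d}^2 I_q$, so $\Gamma_k$ is self-adjoint with norm $\norm{k}{\R^d}$. As $U^k$ is unitary, $\norm{U^k\otimes\Gamma_k}{}=\norm{k}{\R^d}$.

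The main obstacle I expect is the domain bookkeeping in the cancellation step. One must justify that the twisted commutator is computed on $\dom{\nabla_w}$ and that $\pi(a)B_k$ there agrees with $\pi(a)B_k\nabla_{w,\varepsilon}^{-1}\nabla_{w,\varepsilon}$. The remaining steps are direct consequences of the earlier lemmas.
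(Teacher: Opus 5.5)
Your proposal is correct and follows essentially the same route as the paper. Self-adjointness and compact resolvent come from Proposition \ref{nabla-compact-res-prop}, the monomial formula is obtained by cancelling the defect $\pi(a)B_k$ against $\pi(a)B_k\nabla_{w,\varepsilon}^{-1}\nabla_{w,\varepsilon}$ on $\dom{\nabla_w}$, and density of Lipschitz Fourier polynomials gives the dense domain. The only difference is cosmetic: the paper checks $\pi(a)$ and $U^k$ separately and then invokes the monomial expansion, while you verify the monomial formula directly and specialize to $k=0$ and $a=1$ for the two norm identities.
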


  \begin{proof}
    Proposition \ref{nabla-compact-res-prop} proves that $\nabla_{w,\varepsilon}$ is self-adjoint with a compact resolvent for all $\varepsilon > 0$.

    Since $[\partial_j,\pi(a)] = 0$ and $\psi_\varepsilon(\pi(a)) = \pi(a)$ for all $a\in \A_w$, we immediately get
    \begin{align*}
      [\nabla_{w,\varepsilon},\pi(a)]_{\rho_\varepsilon} &= \nabla_{w,\varepsilon}\pi(a) - \psi_\varepsilon(\pi(a))\nabla_{w,\varepsilon} \\
      &=[\Dirac_w,\pi(a)]\otimes\gamma_0 \text,
    \end{align*}
    which is bounded since $a\in \A_w$. On the other hand:
    \begin{align*}
      [\nabla_{w,\varepsilon}, U^k]_{\rho_\varepsilon} &= \nabla_{w,\varepsilon} U^k - \psi_\varepsilon(U^k) \nabla_{w,\varepsilon} \\
      &= [\nabla_{w,\varepsilon}, U^k] + (U^k - \psi_{\varepsilon}(U^k))\nabla_{w,\varepsilon} \\
      &= \sum_{j=1}^d [\partial_j,U^k]\otimes\gamma_j + B_k + (-B_k\nabla_{w,\varepsilon}^{-1})\nabla_{w,\varepsilon} \\
      &= \sum_{j=1}^d [\partial_j,U^k]\otimes\gamma_j \\
      &= \sum_{j=1}^d k_j U^k\otimes\gamma_j \text,
    \end{align*}
    which is bounded as well. By the monomial formula above and linearity, we conclude that $\nabla_{w,\varepsilon}$ has bounded twisted commutators with the linear span of $\pi(a)U^k$ for $a\in\A_w$ and $k\in\Z^d$, which is dense.
  \end{proof}

  We will see that in fact, $\nabla_{w,\varepsilon}$ has bounded twisted commutators on a larger domain, as we now focus on the metric properties of our newly constructed twisted spectral triple.

  \subsection{The Lip-norm}

  We now focus on the Lipschitz seminorm associated with our twisted spectral triple. We continue using the notation in this section. Fix $\kappa>0$. The natural Lipschitz seminorm for $(\A\rtimes_\alpha\Z^d,\Hilbert[S]\otimes\C^q, \nabla_{w,\varepsilon},\psi_\varepsilon)$ is defined on the domain
  \begin{equation*}
    \dom{\Lip_{w,\varepsilon,\kappa}} \coloneqq \left\{ b\in\A\rtimes_\alpha\Z^d: b,b^\ast \in \dom{\rho_\varepsilon} \right\} \text.
  \end{equation*}
  For $b\in\dom{\Lip_{w,\varepsilon,\kappa}}$, we set
  \begin{equation*}
    \begin{split}
      \Lip_{w,\varepsilon,\kappa}(b) \coloneqq \max\Bigl\{& \norm{[\nabla_{w,\varepsilon},b]_{\rho_\varepsilon}} {\B(\Hilbert[S]\otimes\C^q)}, \kappa\norm{b-\rho_\varepsilon(b)} {\B(\Hilbert[S]\otimes\C^q)},\\
      & \norm{[\nabla_{w,\varepsilon},b^\ast]_{\rho_\varepsilon}} {\B(\Hilbert[S]\otimes\C^q)}, \kappa\norm{b^\ast-\rho_\varepsilon(b^\ast)} {\B(\Hilbert[S]\otimes\C^q)} \Bigr\}\text.
    \end{split}
  \end{equation*}
  If $b \in\A\rtimes_\alpha\Z^d \setminus\dom{\Lip_{w,\varepsilon,\kappa}}$, then we set $\Lip_{w,\varepsilon,\kappa}(b) = \infty$.

  We make a first remark. If $b\in\sa{\A\rtimes_\alpha\Z^d}\cap \dom{\Lip_{w,\varepsilon,\kappa}}$, then
  \begin{equation*}
    \Lip_{w,\varepsilon,\kappa}(b) = \max\left\{ \norm{[\nabla_{w,\varepsilon},b]_{\rho_\varepsilon}} {\B(\Hilbert[S]\otimes\C^q)}, \kappa\norm{b-\rho_\varepsilon(b)} {\B(\Hilbert[S]\otimes\C^q)} \right\}
  \end{equation*}
  as in \cite{Latremoliere26b}. Moreover, $\dom{\Lip_{w,\varepsilon,\kappa}}$ is closed for $*$ and dense, so $\dom{\Lip_{w,\varepsilon,\kappa}}\cap\sa{\A\rtimes_\alpha\Z^d}$ is dense as well in $\sa{\A\rtimes_\alpha\Z^d}$. For metric purposes, only the restriction of $\Lip_{w,\varepsilon,\kappa}$ to the self-adjoint part of its domain matters, but we see no reason to exclude general elements for now.

  The Lipschitz seminorm candidate must satisfy several conditions for our twisted spectral triple to be metric. First of all, it must be lower semicontinuous. Now, the restriction of $\Lip_{w,\varepsilon,\kappa}$ to $\domsa{\Lip_{w,\varepsilon,\kappa}}$ is lower semicontinuous by \cite[Theorem 4.9]{Latremoliere26b}, since the proof of lower semicontinuity is independent of the topology induced by $\Kantorovich{\Lip_{w,\varepsilon,\kappa}}$. The same argument applies here, but since technically, our seminorm is defined on a larger domain, we include it again.

  We first record the closed graph property which we will use.

  \begin{lemma}\label{twisted-commutator-closed-graph-lemma}
    If $(b_j)_{j\in J}$ is a net in $\dom{\rho_\varepsilon}$ such that
    \begin{equation*}
      b_j\xrightarrow{j \in J} b\text{ in norm}, \qquad \rho_\varepsilon(b_j) \xrightarrow{j \in J} T\text{ in the weak operator topology},
    \end{equation*}
    and, for some $M > 0$,
    \begin{equation*}
      \sup_{j\in J} \norm{[\nabla_{w,\varepsilon},b_j]_{\rho_\varepsilon}}{\B(\Hilbert[S]\otimes\C^q)} \leq M\text,
    \end{equation*}
    then $b\in \dom{\rho_\varepsilon}$ and $\rho_\varepsilon(b)=T$. Furthermore:
    \begin{equation*}
      \norm{[\nabla_{w,\varepsilon},b]_{\rho_\varepsilon}} {\B(\Hilbert[S]\otimes\C^q)} \leq M\text.
    \end{equation*}
  \end{lemma}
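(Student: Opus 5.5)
We split the statement into two parts. First, $b\in\dom{\psi_\varepsilon}$ with $\psi_\varepsilon(b)=T$: this is exactly Proposition \ref{psi-closed-graph-prop}, since $\dom{\rho_\varepsilon}\subseteq\dom{\psi_\varepsilon}$ and $\rho_\varepsilon$ is the restriction of $\psi_\varepsilon$. Second, we must show that $b\dom{\nabla_w}\subseteq\dom{\nabla_w}$ and that $[\nabla_{w,\varepsilon},b]_{\psi_\varepsilon}$ is bounded with norm at most $M$. Once both hold, $b\in\dom{\rho_\varepsilon}$ and $\rho_\varepsilon(b)=\psi_\varepsilon(b)=T$.

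For the second part, the plan is a weak-limit argument exploiting self-adjointness of $\nabla_{w,\varepsilon}$. Fix $\xi\in\dom{\nabla_w}$ and $\eta\in\dom{\nabla_w}$. For each $j$,
\begin{equation*}
  \inner{b_j\xi}{\nabla_{w,\varepsilon}\eta}{} = \inner{\nabla_{w,\varepsilon}b_j\xi}{\eta}{} = \inner{[\nabla_{w,\varepsilon},b_j]_{\rho_\varepsilon}\xi}{\eta}{} + \inner{\rho_\varepsilon(b_j)\nabla_{w,\varepsilon}\xi}{\eta}{}\text.
\end{equation*}
The left side converges to $\inner{b\xi}{\nabla_{w,\varepsilon}\eta}{}$ by norm convergence, and the last term converges to $\inner{T\nabla_{w,\varepsilon}\xi}{\eta}{}$ by WOT convergence. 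Hence the middle term converges for every such $\eta$. The operators $X_j\coloneqq[\nabla_{w,\varepsilon},b_j]_{\rho_\varepsilon}$ lie in the ball of radius $M$, which is WOT compact. Passing to a WOT-convergent subnet $X_j\to X$ with $\norm{X}{}\leq M$, we obtain
\begin{equation*}
  \inner{b\xi}{\nabla_{w,\varepsilon}\eta}{}=\inner{(X+T\nabla_{w,\varepsilon})\xi}{\eta}{}
\end{equation*}
for all $\eta\in\dom{\nabla_w}$. Thus $\eta\mapsto\inner{b\xi}{\nabla_{w,\varepsilon}\eta}{}$ is bounded, so $b\xi\in\dom{\nabla_{w,\varepsilon}^\ast}=\dom{\nabla_w}$ by self-adjointness (Proposition \ref{nabla-compact-res-prop}). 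Moreover $\nabla_{w,\varepsilon}b\xi=X\xi+T\nabla_{w,\varepsilon}\xi$.

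With $T=\psi_\varepsilon(b)$ from the first part, this gives $[\nabla_{w,\varepsilon},b]_{\psi_\varepsilon}\xi=X\xi$ on $\dom{\nabla_w}$. The twisted commutator is therefore bounded, it extends to $X$, and its norm is at most $M$. Note that $X$ is determined by this identity on the dense set $\dom{\nabla_w}$, so the choice of subnet is immaterial.

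The main obstacle is the domain inclusion $b\dom{\nabla_w}\subseteq\dom{\nabla_w}$: we cannot approximate $\nabla_{w,\varepsilon}b\xi$ directly and must pass through the adjoint. This is exactly where self-adjointness of $\nabla_{w,\varepsilon}$ on the specified domain $\dom{\nabla_w}$ is essential. Only WOT convergence of $\rho_\varepsilon(b_j)$ is available, so every limit must be taken in inner products against the fixed vectors $\nabla_{w,\varepsilon}\xi$ and $\eta$, as done above.
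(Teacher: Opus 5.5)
Your proof is correct and follows the paper's argument. The twist part is exactly Proposition \ref{psi-closed-graph-prop}, and the domain and norm claims come from extracting a WOT-convergent subnet of the twisted commutators (all of norm at most $M$) and passing to the limit in $\nabla_{w,\varepsilon}b_j\xi=\rho_\varepsilon(b_j)\nabla_{w,\varepsilon}\xi+[\nabla_{w,\varepsilon},b_j]_{\rho_\varepsilon}\xi$. The one difference is how $b\xi\in\dom{\nabla_w}$ is obtained: the paper uses weak closedness of the norm-closed, convex graph of $\nabla_{w,\varepsilon}$, which needs only closedness, whereas you use the dual fact $\dom{\nabla_{w,\varepsilon}^\ast}=\dom{\nabla_w}$. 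Self-adjointness is therefore convenient here but not essential, contrary to your closing remark.
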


  \begin{proof}
    Since $\rho_\varepsilon(b_j)=\psi_\varepsilon(b_j)$ for all $j\in J$, Proposition \ref{psi-closed-graph-prop} gives $b\in\dom{\psi_\varepsilon}$ and $\psi_\varepsilon(b)=T$.

    The bounded net
    \begin{equation*}
      \left( [\nabla_{w,\varepsilon},b_j]_{\rho_\varepsilon} \right)_{j\in J}
    \end{equation*}
    has a subnet converging in the weak operator topology to some $S\in\B(\Hilbert[S]\otimes\C^q)$ with $\norm{S}{}\leq M$, since closed, norm bounded subsets of $\B(\Hilbert[S]\otimes\C^q)$ are WOT compact (by Banach-Alaoglu theorem).

    Let $\xi\in\dom{\nabla_w}$. Along this subnet, $b_j\xi$ converges in norm to $b\xi$, while
    \begin{equation*}
      \nabla_{w,\varepsilon}b_j\xi = \rho_\varepsilon(b_j)\nabla_{w,\varepsilon}\xi + [\nabla_{w,\varepsilon},b_j]_{\rho_\varepsilon}\xi
    \end{equation*}
    converges weakly to
    \begin{equation*}
      T\nabla_{w,\varepsilon}\xi+S\xi\text.
    \end{equation*}
    The graph of the closed operator $\nabla_{w,\varepsilon}$ is a norm-closed linear subspace, and is therefore weakly closed. Consequently, $b\xi\in\dom{\nabla_w}$ and
    \begin{equation*}
      \nabla_{w,\varepsilon}b\xi = T\nabla_{w,\varepsilon}\xi+S\xi = \psi_\varepsilon(b)\nabla_{w,\varepsilon}\xi+S\xi\text.
    \end{equation*}
    Thus $b\in\dom{\rho_\varepsilon}$ and $\rho_\varepsilon(b)=T$, while
    \begin{equation*}
      [\nabla_{w,\varepsilon},b]_{\rho_\varepsilon}=S\text.
    \end{equation*}
    The estimate follows from $\norm{S}{\B(\Hilbert[S]\otimes\C^q)}\leq M$.
  \end{proof}

  \begin{proposition}\label{Lip-lsc-prop}
    The seminorm $\Lip_{w,\varepsilon,\kappa}$ is lower semicontinuous on $\A\rtimes_\alpha\Z^d$.
  \end{proposition}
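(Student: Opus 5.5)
To prove Proposition \ref{Lip-lsc-prop}, I would show that for every $t\geq 0$ the sublevel set
\begin{equation*}
  \left\{ b\in\A\rtimes_\alpha\Z^d : \Lip_{w,\varepsilon,\kappa}(b)\leq t \right\}
\end{equation*}
is norm closed. This is equivalent to lower semicontinuity for an extended-valued function. So let $(b_j)_{j\in J}$ be a net in this set converging in norm to some $b$. The aim is to show $b\in\dom{\Lip_{w,\varepsilon,\kappa}}$ and $\Lip_{w,\varepsilon,\kappa}(b)\leq t$. The main tool is Lemma \ref{twisted-commutator-closed-graph-lemma}, applied once to the net $(b_j)$ and once to the net $(b_j^\ast)$. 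The latter also converges in norm, to $b^\ast$.

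The first step is to produce a WOT limit for the twists. Since $\kappa\norm{b_j-\rho_\varepsilon(b_j)}{}\leq t$, we have
\begin{equation*}
  \norm{\rho_\varepsilon(b_j)}{}\leq \norm{b_j}{}+t/\kappa ,
\end{equation*}
and the right-hand side is eventually bounded because $(b_j)$ converges in norm. Hence $(\rho_\varepsilon(b_j))_{j\in J}$ is eventually norm bounded. By WOT compactness of closed balls (Banach--Alaoglu), there is a subnet along which $\rho_\varepsilon(b_j)\to T$ in WOT. Passing to a further subnet, we may also assume $\rho_\varepsilon(b_j^\ast)\to T'$ in WOT.

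Along this subnet, the twisted commutators are bounded by $t$, so Lemma \ref{twisted-commutator-closed-graph-lemma} applies. It gives $b\in\dom{\rho_\varepsilon}$, $\rho_\varepsilon(b)=T$ and $\norm{[\nabla_{w,\varepsilon},b]_{\rho_\varepsilon}}{}\leq t$. In the same way it gives $b^\ast\in\dom{\rho_\varepsilon}$, $\rho_\varepsilon(b^\ast)=T'$ and the corresponding commutator bound for $b^\ast$. Thus $b\in\dom{\Lip_{w,\varepsilon,\kappa}}$.

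It remains to bound the displacement terms. Along the subnet, $b_j-\rho_\varepsilon(b_j)$ converges in WOT to $b-\rho_\varepsilon(b)$, because norm convergence of $b_j$ implies WOT convergence. The operator norm is WOT lower semicontinuous, since the closed ball is WOT closed. Therefore
\begin{equation*}
  \kappa\norm{b-\rho_\varepsilon(b)}{}\leq \liminf_{j\in J} \kappa\norm{b_j-\rho_\varepsilon(b_j)}{}\leq t ,
\end{equation*}
and the same argument applies to $b^\ast$. Taking the maximum of the four bounds yields $\Lip_{w,\varepsilon,\kappa}(b)\leq t$.

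The main point needing care is the first step: obtaining boundedness of $\rho_\varepsilon(b_j)$ in order to extract a WOT-convergent subnet. This is exactly where the displacement term in $\Lip_{w,\varepsilon,\kappa}$ with $\kappa>0$ is used; without it, $\psi_\varepsilon$ could be unbounded and Lemma \ref{twisted-commutator-closed-graph-lemma} would have no limit $T$ to work with. Everything else is a routine application of that lemma and of WOT lower semicontinuity of the norm.
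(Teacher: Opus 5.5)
Your proposal is correct and follows essentially the same route as the paper: bound $\rho_\varepsilon(b_j)$ using the displacement term, extract WOT limits for both $\rho_\varepsilon(b_j)$ and $\rho_\varepsilon(b_j^\ast)$, apply Lemma \ref{twisted-commutator-closed-graph-lemma} to each, and finish with WOT lower semicontinuity of the operator norm. The only difference is cosmetic. You work with nets and extract subnets directly from Banach--Alaoglu. The paper instead uses sequences, and extracts subsequences using metrizability of the WOT on bounded sets, which relies on separability of $\Hilbert[S]\otimes\C^q$; your version does not need that separability.
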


  \begin{proof}
    Let $(b_n)_{n\in\N}$ be a sequence in $\dom{\Lip_{w,\varepsilon,\kappa}}$ converging in norm to $b\in\A\rtimes_\alpha\Z^d$, and assume that $\Lip_{w,\varepsilon,\kappa}(b_n)\leq1$ for all $n\in\N$.

    Since $(b_n)_{n\in\N}$ converges in norm, there exists $M>0$ such that $\norm{b_n}{\A\rtimes_\alpha\Z^d}\leq M$ for all $n\in\N$. Hence
    \begin{equation*}
      \norm{\rho_\varepsilon(b_n)}{\B(\Hilbert[S]\otimes\C^q)} \leq \norm{b_n-\rho_\varepsilon(b_n)}{\B(\Hilbert[S]\otimes\C^q)} + \norm{b_n}{\A\rtimes_\alpha\Z^d} \leq\frac{1}{\kappa}+M\text.
    \end{equation*}
    The same estimate holds for $(\rho_\varepsilon(b_n^*))_{n\in\N}$.

    Closed balls in $\B(\Hilbert[S]\otimes\C^q)$ are compact for the weak operator topology, which is metrizable on bounded subsets since $\Hilbert[S]\otimes\C^q$ is separable, as $\Dirac$ has compact resolvent. Therefore, there exists a strictly increasing function $f : \N\to\N$, and $T,T' \in \B(\Hilbert[S]\otimes\C^q)$ such that
    \begin{equation*}
      \mathrm{WOT-}\lim_{n\to\infty} \rho_\varepsilon(b_{f(n)}) = T \qquad\text{and}\qquad \mathrm{WOT-}\lim_{n\to\infty} \rho_\varepsilon(b_{f(n)}^*) = T' \text.
    \end{equation*}

    Moreover,
    \begin{equation*}
      \sup_{n\in\N} \norm{[\nabla_{w,\varepsilon},b_n]_{\rho_\varepsilon}}{\B(\Hilbert[S]\otimes\C^q)} \leq 1
    \end{equation*}
    and
    \begin{equation*}
      \sup_{n\in\N}\norm{[\nabla_{w,\varepsilon},b_n^*]_{\rho_\varepsilon}}{\B(\Hilbert[S]\otimes\C^q)} \leq 1\text.
    \end{equation*}
    Lemma \ref{twisted-commutator-closed-graph-lemma}, applied first to $(b_{f(n)})_{n\in\N}$ and then to $(b_{f(n)}^*)_{n\in\N}$, shows that $b,b^*\in\dom{\rho_\varepsilon}$, and
    \begin{equation*}
      \rho_\varepsilon(b)=T \text{, } \qquad \rho_\varepsilon(b^*)=T' \text{, }
    \end{equation*}
    while
    \begin{equation*}
      \norm{[\nabla_{w,\varepsilon},b]_{\rho_\varepsilon}}{\B(\Hilbert[S]\otimes\C^q)} \leq 1\text{, } \qquad \norm{[\nabla_{w,\varepsilon},b^*]_{\rho_\varepsilon}}{\B(\Hilbert[S]\otimes\C^q)} \leq 1\text.
    \end{equation*}
    In particular, $b\in\dom{\Lip_{w,\varepsilon,\kappa}}$.

    Moreover,
    \begin{equation*}
      \mathrm{WOT-}\lim_{n\to\infty} b_{f(n)}-\rho_\varepsilon(b_{f(n)}) = b-\rho_\varepsilon(b)
    \end{equation*}
    and
    \begin{equation*}
      \mathrm{WOT-}\lim_{n\to\infty} b^\ast_{f(n)}-\rho_\varepsilon(b_{f(n)}^\ast) = b^\ast-\rho_\varepsilon(b^\ast)\text.
    \end{equation*}
    Lower semicontinuity of the operator norm for the weak operator topology therefore implies that
    \begin{equation*}
      \kappa\norm{b-\rho_\varepsilon(b)}{\B(\Hilbert[S]\otimes\C^q)} \leq 1
    \end{equation*}
    and
    \begin{equation*}
      \kappa\norm{b^*-\rho_\varepsilon(b^*)}{\B(\Hilbert[S]\otimes\C^q)} \leq 1 \text.
    \end{equation*}
    Hence $\Lip_{w,\varepsilon,\kappa}(b)\leq1$, as desired.
  \end{proof}

  \begin{remark}
    The map $\psi_\varepsilon$ and the twist $\rho_\varepsilon$ have different domains. The maximal extension $\psi_\varepsilon$ has a norm-to-weak-operator closed graph, but its domain may contain elements whose twisted commutators are not bounded. The actual twist is therefore its restriction $\rho_\varepsilon$ to $\dom{\rho_\varepsilon}$.

    The unrestricted graph of this restriction need not be closed from the norm to the weak operator topology any longer, since $\dom{\rho_\varepsilon}$ need not be norm closed. Lemma \ref{twisted-commutator-closed-graph-lemma} proves instead that its graph is closed on every set on which the twisted commutator is uniformly bounded. This is the closedness property required for a metric twisted spectral triple and used in the preceding proof. Notice that even the identity map, when restricted to a proper Lipschitz domain, need not have a closed graph.
  \end{remark}

  \begin{corollary}
    The domain $\dom{\Lip_{w,\varepsilon,\kappa}}$ is a Banach space for the norm
    \begin{equation*}
      b\in\dom{\Lip_{w,\varepsilon,\kappa}} \longmapsto \max\left\{\norm{b}{\A\rtimes_\alpha\Z^d}, \Lip_{w,\varepsilon,\kappa}(b)\right\}\text.
    \end{equation*}
  \end{corollary}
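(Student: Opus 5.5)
The plan is the standard argument that a lower semicontinuous seminorm yields a Banach space when added to the norm. First I will check that $N(b)\coloneqq\max\{\norm{b}{\A\rtimes_\alpha\Z^d},\Lip_{w,\varepsilon,\kappa}(b)\}$ is a norm on $\dom{\Lip_{w,\varepsilon,\kappa}}$. The domain is a vector space: $\dom{\rho_\varepsilon}$ is a subspace, because $\psi_\varepsilon$ is linear by Proposition \ref{psi-closed-graph-prop}, invariance of $\dom{\nabla_w}$ is preserved under linear combinations, and twisted commutators depend linearly on $b$. The condition $b,b^\ast\in\dom{\rho_\varepsilon}$ is then stable under linear combinations since $*$ is conjugate linear. $\Lip_{w,\varepsilon,\kappa}$ is a seminorm on this space, being a maximum of four seminorms; for the terms involving $b^\ast$ one uses $|\overline{x}|=|x|$. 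Adding the C*-norm makes $N$ a norm.

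For completeness, let $(b_n)_{n\in\N}$ be $N$-Cauchy. Then it is Cauchy in $\A\rtimes_\alpha\Z^d$, so it converges in norm to some $b$, and $\sup_n\Lip_{w,\varepsilon,\kappa}(b_n)<\infty$. By Proposition \ref{Lip-lsc-prop} and homogeneity, the set $\{c:\Lip_{w,\varepsilon,\kappa}(c)\leq t\}$ is norm closed for every $t\geq 0$. Hence $\Lip_{w,\varepsilon,\kappa}(b)\leq\liminf_n\Lip_{w,\varepsilon,\kappa}(b_n)<\infty$, and so $b\in\dom{\Lip_{w,\varepsilon,\kappa}}$.

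It remains to show $N(b_n-b)\to0$. Fix $\delta>0$ and choose $n_0$ such that $\Lip_{w,\varepsilon,\kappa}(b_n-b_m)\leq\delta$ for all $n,m\geq n_0$. Fix $n\geq n_0$. As $m\to\infty$, $b_n-b_m\to b_n-b$ in norm. Applying lower semicontinuity again (Proposition \ref{Lip-lsc-prop}, which covers the full non-self-adjoint domain) gives $\Lip_{w,\varepsilon,\kappa}(b_n-b)\leq\delta$. Since also $\norm{b_n-b}{\A\rtimes_\alpha\Z^d}\to0$, we conclude $N(b_n-b)\to 0$.

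The only delicate point is that lower semicontinuity must hold on all of $\A\rtimes_\alpha\Z^d$, not merely on self-adjoint elements. Proposition \ref{Lip-lsc-prop} was stated and proved exactly in that generality, handling $b$ and $b^\ast$ simultaneously. Its sequential form suffices, because the norm topology is metrizable.
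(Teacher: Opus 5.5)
Your proof is correct and follows the same route as the paper, which proves this corollary in one line by invoking the lower semicontinuity of $\Lip_{w,\varepsilon,\kappa}$ (Proposition \ref{Lip-lsc-prop}) together with the standard argument. You have written that argument out in full, including the checks that the domain is a vector space and that $\Lip_{w,\varepsilon,\kappa}$ is a seminorm on it, and you correctly note that Proposition \ref{Lip-lsc-prop} covers the non-self-adjoint domain.
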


  \begin{proof}
    This follows immediately from the lower semicontinuity of $\Lip_{w,\varepsilon,\kappa}$, see for instance \cite{Latremoliere16c}.
  \end{proof}

  \subsection{The \MongeKant}

  We now develop some tools which, in addition to contributing to the proof that our twisted spectral triple is metric, are also of general usefulness, in particular for establishing later convergence results. The methods of this section are well-understood, and our construction was in part designed to ensure they would indeed apply. The core idea is to use Fourier analysis on the C*-crossed-product, via the dual action $\beta$ of the Lie group $\T^d$.

  We begin with a simple but important observation: the domain of our Lipschitz seminorm is invariant for the dual action.

  \begin{lemma}
    For all $z\in\T^d$, we have
    \begin{equation*}
      \beta^z(\dom{\Lip_{w,\varepsilon,\kappa}}) = \dom{\Lip_{w,\varepsilon,\kappa}}
    \end{equation*}
    and
    \begin{equation*}
      \Lip_{w,\varepsilon,\kappa}\circ\beta^z = \Lip_{w,\varepsilon,\kappa}\text.
    \end{equation*}
  \end{lemma}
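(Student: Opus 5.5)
The plan is to show that conjugation by the unitaries $V^z$ intertwines everything in sight: the twist $\psi_\varepsilon$, the domain $\dom{\nabla_w}$, and the operator $\nabla_{w,\varepsilon}$. Concretely, I will prove the three relations
\begin{equation*}
  \beta^z(\dom{\psi_\varepsilon})=\dom{\psi_\varepsilon},\qquad
  \psi_\varepsilon(\beta^z(b))=\beta^z(\psi_\varepsilon(b)),\qquad
  [\nabla_{w,\varepsilon},\beta^z(b)]_{\psi_\varepsilon}=V^z[\nabla_{w,\varepsilon},b]_{\psi_\varepsilon}(V^z)^\ast .
\end{equation*}
Since $\mathrm{Ad}_{V^z}$ is isometric on $\B(\Hilbert[S]\otimes\C^q)$, each of the four terms in the maximum defining $\Lip_{w,\varepsilon,\kappa}$ is then preserved. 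Moreover $\beta^z$ is a $\ast$-automorphism, so $\beta^z(b)^\ast=\beta^z(b^\ast)$ and the adjoint terms are handled the same way. This gives $\beta^z(\dom{\Lip_{w,\varepsilon,\kappa}})\subseteq\dom{\Lip_{w,\varepsilon,\kappa}}$ and $\Lip_{w,\varepsilon,\kappa}\circ\beta^z\leq\Lip_{w,\varepsilon,\kappa}$. Applying the same inclusion and inequality to $\beta^{\bar z}=(\beta^z)^{-1}$ yields equality in both.

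The first step is the Fourier computation. The group $\T^d$ is abelian, so $\beta^{z'}\beta^z=\beta^z\beta^{z'}$, and the definition of $\mathcal{F}_k$ gives
\begin{equation*}
  \mathcal{F}_k(\beta^z(T))=\beta^z(\mathcal{F}_k(T))=z^k\mathcal{F}_k(T)
\end{equation*}
for every bounded $T$. Now let $b\in\dom{\psi_\varepsilon}$ with $T=\psi_\varepsilon(b)$. Using the defining condition,
\begin{equation*}
  \mathcal{F}_k(\beta^z(T))=z^k\mathcal{F}_k(b)U^{-k}\psi_\varepsilon(U^k)=\mathcal{F}_k(\beta^z(b))U^{-k}\psi_\varepsilon(U^k).
\end{equation*}
This says precisely that $\beta^z(b)\in\dom{\psi_\varepsilon}$ with $\psi_\varepsilon(\beta^z(b))=\beta^z(\psi_\varepsilon(b))$, by the uniqueness established before Proposition \ref{psi-closed-graph-prop}.

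The second step concerns the unbounded operator, and this is where some care with domains is needed. The unitary $V^z$ acts diagonally in the $\Z^d$-coordinate, multiplying the $m$-th component by a scalar of modulus one. Hence it preserves $\dom{\Dirac_w}$, each $\dom{\partial_j}$, and therefore $\dom{\nabla_w}$; the same holds for $(V^z)^\ast=V^{\bar z}$. Being diagonal, it also commutes with $\Dirac_w\otimes\gamma_0$, with each $\partial_j\otimes\gamma_j$, and with $1\otimes\gamma_{d+1}$, so $V^z\nabla_{w,\varepsilon}=\nabla_{w,\varepsilon}V^z$ on $\dom{\nabla_w}$. It follows that if $b\,\dom{\nabla_w}\subseteq\dom{\nabla_w}$, then
\begin{equation*}
  \beta^z(b)\,\dom{\nabla_w}=V^zb(V^z)^\ast\dom{\nabla_w}\subseteq\dom{\nabla_w}.
\end{equation*}
On $\dom{\nabla_w}$ we then compute
\begin{equation*}
  \nabla_{w,\varepsilon}V^zb(V^z)^\ast-V^z\psi_\varepsilon(b)(V^z)^\ast\nabla_{w,\varepsilon}
  =V^z\bigl(\nabla_{w,\varepsilon}b-\psi_\varepsilon(b)\nabla_{w,\varepsilon}\bigr)(V^z)^\ast,
\end{equation*}
which is bounded exactly when $[\nabla_{w,\varepsilon},b]_{\psi_\varepsilon}$ is, and has the same norm. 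This gives $\beta^z(\dom{\rho_\varepsilon})\subseteq\dom{\rho_\varepsilon}$ together with the norm equality.

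Finally, $\beta^z(b)-\rho_\varepsilon(\beta^z(b))=V^z\bigl(b-\rho_\varepsilon(b)\bigr)(V^z)^\ast$, so the displacement term has the same norm as well. Applying all of this to $b^\ast$ completes the argument. The only genuine subtlety I expect is the domain bookkeeping for the commutator with $V^z$; everything else follows formally from the equivariance of $\psi_\varepsilon$ established in the first step.
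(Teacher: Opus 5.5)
Your proposal is correct and follows essentially the same route as the paper. The paper also shows that $V^z$ commutes with $\nabla_{w,\varepsilon}$ and preserves $\dom{\nabla_w}$, and derives $\psi_\varepsilon(\beta^z(b))=\beta^z(\psi_\varepsilon(b))$ from $\mathcal{F}_k\circ\beta^z=z^k\mathcal{F}_k$ together with uniqueness. It then conjugates both the twisted commutator and the displacement term by $V^z$, and passes to $b^\ast$ and to the inverse of $z$ to obtain equality.
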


  \begin{proof}
    The unitaries $V^z$, for all $z\in\T^d$, commute with $\Dirac_w$ and $\partial_1,\ldots,\partial_d$, while preserving all their domains. Consequently, they commute with $\nabla_{w,\varepsilon}$ and preserve $\dom{\nabla_w}$.

    Let $b\in\dom{\Lip_{w,\varepsilon,\kappa}}$. In particular, $b\dom{\nabla_w}\subseteq\dom{\nabla_w}$. We then observe that
    \begin{align*}
      \beta^z(b)\dom{\nabla_w} &=V^zb(V^z)^*\dom{\nabla_w}\\
      &=V^zb\dom{\nabla_w}\\
      &\subseteq V^z\dom{\nabla_w} =\dom{\nabla_w}\text.
    \end{align*}
    Since $\beta^z(b)^*=\beta^z(b^*)$, the same argument applies to $b^*$.

    We next show that $\psi_\varepsilon$ commutes with the dual action. For all $k\in\Z^d$, we have
    \begin{equation*}
      \mathcal{F}_k(\beta^z(b))=z^k\mathcal{F}_k(b)\text.
    \end{equation*}
    Therefore,
    \begin{align*}
      \mathcal{F}_k\bigl(\beta^z(\psi_\varepsilon(b))\bigr) &=z^k\mathcal{F}_k(\psi_\varepsilon(b))\\
      &=z^k\psi_\varepsilon(\mathcal{F}_k(b))\\
      &=\psi_\varepsilon(\mathcal{F}_k(\beta^z(b)))\text.
    \end{align*}
    By the definition of $\dom{\psi_\varepsilon}$ and the uniqueness in this definition, $\beta^z(b)\in\dom{\psi_\varepsilon}$ and
    \begin{equation*}
      \psi_\varepsilon(\beta^z(b)) = \beta^z(\psi_\varepsilon(b))\text.
    \end{equation*}
    The same conclusion holds for $b^*$.

    Thus, on $\dom{\nabla_w}$,
    \begin{align*}
      [\nabla_{w,\varepsilon},\beta^z(b)]_{\psi_\varepsilon} &=\nabla_{w,\varepsilon}V^zb(V^z)^* -\psi_\varepsilon(V^zb(V^z)^*)\nabla_{w,\varepsilon}\\
      &=V^z\left( \nabla_{w,\varepsilon}b -\psi_\varepsilon(b)\nabla_{w,\varepsilon} \right)(V^z)^*\\
      &=V^z[\nabla_{w,\varepsilon},b]_{\psi_\varepsilon} (V^z)^*\text.
    \end{align*}
    Consequently,
    \begin{equation*}
      \norm{[\nabla_{w,\varepsilon},\beta^z(b)]_{\psi_\varepsilon}}{\B(\Hilbert[S]\otimes\C^q)} = \norm{[\nabla_{w,\varepsilon},b]_{\psi_\varepsilon}}{\B(\Hilbert[S]\otimes\C^q)}\text.
    \end{equation*}
    Moreover,
    \begin{align*}
      \norm{\beta^z(b)-\psi_\varepsilon(\beta^z(b))}{\B(\Hilbert[S]\otimes\C^q)} &=\norm{V^z(b-\psi_\varepsilon(b))(V^z)^*}{\B(\Hilbert[S]\otimes\C^q)}\\
      &=\norm{b-\psi_\varepsilon(b)}{\B(\Hilbert[S]\otimes\C^q)}\text.
    \end{align*}
    The same equalities hold with $b^*$ in place of $b$. Hence $\beta^z(b)\in\dom{\Lip_{w,\varepsilon,\kappa}}$ and
    \begin{equation*}
      \Lip_{w,\varepsilon,\kappa}(\beta^z(b)) = \Lip_{w,\varepsilon,\kappa}(b)\text.
    \end{equation*}
    Applying the same argument to the inverse of $z$ gives
    \begin{equation*}
      \beta^z(\dom{\Lip_{w,\varepsilon,\kappa}}) = \dom{\Lip_{w,\varepsilon,\kappa}}\text.
    \end{equation*}
    This completes our proof.
  \end{proof}

  We also introduce a length function on $\T^d$. For all $z = (\exp(i t_1), \ldots, \exp(i t_d)) \in \T^d$ with $t_1,\ldots,t_d \in (-\pi,\pi]$, we write $\ell(z) = \norm{(t_1,\ldots,t_d)}{\R^d}$, where $\norm{\cdot}{\R^d}$ is the Euclidean norm.

  Next, we introduce the usual integrated form of our dual action.
  \begin{notation}
    Let $\mathds{P}$ be the Haar probability measure on the $d$-torus $\T^d$. For all $f \in L^1(\T^d,\mathds{P})$, and for all $b \in \A\rtimes_\alpha\Z^d$, we define:
    \begin{equation*}
      \beta^f(b) \coloneqq \int_{z\in\T^d} f(z) \beta^z(b) \, d\mathds{P}(z)\text;
    \end{equation*}
    as is easily checked,
    \begin{equation*}
      \norm{\beta^f(b)}{\A\rtimes_\alpha\Z^d}\leq \norm{f}{L^1(\T^d,\mathds{P})} \norm{b}{\A\rtimes_\alpha\Z^d}
    \end{equation*}
    and $\beta^f$ is a linear endomorphism of $\A\rtimes_\alpha\Z^d$.
  \end{notation}

  We now prove a mean value inequality for our Lipschitz seminorm and the dual action. We begin by checking that $\beta^f$ is a Lipschitz linear morphism:

  \begin{lemma}\label{beta-Lip-lemma}
    If $f \in L^1(\T^d,\mathds{P})$, then for all $b \in \dom{\Lip_{w,\varepsilon,\kappa}}$,
    \begin{equation*}
      \Lip_{w,\varepsilon,\kappa}(\beta^f(b)) \leq \Lip_{w,\varepsilon,\kappa}(b) \norm{f}{L^1(\T^d,\mathds{P})} \text.
    \end{equation*}
  \end{lemma}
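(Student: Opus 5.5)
The plan is to show that $\beta^f(b)$ lies in $\dom{\Lip_{w,\varepsilon,\kappa}}$ with the explicit twist
\begin{equation*}
  \psi_\varepsilon(\beta^f(b))=\int_{\T^d} f(z)\beta^z(\psi_\varepsilon(b))\,d\mathds{P}(z)
\end{equation*}
(a weak integral) and the explicit twisted commutator
\begin{equation*}
  [\nabla_{w,\varepsilon},\beta^f(b)]_{\rho_\varepsilon}=\int_{\T^d} f(z)V^z[\nabla_{w,\varepsilon},b]_{\rho_\varepsilon}(V^z)^*\,d\mathds{P}(z).
\end{equation*}
The estimate then follows from the triangle inequality for weak integrals together with the previous lemma, which gives $\Lip_{w,\varepsilon,\kappa}\circ\beta^z=\Lip_{w,\varepsilon,\kappa}$ and the unitary conjugation identities for both terms. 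Since $\beta^f(b)^*=\beta^{\overline{f}}(b^*)$ and $\norm{\overline f}{L^1}=\norm{f}{L^1}$, it suffices to treat $b$; the same argument applied to $b^*$ handles the adjoint terms.

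The first step is to check $\beta^f(b)\in\dom{\psi_\varepsilon}$. Let $T$ be the weak integral of $f(z)\beta^z(\psi_\varepsilon(b))$; it is bounded with $\norm{T}{}\leq\norm{f}{L^1}\norm{\psi_\varepsilon(b)}{}$. Using $\mathcal{F}_k(\beta^z(S))=z^k\mathcal{F}_k(S)$ and Fubini for weak integrals (matrix coefficients are integrable), we get $\mathcal{F}_k(T)=\hat f(k)\,\mathcal{F}_k(\psi_\varepsilon(b))$, where $\hat f(k)=\int f(z)z^k\,d\mathds{P}$. Likewise $\mathcal{F}_k(\beta^f(b))=\hat f(k)\mathcal{F}_k(b)$. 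Equation \eqref{psi-Fourier-compatibility} then gives $\mathcal{F}_k(T)=\hat f(k)\mathcal{F}_k(b)U^{-k}\psi_\varepsilon(U^k)=\psi_\varepsilon(\mathcal{F}_k(\beta^f(b)))$, so uniqueness yields $\psi_\varepsilon(\beta^f(b))=T$. Applying the triangle inequality to $\beta^f(b)-T=\int f(z)V^z(b-\psi_\varepsilon(b))(V^z)^*\,d\mathds{P}$ then bounds the displacement term by $\norm{f}{L^1}\norm{b-\rho_\varepsilon(b)}{}$.

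The main obstacle is the domain condition: showing $\beta^f(b)\dom{\nabla_w}\subseteq\dom{\nabla_w}$ together with the commutator formula. Fix $\xi\in\dom{\nabla_w}$. Since $V^z$ commutes with $\nabla_{w,\varepsilon}$ and preserves its domain, for each $z$ we have
\begin{equation*}
  \nabla_{w,\varepsilon}\beta^z(b)\xi=\beta^z(\psi_\varepsilon(b))\nabla_{w,\varepsilon}\xi+V^z[\nabla_{w,\varepsilon},b]_{\rho_\varepsilon}(V^z)^*\xi,
\end{equation*}
which is bounded in $z$ and weakly measurable. I would pair against $\eta\in\dom{\nabla_w}$ and use self-adjointness:
\begin{equation*}
  \inner{\beta^f(b)\xi}{\nabla_{w,\varepsilon}\eta}{}=\int f(z)\inner{\nabla_{w,\varepsilon}\beta^z(b)\xi}{\eta}{}\,d\mathds{P}=\inner{T\nabla_{w,\varepsilon}\xi+S\xi}{\eta}{},
\end{equation*}
where $S$ is the weak integral of $f(z)V^z[\nabla_{w,\varepsilon},b]_{\rho_\varepsilon}(V^z)^*$. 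This identity shows $\beta^f(b)\xi\in\dom{\nabla_{w,\varepsilon}^*}=\dom{\nabla_w}$ and $\nabla_{w,\varepsilon}\beta^f(b)\xi=T\nabla_{w,\varepsilon}\xi+S\xi$. Hence $[\nabla_{w,\varepsilon},\beta^f(b)]_{\rho_\varepsilon}=S$, and $\norm{S}{}\leq\norm{f}{L^1}\norm{[\nabla_{w,\varepsilon},b]_{\rho_\varepsilon}}{}$. Combining all four terms in the max gives the claim.
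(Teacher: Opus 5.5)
Your argument is correct, but it takes a different route from the paper.

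\textbf{The paper's route.} The paper's proof is soft. For continuous $f$, it approximates $\beta^f(b)$ in norm by Riemann sums $\sum_i f(z_i)\mathds{P}(E_i)\beta^{z_i}(b)$. Using only the triangle inequality and the $\beta$-invariance from the preceding lemma, their seminorm is at most $\sum_i|f(z_i)|\mathds{P}(E_i)\Lip_{w,\varepsilon,\kappa}(b)$. The bound then passes to the limit by the lower semicontinuity of Proposition \ref{Lip-lsc-prop}. A general $f\in L^1(\T^d,\mathds{P})$ is handled by approximating it in $L^1$ with continuous functions and invoking lower semicontinuity once more.

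\textbf{Your route.} You compute everything exactly. First, you identify $\psi_\varepsilon(\beta^f(b))$ as the weak average $\int_{\T^d} f(z)\beta^z(\psi_\varepsilon(b))\,d\mathds{P}(z)$, via the Fourier uniqueness built into the definition of $\dom{\psi_\varepsilon}$. Second, you get both $\beta^f(b)\dom{\nabla_w}\subseteq\dom{\nabla_w}$ and $[\nabla_{w,\varepsilon},\beta^f(b)]_{\rho_\varepsilon}=\int_{\T^d} f(z)V^z[\nabla_{w,\varepsilon},b]_{\rho_\varepsilon}(V^z)^*\,d\mathds{P}(z)$ directly from the self-adjointness of $\nabla_{w,\varepsilon}$. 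This adjoint argument replaces the subnet and weak-closure argument of Lemma \ref{twisted-commutator-closed-graph-lemma}, on which the paper's lower semicontinuity ultimately rests. Your use of weak integrals, Fubini, and $\beta^f(b)^*=\beta^{\overline{f}}(b^*)$ is all legitimate, since $z\mapsto V^zX(V^z)^*$ is strongly continuous for every bounded $X$.

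\textbf{What each buys.} The paper's argument is shorter given the machinery already in place. It also applies verbatim to any lower semicontinuous, $\beta$-invariant seminorm. Yours is self-contained and treats $L^1$ kernels without a density step. It also proves more than the inequality: the twist and the twisted commutator intertwine the integrated dual action. For instance, taking $f(z)=z^{-k}$ yields $[\nabla_{w,\varepsilon},\mathcal{F}_k(b)]_{\psi_\varepsilon}=\mathcal{F}_k([\nabla_{w,\varepsilon},b]_{\psi_\varepsilon})$ at once. This kind of Fourier-block identity is what the later arguments on Fourier coefficients rely on.
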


  \begin{proof}
    Assume first that $f$ is continuous. Approximating the integral defining $\beta^f(b)$ in norm by Riemann sums, using the lower semicontinuity of $\Lip_{w,\varepsilon,\kappa}$ and the invariance of this seminorm under $\beta$, we obtain
    \begin{equation*}
      \Lip_{w,\varepsilon,\kappa}(\beta^f(b)) \leq \int_{z \in \T^d} |f(z)| \Lip_{w,\varepsilon,\kappa}(\beta^z(b))\,d\mathds{P}(z) = \norm{f}{L^1(\T^d,\mathds{P})}\Lip_{w,\varepsilon,\kappa}(b) \text.
    \end{equation*}

    Let now $f \in L^1(\T^d,\mathds{P})$, and let $(f_n)_{n\in\N}$ be a sequence of continuous functions converging to $f$ in $L^1(\T^d,\mathds{P})$. Since
    \begin{equation*}
      \norm{\beta^{f_n}(b)-\beta^f(b)}{\A\rtimes_\alpha\Z^d} \leq \norm{f_n-f}{L^1(\T^d,\mathds{P})}\norm{b}{\A\rtimes_\alpha\Z^d},
    \end{equation*}
    the sequence $(\beta^{f_n}(b))_{n\in\N}$ converges in norm to $\beta^f(b)$. Therefore, by lower semicontinuity,
    \begin{align*}
      \Lip_{w,\varepsilon,\kappa}(\beta^f(b)) &\leq \liminf_{n\to\infty}\Lip_{w,\varepsilon,\kappa}(\beta^{f_n}(b))\\
      &\leq \liminf_{n\to\infty}\norm{f_n}{L^1(\T^d,\mathds{P})}\Lip_{w,\varepsilon,\kappa}(b)\\
      &=\norm{f}{L^1(\T^d,\mathds{P})}\Lip_{w,\varepsilon,\kappa}(b) \text,
    \end{align*}
    as claimed.
  \end{proof}

  \begin{lemma}\label{Fourier-coefficient-Lip-lemma}
    Let $b\in\dom{\Lip_{w,\varepsilon,\kappa}}$ and $k\in\Z^d$, and write
    \begin{equation*}
      \mathcal{F}_k(b)=a_kU^k
    \end{equation*}
    for some $a_k\in\A$. Then $a_k\in\A_w$, and
    \begin{equation*}
      \Lip_w(a_k) \leq \Lip_{w,\varepsilon,\kappa}(\mathcal{F}_k(b)) \leq \Lip_{w,\varepsilon,\kappa}(b) \text.
    \end{equation*}
  \end{lemma}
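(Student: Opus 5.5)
The plan has two inequalities to establish, and I will take them in order.

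\emph{Second inequality.} I write the Fourier coefficient as an integrated dual action: $\mathcal{F}_k(b)=\beta^{f}(b)$ with $f(z)=z^{-k}$, so $\norm{f}{L^1(\T^d,\mathds{P})}=1$. Lemma \ref{beta-Lip-lemma} then gives directly $\mathcal{F}_k(b)\in\dom{\Lip_{w,\varepsilon,\kappa}}$ and
\begin{equation*}
\Lip_{w,\varepsilon,\kappa}(\mathcal{F}_k(b))\leq\Lip_{w,\varepsilon,\kappa}(b)\text.
\end{equation*}
This step is formal once we note that $\Lip_{w,\varepsilon,\kappa}(b)<\infty$ implies $\beta^f(b)$ lies in the domain. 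That conclusion is contained in the lower semicontinuity argument behind Lemma \ref{beta-Lip-lemma}, via Proposition \ref{Lip-lsc-prop}.

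\emph{First inequality.} I now work with $c\coloneqq\mathcal{F}_k(b)=\pi(a_k)U^k\in\dom{\rho_\varepsilon}$. By Expression \eqref{psi-Fourier-compatibility}, $\rho_\varepsilon(c)=\pi(a_k)\psi_\varepsilon(U^k)=\pi(a_k)(U^k+B_k\nabla_{w,\varepsilon}^{-1})$. On $\dom{\nabla_w}$ I will compute the twisted commutator as in the computation preceding Expression \eqref{psi-eq}, formally getting
\begin{equation*}
[\nabla_{w,\varepsilon},c]_{\rho_\varepsilon}=[\Dirac_w,\pi(a_k)]U^k\otimes\gamma_0+\sum_{j=1}^dk_j\pi(a_k)U^k\otimes\gamma_j\text.
\end{equation*}
Here $[\Dirac_w,\pi(a_k)]$ is understood on the vectors $U^k\xi$; this is legitimate because $U^k$ preserves $\dom{\nabla_w}$ and, by hypothesis, $c$ does too. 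Multiplying on the right by the unitary $U^{-k}$ and using $U^{-k}\dom{\nabla_w}=\dom{\nabla_w}$, I obtain that the operator $X\coloneqq[\Dirac_w,\pi(a_k)]\otimes\gamma_0+\sum_j k_j\pi(a_k)\otimes\gamma_j$ is defined on $\dom{\nabla_w}$ and bounded by $\Lip_{w,\varepsilon,\kappa}(c)$.

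To isolate the $\gamma_0$ part, I will use that all operators involved are diagonal in $z$. Concretely, $[\Dirac_w,\pi(a_k)]$ acts on the $z$-th component as $w(z)[\Dirac,\alpha^{-z}(a_k)]$, at least in the form sense on $\dom{\Dirac}$ in each fiber. I then compress by the projection $1\otimes\frac{1}{2}(1+\gamma_0)$ or, more simply, use the anticommutation relations. The cleanest route is the identity
\begin{equation*}
\tfrac12\left(X+(1\otimes\gamma_0)X(1\otimes\gamma_0)\right)=[\Dirac_w,\pi(a_k)]\otimes\gamma_0\text,
\end{equation*}
since conjugation by $\gamma_0$ fixes $\gamma_0$ and flips the sign of each $\gamma_j$ with $j\geq1$. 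This shows $\norm{[\Dirac_w,\pi(a_k)]\xi}{}\leq\Lip_{w,\varepsilon,\kappa}(c)\norm{\xi}{}$ for $\xi\in\dom{\Dirac_w}\cap\bigcap_j\dom{\partial_j}$.

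\emph{Main obstacle.} The remaining step, which I expect to be the main difficulty, is passing from this to the fiberwise statement. I need that $a_k\in\A_w$, i.e. that $\alpha^{-z}(a_k)$ maps $\dom{\Dirac}$ into itself with $w(z)\norm{[\Dirac,\alpha^{-z}(a_k)]}{}\leq\Lip_{w,\varepsilon,\kappa}(c)$. To get this, I will restrict to vectors $\phi_z\otimes\eta$ with $\eta\in\dom{\Dirac}$; these lie in $\dom{\nabla_w}$. From $c\,\dom{\nabla_w}\subseteq\dom{\nabla_w}$ I will deduce $\alpha^{-z}(a_k)\eta\in\dom{\Dirac}$. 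The commutator estimate then follows, with $w(z)>0$. Since $\Dirac$ is closed and $\dom{\Dirac}$ is its natural domain, this gives $\alpha^{-z}(a_k)\in\A_{\Dirac}$ together with the bound. Taking the supremum over $z$ and applying Lemma \ref{Dw-Lipw-lemma} yields $a_k\in\A_w$ and $\Lip_w(a_k)\leq\Lip_{w,\varepsilon,\kappa}(c)$.
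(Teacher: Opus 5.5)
Your proposal is correct and follows essentially the same route as the paper. The second inequality comes from Lemma \ref{beta-Lip-lemma} applied to the character $z\mapsto z^{-k}$. The first comes from extracting the $\gamma_0$-component of $[\nabla_{w,\varepsilon},\mathcal{F}_k(b)]_{\rho_\varepsilon}$ by a norm-nonincreasing operation and then reading off single fibers $\phi_z\otimes\eta$. The only cosmetic difference is that you remove $U^k$ and average with the conjugation by $1_{\Hilbert[S]}\otimes\gamma_0$, whereas the paper applies the partial trace $(1_{\Hilbert[S]}\otimes\operatorname{tr})(\,\cdot\,(1_{\Hilbert[S]}\otimes\gamma_0))$ directly.
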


  \begin{proof}
    By Lemma \ref{beta-Lip-lemma}, applied to the character $z\mapsto z^{-k}$, we have $\mathcal{F}_k(b)\in\dom{\Lip_{w,\varepsilon,\kappa}}$ and
    \begin{equation*}
      \Lip_{w,\varepsilon,\kappa}(\mathcal{F}_k(b)) \leq \Lip_{w,\varepsilon,\kappa}(b) \text.
    \end{equation*}
    Set $c=\mathcal{F}_k(b)=a_kU^k$. Since $c$, $c^*$ and $U^k$, $U^{-k}$ preserve $\dom{\nabla_w}$, the operators
    \begin{equation*}
      \pi(a_k)=cU^{-k} \qquad\text{and}\qquad \pi(a_k)^*=U^kc^*
    \end{equation*}
    preserve $\dom{\nabla_w}$. By applying these operators to vectors supported on a single fiber, we conclude that $\alpha^{-z}(a_k)$ and $\alpha^{-z}(a_k^*)$ preserve $\dom{\Dirac}$ for every $z\in\Z^d$.

    We denote the tracial state of $M_q(\C)$ by $\operatorname{tr}$. On $\dom{\nabla_w}$, the computation of the twisted commutator gives
    \begin{equation*}
      (1_{\Hilbert[S]}\otimes\operatorname{tr}) \left( [\nabla_{w,\varepsilon},c]_{\psi_\varepsilon} (1_{\Hilbert[S]}\otimes\gamma_0) \right) =[\Dirac_w,\pi(a_k)]U^k \text.
    \end{equation*}
    The left-hand side extends to a bounded operator. Since $U^k$ is unitary, $[\Dirac_w,\pi(a_k)]$ thus extends to a bounded operator as well. Its restriction to the fiber indexed by $z\in\Z^d$ is
    \begin{equation*}
      w(z)[\Dirac,\alpha^{-z}(a_k)] \text.
    \end{equation*}
    Consequently, $a_k\in\A_w$, and, since the linear map $1_{\Hilbert[S]}\otimes\operatorname{tr}$ has norm $1$,
    \begin{align*}
      \Lip_w(a_k) &=\norm{[\Dirac_w,\pi(a_k)]}{\B(\Hilbert[S])}\\
      &\leq \norm{[\nabla_{w,\varepsilon},c]_{\psi_\varepsilon}}{\B(\Hilbert[S]\otimes\C^q)}\\
      &\leq \Lip_{w,\varepsilon,\kappa}(\mathcal{F}_k(b))\\
      &\leq \Lip_{w,\varepsilon,\kappa}(b) \text,
    \end{align*}
    as claimed.
  \end{proof}

  We now prove that we can approximate any element $b$ in a ball for the Lipschitz seminorm by elements of the form $\beta^f(b)$ as $f$ ranges over any approximate unit in $L^1(\T^d,\mathds{P})$.

  \begin{lemma}\label{approx-lemma}
    If $b \in \dom{\Lip_{w,\varepsilon,\kappa}}$, and if $(f_n)_{n\in\N}$ is an approximate unit in $L^1(\T^d,\mathds{P})$ with $\int_{\T^d} f_n\, d\mathds{P} = 1$ and $f_n\geq 0$ for all $n\in\N$, then $(\beta^{f_n}(b))_{n\in\N}$ converges to $b$ in norm, and
    \begin{equation*}
      \Lip_{w,\varepsilon,\kappa}(\beta^{f_n}(b)) \leq \Lip_{w,\varepsilon,\kappa}(b) \text.
    \end{equation*}
  \end{lemma}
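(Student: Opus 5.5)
The statement has two parts: norm convergence of $\beta^{f_n}(b)$ to $b$, and the Lipschitz bound. The bound is immediate from Lemma \ref{beta-Lip-lemma}. Since $f_n\geq 0$ and $\int_{\T^d} f_n\,d\mathds{P}=1$, we have $\norm{f_n}{L^1(\T^d,\mathds{P})}=1$, hence $\Lip_{w,\varepsilon,\kappa}(\beta^{f_n}(b))\leq\Lip_{w,\varepsilon,\kappa}(b)$ for every $n\in\N$.

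For norm convergence, I will use the standard approximate-identity argument for a strongly continuous action of a compact group. First I check that $z\in\T^d\mapsto\beta^z(b)$ is norm continuous for every $b\in\A\rtimes_\alpha\Z^d$. On a monomial $\pi(a)U^k$ we have $\beta^z(\pi(a)U^k)=z^k\pi(a)U^k$, which is plainly continuous. The maps $\beta^z$ are isometric $*$-automorphisms, since they are conjugations by the unitaries $V^z$. Fourier polynomials are dense, so continuity extends to all $b$ by an $\varepsilon/3$ argument.

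Then, using $\int f_n\,d\mathds{P}=1$,
\begin{equation*}
  \norm{\beta^{f_n}(b)-b}{\A\rtimes_\alpha\Z^d}\leq\int_{\T^d}f_n(z)\norm{\beta^z(b)-b}{\A\rtimes_\alpha\Z^d}\,d\mathds{P}(z)\text.
\end{equation*}
Given $\delta>0$, continuity at the unit $1\in\T^d$ gives a neighborhood $W$ on which $\norm{\beta^z(b)-b}{}<\delta$. Off $W$, the integrand is at most $2\norm{b}{}$. This yields a bound of $\delta+2\norm{b}{}\int_{\T^d\setminus W}f_n\,d\mathds{P}$.

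The one real point is the meaning of ``approximate unit'' in $L^1(\T^d,\mathds{P})$. If it means that, for every neighborhood $W$ of $1$, $\int_{\T^d\setminus W}f_n\,d\mathds{P}\to0$ (as for Fej\'er kernels), the argument above concludes directly. If it only means that $f_n\ast g\to g$ in $L^1$ for all $g$, I would instead argue weakly. For each state, or more generally each $\varphi$ in the dual, the function $g_\varphi(z)=\varphi(\beta^z(b))$ is continuous, and $\varphi(\beta^{f_n}(b))=(\check f_n\ast g_\varphi)(1)$. I would then combine nonnegativity and normalization with the concentration this forces. A simpler route, which I expect to suffice, is to reduce to Fourier polynomials. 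For a monomial, $\beta^{f_n}(\pi(a)U^k)=\widehat{f_n}(-k)\,\pi(a)U^k$. Since $f_n\ast\chi\to\chi$ in $L^1$ for characters $\chi$, we get $\widehat{f_n}(k)\to1$ for every $k$. This gives convergence on Fourier polynomials, and the uniform bound $\norm{\beta^{f_n}}{}\leq1$ extends it to all $b$ by density.
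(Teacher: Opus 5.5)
Your proposal is correct and follows essentially the paper's own argument. You bound $\norm{b-\beta^{f_n}(b)}{\A\rtimes_\alpha\Z^d}$ by $\int_{\T^d} f_n(z)\norm{b-\beta^z(b)}{\A\rtimes_\alpha\Z^d}\,d\mathds{P}(z)$, send this to $\norm{b-\beta^1(b)}{\A\rtimes_\alpha\Z^d}=0$ using continuity of $z\mapsto\beta^z(b)$, and obtain the seminorm bound from Lemma~\ref{beta-Lip-lemma} since $\norm{f_n}{L^1(\T^d,\mathds{P})}=1$. Your extra care over the meaning of ``approximate unit'' fills a step the paper leaves implicit, as does your fallback via $\widehat{f_n}(k)\to 1$ on Fourier polynomials together with $\norm{\beta^{f_n}}{}\leq 1$; in fact, under $f_n\geq 0$ and $\int_{\T^d} f_n\,d\mathds{P}=1$, all the readings you consider are equivalent.
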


  \begin{proof}
    A direct computation shows:
    \begin{align*}
      \norm{ b - \beta^{f_n}(b) }{\A\rtimes_\alpha\Z^d} &=\norm{ \int_{z\in \T^d} f_n(z) b \,d\mathds{P}(z) - \int_{z\in\T^d} f_n(z)\beta^z(b)\,d\mathds{P}(z) }{\A\rtimes_\alpha\Z^d} \\
      &\leq \int_{z \in \T^d} f_n(z) \norm{b - \beta^z(b)}{\A\rtimes_\alpha\Z^d} \, d\mathds{P}(z) \\
      &\xrightarrow{n\to\infty} \norm{b-\beta^1(b)}{\A\rtimes_\alpha\Z^d} = 0 \text.
    \end{align*}
    Lemma \ref{beta-Lip-lemma} gives us $\Lip_{w,\varepsilon,\kappa}(\beta^{f_n}(b)) \leq \Lip_{w,\varepsilon,\kappa}(b)$, and our proof is complete.
  \end{proof}

  We call an element of the linear span $\{ a U^k : a \in \A, k \in \Z^d \}$ a Fourier polynomial of $\A\rtimes_\alpha\Z^d$. Elements of the linear span $\{ a U^k : a \in \A_w, k \in \Z^d \}$ will be called Lipschitz Fourier polynomials. Now, by choosing Fej{\'e}r kernels in Lemma \ref{approx-lemma}, we prove that every element in $\dom{\Lip_{w,\varepsilon,\kappa}}$ can be approximated in norm by Lipschitz Fourier polynomials with the same, or lesser, Lipschitz seminorm.

  We recall the following result.
  \begin{lemma}\label{Fejer-exist-lemma}
    For any $n\in\N$, there exists a function $f_n \in L^1(\T^d,\mathds{P})$ with $f_n \geq 0$, $\int_{\T^d} f_n \, d\mathds{P} = 1$, and a finite set $S_n$ of $\Z^d$ with $0 \in S_n$, such that the range of $\beta^{f_n}$ is the linear span of $\{ a U^k : a \in \A, k \in S_n \}$, and
    \begin{equation*}
      \int_{z \in \T^d} f_n(z) \ell(z) \, d\mathds{P}(z) < \frac{1}{n+1} \text;
    \end{equation*}
    moreover $(f_n)_{n\in\N}$ is an approximate unit of $L^1(\T^d,\mathds{P})$.
  \end{lemma}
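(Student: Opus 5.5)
The plan is to take $f_n$ to be the $d$-dimensional Fejér kernel, written as a product of one-dimensional Fejér kernels. For $N\in\N$, let
\begin{equation*}
  F_N(e^{it}) \coloneqq \sum_{|j|\leq N}\left(1-\frac{|j|}{N+1}\right)e^{ijt} = \frac{1}{N+1}\left(\frac{\sin((N+1)t/2)}{\sin(t/2)}\right)^2 ,
\end{equation*}
and set $K_N(z_1,\ldots,z_d)\coloneqq\prod_{j=1}^d F_N(z_j)$. Three properties of $K_N$ follow from the one-dimensional facts. It is nonnegative, and its integral is $1$ because only the constant Fourier mode survives integration. Its Fourier coefficients $\widehat{K_N}(k)$ vanish unless $k\in S_N\coloneqq\{-N,\ldots,N\}^d$, which is finite and contains $0$.

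The range of $\beta^{K_N}$ comes from the Fourier coefficients. Since $\beta^z(aU^k)=z^kaU^k$, we get
\begin{equation*}
  \beta^{K_N}(aU^k)=\widehat{K_N}(-k)\,aU^k .
\end{equation*}
Here $\widehat{K_N}(-k)=\widehat{K_N}(k)$ by symmetry, and it equals $\prod_j\left(1-\frac{|k_j|}{N+1}\right)$. For general $b$, one writes $\beta^{K_N}(b)=\sum_{k\in S_N}\widehat{K_N}(k)\,\mathcal{F}_k(b)$, since both sides are norm-continuous in $b$ and agree on Fourier polynomials. So the range lies in the span of $\{aU^k:a\in\A,k\in S_N\}$. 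Conversely, for $k\in S_N$ the coefficient is nonzero, so $\beta^{K_N}(\widehat{K_N}(k)^{-1}aU^k)=aU^k$, and the range is exactly that span.

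The key estimate is the first moment. By the triangle inequality, $\ell(z)\leq\sum_j|t_j|$. Integrating against the product kernel, and using that each one-dimensional factor has integral $1$, gives
\begin{equation*}
  \int K_N\,\ell \leq d\cdot\frac{1}{2\pi}\int_{-\pi}^{\pi}F_N(e^{it})|t|\,dt .
\end{equation*}
On $[-\pi,\pi]$ we have $|\sin(t/2)|\geq|t|/\pi$, hence $F_N\leq\min\{N+1,\ \pi^2/((N+1)t^2)\}$. Splitting the integral at $|t|=1/(N+1)$ bounds it by $C\log(N+2)/(N+1)$. This tends to $0$, so for each $n$ we may choose $N=N(n)$ with the moment below $1/(n+1)$, and set $f_n\coloneqq K_{N(n)}$ and $S_n\coloneqq S_{N(n)}$. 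This step is the only genuinely quantitative one. A sharper bound using $|t|\leq$ a Jensen-type estimate with the second moment also works, but the log bound is enough.

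Finally, $(f_n)$ is an approximate unit: it consists of nonnegative functions of integral $1$, and for every $\delta>0$ the mass outside $\{\ell<\delta\}$ is at most $\delta^{-1}\int f_n\ell\to0$ by Markov's inequality. The standard argument then gives $f_n\ast g\to g$ in $L^1$ for every $g\in L^1(\T^d,\mathds{P})$. Continuous $g$ are handled by uniform continuity, and general $g$ by density, since convolution by $f_n$ has norm $1$.
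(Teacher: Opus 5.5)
Your argument is correct and follows the paper's route. The paper only recalls this lemma and identifies the $f_n$ as Fej\'er kernels (it later uses the product kernels $f_N(x_1)f_N(x_2)$). Your range computation via $\beta^{K_N}(b)=\sum_{k\in S_N}\widehat{K_N}(k)\,\mathcal{F}_k(b)$, the $O(\log(N+2)/(N+1))$ first-moment bound, and the Markov concentration argument fill in the standard details the paper omits.

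One minor correction to your aside: the second-moment (Cauchy--Schwarz) route only yields $O((N+1)^{-1/2})$, because the Fej\'er second moment is of order $1/(N+1)$. So it is weaker rather than sharper than your logarithmic bound, though it would still suffice.
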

  Note that the maps $f_n$ in Lemma \ref{Fejer-exist-lemma} are in fact linear combinations of characters of $\T^d$, and are known as Fej{\'e}r kernels.

  Altogether, we thus obtain:
  \begin{corollary}\label{Fejer-approx-cor}
    Every element $b \in \dom{\Lip_{w,\varepsilon,\kappa}}$ is the limit in norm of Lipschitz Fourier polynomials $(b_n)_{n\in\N}$ with $\Lip_{w,\varepsilon,\kappa}(b_n) \leq \Lip_{w,\varepsilon,\kappa}(b)$.
  \end{corollary}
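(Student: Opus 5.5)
The plan is to combine Lemma \ref{approx-lemma} with the Fej\'er kernels of Lemma \ref{Fejer-exist-lemma}. Fix $b\in\dom{\Lip_{w,\varepsilon,\kappa}}$ and let $(f_n)_{n\in\N}$ and $(S_n)_{n\in\N}$ be as in Lemma \ref{Fejer-exist-lemma}. We set $b_n\coloneqq\beta^{f_n}(b)$ for every $n\in\N$. Since the $f_n$ are nonnegative, integrate to $1$ and form an approximate unit, Lemma \ref{approx-lemma} gives two facts at once: $(b_n)_{n\in\N}$ converges to $b$ in norm, and $\Lip_{w,\varepsilon,\kappa}(b_n)\leq\Lip_{w,\varepsilon,\kappa}(b)$ for all $n$.

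It then remains to check that each $b_n$ is a Lipschitz Fourier polynomial, and not merely a Fourier polynomial. By Lemma \ref{Fejer-exist-lemma}, $b_n$ lies in the span of $\{aU^k:a\in\A,k\in S_n\}$ with $S_n$ finite. Since $b_n$ is a finite sum of such terms and $\mathcal{F}_k(aU^m)$ is $aU^k$ when $m=k$ and $0$ otherwise, we get
\begin{equation*}
b_n=\sum_{k\in S_n}\mathcal{F}_k(b_n)\text.
\end{equation*}
Writing $\mathcal{F}_k(b_n)=a_{n,k}U^k$ with $a_{n,k}\in\A$, this identity holds at the level of operators, so no convergence question arises. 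Since $b_n\in\dom{\Lip_{w,\varepsilon,\kappa}}$, Lemma \ref{Fourier-coefficient-Lip-lemma} applied to $b_n$ shows that each $a_{n,k}$ lies in $\A_w$.

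Alternatively, one may apply Lemma \ref{Fourier-coefficient-Lip-lemma} directly to $b$. Indeed $\mathcal{F}_k(b_n)=\hat f_n(k)\mathcal{F}_k(b)$, where $\hat f_n(k)=\int f_n(z)z^{-k}\,d\mathds{P}(z)$, so $a_{n,k}$ is a scalar multiple of the coefficient $a_k\in\A_w$ of $b$. Either way, $b_n$ is a Lipschitz Fourier polynomial.

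There is no genuine obstacle here. The only point needing care is the identification of $b_n$ with the finite sum of its Fourier coefficients, and the computation $\mathcal{F}_k\circ\beta^{f}=\hat f(k)\mathcal{F}_k$. The latter follows from Fubini's theorem for the weak integrals and the fact that $\beta$ is a group action, which gives
\begin{equation*}
\int z^{-k}\beta^{z}\Bigl(\int f(y)\beta^y(b)\,d\mathds{P}(y)\Bigr)d\mathds{P}(z)=\int f(y)y^{k}\,d\mathds{P}(y)\,\mathcal{F}_k(b)
\end{equation*}
after the substitution $z\mapsto zy^{-1}$. This equals $\hat f(-k)\mathcal{F}_k(b)$ up to the sign convention for $\hat f$, which does not affect the argument.
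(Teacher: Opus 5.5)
Your proposal is correct and follows the paper's proof, which simply combines Lemma \ref{approx-lemma} with the Fej\'er kernels of Lemma \ref{Fejer-exist-lemma}. Your extra step goes slightly beyond the paper: you use Lemma \ref{Fourier-coefficient-Lip-lemma} to check that the coefficients of $b_n=\beta^{f_n}(b)$ lie in $\A_w$, so that $b_n$ is a Lipschitz Fourier polynomial rather than merely a Fourier polynomial, and this fills in a point the paper's one-line proof leaves implicit.
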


  \begin{proof}
    This follows immediately from Lemma \ref{approx-lemma} applied using the Fej{\'e}r kernels from Lemma \ref{Fejer-exist-lemma}.
  \end{proof}

  To derive our next results, we must introduce notation for the calculus induced by the dual action $\beta$ transporting the Lie algebra to $\A\rtimes_\alpha\Z^d$. We simply denote the exponential map from the Lie algebra $\R^d$ onto the compact connected Lie group $\T^d$ as:
  \begin{equation*}
    \exp : (t_1,\ldots,t_d) \in \R^d \mapsto \left(\exp(it_1), \ldots, \exp(it_d)\right) \in \T^d \text.
  \end{equation*}
  For every $X \in \R^d$, and for any Fourier polynomial $b$ in $\A\rtimes_\alpha\Z^d$, we set:
  \begin{equation*}
    \beta^{X}(b) = \lim_{t\to 0} \frac{1}{t}\left(\beta^{\exp(t X)}(b)-b\right) \text,
  \end{equation*}
  which is well-defined (in fact, it is defined on a larger domain \cite{Bratteli79}, but this will suffice for our purpose; however in general it is not defined for all $b \in \dom{\Lip_{w,\varepsilon,\kappa}}$). One may check that $X \in \R^d\mapsto \beta^X$ is a Lie algebra morphism valued in the derivations over the algebra of Fourier polynomials.

  For any Fourier polynomial $b$, a standard computation shows
  \begin{equation*}
    [\partial_j,b] = -i\beta^{e_j}(b)
  \end{equation*}
  where $(e_1,\ldots,e_d)$ is the canonical basis of $\R^d$.

  We now prove our mean value inequality.
  \begin{lemma}
    For all $b \in \dom{\Lip_{w,\varepsilon,\kappa}}$ and for all $z \in \T^d$,
    \begin{equation*}
      \norm{\beta^{z}(b) - b}{\A\rtimes_\alpha\Z^d} \leq \ell(z) \Lip_{w,\varepsilon,\kappa}(b) \text.
    \end{equation*}
  \end{lemma}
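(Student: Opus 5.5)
First reduce to Lipschitz Fourier polynomials using Corollary \ref{Fejer-approx-cor}. Given $b \in \dom{\Lip_{w,\varepsilon,\kappa}}$, choose Lipschitz Fourier polynomials $b_n \to b$ in norm with $\Lip_{w,\varepsilon,\kappa}(b_n)\leq\Lip_{w,\varepsilon,\kappa}(b)$. Since $\beta^z$ is an isometry, $\beta^z(b_n)-b_n \to \beta^z(b)-b$ in norm. It therefore suffices to prove the inequality for a Lipschitz Fourier polynomial $b$.

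For such $b$, write $z=\exp(X)$ with $X=(t_1,\ldots,t_d)$, $t_j\in(-\pi,\pi]$, so $\ell(z)=\norm{X}{\R^d}$. The path $s\in[0,1]\mapsto\beta^{\exp(sX)}(b)$ is norm-differentiable, since $b$ is a finite sum $\sum_k a_kU^k$ and $\beta^{\exp(sX)}(b)=\sum_k e^{is\langle k,X\rangle}a_kU^k$. Its derivative is $\beta^{\exp(sX)}(\beta^X(b))$. Hence
\begin{equation*}
\norm{\beta^z(b)-b}{\A\rtimes_\alpha\Z^d}\leq\sup_{s\in[0,1]}\norm{\beta^{\exp(sX)}(\beta^X(b))}{}=\norm{\beta^X(b)}{}.
\end{equation*}
Moreover
\begin{equation*}
\beta^X(b)=\sum_{j}t_j\beta^{e_j}(b)=i\sum_j t_j[\partial_j,b],
\end{equation*}
using $[\partial_j,b]=-i\beta^{e_j}(b)$.

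The main step is to bound $\norm{\sum_j t_j[\partial_j,b]}{}$ by $\norm{X}{\R^d}\norm{[\nabla_{w,\varepsilon},b]_{\rho_\varepsilon}}{}$. Using the monomial formula of Theorem \ref{twisted-spectral-triple-thm} and linearity,
\begin{equation*}
[\nabla_{w,\varepsilon},b]_{\rho_\varepsilon}=C_0\otimes\gamma_0+\sum_{j=1}^d[\partial_j,b]\otimes\gamma_j,
\end{equation*}
where $C_0=\sum_k[\Dirac_w,\pi(a_k)]U^k$ and $[\partial_j,b]=\sum_k k_j a_kU^k$. Set $u=X/\norm{X}{}$ (the case $X=0$ is trivial) and $\gamma_u=\sum_j u_j\gamma_j$. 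This is a self-adjoint unitary that anticommutes with $\gamma_0$ and with $\gamma_v$ for every unit vector $v\perp u$ in $\R^d$.

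Compress with the trace: consider $(1\otimes\operatorname{tr})\bigl([\nabla_{w,\varepsilon},b]_{\rho_\varepsilon}(1\otimes\gamma_u)\bigr)$, as in the proof of Lemma \ref{Fourier-coefficient-Lip-lemma}. We have $\operatorname{tr}(\gamma_0\gamma_u)=0$ and $\operatorname{tr}(\gamma_j\gamma_u)=u_j$, because $\operatorname{tr}(\gamma_j\gamma_l)=\delta_{jl}$ by anticommutation. Therefore the compression equals $\sum_j u_j[\partial_j,b]$. Since $1\otimes\operatorname{tr}$ has norm one and $\gamma_u$ is unitary,
\begin{equation*}
\norm{\sum_j u_j[\partial_j,b]}{}\leq\norm{[\nabla_{w,\varepsilon},b]_{\rho_\varepsilon}}{}\leq\Lip_{w,\varepsilon,\kappa}(b).
\end{equation*}
Multiplying by $\norm{X}{}=\ell(z)$ gives $\norm{\beta^X(b)}{}\leq\ell(z)\Lip_{w,\varepsilon,\kappa}(b)$, which concludes the polynomial case and hence the lemma.

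The expected obstacle is justifying the twisted commutator formula for general Lipschitz Fourier polynomials, which follows by linearity from Theorem \ref{twisted-spectral-triple-thm}. A second point to check is that the partial trace of an operator on $\Hilbert[S]\otimes\C^q$ is contractive, which holds because it is a conditional expectation onto $\B(\Hilbert[S])\otimes 1$.
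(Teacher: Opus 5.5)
Your proposal is correct and follows essentially the same route as the paper: the paper also first treats Lipschitz Fourier polynomials by compressing $[\nabla_{w,\varepsilon},b]_{\psi_\varepsilon}(1_{\Hilbert[S]}\otimes\gamma(v))$ with $1_{\Hilbert[S]}\otimes\operatorname{tr}$ to get $\norm{\beta^{v}(b)}{}\leq\norm{v}{\R^d}\Lip_{w,\varepsilon,\kappa}(b)$. It then integrates along $s\mapsto\exp(sX)$ and extends to all of $\dom{\Lip_{w,\varepsilon,\kappa}}$ via Corollary \ref{Fejer-approx-cor}; the only cosmetic difference is that you normalize to a unit vector $u$ and write out the twisted commutator decomposition explicitly, whereas the paper works directly with $\gamma(v)$, whose norm is $\norm{v}{\R^d}$.
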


  \begin{proof}
    We will denote the tracial state of $M_q(\C)$ by $\operatorname{tr}$. We also define $\gamma(v) = \sum_{j=1}^d v_j \gamma_j$ for all $v=(v_1,\ldots,v_d) \in \R^d$.

    If $j,k \in \{0,\ldots,d+1\}$ and $j\neq k$, using the relations between our $\gamma$ matrices and the tracial property of $\operatorname{tr}$:
    \begin{equation*}
      \operatorname{tr}(\gamma_j \gamma_k) = \operatorname{tr}(\gamma_k \gamma_j) = \operatorname{tr}(-\gamma_j \gamma_k)
    \end{equation*}
    so $\operatorname{tr}(\gamma_j \gamma_k) = 0$.

    Thus, the following computation follows. Let $b$ be a Lipschitz Fourier polynomial in $\A\rtimes_\alpha\Z^d$. For any $v \in \R^d$,
    \begin{equation*}
      (1_{\Hilbert[S]}\otimes\operatorname{tr})([\nabla_{w,\varepsilon},b]_{\psi_\varepsilon}(1_{\Hilbert[S]}\otimes \gamma(v))) = -i \sum_{j=1}^d v_j \beta^{e_j}(b) = -i \beta^{v}(b) \text.
    \end{equation*}

    Since the norm of the linear operator $1_{\Hilbert[S]}\otimes\operatorname{tr}$ is $1$, and the norm of $\gamma(v)$ is $\norm{v}{\R^d}$, we thus have
    \begin{equation*}
      \norm{ \beta^{v}(b) }{\A\rtimes_\alpha\Z^d} \leq \norm{v}{\R^d} \Lip_{w,\varepsilon,\kappa}(b) \text.
    \end{equation*}

    Therefore, if $z = (\exp(it_1),\ldots,\exp(it_d))$ with $t_1,\ldots,t_d \in (-\pi,\pi]$, and $p:s\in[0,1]\mapsto (\exp(i s t_1),\ldots,\exp(i s t_d))=(p_1(s),\ldots,p_d(s))$, then for every Lipschitz Fourier polynomial $b \in \A\rtimes_\alpha\Z^d$:
    \begin{align*}
      \norm{\beta^z(b) - b}{\A\rtimes_\alpha\Z^d} &=\norm{\int_0^1 \sum_{j=1}^d \frac{p'_j(s)}{i p_j(s)} \beta^{p(s)}(\beta^{e_j}(b)) \, ds}{\A\rtimes_\alpha\Z^d}\\
      &=\norm{\int_0^1 \sum_{j=1}^d t_j \beta^{p(s)}(\beta^{e_j}(b)) \, ds}{\A\rtimes_\alpha\Z^d}\\
      &\leq \int_0^1 \norm{\beta^{p(s)}(\beta^{(t_1,\ldots,t_d)}(b))}{\A\rtimes_\alpha\Z^d}\, ds \\
      &=\int_0^1 \norm{\beta^{(t_1,\ldots,t_d)}(b)}{\A\rtimes_\alpha\Z^d}\, ds \\
      &\leq \int_0^1 \norm{(t_1,\ldots,t_d)}{\R^d} \Lip_{w,\varepsilon,\kappa}(b) \, ds \\
      &= \int_0^1 \ell(z)\Lip_{w,\varepsilon,\kappa}(b) \, ds \\
      &= \ell(z) \Lip_{w,\varepsilon,\kappa}(b) \text,
    \end{align*}
    as desired.

    We now extend this inequality to the whole domain $\dom{\Lip_{w,\varepsilon,\kappa}}$. Let $b \in\dom{\Lip_{w,\varepsilon,\kappa}}$. By Corollary \ref{Fejer-approx-cor}, there exists a sequence $(b_n)_{n\in\N}$ of Fourier polynomials converging to $b$, with $\Lip_{w,\varepsilon,\kappa}(b_n) \leq \Lip_{w,\varepsilon,\kappa}(b)$ for all $n\in\N$. We then have:

    \begin{align*}
      \norm{b - \beta^{z}(b)}{\A\rtimes_\alpha\Z^d} &=\lim_{n\to\infty}\norm{b_n - \beta^{z}(b_n)}{\A\rtimes_\alpha\Z^d}\text{ by continuity, }\\
      &=\liminf_{n\to\infty}\norm{b_n - \beta^{z}(b_n)}{\A\rtimes_\alpha\Z^d}\\
      &\leq \liminf_{n\to\infty} \ell(z) \Lip_{w,\varepsilon,\kappa}(b_n) \\
      &\leq \ell(z) \Lip_{w,\varepsilon,\kappa}(b) \text.
    \end{align*}
    This completes our proof.
  \end{proof}

  We extend our inequality to work with kernels. This corollary, in particular, applied to the functions from Lemma \ref{Fejer-exist-lemma}, would recover our result on approximating elements in $\dom{\Lip_{w,\varepsilon,\kappa}}$ with Fourier polynomials of no greater Lipschitz seminorm, but what it provides us with is a uniform control on such approximations as a function of the Lipschitz seminorm and the kernel itself.

  \begin{corollary}
    For all $f \in L^1(\T^d,\mathds{P})$ with $f \geq 0$ and $\int_{z\in\T^d}f(z)\,d\mathds{P}(z) = 1$, and for all $b \in \dom{\Lip_{w,\varepsilon,\kappa}}$,
    \begin{equation*}
      \norm{b - \beta^f(b)}{\A\rtimes_\alpha\Z^d} \leq \Lip_{w,\varepsilon,\kappa}(b) \int_{z\in\T^d} f(z) \ell(z) \, d\mathds{P}(z) \text,
    \end{equation*}
    and
    \begin{equation*}
      \Lip_{w,\varepsilon,\kappa}(\beta^f(b)) \leq \Lip_{w,\varepsilon,\kappa}(b) \text.
    \end{equation*}
  \end{corollary}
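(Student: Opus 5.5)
The second inequality is exactly Lemma \ref{beta-Lip-lemma}: since $f\geq 0$ with $\int f\,d\mathds{P}=1$, we have $\norm{f}{L^1(\T^d,\mathds{P})}=1$. So only the first inequality requires work. We will derive it from the mean value inequality just proved, namely $\norm{\beta^z(b)-b}{\A\rtimes_\alpha\Z^d}\leq\ell(z)\Lip_{w,\varepsilon,\kappa}(b)$ for all $z\in\T^d$ and $b\in\dom{\Lip_{w,\varepsilon,\kappa}}$.

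Since $\int f\,d\mathds{P}=1$, we can write
\begin{equation*}
  b-\beta^f(b)=\int_{z\in\T^d} f(z)\bigl(b-\beta^z(b)\bigr)\,d\mathds{P}(z)\text.
\end{equation*}
The map $z\mapsto\beta^z(b)$ is norm continuous, because the dual action is strongly continuous on the crossed product. The integral is therefore a Bochner integral in norm, as in the proof of Lemma \ref{approx-lemma}, and the triangle inequality for integrals together with $f\geq 0$ gives
\begin{equation*}
  \norm{b-\beta^f(b)}{\A\rtimes_\alpha\Z^d}\leq\int_{\T^d} f(z)\norm{b-\beta^z(b)}{\A\rtimes_\alpha\Z^d}\,d\mathds{P}(z)\leq\Lip_{w,\varepsilon,\kappa}(b)\int_{\T^d}f(z)\ell(z)\,d\mathds{P}(z)\text.
\end{equation*}

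The only point requiring care is measurability and integrability of $z\mapsto f(z)\ell(z)$. The length $\ell$ is bounded by $\pi\sqrt{d}$ and is Borel measurable (it is continuous away from a null set), so $f\ell\in L^1(\T^d,\mathds{P})$ and the right-hand side is finite. If $\Lip_{w,\varepsilon,\kappa}(b)=0$, the bound is still valid, since the mean value inequality then forces $\beta^z(b)=b$ for all $z$. No other obstacle arises: the argument is a direct averaging of the pointwise mean value inequality against the probability density $f$.
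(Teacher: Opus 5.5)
Your proposal is correct and follows the paper's proof essentially verbatim: write $b-\beta^f(b)=\int_{\T^d} f(z)\bigl(b-\beta^z(b)\bigr)\,d\mathds{P}(z)$, apply the triangle inequality for the integral, then the pointwise mean value inequality, and obtain the second inequality from Lemma \ref{beta-Lip-lemma} since $\norm{f}{L^1(\T^d,\mathds{P})}=1$. Your extra remarks on measurability are harmless; in fact $\ell$ is continuous on all of $\T^d$, because $e^{it}\mapsto|t|$ with $t\in(-\pi,\pi]$ is continuous on $\T$.
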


  \begin{proof}
    A direct computation shows that for all $b \in \dom{\Lip_{w,\varepsilon,\kappa}}$:
    \begin{align*}
      \norm{b - \beta^f(b)}{\A\rtimes_\alpha\Z^d} &=\norm{\int_{\T^d} f \, d\mathds{P} b - \int_{z\in\T^d} f(z) \beta^z(b) \, d\mathds{P}(z)}{\A\rtimes_\alpha\Z^d} \\
      &=\norm{\int_{z\in\T^d} f(z)( b - \beta^{z}(b) )\, d\mathds{P}(z)}{\A\rtimes_\alpha\Z^d}\\
      &\leq \int_{z\in\T^d} f(z) \norm{b - \beta^z(b)}{\A\rtimes_\alpha\Z^d} \, d\mathds{P}(z) \\
      &\leq \int_{z\in\T^d} f(z) \ell(z)\Lip_{w,\varepsilon,\kappa}(b) \, d\mathds{P}(z) \\
      &= \Lip_{w,\varepsilon,\kappa}(b) \int_{\T^d} f \ell \, d\mathds{P} \text,
    \end{align*}
    as claimed. The other part of our conclusion follows from Lemma \ref{beta-Lip-lemma}.
  \end{proof}

  We thus conclude:
  \begin{proposition}\label{Lip-metric-prop}
    The distance $\Kantorovich{\Lip_{w,\varepsilon,\kappa}}$ metrizes the weak* topology on $\StateSpace(\A\rtimes_\alpha\Z^d)$.
  \end{proposition}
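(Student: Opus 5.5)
We will use Rieffel's criterion in the form appropriate for generalized quantum compact metric spaces. Fix any state $\mu\in\StateSpace(\A\rtimes_\alpha\Z^d)$; for instance $\mu=\varphi\circ\pi^{-1}\circ\mathcal{F}_0$ for a state $\varphi$ of $\A$. Set
\begin{equation*}
  \mathcal{B}\coloneqq\left\{b\in\domsa{\Lip_{w,\varepsilon,\kappa}}:\Lip_{w,\varepsilon,\kappa}(b)\leq1,\ \mu(b)=0\right\}\text.
\end{equation*}
The plan is to prove that $\mathcal{B}$ is totally bounded in norm. By the standard argument, total boundedness of $\mathcal{B}$ implies that $\Kantorovich{\Lip_{w,\varepsilon,\kappa}}$ is bounded and that its topology agrees with the weak* topology on the compact state space. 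This standard argument uses two further facts. First, $\Lip_{w,\varepsilon,\kappa}(b)=0$ forces $b\in\R1$: indeed $[\partial_j,b]=-i\beta^{e_j}(b)$ vanishes, so by the mean value inequality $b=\mathcal{F}_0(b)=\pi(a)$ with $\Lip_w(a)=0$, hence $a\in\R1$. Second, $\domsa{\Lip_{w,\varepsilon,\kappa}}$ is dense, which was already noted.

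To get total boundedness, fix $\delta>0$ and use Lemma \ref{Fejer-exist-lemma} to choose $n$ with $\int f_n\ell\,d\mathds{P}<\delta$. By the corollary following the mean value inequality, every $b\in\mathcal{B}$ satisfies
\begin{equation*}
  \norm{b-\beta^{f_n}(b)}{\A\rtimes_\alpha\Z^d}\leq\delta
\end{equation*}
and $\Lip_{w,\varepsilon,\kappa}(\beta^{f_n}(b))\leq1$. It therefore suffices to show that $\{\beta^{f_n}(b):b\in\mathcal{B}\}$ is totally bounded. Each such element has the form $\sum_{k\in S_n}\hat f_n(k)\mathcal{F}_k(b)$ with $|\hat f_n(k)|\leq1$, and $\mathcal{F}_k(b)=\pi(a_k)U^k$. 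By Lemma \ref{Fourier-coefficient-Lip-lemma}, $a_k\in\A_w$ with $\Lip(a_k)\leq\Lip_w(a_k)\leq1$.

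We next bound the norms of the coefficients. For $k\neq0$ we must control $\norm{a_k}{\A}$. The mean value inequality gives $\norm{b-\beta^z(b)}{}\leq\ell(z)\leq\pi\sqrt d$, and $\norm{\mathcal{F}_k(b)}{}=\norm{\int z^{-k}(\beta^z(b)-b)\,d\mathds{P}}{}\leq\pi\sqrt d$, because $\int z^{-k}\,d\mathds{P}=0$. For $k=0$, the element $a_0$ is self-adjoint and $\mu(b)=\varphi(a_0)=0$. Since $(\A,\Lip)$ is a quantum compact metric space, $\norm{a_0}{\A}\leq\diam{\StateSpace(\A)}{\Kantorovich{\Lip}}$. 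The coefficients $a_k$ need not be self-adjoint. However, $(\A,\Lip)$ is metric, so by Rieffel's theorem the set $\{a\in\A_{\Dirac}:\Lip(a)\leq1,\norm{a}{\A}\leq R\}$ is totally bounded; for non-self-adjoint $a$ one applies this to the real and imaginary parts, using $*$-invariance of $\Lip$. Each $a_k$, for $k\in S_n$, thus lies in a fixed totally bounded set, and since $S_n$ is finite, the set of Fourier polynomials $\beta^{f_n}(b)$ is totally bounded. This yields a finite $3\delta$-net for $\mathcal{B}$.

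The main obstacle is the identification of Fourier coefficients with base-algebra elements that are controlled by $\Lip$. This step is handled by Lemma \ref{Fourier-coefficient-Lip-lemma} together with the bound $\Lip\leq\Lip_w$, which follows from $w(0)=1$. A secondary, delicate point is the norm bound on $a_0$. It relies on choosing $\mu$ to factor through $\mathcal{F}_0$, so that centering $b$ also centers $a_0$.
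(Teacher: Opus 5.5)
Your proposal is correct and takes essentially the same route as the paper: Fej\'er truncation under the dual action, control of the Fourier coefficients via Lemma \ref{Fourier-coefficient-Lip-lemma}, total boundedness of norm-bounded Lipschitz balls in the base, and Rieffel's criterion. The differences are minor: you bound $\norm{a_k}{\A}$ for $k\neq0$ by integrating the mean value inequality against the character $z\mapsto z^{-k}$ rather than using $\beta^{k}(a_kU^k)=i\norm{k}{\R^d}^2a_kU^k$, you work with $\Lip$-balls in the base (via $\Lip\leq\Lip_w$) instead of $\Lip_w$-balls, and your preliminary kernel check is redundant, since boundedness of your centered ball already forces the kernel to be $\R1$.
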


  \begin{proof}
    Let $b \in \dom{\Lip_{w,\varepsilon,\kappa}}$, and, for all $k\in\Z^d$, write $\mathcal{F}_k(b)=a_kU^k$. By Lemma \ref{Fourier-coefficient-Lip-lemma}, we have $a_k\in\A_w$ and
    \begin{equation*}
      \Lip_w(a_k) \leq \Lip_{w,\varepsilon,\kappa}(\mathcal{F}_k(b)) \leq \Lip_{w,\varepsilon,\kappa}(b) \text.
    \end{equation*}

    Now, let $k\neq 0$. Since
    \begin{equation*}
      \beta^{k}(a_k U^k) = i \norm{k}{\R^d}^2 a_k U^k,
    \end{equation*}
    we conclude, by the directional derivative estimate above, that
    \begin{align*}
      \norm{a_k}{\A} &= \norm{a_k U^k}{\A\rtimes_\alpha\Z^d}\\
      &= \frac{1}{\norm{k}{\R^d}^2} \norm{\beta^{k}(a_k U^k)}{\A\rtimes_\alpha\Z^d}\\
      &\leq \frac{1}{\norm{k}{\R^d}} \Lip_{w,\varepsilon,\kappa}(\mathcal{F}_k(b))\\
      &\leq \frac{1}{\norm{k}{\R^d}} \Lip_{w,\varepsilon,\kappa}(b)\\
      &\leq \Lip_{w,\varepsilon,\kappa}(b) \text.
    \end{align*}

    Fix a state $\mu$ of $\A$, and let $\nu = \mu\circ\mathcal{F}_0$: note that $\nu$ is a state of $\A\rtimes_\alpha\Z^d$. For any $S\subseteq\Z^d$ finite with $0\in S$, let
    \begin{equation*}
      K_S \coloneqq \left\{ b=b^*=\sum_{k \in S} a_k U^k : \forall k \in S \quad a_k \in \dom{\Lip_w}, \Lip_{w,\varepsilon,\kappa}(b) \leq 1, \nu(b) = 0 \right\} \text.
    \end{equation*}
    Now, let
    \begin{equation*}
      K_\mu = \left\{ a \in \domsa{\Lip_w} : \Lip_w(a) \leq 1\text{, } \mu(a) = 0 \right\}\text,
    \end{equation*}
    and
    \begin{equation*}
      K_1 = \left\{ a \in \domsa{\Lip_w} : \Lip_w(a) \leq 1\text{, }\norm{a}{\A} \leq 1 \right\} \text.
    \end{equation*}
    Since $\Lip_w$ is a lower semicontinuous Lipschitz seminorm, $K_\mu$ and $K_1$ are norm compact in $\A$.

    For $b\in K_S$, we have $a_0\in K_\mu$. Moreover, for $k\in S\setminus\{0\}$, the computations above give $\Lip_w(a_k)\leq 1$ and $\norm{a_k}{\A}\leq 1$. Since $\Lip_w$ is invariant under taking adjoints, $\Re a_k,\Im a_k\in K_1$, and thus $a_k\in K_1+iK_1$.

    Therefore, by Tychonoff's theorem, the product $K_\mu\times\prod_{k \in S\setminus\{0\}}(K_1+iK_1)$ is compact as well. Thus the range of the continuous map
    \begin{equation*}
      (a_k)_{k\in S} \in K_\mu\times\prod_{k \in S\setminus\{0\}}(K_1+iK_1) \mapsto \sum_{k \in S} a_k U^k
    \end{equation*}
    is also compact; moreover, it contains $K_S$ via the computations done above. So $K_S$ is totally bounded.

    Now, let
    \begin{equation*}
      \B \coloneqq \left\{ b \in \domsa{\Lip_{w,\varepsilon,\kappa}} : \nu(b) = 0, \Lip_{w,\varepsilon,\kappa}(b) \leq 1 \right\}\text.
    \end{equation*}

    Let $\eta > 0$. Let $N\in\N$ such that $\frac{1}{N+1} < \eta$. Let $f_N$ and $S\subseteq\Z^d$ be given by Lemma \ref{Fejer-exist-lemma}. Since $f_N$ is nonnegative with integral $1$, the map $\beta^{f_N}$ preserves self-adjointness and satisfies $\mathcal{F}_0\circ\beta^{f_N}=\mathcal{F}_0$. Consequently,
    \begin{equation*}
      \beta^{f_N}(\B)\subseteq K_S\cap\B \text.
    \end{equation*}
    Moreover, by the preceding corollary,
    \begin{equation*}
      \Haus{\A\rtimes_\alpha\Z^d} \left( \B, \beta^{f_N}(\B) \right) \leq \int_{\T^d} f_N(z)\ell(z)\,d\mathds{P}(z) < \frac{1}{N+1} < \eta \text.
    \end{equation*}

    Since $\beta^{f_N}(\B)\subseteq K_S$, and since $\eta>0$ is arbitrary, we conclude that $\B$ is totally bounded. Since $\Lip_{w,\varepsilon,\kappa}$ is lower semicontinuous, $\B$ is even compact, though for this result, this is not needed.

    The total boundedness of $\B$ implies that $\Kantorovich{\Lip_{w,\varepsilon,\kappa}}$ metrizes the weak* topology by \cite{Rieffel98a}.
  \end{proof}

  We summarize our construction in the following theorem. The weight controls the growth of the Lipschitz seminorm along the action, while the twist compensates for the failure of the weighted Dirac operator to commute with the shifts. The coordinate operators provide control of the dual action and, consequently, uniform Fourier approximation on Lipschitz balls.

  \begin{theorem}\label{metric-thm}
    Let $(\A,\Hilbert,\Dirac)$ be a metric spectral triple, with associated Lipschitz seminorm $\Lip$, and let $\alpha$ be an action of $\Z^d$ on $\A$. Let $w$ be an admissible weight adapted to $\alpha$ and $\Lip$, in the sense of Definitions \ref{weight-def} and \ref{adapted-def}. Fix $\varepsilon>0$ and $\kappa>0$.

    Let $\Hilbert[S]=\ell^2(\Z^d,\Hilbert)$, equipped with the regular covariant representation $(\pi,U)$ of $(\A,\alpha)$, and choose $d+2$ anticommuting self-adjoint unitaries $\gamma_0,\ldots,\gamma_{d+1}$ on $\C^q$. Define
    \begin{equation*}
      \nabla_{w,\varepsilon} = \Dirac_w\otimes\gamma_0 +\sum_{j=1}^d\partial_j\otimes\gamma_j +\varepsilon\,1_{\Hilbert[S]}\otimes\gamma_{d+1},
    \end{equation*}
    where
    \begin{equation*}
      (\Dirac_w\xi)_z=w(z)\Dirac\xi_z, \qquad (\partial_j\xi)_z=z_j\xi_z,
    \end{equation*}
    on
    \begin{equation*}
      \dom{\nabla_w} = \left( \dom{\Dirac_w}\cap \bigcap_{j=1}^d\dom{\partial_j} \right)\otimes\C^q \text.
    \end{equation*}

    Let $\rho_\varepsilon$ be the linear map with domain and extension defined by Expression \eqref{rho-eq}, satisfying
    \begin{equation*}
      \rho_\varepsilon(\pi(a)U^k) = \pi(a)\left( U^k+ (M_k\Dirac_wU^k\otimes\gamma_0) \nabla_{w,\varepsilon}^{-1} \right)
    \end{equation*}
    for all $a\in\A_w$ and $k\in\Z^d$, where
    \begin{equation*}
      (M_k\xi)_z = \left(1-\frac{w(z-k)}{w(z)}\right)\xi_z \text.
    \end{equation*}

    Then
    \begin{equation*}
      \left( \A\rtimes_\alpha\Z^d, \Hilbert[S]\otimes\C^q, \nabla_{w,\varepsilon}, \rho_\varepsilon \right)
    \end{equation*}
    is a $\kappa$-metric twisted spectral triple.

    More precisely, $\nabla_{w,\varepsilon}$ is self-adjoint with compact resolvent and $\norm{\nabla_{w,\varepsilon}^{-1}}{}\leq\varepsilon^{-1}$. For all $a\in\A_w$ and $k\in\Z^d$, its twisted commutator satisfies
    \begin{equation*}
      [\nabla_{w,\varepsilon},\pi(a)U^k]_{\rho_\varepsilon} = [\Dirac_w,\pi(a)]U^k\otimes\gamma_0 +\sum_{j=1}^d k_j\pi(a)U^k\otimes\gamma_j \text.
    \end{equation*}
    The associated seminorm, given on its self-adjoint domain by
    \begin{equation*}
      \Lip_{w,\varepsilon,\kappa}(b) = \max\left\{ \norm{[\nabla_{w,\varepsilon},b]_{\rho_\varepsilon}}{}, \kappa\norm{b-\rho_\varepsilon(b)}{} \right\},
    \end{equation*}
    is lower semicontinuous, and $\Kantorovich{\Lip_{w,\varepsilon,\kappa}}$ metrizes the weak* topology on $\StateSpace(\A\rtimes_\alpha\Z^d)$.
  \end{theorem}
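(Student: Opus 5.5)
The proof assembles the results of this section, checking in turn the three conditions of Definition \ref{metric-twisted-spectral-def} for the twisted spectral triple provided by Theorem \ref{twisted-spectral-triple-thm}. First, Proposition \ref{nabla-compact-res-prop} gives self-adjointness, compact resolvent and the bound $\norm{\nabla_{w,\varepsilon}^{-1}}{}\leq\varepsilon^{-1}$. Theorem \ref{twisted-spectral-triple-thm} then shows that every Lipschitz Fourier polynomial lies in $\dom{\rho_\varepsilon}$, and gives the stated formula for the twisted commutator of $\pi(a)U^k$. The twist is defined on $\dom{\rho_\varepsilon}$; its natural extension $\psi_\varepsilon$ agrees with the formula $\rho_\varepsilon(\pi(a)U^k)=\pi(a)(U^k+B_k\nabla_{w,\varepsilon}^{-1})$ by the uniqueness argument following Expression \eqref{psi-def-eq}.

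Next I verify the density and metric part of condition (1). The Lipschitz Fourier polynomials form a dense $\ast$-closed subspace, since $\A_w$ is dense and $\ast$-closed and $(aU^k)^\ast=\alpha^{-k}(a^\ast)U^{-k}$. The step to watch is that $\alpha^{-k}(a^\ast)\in\A_w$; this follows from Lemma \ref{Lipw-lemma}, via $\Lip_w\circ\alpha^{z}\leq r_w(z)\Lip_w$, together with the $\ast$-invariance of $\Lip$. Hence $\dom{\Lip_{w,\varepsilon,\kappa}}$ contains these polynomials, so its self-adjoint part is dense in $\sa{\A\rtimes_\alpha\Z^d}$. On self-adjoint elements, $\Lip_{w,\varepsilon,\kappa}$ coincides with the seminorm $\max\{\norm{[\nabla_{w,\varepsilon},b]_{\rho_\varepsilon}}{},\kappa\norm{b-\rho_\varepsilon(b)}{}\}$ of Definition \ref{metric-twisted-spectral-def}. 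Proposition \ref{Lip-metric-prop} then yields that $\Kantorovich{\Lip_{w,\varepsilon,\kappa}}$ metrizes the weak* topology. Lower semicontinuity is Proposition \ref{Lip-lsc-prop}.

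Condition (2) is exactly Lemma \ref{twisted-commutator-closed-graph-lemma}. Indeed, a norm convergent net with WOT convergent twists and uniformly bounded twisted commutators has its limit in $\dom{\rho_\varepsilon}$, with $\rho_\varepsilon(b)=T$. Condition (3), $\rho_\varepsilon(1)=1$, holds because $M_0=0$, so $B_0=0$, as recorded after Expression \eqref{rho-eq}.

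The only point requiring care, and the main (mild) obstacle, is the bookkeeping between the definition's domain $\A_\rho$ and the paper's $\dom{\Lip_{w,\varepsilon,\kappa}}$. Definition \ref{metric-twisted-spectral-def} uses self-adjoint elements of $\A_\rho=\dom{\rho_\varepsilon}$. For self-adjoint $b$, membership of $b$ in $\dom{\rho_\varepsilon}$ is the same as membership of both $b$ and $b^\ast$, so the two domains and seminorms agree on $\sa{\A\rtimes_\alpha\Z^d}$. Consequently, all the metric statements proved for $\Lip_{w,\varepsilon,\kappa}$ transfer verbatim, and the theorem follows.
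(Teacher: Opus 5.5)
Your proposal is correct and follows essentially the same route as the paper, which likewise assembles Theorem~\ref{twisted-spectral-triple-thm}, Lemma~\ref{twisted-commutator-closed-graph-lemma}, the observation $\rho_\varepsilon(1)=1$, and Propositions~\ref{Lip-lsc-prop} and~\ref{Lip-metric-prop}. Your extra checks make explicit what the paper only asserts in passing: that $\A_w$ is $\alpha$-invariant by Lemma~\ref{Lipw-lemma}, so the Lipschitz Fourier polynomials form a dense $\ast$-closed subspace, and that the self-adjoint parts of $\dom{\rho_\varepsilon}$ and $\dom{\Lip_{w,\varepsilon,\kappa}}$ coincide.
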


  \begin{proof}
    Theorem \ref{twisted-spectral-triple-thm} proves that this is a twisted spectral triple and gives the displayed commutator formula. The closed graph condition in Definition \ref{metric-twisted-spectral-def} follows from Lemma \ref{twisted-commutator-closed-graph-lemma}, while $\rho_\varepsilon(1)=1$ was observed after Expression \eqref{rho-eq}. Finally, Proposition \ref{Lip-lsc-prop} proves lower semicontinuity, and Proposition \ref{Lip-metric-prop} proves that $\Kantorovich{\Lip_{w,\varepsilon,\kappa}}$ metrizes the weak* topology.
  \end{proof}

  \subsection{Examples}

  Lemma \ref{exponential-weight-lemma} gives a general source of examples for Theorem \ref{metric-thm}. Indeed, if the action preserves $\dom{\Lip}$, then it admits an adapted exponential weight. Thus, in all the examples below, it is enough to verify that the action preserves the Lipschitz domain. When a quantitative estimate is available, it also gives an explicit choice of weight.

  \begin{example}[Equicontinuous actions]
    Assume that $\alpha$ is equicontinuous for $\Lip$, i.e.
    \begin{equation*}
      \sup_{z\in\Z^d}\Lip(\alpha^z(a))<\infty
    \end{equation*}
    for all $a\in\dom{\Lip}$. We may then choose $w=1$. In this case, $M_k=0$ for all $k\in\Z^d$, and therefore $\psi_\varepsilon=\mathrm{id}$. Hence our construction recovers the usual crossed-product spectral triple for equicontinuous actions, up to the Clifford representation and the invertible term $\varepsilon\gamma_{d+1}$, which does not affect the commutator seminorm.

    This includes rotations of the circle and the resulting quantum tori, odometer actions and Bunce--Deddens algebras, automorphisms of AF algebras which preserve an AF filtration, including product-type automorphisms of UHF algebras, and the $\Z^d$-odometer examples studied by Hawkins, Skalski, White, and Zacharias \cite{Hawkins13}. Thus these give genuinely noncommutative examples in addition to actions on compact spaces.
  \end{example}

  \begin{example}[Commuting diffeomorphisms]
    Let $M$ be a connected compact Riemannian manifold without boundary, and equip $C(M)$ with the metric spectral triple associated with the Hodge--de Rham operator $d+d^*$. Let $\varphi_1,\ldots,\varphi_d$ be commuting $C^1$-diffeomorphisms of $M$, and set
    \begin{equation*}
      \varphi^k = \varphi_1^{k_1}\circ\cdots\circ\varphi_d^{k_d}
    \end{equation*}
    for all $k=(k_1,\ldots,k_d)\in\Z^d$. We define
    \begin{equation*}
      \alpha^k(f)=f\circ\varphi^{-k}
    \end{equation*}
    for all $f\in C(M)$. If
    \begin{equation*}
      C_j = \max\left\{ 1, \norm{d\varphi_j}{\infty}, \norm{d\varphi_j^{-1}}{\infty} \right\},
    \end{equation*}
    then the chain rule gives
    \begin{equation*}
      \Lip(\alpha^k(f)) \leq \prod_{j=1}^d C_j^{|k_j|}\Lip(f)
    \end{equation*}
    for all $k\in\Z^d$ and $f\in\dom{\Lip}$. Thus
    \begin{equation*}
      w(k)=\prod_{j=1}^d C_j^{-|k_j|}
    \end{equation*}
    is adapted, and Theorem \ref{metric-thm} gives a metric twisted spectral triple on
    \begin{equation*}
      C(M)\rtimes_\alpha\Z^d \, .
    \end{equation*}

    For $d=1$, this includes the Arnold cat map and the parabolic automorphisms of $\T^2$ considered by Bellissard, Marcolli, and Reihani \cite{Bellissard10}, as well as the diffeomorphisms considered by Kaad and Kyed \cite{Kaad21}. Taking commuting matrices in $\operatorname{GL}(n,\Z)$ gives higher-rank actions by toral automorphisms. Kaad and Kyed construct compact quantum metrics from an ordinary derivation and a twisted derivation. The construction here instead produces a compact-resolvent twisted spectral triple on the original crossed product.
  \end{example}

  \begin{example}[The Sierpi\'nski gasket]
    Let $SG$ be the standard Sierpi\'nski gasket, equipped with its geodesic distance and with the metric spectral triple of Christensen, Ivan, and Lapidus \cite{Lapidus08}. Its commutator seminorm is the ordinary Lipschitz seminorm for the geodesic distance. Let $r$ be the rotation of $SG$ through an angle of $2\pi/3$, and define an action $\alpha$ of $\Z$ on $C(SG)$ by
    \begin{equation*}
      \alpha^n(f)=f\circ r^{-n}
    \end{equation*}
    for all $n\in\Z$ and $f\in C(SG)$. Since $r$ is an isometry, we have
    \begin{equation*}
      \Lip(\alpha^n(f))=\Lip(f)
    \end{equation*}
    for all $n\in\Z$ and $f\in\dom{\Lip}$. Thus this action is equicontinuous, and our construction applies with $w=1$.

    The homeomorphism group of $SG$ is the dihedral group of order six. 
 In particular, the action above factors through $\Z/3\Z$, and every action of $\Z$ on $C(SG)$ factors through a cyclic subgroup of order one, two, or three. Hence the standard Sierpi\'nski gasket admits no faithful action of $\Z$ by automorphisms of $C(SG)$.
  \end{example}

  \begin{example}[Automorphisms of quantum tori]
    Let $\A_\theta$ be the quantum two-torus in its Weyl presentation, and let $A\in\operatorname{SL}(2,\Z)$. The formula
    \begin{equation*}
      \alpha_{A,\theta}(W_\theta(m))=W_\theta(Am)
    \end{equation*}
    defines an automorphism of $\A_\theta$. For the standard flat metric spectral triple, the canonical derivations satisfy
    \begin{equation*}
      \delta_j\alpha_{A,\theta}^n(a) = \sum_{l=1}^2 (A^n)_{jl} \alpha_{A,\theta}^n(\delta_l(a))
    \end{equation*}
    on the smooth quantum torus. Consequently,
    \begin{equation*}
      \Lip_\theta(\alpha_{A,\theta}^n(a)) \leq \norm{A^n}{}\Lip_\theta(a) \, .
    \end{equation*}
    The same estimate extends to the Lipschitz domain by lower semicontinuity and Fej\'er approximation. Therefore, if
    \begin{equation*}
      C=\max\left\{1,\norm{A}{},\norm{A^{-1}}{}\right\},
    \end{equation*}
    then $w(n)=C^{-|n|}$ is adapted, and Theorem \ref{metric-thm} applies to
    \begin{equation*}
      \A_\theta\rtimes_{\alpha_{A,\theta}}\Z \, .
    \end{equation*}

    Hyperbolic matrices give non-equicontinuous examples, while parabolic matrices give examples with polynomial growth. The lattice real-multiplication systems considered by Bellissard, Marcolli, and Reihani provide related examples \cite{Bellissard10}. We only claim here the metric twisted spectral triple constructed above; its identification with their solvmanifold spectral triple, and the arithmetic information carried by the latter, are separate questions.
  \end{example}

  \begin{example}[Subshifts]
    Let $X$ be a subshift, and let
    \begin{equation*}
      (C(X),\Hilbert,D_X)
    \end{equation*}
    be the canonical spectral triple of Julien and Putnam \cite{Putnam16}. If $U$ denotes the unitary which implements the shift, then $[D_X,U]$ is bounded by \cite[Theorem~3.2]{Putnam16}. Thus, for all $f\in\dom{\Lip}$,
    \begin{align*}
      [D_X,UfU^*] &=[D_X,U]fU^*+U[D_X,f]U^*+Uf[D_X,U^*]
    \end{align*}
    is bounded. The same computation with $U^*$ shows that the shift and its inverse preserve $\dom{\Lip}$. Therefore Lemma \ref{exponential-weight-lemma} applies whenever the base triple is metric.

    Julien and Putnam prove this metric property for linearly recurrent subshifts \cite[Theorem~5.10]{Putnam16}, including primitive substitution subshifts such as the Fibonacci and Thue--Morse systems, and for a large class of Sturmian subshifts \cite[Theorem~5.14 and Corollary~5.15]{Putnam16}, including almost every parameter. Theorem \ref{metric-thm} therefore gives metric twisted spectral triples on the associated crossed products. Their canonical operator also gives a spectral triple on the crossed product, but the metric property of that crossed-product triple is left open in \cite{Putnam16}. Our conclusion concerns the different twisted spectral triple constructed here.
  \end{example}

  \begin{remark}[Comparison and scope]
    The preceding examples illustrate the difference with earlier approaches on how to handle spectral triples for C*-crossed products. For non-equicontinuous actions, Bellissard, Marcolli, and Reihani propose to replace the base space with a metric-bundle algebra, which is nonunital even when the original algebra is unital \cite{Bellissard10}. Kaad and Kyed retain the original crossed product and construct compact quantum metrics using derivations and twisted derivations \cite{Kaad21}, though they are not brought together into a single spectral triple. Our construction remains on the original unital crossed product and produces a single compact-resolvent metric twisted spectral triple. An advantage of our construction is that it fits within our convergence for the spectral propinquity, and is a direct generalization of the equicontinuous case (to which our construction reduces by choosing the trivial weight $1$). This paper focused on actions of $\Z^d$, though we hope it is also the blueprint for further studies of C*-crossed products of other discrete groups. Our focus on $\Z^d$ is owed to our desire to obtain nice quantum metrics from our setting; presumably using groups for which the reduced C*-algebra carries a metric spectral triple are the next natural step. Moreover, some of the papers mentioned above also have constructions which either do not lead to metric spectral triple on the base space, or work with some nonunital situations, which do not fit in the framework described here and will require further study.
  \end{remark}


  \section{Examples of Convergence for the Spectral Propinquity}
  \label{finite-mode-convergence-section}

  While it is noteworthy that our construction of twisted spectral triples on C*-crossed products naturally extends the usual construction from the equicontinuous case to a much more general situation, and that it induces a quantum metric whenever the base spectral triple does, we believe that a test for our construction is a form of continuity for the spectral propinquity for our newly constructed triples. We propose such examples in this section. Our focus of course is on non-equicontinuous actions.

  Our first step is to write a general result for convergence for the propinquity which abstracts our previous work on quantum tori. We then apply this method to our twisted spectral triples, and in turn, apply our work to the example of an Anosov automorphism acting on the continuous family of quantum $2$-tori, indexed by their deformation parameters.

  \subsection{A strategy for proving convergence for the spectral propinquity}

  We first isolate the convergence argument which will be used below. It is an abstract version of the finite-dimensional approximation and continuity methods used for quantum tori in \cite{Latremoliere21a}, which in turn will serve as a starting point for our strategy to prove convergence for our twisted spectral triples. We thus group the hypothesis for our convergence theorem in the following hypothesis.

  We specialize to our situation the idea of a \emph{bridge} as introduced in \cite{Latremoliere13}. Let $\A$ and $\B$ be two unital C*-algebras. A \emph{bridge} $(\B(\Hilbert),\omega)$ is given by the C*-algebra $\B(\Hilbert)$ of all bounded linear operators on a fixed Hilbert space $\Hilbert$ on which both $\A$ and $\B$ are faithfully and unitally represented, and a bounded self-adjoint operator $\omega$ on $\Hilbert$ with norm $1$ such that:
  \begin{equation*}
    \StateSpace_1(\omega) \coloneqq \left\{ \varphi \in \StateSpace(\B(\Hilbert)) : \varphi(\omega) = 1 \right\}
  \end{equation*}
  is not empty. The operator $\omega$ is called the \emph{pivot} of our bridge.

  In \cite{Latremoliere13}, a bridge is more general --- $\B(\Hilbert)$ may be any unital C*-algebra, and the unital *-monomorphisms from $\A$ and $\B$ into the bridge algebra are explicitly specified; we even allow the pivot $\omega$ to be non-self-adjoint, but this will not be necessary here.

  A bridge is a tool to quantify how close two generalized {\qcms s} can be. It serves as the measurement device for the quantum propinquity, which was introduced specifically for {\qcms s}. The notion of bridge and its associated measurement scalar extend trivially to generalized {\qcms s}, but not the quantum propinquity (it is essential for its coincidence property that we work with {\qcms s}).

  If $\gamma=(\B(\Hilbert),\omega)$ is a bridge, then its associated scalar is the \emph{length} $\bridgelength{\gamma}{}$, which is the maximum of two numbers. The first number is the \emph{bridge height} $\bridgeheight{\gamma}$, defined as
  \begin{equation*}
    \max\left\{ \Haus{\Kantorovich{\Lip_\A}}(\StateSpace(\A), \{\varphi\left|_{\A}\right. : \varphi \in \StateSpace_1(\omega) \}), \Haus{\Kantorovich{\Lip_\B}}(\StateSpace(\B), \{\varphi\left|_{\B}\right. : \varphi \in \StateSpace_1(\omega) \}) \right\} \text.
  \end{equation*}
  The second number is the \emph{bridge reach}, which is defined as the maximum of
  \begin{equation*}
    \sup_{\substack{a \in \domsa{\Lip_\A} \\ \Lip_\A(a)\leq 1}} \inf_{\substack{b \in \domsa{\Lip_\B} \\ \Lip_\B(b) \leq 1}} \norm{ a \omega - \omega b }{\B(\Hilbert)}
  \end{equation*}
  and
  \begin{equation*}
    \sup_{\substack{b \in \domsa{\Lip_\B} \\ \Lip_\B(b)\leq 1}} \inf_{\substack{a \in \domsa{\Lip_\A} \\ \Lip_\A(a) \leq 1}} \norm{ a \omega - \omega b }{\B(\Hilbert)}\text.
  \end{equation*}

  Within the context of the dual propinquity, on which the spectral propinquity is built, a bridge is a tool to define tunnels. If $(\B(\Hilbert),\omega)$ is a bridge of length less than or equal to $\varepsilon > 0$, then we associate the tunnel
  \begin{equation*}
    \tau \coloneqq \left(\A\oplus\B, \TLip, \pi_\A, \pi_\B \right)
  \end{equation*}
  where $\pi_\A : (a,b)\in\A\oplus\B\to\A$, $\pi_\B:(a,b)\in\A\oplus\B\to\B$, and for all $(a,b) \in \domsa{\Lip_\A}\oplus\domsa{\Lip_\B}$,
  \begin{equation*}
    \TLip(a,b) \coloneqq \max\left\{ \Lip_\A(a), \Lip_\B(b), \frac{1}{\varepsilon}\norm{a \omega - \omega b}{\B(\Hilbert)} \right\} \text.
  \end{equation*}
  The extent of the tunnel $\tau$ is at most $\varepsilon$.

  For this section, we first formalize how the existence of certain maps and associated estimates provide the existence of tunnels which can be used to prove convergence.

  \begin{hypothesis}\label{finite-mode-hyp}
    Let $T$ be a topological space. For every $t\in T$, let $(\B_t,\Lip_t)$ be a generalized {\qcms}. We assume that every $\B_t$ is faithfully and unitally represented on a common Hilbert space $\Hilbert[K]$. Fix a state $\mu_t\in\StateSpace(\B_t)$ of $\B_t$ and set
    \begin{equation*}
      \mathfrak{L}_t \coloneqq \left\{ b\in\domsa{\Lip_t}: \Lip_t(b)\leq1,\ \mu_t(b)=0 \right\}\text.
    \end{equation*}
    Fix $t_0 \in T$. We assume the following data is available.
    \begin{enumerate}
      \item For every $m\in\N$, for every $t \in T$, we are given linear maps $\Phi_{m,t}:\B_t\longrightarrow\B_t$, preserving adjoints, the unit, and such that
      \begin{equation*}
        \mu_t \circ \Phi_{m,t} = \mu_t \text,
      \end{equation*}
      \item For every $m \in \N$, there exists $\varepsilon_m > 0$ such that, for all $t\in T$ and $b \in \domsa{\Lip_t}$,
      \begin{equation*}
        \Lip_t(\Phi_{m,t}(b)) \leq (1+\varepsilon_m)\Lip_t(b)
      \end{equation*}
      and
      \begin{equation*}
        \norm{b-\Phi_{m,t}(b)}{\B_t} \leq \varepsilon_m\Lip_t(b)
      \end{equation*}
      while
      \begin{equation*}
        \lim_{m\to\infty}\varepsilon_m = 0 \text,
      \end{equation*}

      \item For every $m\in\N$, we are given a compact Hausdorff space
      $C_m$ and, for all $m\in \N$ and $t\in T$, we are given norm-continuous maps
      \begin{equation*}
        P_{m,t}:C_m\longrightarrow\domsa{\Lip_t}
      \end{equation*}
      such that
      \begin{equation*}
        \Phi_{m,t}(\mathfrak L_t) \subseteq P_{m,t}(C_m) \qquad\text{and}\qquad \mu_t(P_{m,t}(x))=0
      \end{equation*}
      for all $x\in C_m$. Moreover,
      \begin{equation*}
        \sup_{x\in C_m} \left| \Lip_t(P_{m,t}(x)) - \Lip_{t_0}(P_{m,t_0}(x)) \right| \xrightarrow{t\in T, t\to t_0} 0\text.
      \end{equation*}
      \item For every $m\in\N$ and every $\xi\in\Hilbert[K]$,
      \begin{equation*}
        \sup_{x\in C_m} \norm{(P_{m,t}(x)-P_{m,t_0}(x))\xi}{\Hilbert[K]} \xrightarrow{t \in T, t\to t_0} 0\text.
      \end{equation*}

      \item For every $m, r\in\N$, there are positive finite-rank contractions $\omega_{m,r}\in\B(\Hilbert[K])$ such that
      \begin{equation*}
        \sup_{x\in C_m} \norm{ \left[ P_{m,t_0}(x), \omega_{m,r} \right]}{\B(\Hilbert[K])} \xrightarrow{r \to \infty} 0\text.
      \end{equation*}
      and $\gamma_{m,r} \coloneqq (\B(\Hilbert[K]),\omega_{m,r})$ is a bridge from $\B_t$ to $\B_{t_0}$ such that
      \begin{equation*}
        \lim_{r\to\infty} \limsup_{t \to t_0} \bridgeheight{\gamma_{m,r}|\Lip_t,\Lip_{t_0}}{} = 0\text.
      \end{equation*}
    \end{enumerate}
  \end{hypothesis}

  \begin{theorem}
    \label{finite-mode-convergence-thm} If we assume Hypothesis \ref{finite-mode-hyp}, then for all $\varepsilon > 0$, there exists a neighborhood $T_\varepsilon$ of $t_0$ in $T$ such that, for all $t \in T_\varepsilon$, there exists a tunnel from $(\B_t,\Lip_t)$ to $(\B_{t_0},\Lip_{t_0})$ of the form:
    \begin{equation*}
      \left(\B_t \oplus \B_{t_0}, \TLip_{t,\varepsilon}, \pi_t, \pi_{t_0} \right)
    \end{equation*}
    where
    \begin{equation*}
      \pi_t(a,b)=a \qquad\text{and}\qquad \pi_{t_0}(a,b)=b
    \end{equation*}
    for all $(a,b)\in\B_t\oplus\B_{t_0}$, and with extent at most $\varepsilon$.
  \end{theorem}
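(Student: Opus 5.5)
Given $\varepsilon>0$, the plan is to build the tunnel directly from a bridge $\gamma_{m,r}=(\B(\Hilbert[K]),\omega_{m,r})$ with $m$ and $r$ chosen large, using the tunnel recipe recalled before Hypothesis \ref{finite-mode-hyp}:
\begin{equation*}
  \TLip_{t,\varepsilon}(a,b)=\max\left\{\Lip_t(a),\Lip_{t_0}(b),\tfrac{1}{\delta}\norm{a\omega_{m,r}-\omega_{m,r}b}{\B(\Hilbert[K])}\right\}
\end{equation*}
for a suitable $\delta\le\varepsilon$. The extent of such a tunnel is controlled by the length of the bridge, so it suffices to show that $\bridgelength{\gamma_{m,r}}{\Lip_t,\Lip_{t_0}}<\varepsilon$ for all $t$ in some neighborhood $T_\varepsilon$ of $t_0$. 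The height is handled directly by Hypothesis \ref{finite-mode-hyp}(5): pick $r$ large, then a neighborhood of $t_0$, so that the height is below $\varepsilon/2$. The remaining work is the bridge reach.

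For the reach, the first step is to reduce to the sets $\mathfrak L_t$. Elements with $\Lip_t(b)\le 1$ differ from elements of $\mathfrak L_t$ by the scalar $\mu_t(b)1$. Scalars commute with the pivot, so the same scalar shift can be made on both sides. To keep the pairing symmetric with respect to $\mu_t$ and $\mu_{t_0}$, I will shift the partner element by $\mu_t(b)1$ as well, using $\Lip(1)=0$.

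The second step handles a given $b\in\mathfrak L_t$. Write $\Phi_{m,t}(b)=P_{m,t}(x)$ with $x\in C_m$. Then $\norm{b-P_{m,t}(x)}{}\le\varepsilon_m$ and $\Lip_t(P_{m,t}(x))\le 1+\varepsilon_m$. The candidate partner is $c=P_{m,t_0}(x)/(1+\eta)$, where $\eta\to 0$. The choice of $\eta$ is justified by the uniform convergence of $\Lip_t\circ P_{m,t}$ to $\Lip_{t_0}\circ P_{m,t_0}$ over $C_m$ from (3), so that $\Lip_{t_0}(c)\le 1$. We then estimate
\begin{equation*}
  \norm{b\omega-\omega c}{}\le \varepsilon_m+\norm{(P_{m,t}(x)-P_{m,t_0}(x))\omega}{}+\norm{[P_{m,t_0}(x),\omega]}{}+O(\eta).
\end{equation*}
The commutator term is small by (5) for $r$ large. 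The second term is small uniformly in $x$ because $\omega$ has finite rank. Concretely, write $\omega=\sum_{i\le N}\xi_i\otimes\zeta_i^*$; then the norm is bounded by a finite sum of $\sup_x\norm{(P_{m,t}(x)-P_{m,t_0}(x))\xi_i}{}$, which tends to $0$ by (4). The symmetric direction, starting from $c\in\mathfrak L_{t_0}$, is handled in the same way with the roles of $t$ and $t_0$ swapped. This uses $\Phi_{m,t_0}$, and the bound $\Lip_t(P_{m,t}(x))\le (1+\varepsilon_m)+o(1)$, which again comes from (3).

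The order of choices is as follows: first $m$ with $\varepsilon_m<\varepsilon/8$; then $r$ making the commutator term and the $\limsup$ of the heights small; then the neighborhood $T_\varepsilon$ from (3), (4) and (5) for these fixed $m,r$. One could worry that the reach involves a supremum over an infinite family with no compactness on the $t$ side. I expect this to be the main obstacle, and the device above addresses it: everything is funneled through the fixed compact $C_m$ via $\Phi_{m,t}$ and the maps $P_{m,t}$, uniformly in $t$. This is exactly why (2) and (3) are required to hold for all $t\in T$. The second delicate point is the self-adjointness of the partners. The maps $P_{m,t}$ take values in $\domsa{\Lip_t}$, and the rescaling by $1+\eta$ preserves this, so there is no issue.
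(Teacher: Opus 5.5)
Your route is the paper's. You use the bridge $\gamma_{m,r}$ and funnel $\mathfrak L_t$ through $\Phi_{m,t}$ and the fixed compact $C_m$. The finite-rank pivot upgrades the strong convergence of (4) to a norm estimate, and you use the commutator condition from (5) and the real-scalar shift. However, one step does not close as written: the rescaling term.

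Since $\Lip_t(P_{m,t}(x))$ may be as large as $1+\varepsilon_m$, your $\eta$ must satisfy $\eta\geq\varepsilon_m+\sup_{x\in C_m}|\Lip_t(P_{m,t}(x))-\Lip_{t_0}(P_{m,t_0}(x))|$. So once $m$ is fixed, $\eta$ does not tend to $0$; it stays above $\varepsilon_m$. The term you write as $O(\eta)$ is $\frac{\eta}{1+\eta}\norm{\omega_{m,r}P_{m,t_0}(x)}{}$, and the proposal gives no bound on $\norm{P_{m,t_0}(x)}{}$. Compactness of $C_m$ only gives an $m$-dependent bound $K_m=\sup_{x\in C_m}\norm{P_{m,t_0}(x)}{}$. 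Nothing in the hypothesis makes $\varepsilon_m K_m$ small, since $C_m$ may map to elements of huge norm. Hence your choice ``first $m$ with $\varepsilon_m<\varepsilon/8$'' does not make this term small, and choosing $m$ in terms of $K_m$ would be circular. The same issue appears in the reverse direction, through $\frac{\eta}{1+\eta}\norm{\omega_{m,r}\Phi_{m,t_0}(b)}{}$.

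The missing ingredient is the finite diameter $d_0=\diam{\B_{t_0}}{\Lip_{t_0}}$, combined with the centering $\mu_{t_0}(P_{m,t_0}(x))=0$. For self-adjoint $a$ with $\mu_{t_0}(a)=0$, one has $|\varphi(a)|=|\varphi(a)-\mu_{t_0}(a)|\leq d_0\Lip_{t_0}(a)$ for every state $\varphi$, so $\norm{a}{}\leq d_0\Lip_{t_0}(a)$. Hence, for every $x$ that actually arises, $\norm{P_{m,t_0}(x)}{}\leq d_0(1+\eta)$, and the rescaling term is at most $\eta d_0$, uniformly in $m$. In the reverse direction the analogous bound is $d_0(1+\varepsilon_m)$.

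You must then choose $m$ with $\varepsilon_m<\varepsilon/(16(d_0+1))$. You must also shrink the neighborhood so that the $\Lip$-discrepancy on $C_m$ is below $\varepsilon/(16(d_0+1))$, and take $\eta=\varepsilon/(8(d_0+1))$. This is exactly how the paper's proof is organized. With these choices your estimates close, the reach is below $\varepsilon/2$, and the tunnel has extent at most $\varepsilon$.
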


  \begin{proof}
    Let $\varepsilon>0$, and let
    \begin{equation*}
      d_0 \coloneqq \diam{\B_{t_0}}{\Lip_{t_0}} < \infty \text.
    \end{equation*}
    By assumption, there exists $M \in \N$ such that, if $m\geq M$, then
    \begin{equation*}
      \varepsilon_m < \frac{\varepsilon}{16(d_0+1)}\text.
    \end{equation*}
    We fix $m \geq M$. Now, by assumption, there exists $R \in \N$ such that, if $r \geq R$, then
    \begin{equation*}
      \sup_{x\in C_m}\norm{[P_{m,t_0}(x),\omega_{m,r}]}{\B(\Hilbert[K])} <\frac{\varepsilon}{16}
    \end{equation*}
    and
    \begin{equation*}
      \limsup_{t\to t_0}\bridgeheight{\gamma_{m,r}|\Lip_t,\Lip_{t_0}} < \frac{\varepsilon}{8}\text.
    \end{equation*}
    Fix $r \geq R$.

    Write
    \begin{equation*}
      \omega_{m,r}\xi = \sum_{j=1}^N \lambda_j \inner{\xi}{\xi_j}{\Hilbert[K]} \xi_j
    \end{equation*}
    for some orthonormal family $(\xi_j)_{j=1}^N$ and some $\lambda_1,\ldots,\lambda_N\in(0,1]$. By Hypothesis~\ref{finite-mode-hyp}, there exists a neighborhood $T_\varepsilon$ of $t_0$ such that, for all $t\in T_\varepsilon$,
    \begin{equation*}
      \sup_{x\in C_m} \left| \Lip_t(P_{m,t}(x)) - \Lip_{t_0}(P_{m,t_0}(x)) \right| < \frac{\varepsilon}{16(d_0+1)},
    \end{equation*}
    \begin{equation*}
      \sup_{x\in C_m} \norm{ (P_{m,t}(x)-P_{m,t_0}(x))\xi_j }{\Hilbert[K]} < \frac{\varepsilon}{16N}
    \end{equation*}
    for all $j\in\{1,\ldots,N\}$, and
    \begin{equation*}
      \bridgeheight{\gamma_{m,r}} < \frac{\varepsilon}{4}\text.
    \end{equation*}
    Consequently, for every $t\in T_\varepsilon$,
    \begin{align*}
      &\sup_{x\in C_m} \norm{(P_{m,t}(x)-P_{m,t_0}(x))\omega_{m,r}}{\B(\Hilbert[K])}\\
      &\quad\leq \sum_{j=1}^N \lambda_j \sup_{x\in C_m} \norm{(P_{m,t}(x)-P_{m,t_0}(x))\xi_j}{\Hilbert[K]} <\frac{\varepsilon}{16}\text.
    \end{align*}

    Fix $t\in T_\varepsilon$.

    We are now ready to prove our estimate on the reach of the bridges $\gamma_{m,r}$. Let $b\in\mathfrak{L}_t$. Since $\Phi_{m,t}(\mathfrak{L}_t) \subseteq P_{m,t}(C_m)$ by assumption, there exists $x\in C_m$ such that
    \begin{equation*}
      \Phi_{m,t}(b)=P_{m,t}(x)\text.
    \end{equation*}
    We then have
    \begin{align*}
      \Lip_{t_0}(P_{m,t_0}(x)) &\leq \Lip_t(P_{m,t}(x)) + \left| \Lip_t(P_{m,t}(x)) - \Lip_{t_0}(P_{m,t_0}(x)) \right|\\
      &< 1+\varepsilon_m + \frac{\varepsilon}{16(d_0+1)}\\
      &< 1+\frac{\varepsilon}{8(d_0+1)}\text.
    \end{align*}
    Hence, if we set
    \begin{equation*}
      c \coloneqq \frac{1}{1+\frac{\varepsilon}{8(d_0+1)}} P_{m,t_0}(x)
    \end{equation*}
    then $c \in \mathfrak L_{t_0}$. Moreover,
    \begin{equation*}
      \norm{P_{m,t_0}(x)}{\B_{t_0}} < d_0 \left( 1+\frac{\varepsilon}{8(d_0+1)}\right)\text.
    \end{equation*}
    Therefore,
    \begin{align*}
      \norm{b\omega_{m,r} - \omega_{m,r}c}{\B(\Hilbert[K])} &\leq\norm{b-\Phi_{m,t}(b)}{\B_t} \underbracket{\norm{\omega_{m,r}}{\B(\Hilbert[K])} }_{=1}\\
      &\qquad+ \norm{(\underbracket{P_{m,t}(x)}_{=\Phi_{m,t}(b)}-P_{m,t_0}(x))\omega_{m,r}}{\B(\Hilbert[K])}\\
      &\qquad+ \norm{\left[P_{m,t_0}(x),\omega_{m,r}\right]}{\B(\Hilbert[K])}\\
      &\qquad+ \frac{\frac{\varepsilon}{8(d_0+1)}}{1+\frac{\varepsilon}{8(d_0+1)}} \norm{\omega_{m,r}P_{m,t_0}(x)}{\B(\Hilbert[K])}\\
      &< \frac{\varepsilon}{16} + \frac{\varepsilon}{16} + \frac{\varepsilon}{16} + \frac{\varepsilon}{8} < \frac{\varepsilon}{2}\text{.}
    \end{align*}

    Conversely, let $b\in\mathfrak L_{t_0}$, and choose $x\in C_m$ such that
    \begin{equation*}
      \Phi_{m,t_0}(b)=P_{m,t_0}(x)\text.
    \end{equation*}
    We have
    \begin{align*}
      \Lip_t(P_{m,t}(x)) &\leq \Lip_{t_0}(P_{m,t_0}(x)) + \left| \Lip_t(P_{m,t}(x)) - \Lip_{t_0}(P_{m,t_0}(x))\right|\\
      &< 1+\varepsilon_m+\frac{\varepsilon}{16(d_0+1)}\\
      &< 1+\frac{\varepsilon}{8(d_0+1)}\text.
    \end{align*}
    Thus, setting:
    \begin{equation*}
      c \coloneqq \frac{1}{1+\frac{\varepsilon}{8(d_0+1)}} P_{m,t}(x)
    \end{equation*}
    we have $c \in \mathfrak L_t$. Since
    \begin{equation*}
      \norm{P_{m,t_0}(x)}{\B_{t_0}} \leq d_0(1+\varepsilon_m) < d_0 \left( 1+\frac{\varepsilon}{8(d_0+1)} \right),
    \end{equation*}
    we obtain
    \begin{align*}
      \norm{ c\omega_{m,r} - \omega_{m,r}b}{\B(\Hilbert[K])} & \leq \frac{1}{1+\frac{\varepsilon}{8(d_0+1)}}\norm{ (P_{m,t}(x)-P_{m,t_0}(x))\omega_{m,r}}{\B(\Hilbert[K])}\\
      &\qquad+ \frac{1}{1+\frac{\varepsilon}{8(d_0+1)}} \norm{ P_{m,t_0}(x)\omega_{m,r} - \omega_{m,r}P_{m,t_0}(x)}{\B(\Hilbert[K])}\\
      &\qquad+ \frac{\frac{\varepsilon}{8(d_0+1)}}{1+\frac{\varepsilon}{8(d_0+1)}} \norm{ \omega_{m,r}P_{m,t_0}(x) }{\B(\Hilbert[K])}\\
      &\qquad+ \norm{ \omega_{m,r}(P_{m,t_0}(x)-b)}{\B(\Hilbert[K])}\\
      &< \frac{\varepsilon}{16} +\frac{\varepsilon}{16}+\frac{\varepsilon}{8}+\frac{\varepsilon}{16} < \frac{\varepsilon}{2}\text.
    \end{align*}

    Adding the same real scalar to both matched elements does not change the bridge seminorm. Therefore, the two preceding estimates apply to the full Lipschitz unit balls, and
    \begin{equation*}
      \bridgereach{\gamma_{m,r}}{\Lip_t,\Lip_{t_0}} < \frac{\varepsilon}{2}\text.
    \end{equation*}
    Thus, the length of our bridge is no more than $\varepsilon>0$.

    Therefore, $(\B_t \oplus \B_{t_0}, \TLip_{t,\varepsilon}, \pi_t, \pi_{t_0})$, where
    \begin{equation*}
      \TLip_{t,\varepsilon} : (a,b) \in \domsa{\Lip_t}\oplus\domsa{\Lip_{t_0}} \mapsto \max\left\{ \Lip_t(a), \Lip_{t_0}(b), \frac{1}{\varepsilon}\norm{ a \omega_{m,r} - \omega_{m,r} b }{\B(\Hilbert[K])} \right\}
    \end{equation*}
    is a tunnel of extent at most $\varepsilon$ by \cite{Latremoliere13b}.
  \end{proof}

  If we add the assumption that there exist $C\geq 1$ and $D \geq 0$ such that, for all $t \in T$, the Lipschitz seminorm $\Lip_t$ is $(C,D)$-Leibniz, then we have proven that
  \begin{equation*}
    \lim_{t\to t_0} \dpropinquity{C,D}((\B_t,\Lip_t),(\B_{t_0},\Lip_{t_0})) = 0 \text.
  \end{equation*}

  We will however not require this assumption; instead we now prove an extension of the previous theorem to the spectral propinquity.


  \subsubsection{Spectral convergence}

  We now strengthen Theorem~\ref{finite-mode-convergence-thm} to the twisted spectral propinquity.

  \begin{hypothesis}\label{spectral-finite-mode-hyp}
    Fix $\kappa\in(0,1]$. We assume Hypothesis~\ref{finite-mode-hyp}. Moreover, we assume the following conditions.

    For every $t\in T$, we are given a $\kappa$-metric twisted spectral triple
    \begin{equation*}
      (\B_t,\Hilbert[K],\Dirac[\nabla]_t,\rho_t)
    \end{equation*}
    such that $\Lip_t$ is the Lipschitz seminorm for this triple.

    We furthermore assume:
    \begin{enumerate}
      \item There exists a dense subspace $\dom{\nabla}$ of $\Hilbert[K]$ such that $\dom{\Dirac[\nabla]_t} = \dom{\nabla}$ for all $t \in T$. For all $t \in T$ and $\xi \in \dom{\nabla}$, we set
      \begin{equation*}
        \CDN_t(\xi) \coloneqq \norm{\xi}{\Hilbert[K]} + \norm{\Dirac[\nabla]_t\xi}{\Hilbert[K]} \text.
      \end{equation*}

      \item For all $t \in T$, there exists $\delta_t\geq0$ such that, for all $\xi \in \dom{\nabla}$,
      \begin{equation*}
        \norm{(\Dirac[\nabla]_t-\Dirac[\nabla]_{t_0})\xi}{\Hilbert[K]} \leq \delta_t\CDN_{t_0}(\xi)
      \end{equation*}
      while $\lim_{\substack{t \to t_0 \\ t \in T}} \delta_t = 0$.

      \item For all $X \in \B(\Hilbert[K])$, we set
      \begin{equation*}
        \opnorm{X}{}{t_0} \coloneqq \sup \left\{ \norm{X\xi}{\Hilbert[K]} : \xi\in\dom{\nabla}\text{, }\CDN_{t_0}(\xi)\leq1 \right\}\text.
      \end{equation*}
      There exists a neighborhood $V$ of $t_0$ in $T$ such that
      \begin{equation*}
        \sup_{t \in V} \sup_{b\in\mathfrak L_t} \opnorm{\rho_t(b)-\rho_t(\Phi_{m,t}(b))}{}{t_0} \xrightarrow{m \to \infty} 0 \text.
      \end{equation*}

      \item We assume that for every $m\in\N$,
      \begin{equation*}
        \sup_{x\in C_m}\max \left\{ \opnorm{P_{m,t}(x)-P_{m,t_0}(x)}{}{t_0}\text{, }\opnorm{\rho_t(P_{m,t}(x)) - \rho_{t_0}(P_{m,t_0}(x))}{}{t_0} \right\}\xrightarrow{t \in T, t\to t_0} 0 \text.
      \end{equation*}
    \end{enumerate}
  \end{hypothesis}

  We first construct a metrical tunnel between our metrical C*-correspondences.

  \begin{lemma}[Finite-mode metrical tunnels]
    \label{finite-mode-joint-matching-lemma} For every $\varepsilon>0$, there exists a neighborhood $V_\varepsilon$ of $t_0$ such that, for all $t\in V_\varepsilon$, there exists a relaxed $(1,0,\kappa^{-1}+2,2)$-metrical tunnel from
    \begin{equation*}
      (\Hilbert[K], \CDN_t, \B_t,\Lip_t,\C,0)
    \end{equation*}
    to
    \begin{equation*}
      (\Hilbert[K], \CDN_{t_0}, \B_{t_0},\Lip_{t_0},\C,0)
    \end{equation*}
    with extent at most $\varepsilon$.
  \end{lemma}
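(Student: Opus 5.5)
The plan is to build the metrical tunnel from the same bridge $\gamma_{m,r}=(\B(\Hilbert[K]),\omega_{m,r})$ used in the proof of Theorem~\ref{finite-mode-convergence-thm}, now acting also on the modules. On the C*-algebra level, the tunnel is the one given by Theorem~\ref{finite-mode-convergence-thm}, with $m$ and $r$ chosen large and $t$ in a small neighborhood. On the module level, take the module $\Hilbert[K]\oplus\Hilbert[K]$ over $\B_t\oplus\B_{t_0}$, with the obvious projections onto each summand. Equip it with the D-norm
\begin{equation*}
  \TDN(\xi,\eta)\coloneqq\max\left\{\CDN_t(\xi),\CDN_{t_0}(\eta),\frac{1}{\varepsilon}\norm{\omega_{m,r}\xi-\omega_{m,r}\eta}{\Hilbert[K]}\right\}\text.
\end{equation*}
Put the trivial Lipschitz seminorm on $\C$, and take the trivial tunnel there. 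Since $\omega_{m,r}$ has finite rank, $\TDN$ has a compact unit ball modulo the base.

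The verification then proceeds in three steps. First, \emph{extent on modules}. Given $\xi$ with $\CDN_t(\xi)\leq1$, match it with $\eta=\xi$. Hypothesis~\ref{spectral-finite-mode-hyp}(2) gives $\CDN_{t_0}(\xi)\leq\CDN_t(\xi)+\delta_t\CDN_{t_0}(\xi)$, so $\CDN_{t_0}(\xi)\leq(1-\delta_t)^{-1}$. Rescale by this factor, which tends to $1$; this keeps $\norm{\omega\xi-\omega\eta}{}$ of order $\delta_t$. Symmetrically, if $\CDN_{t_0}(\eta)\leq1$, then $\CDN_t(\eta)\leq1+\delta_t$. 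This shows both projections are quotient maps up to an error $\to0$, which is what the \emph{relaxed} qualifier absorbs.

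Second, \emph{the modular Leibniz inequality with constants $(\kappa^{-1}+2,2)$}. For $(a,b)$ and $(\xi,\eta)$, each diagonal term follows from the twisted Leibniz bound $\CDN_t(a\xi)\leq(\norm{a}{}+\kappa^{-1}\Lip_t(a)+\ldots)\CDN_t(\xi)$. This is obtained from $\nabla_ta\xi=\rho_t(a)\nabla_t\xi+[\nabla_t,a]_{\rho_t}\xi$, together with $\norm{\rho_t(a)}{}\leq\norm{a}{}+\kappa^{-1}\Lip_t(a)$ as in \cite{Latremoliere26b}. The cross term $\frac1\varepsilon\norm{\omega a\xi-\omega b\eta}{}$ is split as
\begin{equation*}
  \omega(a\xi-b\eta)=(\omega a-b\omega)\xi+[b,\omega]\xi+b\,\omega(\xi-\eta)\text,
\end{equation*}
which yields a bound by $\TLip(a,b)\TDN(\xi,\eta)$ plus a commutator term. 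The latter is controlled by the estimate $\sup_x\norm{[P_{m,t_0}(x),\omega_{m,r}]}{}<\varepsilon/16$ from the proof of Theorem~\ref{finite-mode-convergence-thm}, after replacing $b$ by its finite-mode approximation $\Phi_{m,t_0}(b)=P_{m,t_0}(x)$. This is where the constant $2$ arises.

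Third, and this is the main obstacle, \emph{the spectral (Dirac) part}. The metrical tunnel must also control $\nabla_t$ against $\nabla_{t_0}$ on the matched data, including the twisted terms $\rho_t(b)$ appearing in the inner Leibniz bounds. The plan is to reduce every $b\in\mathfrak L_t$ to $\Phi_{m,t}(b)=P_{m,t}(x)$ using Hypothesis~\ref{spectral-finite-mode-hyp}(3), at cost $\opnorm{\rho_t(b)-\rho_t(\Phi_{m,t}(b))}{}{t_0}\to0$ uniformly. Then pass from $P_{m,t}(x)$ and $\rho_t(P_{m,t}(x))$ to $P_{m,t_0}(x)$ and $\rho_{t_0}(P_{m,t_0}(x))$ by Hypothesis~\ref{spectral-finite-mode-hyp}(4). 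These operator errors are measured in $\opnorm{\cdot}{}{t_0}$, i.e.\ against $\CDN_{t_0}$, which is comparable to $\CDN_t$ by step one. One therefore chooses $m$ first, then $r$, then the neighborhood $V_\varepsilon$ shrinking $\delta_t$ and the convergences in (4), so that all error terms are below $\varepsilon/8$. The delicate point is uniformity over $C_m$, which is guaranteed by compactness of $C_m$ and by the hypotheses being stated with suprema over $x\in C_m$. Collecting the bounds gives extent at most $\varepsilon$.
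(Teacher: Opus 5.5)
There is a genuine gap on the module side. With your D-norm
\[
\TDN(\xi,\eta)=\max\left\{\CDN_t(\xi),\ \CDN_{t_0}(\eta),\ \varepsilon^{-1}\norm{\omega_{m,r}(\xi-\eta)}{\Hilbert[K]}\right\}
\]
the inner Leibniz inequality cannot be derived, and your plan never actually verifies it. In your third step you mention $\rho_t(b)$ ``appearing in the inner Leibniz bounds'', but no twist appears there.

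The base tunnel cannot be the trivial one over $\C$. The two coordinate projections must carry the module inner product to $\inner{\xi}{\zeta}{\Hilbert[K]}$ and $\inner{\eta}{\zeta'}{\Hilbert[K]}$ respectively, so you need a base such as $\C^2$ with its two characters $j_t,j_{t_0}$. Whatever base $(\mathfrak E,q)$ you take, extent at most $\varepsilon$ forces $\Kantorovich{q}(j_t,j_{t_0})\leq\varepsilon$. Hence $|j_t(e)-j_{t_0}(e)|\leq\varepsilon q(e)$ for self-adjoint $e$, and the inner Leibniz inequality applied to $\inner{(\xi,\eta)}{(\xi,\eta)}{}$ demands
\[
\varepsilon^{-1}\left|\norm{\xi}{\Hilbert[K]}^2-\norm{\eta}{\Hilbert[K]}^2\right|\leq 2\,\TDN(\xi,\eta)^2\text{.}
\]
Take $\eta=0$ and a unit vector $\xi\in\ker\omega_{m,r}\cap\dom{\nabla}$; then $\TDN(\xi,0)=\CDN_t(\xi)$. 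Hypothesis~\ref{spectral-finite-mode-hyp} relates the pivot only to bridge heights and commutators, never to the spectrum of $\Dirac[\nabla]_t$. So nothing excludes such a $\xi$ with $\CDN_t(\xi)$ bounded independently of $\varepsilon$, and then the inequality fails for small $\varepsilon$. In short, $\norm{\omega_{m,r}(\xi-\eta)}{\Hilbert[K]}$ does not control $\norm{\xi-\eta}{\Hilbert[K]}$, which is what inner products require.

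Some smaller points:
\begin{itemize}
\item Your modular estimate could be repaired, but your identity for $\omega_{m,r}(a\xi-b\eta)$ omits the term $[\omega_{m,r},b](\xi-\eta)$.
\item Compactness of the $\TDN$ unit ball comes from the compact resolvents, not from the finite rank of the pivot.
\item The quotient property holds exactly, so no relaxation is needed there.
\end{itemize}

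The paper keeps the pivot off the module. It uses $\varepsilon^{-1}\norm{\xi-\eta}{\Hilbert[K]}$ in $\TDN_{t,\varepsilon}$, so the inner Leibniz inequality with constant $2$ is immediate with $q_\varepsilon(z,w)=\varepsilon^{-1}|z-w|$ on $\C^2$. Also $\TDN_{t,\varepsilon}(\xi,(1-\delta_t)\xi)=\CDN_t(\xi)$ and $\TDN_{t,\varepsilon}((1+\delta_t)^{-1}\eta,\eta)=\CDN_{t_0}(\eta)$ once $\delta_t\leq\varepsilon$. The cost moves to the modular cross term,
\[
\varepsilon^{-1}\norm{b\xi-c\eta}{\Hilbert[K]}\leq\norm{b}{\B_t}\,\varepsilon^{-1}\norm{\xi-\eta}{\Hilbert[K]}+\varepsilon^{-1}\opnorm{b-c}{}{t_0}\,\CDN_{t_0}(\eta)\text{,}
\]
which forces the algebra-level seminorm to be enlarged to
\[
\SLip_{t,\varepsilon}(b,c)=\max\left\{\TLip_{t,\varepsilon}(b,c),\ \varepsilon^{-1}\opnorm{b-c}{}{t_0},\ \varepsilon^{-1}\opnorm{\rho_t(b)-\rho_{t_0}(c)}{}{t_0}\right\}
\]
(the twist term is kept for Theorem~\ref{finite-mode-spectral-convergence-thm}).

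The substantive step missing from your plan is showing that $\SLip_{t,\varepsilon}$ still defines a tunnel of extent at most $\varepsilon$. Each $b\in\mathfrak L_t$ is matched with a slight rescaling $c$ of $P_{m,t_0}(x)$, where $\Phi_{m,t}(b)=P_{m,t}(x)$. Items (3)--(4) of Hypothesis~\ref{spectral-finite-mode-hyp}, together with $\norm{\rho_{t_0}(p)}{}\leq\norm{p}{}+\kappa^{-1}\Lip_{t_0}(p)$ and the diameter bound on $P_{m,t_0}(x)$, make both new terms smaller than $\varepsilon/8$; the same argument runs symmetrically from $\mathfrak L_{t_0}$. Lower semicontinuity of $\SLip_{t,\varepsilon}$ follows from weak-operator compactness and the closed-graph property of the twists. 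That is exactly where items (3)--(4) are used; comparing Dirac flows is not part of this lemma.
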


  \begin{proof}
    We let
    \begin{equation*}
      d_0 \coloneqq \diam{\B_{t_0}}{\Lip_{t_0}}\text{, } \qquad d_\kappa \coloneqq d_0+\kappa^{-1}+1 \text.
    \end{equation*}

    Let $\varepsilon \in (0,1)$. By Hypothesis \ref{spectral-finite-mode-hyp}, there exists a neighborhood $V$ of $t_0$ in $T$ and $M_0 \in \N$ such that, if $m\geq M_0$, then
    \begin{equation*}
      \sup_{s\in V}\sup_{b\in\mathfrak L_s} \opnorm{\rho_s(b)-\rho_s(\Phi_{m,s}(b)) }{}{t_0} < \frac{\varepsilon}{16 d_\kappa}\text.
    \end{equation*}

    Our current proof is in essence a sequel for the proof of Theorem \ref{finite-mode-convergence-thm}, so we bring here our notations from that proof. We replace $\varepsilon$ in that proof with $\frac{d_0+1}{d_\kappa}\varepsilon$. We thus conclude that there exists $m, r \in \N$ and a neighborhood $T_\varepsilon$ of $t_0$ in $T$ such that $m \geq M_0$, and if $t \in T_\varepsilon$, and if we set
    \begin{multline*}
      \TLip_{t,\varepsilon}:(a,b)\in\dom{\Lip_t}\oplus\dom{\Lip_{t_0}} \mapsto \\
      \max\left\{\Lip_t(a),\Lip_{t_0}(b),\frac{d_\kappa}{(d_0+1)\varepsilon}\norm{a\omega_{m,r}-\omega_{m,r}b}{\B(\Hilbert[K])} \right\}
    \end{multline*}
    then $(\B_t\oplus\B_{t_0},\TLip_{t,\varepsilon},\pi_t,\pi_{t_0})$ is a tunnel of extent at most $\frac{d_0+1}{d_{\kappa}}\varepsilon < \varepsilon$. Note that we changed our notation for $\TLip$ (it would technically be $\TLip_{t,\frac{d_0+1}{d_\kappa}\varepsilon}$ using the notation of our previous proof, but we lighten it here). As a rule of thumb, estimates of the form $\frac{\varepsilon}{d_0+1}$ are now replaced with $\frac{\varepsilon}{d_\kappa}$.

    Now, by Hypothesis \ref{spectral-finite-mode-hyp}, there exists an open neighborhood $V'$ of $t_0$ in $T$ such that, for all $t \in V'$,
    \begin{equation*}
      \sup_{x \in C_m} \max\Bigl\{ \opnorm{P_{m,t}(x) - P_{m,t_0}(x)}{}{t_0}, \opnorm{\rho_t(P_{m,t}(x)) - \rho_{t_0}(P_{m,t_0}(x))}{}{t_0} \Bigr\} < \frac{\varepsilon}{16 d_\kappa} \text.
    \end{equation*}

    Last, there exists by Hypothesis \ref{spectral-finite-mode-hyp} an open neighborhood $V''$ of $t_0$ in $T$ such that if $t \in V''$, then $\delta_t < \varepsilon$.

    Henceforth, we set $T'_\varepsilon = T_\varepsilon\cap V \cap V' \cap V''$ (which is a neighborhood of $t_0$ in $T$). We fix $t \in T'_\varepsilon$.

    \medskip

    We now introduce a new tunnel from $(\B_t,\Lip_t)$ to $(\B_{t_0},\Lip_{t_0})$. We define:
    \begin{multline*}
      \SLip_{t,\varepsilon} : (b,c) \in\dom{\Lip_t}\oplus\dom{\Lip_{t_0}} \mapsto \\
      \max\left\{ \TLip_{t,\varepsilon}(b,c) \text{, }\varepsilon^{-1}\opnorm{b-c}{}{t_0} \text{, } \varepsilon^{-1}\opnorm{\rho_t(b)-\rho_{t_0}(c)}{}{t_0}\right\}\text.
    \end{multline*}
    We wish to first prove that indeed, $(\B_t\oplus \B_{t_0},\SLip_{t,\varepsilon},\pi_t,\pi_{t_0})$ is a tunnel. \medskip

    Let $b\in\mathfrak L_t$, and choose $x\in C_m$ such that
    \begin{equation*}
      P_{m,t}(x)=\Phi_{m,t}(b)\text.
    \end{equation*}
    Set
    \begin{equation*}
      p_0\coloneqq P_{m,t_0}(x) \qquad\text{and}\qquad c\coloneqq \frac{1}{1+\frac{\varepsilon}{8d_\kappa}}p_0\text.
    \end{equation*}
    By the proof of Theorem \ref{finite-mode-convergence-thm},
    \begin{equation*}
      c\in\mathfrak L_{t_0} \qquad\text{and}\qquad \TLip_{t,\varepsilon}(b,c)\leq1\text.
    \end{equation*}
    By construction, $\mu_{t_0}(p_0) = 0$ and $\Lip_{t_0}(p_0) \leq \left(1 + \frac{\varepsilon}{8 d_\kappa}\right)$, we conclude
    \begin{equation}\label{p0-eq}
      \opnorm{p_0}{}{t_0} \leq \norm{p_0}{\B(\Hilbert[K])} = \norm{p_0}{\B_{t_0}} < d_0\left( 1 + \frac{\varepsilon}{8 d_\kappa} \right)
    \end{equation}
    and therefore
    \begin{equation}\label{rhop0-eq}
      \begin{split}
        \opnorm{\rho_{t_0}(p_0)}{}{t_0} &\leq \norm{\rho_{t_0}(p_0)}{\B(\Hilbert[K])} \\
        &\leq \norm{p_0}{\B_{t_0}} + \kappa^{-1}\Lip_{t_0}(p_0) \\
        &< (d_0 + \kappa^{-1})\left(1 + \frac{\varepsilon}{8 d_\kappa}\right) \text.
      \end{split}
    \end{equation}

    Consequently, since
    \begin{equation*}
      b - c = (b - \Phi_{m,t}(b)) + (\underbracket{P_{m,t}(x)}_{=\Phi_{m,t}(b)} - \underbracket{P_{m,t_0}(x)}_{=p_0}) + (p_0 - c)
    \end{equation*}
    we conclude
    \begin{align*}
      \opnorm{b-c}{}{t_0} &\leq \opnorm{b-\Phi_{m,t}(b)}{}{t_0} + \opnorm{P_{m,t}(x) - P_{m,t_0}(x)}{}{t_0} + \opnorm{p_0 - c}{}{t_0} \\
      &< \frac{\varepsilon}{16 d_{\kappa}} + \frac{\varepsilon}{16 d_{\kappa}} + \left( 1 - \frac{1}{1 + \frac{\varepsilon}{8 d_\kappa}}\right) \opnorm{p_0}{}{t_0} \\
      &\leq \frac{\varepsilon}{8 d_{\kappa}} + \frac{\frac{\varepsilon}{8 d_\kappa}}{1+\frac{\varepsilon}{8 d_\kappa}} d_0 \left(1+ \frac{\varepsilon}{8 d_\kappa}\right) \quad \text{ by Exp. \eqref{p0-eq}} \\
      &\leq \frac{\varepsilon}{8 d_\kappa} + \frac{\varepsilon d_0}{8 d_\kappa} \\
      &\leq \frac{\varepsilon(d_0+1)}{8 d_\kappa} < \frac{\varepsilon}{8} \text.
    \end{align*}

    Similarly,
    \begin{align*}
      \opnorm{\rho_t(b)-\rho_{t_0}(c)}{}{t_0} &\leq \opnorm{\rho_t(b - \Phi_{m,t}(b))}{}{t_0} + \opnorm{\rho_t(\underbracket{P_{m,t}(x)}_{=\Phi_{m,t}(b)}) - \rho_{t_0}(\underbracket{P_{m,t_0}(x)}_{=p_0})}{}{t_0} \\
      &\qquad + \opnorm{\rho_{t_0}(p_0 - c)}{}{t_0} \\
      &< \frac{\varepsilon}{16 d_\kappa} + \frac{\varepsilon}{16 d_\kappa} + \frac{\frac{\varepsilon}{8 d_\kappa}}{1+\frac{\varepsilon}{8 d_\kappa}} \opnorm{\rho_{t_0}(p_0)}{}{t_0} \\
      &< \frac{\varepsilon}{8 d_\kappa} + \frac{\varepsilon(d_0+\kappa^{-1})}{8d_\kappa} \quad \text{ by Exp. \eqref{rhop0-eq},} \\
      &< \frac{\varepsilon}{8} \text.
    \end{align*}

    Therefore,
    \begin{equation*}
      \SLip(b,c) \leq 1 \text.
    \end{equation*}

    Therefore, by homogeneity and since $\SLip_{t,\varepsilon}(1,1) = 0$, and since $\SLip_{t,\varepsilon}(b,c) \geq \Lip_{t}(b)$ for all $(b,c)\in\dom{\SLip_{t,\varepsilon}}$, we conclude that $\Lip_t(b) = \inf\{ \SLip_{t,\varepsilon}(b,c) : c\in\dom{\Lip_{t_0}}\}$ for all $b \in \dom{\Lip_t}$.

    The same reasoning applies to show that $\Lip_{t_0}(c) = \inf\{ \SLip_{t,\varepsilon}(b,c) : b\in\dom{\Lip_{t}}\}$ for all $c \in \dom{\Lip_{t_0}}$.

    We now prove that $\SLip_{t,\varepsilon}$ is lower semi-continuous. The map
    \begin{equation*}
      (b,c)\longmapsto\opnorm{b-c}{}{t_0}
    \end{equation*}
    is norm-continuous since $2$-Lipschitz: for all $(b,c)\in\dom{\Lip_t}\oplus\dom{\Lip_{t_0}}$,
    \begin{align*}
      &\left|\opnorm{b-c}{}{t_0} - \opnorm{b'-c'}{}{t_0} \right|\\
      &\qquad\leq \opnorm{(b-b')-(c-c')}{}{t_0}\\
      &\qquad\leq \norm{b-b'}{\B_t} + \norm{c-c'}{\B_{t_0}}\text.
    \end{align*}

    Let now $((b_n,c_n))_{n\in\N}$ in $\dom{\Lip_t}\oplus\dom{\Lip_{t_0}}$ converge, in norm, to $(b,c)$, and assume that
    \begin{equation*}
      \liminf_{n\to\infty} \; \SLip_{t,\varepsilon}(b_n,c_n)<\infty\text.
    \end{equation*}
    Passing to a subsequence, we may assume that this limit inferior is a limit and that there exists $M>0$ such that
    \begin{equation*}
      \sup_{n\in\N}\left\{ \norm{b_n}{\B_{t}},\norm{c_n}{\B_{t_0}} ,\SLip_{t,\varepsilon}(b_n,c_n) \right\}\leq M \text.
    \end{equation*}
    In particular,
    \begin{equation*}
      \Lip_t(b_n)\leq M \qquad\text{and}\qquad \Lip_{t_0}(c_n)\leq M\text.
    \end{equation*}
    The sequences $\left(\rho_t(b_n)\right)_{n\in\N}$ and $\left(\rho_{t_0}(c_n)\right)_{n\in\N}$ are therefore bounded in operator norm in $\B(\Hilbert[K])$ by $(1+\kappa^{-1}) M$. Passing to a further subsequence, we may assume that all these sequences converge in the weak operator topology on $\B(\Hilbert[K])$. By Lemma \ref{twisted-commutator-closed-graph-lemma}, their limits are respectively $\rho_t(b)$ and $\rho_{t_0}(c)$. In particular,
    \begin{equation*}
      \rho_t(b_n)-\rho_{t_0}(c_n) \xrightarrow{n \to \infty} \rho_t(b)-\rho_{t_0}(c)
    \end{equation*}
    in the weak operator topology.

    For every $\xi\in\dom{\nabla}$ with $\CDN_{t_0}(\xi)\leq1$, the norm on $\Hilbert[K]$ is weakly lower semi-continuous, and therefore, since $(\rho_t(b_n)-\rho_{t_0}(c_n))_{n\in\N}$ converges to $\rho_t(b)-\rho_{t_0}(c)$ for the WOT, we conclude:
    \begin{equation*}
      \norm{(\rho_t(b)-\rho_{t_0}(c))\xi}{\Hilbert[K]} \leq \liminf_{n\to\infty} \norm{(\rho_t(b_n)-\rho_{t_0}(c_n))\xi}{\Hilbert[K]}\text.
    \end{equation*}
    Taking the supremum over the closed $\CDN_{t_0}$-unit ball gives
    \begin{equation*}
      \opnorm{\rho_t(b)-\rho_{t_0}(c)}{}{t_0} \leq \liminf_{n\to\infty} \opnorm{\rho_t(b_n)-\rho_{t_0}(c_n)}{}{t_0}\text.
    \end{equation*}
    Since $\TLip_{t,\varepsilon}$ is lower semi-continuous, we conclude that $\SLip_{t,\varepsilon}$ is lower semi-continuous.

    Now, $\SLip_{t,\varepsilon} \geq \TLip_{t,\varepsilon}$, so by \cite[Lemma 1.8]{Rieffel98a}, we conclude that $(\B_t\oplus\B_{t_0},\SLip_{t,\varepsilon})$ is a generalized {\qcms}.

    Therefore, altogether,
    \begin{equation*}
      \tau_{t,\varepsilon} \coloneqq (\B_t\oplus\B_{t_0}, \SLip_{t,\varepsilon}, \pi_t,\pi_{t_0})
    \end{equation*}
    is a generalized tunnel. Since $\TLip_{t,\varepsilon}\leq\SLip_{t,\varepsilon}$ (on their domain $\dom{\Lip_t}\oplus\dom{\Lip_{t_0}}$), we conclude by definition that $\Kantorovich{\TLip_{t,\varepsilon}} \geq \Kantorovich{\SLip_{t,\varepsilon}}$, and thus it follows, again by definition, that
    \begin{equation*}
      \tunnelextent{\tau_{t,\varepsilon}} \leq \tunnelextent{(\B_t\oplus\B_{t_0},\TLip_{t,\varepsilon}, \pi_t,\pi_{t_0})} \leq \varepsilon \text.
    \end{equation*}

    We now enhance our tunnel to a (generalized) metrical tunnel. Let
    \begin{equation*}
      \Hilbert[E] = \Hilbert[K]\oplus\Hilbert[K]
    \end{equation*}
    be the right Hilbert $\C^2$-module with coordinatewise right action, inner product
    \begin{equation*}
      \inner{(\xi,\eta)}{(\zeta,\omega)}{\C^2} = \left( \inner{\xi}{\zeta}{\Hilbert[K]}, \inner{\eta}{\omega}{\Hilbert[K]} \right)
    \end{equation*}
    and Hilbert-module norm
    \begin{equation*}
      \norm{(\xi,\eta)}{\Hilbert[E]} = \max\left\{ \norm{\xi}{\Hilbert[K]}, \norm{\eta}{\Hilbert[K]} \right\}\text.
    \end{equation*}
    For all $(\xi,\eta) \in \dom{\nabla}\oplus\dom{\nabla}$, we set
    \begin{equation*}
      \TDN_{t,\varepsilon}(\xi,\eta) = \max \left\{ \CDN_t(\xi), \CDN_{t_0}(\eta) , \frac{1}{\varepsilon}\norm{\xi-\eta}{\Hilbert[K]} \right\}\text.
    \end{equation*}

    Since, for all $\xi \in \dom{\nabla}$:
    \begin{equation*}
      \left|\CDN_t(\xi) - \CDN_{t_0}(\xi)\right| = \left|\norm{\Dirac{\nabla}_t\xi}{\Hilbert[K]} - \norm{\Dirac{\nabla}_{t_0}\xi}{\Hilbert[K]}\right| \leq \norm{(\Dirac{\nabla}_t - \Dirac{\nabla}_{t_0})\xi}{\Hilbert[K]} \leq \delta_t \CDN_{t_0}(\xi) \text,
    \end{equation*}
    we conclude:
    \begin{equation*}
      (1-\delta_t)\CDN_{t_0}(\xi) \leq \CDN_t(\xi) \leq (1+\delta_t)\CDN_{t_0}(\xi)\text.
    \end{equation*}
    Since $\delta_t\leq\varepsilon$ for $t\in T_\varepsilon'$, we obtain
    \begin{equation*}
      \TDN_{t,\varepsilon} \left( \xi, (1-\delta_t)\xi \right) = \CDN_t(\xi)
    \end{equation*}
    and
    \begin{equation*}
      \TDN_{t,\varepsilon}\left(\frac{1}{1+\delta_t}\eta, \eta \right) = \CDN_{t_0}(\eta)\text.
    \end{equation*}

    Thus, if we set $\Pi_t : (\xi,\eta) \in \Hilbert[E] \mapsto \xi$ and $\Pi_{t_0} : (\xi,\eta)\in\Hilbert[E] \mapsto \eta$, then
    \begin{equation*}
      \CDN_t(\xi) = \inf\left\{ \TDN_{t,\varepsilon}(\xi,\eta) : \eta\in\dom{\nabla} \right\} \text{ and } \CDN_{t_0}(\xi) = \inf\left\{ \TDN_{t,\varepsilon}(\eta,\xi) : \eta\in\dom{\nabla} \right\}
    \end{equation*}
    for all $\xi \in \dom{\nabla}$.

    Now, of course, $\dom{\TDN_{t,\varepsilon}} \coloneqq \dom{\nabla}\oplus\dom{\nabla}$ is dense in $\Hilbert[E]$. Moreover, by construction,
    \begin{multline*}
      \left\{ (\xi,\eta) \in \dom{\TDN_{t,\varepsilon}} : \TDN_{t,\varepsilon}(\xi,\eta)\leq 1 \right\} \subseteq \\
      \left\{ \xi \in \dom{\nabla} : \CDN_{t}(\xi)\leq 1 \right\} \times \left\{ \eta \in \dom{\nabla} : \CDN_{t_0}(\eta)\leq 1 \right\}
    \end{multline*}
    and since both $\CDN_t$ and $\CDN_{t_0}$ are $D$-norms, their closed unit balls are compact in $\Hilbert[K]$; therefore their product is compact in $\Hilbert[E]$ by Tychonoff, and thus the closed unit ball of $\TDN_{t,\varepsilon}$ is totally bounded.

    Moreover, $\CDN_t$ and $\CDN_{t_0}$ are both lower semicontinuous and thus so is $\TDN_{t,\varepsilon}$, so its closed unit ball is closed in $\Hilbert[E]$. in particular, since $\Hilbert[E]$ is complete, the closed unit ball of $\TDN_{t,\varepsilon}$ is complete and totally bounded, so it is compact.

    Now, $\Hilbert[E]$ is a $\C^2$-Hilbert module. For all $(z,w) \in \C^2$, we set:
    \begin{equation*}
      q_\varepsilon(z,w) = \frac{1}{\varepsilon}|z-w|\text.
    \end{equation*}
    This is a Leibniz Lipschitz seminorm since, for all $(z,w), (z',w') \in \C^2$,
    \begin{equation*}
      |zz'-ww'| \leq |z|\,|z'-w'|+|w'|\,|z-w|\text.
    \end{equation*}
    Of course,
    \begin{equation*}
      (\C^2, q, j_t:(z,w)\mapsto z, j_{t_0}:(z,w)\mapsto w)
    \end{equation*}
    is a tunnel, with extent at most $\varepsilon$.

    We now check that our prospective metrical tunnel has the expected Leibniz properties. First, for all $(\xi,\eta), (\zeta,\omega) \in \dom{\TDN_{t,\varepsilon}}$, we check:
    \begin{align*}
      q_\varepsilon(\inner{(\xi,\eta)}{(\zeta,\omega)}{\Hilbert[E]}) &=\frac{1}{\varepsilon}\left| \inner{\xi}{\zeta}{\Hilbert[K]} - \inner{\eta}{\omega}{\Hilbert[K]} \right| \\
      &\leq \frac{1}{\varepsilon}\norm{\xi-\eta}{\Hilbert[K]} \norm{\zeta}{\Hilbert[K]}\\
      &\qquad + \norm{\eta}{\Hilbert[K]} \frac{1}{\varepsilon}\norm{\zeta-\omega}{\Hilbert[K]}\\
      &\leq 2\TDN_{t,\varepsilon}(\xi,\eta) \TDN_{t,\varepsilon}(\zeta,\omega)\text.
    \end{align*}
    This proves the required inner Leibniz inequality, with constant $2$, for the real and imaginary parts of the inner product.

    We now prove the modular Leibniz property. Of course, $\B_t\oplus \B_{t_0}$ acts diagonally on $\Hilbert[E]$, i.e. if $(b,c) \in \B_t\oplus\B_{t_0}$ and $(\xi,\eta) \in \Hilbert[E]$, then $(b,c)(\xi,\eta) = (b\xi,c\eta)$. Note that indeed, $(\Hilbert[E], \B_t\oplus\B_{t_0}, \C^2)$ is a C*-correspondence.

    Let $(b,c) \in \dom{\TLip_{t,\varepsilon}}$. We compute:
    \begin{align*}
      \frac{1}{\varepsilon}\norm{b\xi-c\eta}{\Hilbert[K]} &\leq \norm{b}{\B_t}\frac{1}{\varepsilon}\norm{\xi-\eta}{\Hilbert[K]}\\
      &\qquad+ \frac{1}{\varepsilon}\opnorm{b-c}{}{t_0}\CDN_{t_0}(\eta)\text.
    \end{align*}

    Moreover, by \cite[Theorem ]{Latremoliere26b}, for all $b\in\domsa{\Lip_t}$ and $\xi\in\dom{\nabla}$,
    \begin{equation*}
      \CDN_t(b\xi)\leq\left(\norm{b}{\B_t}+\kappa^{-1}\Lip_t(b)\right)\CDN_t(\xi)\text.
    \end{equation*}
    The analogous estimate holds at $t_0$. So altogether, for all $(\xi,\eta)\in\dom{\TDN_{t,\varepsilon}}$,
    \begin{equation*}
      \TDN_{t,\varepsilon}(b\xi,c\eta)\leq \left(\norm{(b,c)}{\B_t\oplus\B_{t_0}} + \frac{1}{\kappa}\TLip_{t,\varepsilon}(b,c)\right)\TDN_{t,\varepsilon}(\xi,\eta) \text.
    \end{equation*}
    In particular,
    \begin{equation*}
      \begin{aligned}
        &\TDN_{t,\varepsilon}(b\xi,c\eta)\\
        &\quad\leq(\frac{1}{\kappa}+2)\max\left\{\norm{(b,c)}{\B_t\oplus\B_{t_0}},\SLip_{t,\varepsilon}(b,c)\right\}\TDN_{t,\varepsilon}(\xi,\eta)\text.
      \end{aligned}
    \end{equation*}

    The coordinate projections from $\C^2$ onto $C$ preserve the inner products, intertwine the left and right actions of our C*-correspondences, and are obviously quantum isometries.. We thus have constructed a relaxed $(1,0,\kappa^{-1}+1,2)$-metrical tunnel:
    \begin{equation*}
      \left( \Hilbert[E], \TDN_{t,\varepsilon}, \B_t\oplus\B_{t_0}, \TLip_{t,\varepsilon}, \C^2, q_\varepsilon \right)
    \end{equation*}
    with quantum isometries $(\Pi_t,\pi_t,j_t)$ onto $(\Hilbert[K],\CDN_t,\B_t,\Lip_t,\C,0)$ and $(\Pi_{t_0},\pi_{t_0}, j_{t_0})$ onto $(\Hilbert[K],\CDN_{t_0},\B_{t_0},\Lip_{t_0},\C,0)$. Its extent is bounded by the extent of $(\B_t\oplus\B_{t_0},\TLip_{t,\varepsilon}, \pi_t, \pi_{t_0})$ and the extent of $(\C^2,q_\varepsilon,j_t,j_{t_0})$, both of which are less or equal to $\varepsilon$. This concludes our proof.
  \end{proof}

  We are now ready to conclude our general convergence result.
  \begin{theorem}[A spectral convergence test]
    \label{finite-mode-spectral-convergence-thm} Under Hypothesis~\ref{spectral-finite-mode-hyp},
    \begin{equation*}
      \lim_{\substack{t \to t_0 \\ t \in T}} \spectralpropinquity{\kappa}\left((\B_t,\Hilbert[K],\Dirac[\nabla]_t,\rho_t),(\B_{t_0},\Hilbert[K],\Dirac[\nabla]_{t_0},\rho_{t_0})\right) = 0 \text.
    \end{equation*}
  \end{theorem}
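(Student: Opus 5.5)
The plan is to combine Lemma \ref{finite-mode-joint-matching-lemma} with the definition of the (twisted) spectral propinquity from \cite{Latremoliere18g,Latremoliere26b}. That quantity is bounded by a maximum of two terms. The first is the extent of a metrical tunnel between the metrical C*-correspondences $(\Hilbert[K],\CDN_t,\B_t,\Lip_t,\C,0)$ and $(\Hilbert[K],\CDN_{t_0},\B_{t_0},\Lip_{t_0},\C,0)$. The second is the separation of the associated unitary groups $\exp(is\Dirac[\nabla]_t)$, measured along this tunnel via the covariant reach at a given time horizon. So, given $\varepsilon>0$, I will fix the neighborhood $V_\varepsilon$ from Lemma \ref{finite-mode-joint-matching-lemma} and take the relaxed $(1,0,\kappa^{-1}+2,2)$-metrical tunnel $\tau_{t,\varepsilon}$ built there, with extent at most $\varepsilon$. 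It then remains to estimate the dynamical part for $t$ near $t_0$.

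For the dynamical part I will use the same tunnel, whose module seminorm is
\begin{equation*}
\TDN_{t,\varepsilon}(\xi,\eta)=\max\left\{\CDN_t(\xi),\CDN_{t_0}(\eta),\varepsilon^{-1}\norm{\xi-\eta}{\Hilbert[K]}\right\}.
\end{equation*}
Given $(\xi,\eta)$ with $\TDN_{t,\varepsilon}(\xi,\eta)\leq 1$ and a time $s$ with $|s|\leq 1/\varepsilon$, I will pair $\exp(is\Dirac[\nabla]_t)\xi$ with $\exp(is\Dirac[\nabla]_{t_0})\eta$. Duhamel's formula gives
\begin{equation*}
\exp(is\Dirac[\nabla]_t)\xi-\exp(is\Dirac[\nabla]_{t_0})\xi = i\int_0^s \exp(i(s-u)\Dirac[\nabla]_t)(\Dirac[\nabla]_t-\Dirac[\nabla]_{t_0})\exp(iu\Dirac[\nabla]_{t_0})\xi\,du .
\end{equation*}
Since $\exp(iu\Dirac[\nabla]_{t_0})$ preserves $\dom{\nabla}$ and $\CDN_{t_0}$, this difference is bounded by $|s|\delta_t\CDN_{t_0}(\xi)\leq|s|\delta_t(1-\delta_t)^{-1}$. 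In addition,
\begin{equation*}
\norm{\exp(is\Dirac[\nabla]_{t_0})(\xi-\eta)}{\Hilbert[K]}\leq\varepsilon .
\end{equation*}
Hence the displacement is at most $\varepsilon+\varepsilon^{-1}\delta_t(1-\delta_t)^{-1}$. I will shrink the neighborhood so that $\delta_t<\varepsilon^2$, which makes this $O(\varepsilon)$. The targets remain in the $\CDN$ unit balls because the unitary groups commute with their generators. This yields the required bound on the $\varepsilon$-separation of the spectral actions. The estimate is also symmetric in $t$ and $t_0$, using $\CDN_{t_0}\leq(1-\delta_t)^{-1}\CDN_t$.

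The main obstacle is making sure the resulting quantity is exactly the one appearing in the definition of $\spectralpropinquity{\kappa}$ of \cite{Latremoliere26b}. In particular, the covariant reach requires the matched pairs to be related through the tunnel's target sets, and the definition uses relaxed metrical tunnels with specified constants. I will check that the constants $(1,0,\kappa^{-1}+2,2)$ of Lemma \ref{finite-mode-joint-matching-lemma} fall within the admissible class for $\spectralpropinquity{\kappa}$. Once this is verified, the spectral propinquity is at most a constant times $\varepsilon$ on a neighborhood of $t_0$. Since $\varepsilon$ was arbitrary, this proves the limit.
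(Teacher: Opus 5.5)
Your treatment of the tunnel extent and of the Dirac flows is sound and essentially the paper's. The paper pairs $\xi$ with $(1-\delta_t)\xi$ and requires $\delta_t(1+\varepsilon^{-1})$ to be small; you allow any pair in the $\TDN_{t,\varepsilon}$-unit ball and require $\delta_t<\varepsilon^2$. Converting your norm displacement into the inner-product form of the covariant reach costs only an extra $\norm{\zeta-\omega}{\Hilbert[K]}\leq\varepsilon$.

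The genuine gap is that you treat $\spectralpropinquity{\kappa}$ as controlled by only two quantities. For the twisted spectral propinquity of \cite{Latremoliere26b}, the $\varepsilon$-magnitude of a metrical tunnel has a third component: the separation of the twists, normalized by the factor $2(\kappa^{-1}+2)$. Concretely, take $(b,c)\in\domsa{\Lip_t}\oplus\domsa{\Lip_{t_0}}$ with $\max\{\norm{(b,c)}{\B_t\oplus\B_{t_0}},\SLip_{t,\varepsilon}(b,c)\}\leq1$, matched vectors $\xi,\eta$, and $(\zeta,\omega)$ in the $\TDN_{t,\varepsilon}$-unit ball. You must then bound $\bigl|\inner{\rho_t(b)^*\xi}{\zeta}{\Hilbert[K]}-\inner{\rho_{t_0}(c)^*\eta}{\omega}{\Hilbert[K]}\bigr|$. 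Your proposal never estimates this, and it is exactly the part of the statement that sees the twists $\rho_t$ directly. You also never use the twist-comparison term $\varepsilon^{-1}\opnorm{\rho_t(b)-\rho_{t_0}(c)}{}{t_0}$ that Lemma~\ref{finite-mode-joint-matching-lemma} deliberately builds into $\SLip_{t,\varepsilon}$, which is where items 3 and 4 of Hypothesis~\ref{spectral-finite-mode-hyp} enter. That is a sign that something is missing.

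The missing step is short, and the tunnel you invoke was built for it. Move the adjoint across and write the difference as
\begin{equation*}
\inner{\xi-\eta}{\rho_t(b)\zeta}{\Hilbert[K]}+\inner{\eta}{\rho_t(b)(\zeta-\omega)}{\Hilbert[K]}+\inner{\eta}{(\rho_t(b)-\rho_{t_0}(c))\omega}{\Hilbert[K]}.
\end{equation*}
Bound the first two terms using $\norm{\rho_t(b)}{\B(\Hilbert[K])}\leq\norm{b}{\B_t}+\kappa^{-1}\Lip_t(b)\leq1+\kappa^{-1}$. Bound the third using $\CDN_{t_0}(\omega)\leq1$ together with $\opnorm{\rho_t(b)-\rho_{t_0}(c)}{}{t_0}\leq\varepsilon$, which is exactly what $\SLip_{t,\varepsilon}(b,c)\leq1$ provides. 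This gives at most $(1+\kappa^{-1})\norm{\xi-\eta}{\Hilbert[K]}+(2+\kappa^{-1})\varepsilon$. With $\norm{\xi-\eta}{\Hilbert[K]}\leq\varepsilon$ for your pairs, dividing by $2(\kappa^{-1}+2)$ leaves a quantity below $\varepsilon$, and the bound does not depend on the direction of matching. Once this third estimate is added, your argument proves the statement.
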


  \begin{proof}
    Let $\varepsilon>0$, and fix
    \begin{equation*}
      0<\varepsilon_0<\min\left\{1,\frac{\varepsilon}{2}\right\}\text.
    \end{equation*}
    By Lemma~\ref{finite-mode-joint-matching-lemma} and since $\lim_{t\to t_0} \delta_t = 0$, there exists a neighborhood $T_\varepsilon$ of $t_0$ in $T$ such that, for all $t\in T_\varepsilon$, the metrical tunnel constructed in Lemma~\ref{finite-mode-joint-matching-lemma} has extent at most $\varepsilon_0$, and
    \begin{equation*}
      \delta_t\left(1+\frac{1}{\varepsilon}\right) < \frac{\varepsilon_0}{4}\text.
    \end{equation*}
    Fix $t\in T_\varepsilon$. In particular, note that $\delta_t \leq \varepsilon_0<\varepsilon$.

    By \cite{Kato}, for $\xi\in\dom{\nabla}$ and $s\in\R$,
    \begin{equation*}
      (e^{is\Dirac[\nabla]_t} - e^{is\Dirac[\nabla]_{t_0}})\xi = i\int_0^s e^{i(s-x)\Dirac[\nabla]_t}(\Dirac[\nabla]_t-\Dirac[\nabla]_{t_0})e^{ix\Dirac[\nabla]_{t_0}}\xi \,\mathrm dx\text.
    \end{equation*}
    Since the flow of $\Dirac[\nabla]_{t_0}$ preserves $\CDN_{t_0}$, we have
    \begin{align*}
      \norm{(e^{is\Dirac[\nabla]_t} - e^{is\Dirac[\nabla]_{t_0}})\xi}{\Hilbert[K]} &\leq \int_{s^-}^{s^+}\norm{(\Dirac[\nabla]_t-\Dirac[\nabla]_{t_0})e^{ix\Dirac[\nabla]_{t_0}}\xi}{\Hilbert[K]}\,\mathrm dx\\
      &\leq |s|\delta_t\CDN_{t_0}(\xi)\text,
    \end{align*}
    where $s^-=\min\{0,s\}$ and $s^+=\max\{0,s\}$.

    If $\CDN_t(\xi)\leq1$, set
    \begin{equation*}
      \eta=(1-\delta_t)\xi\text.
    \end{equation*}
    Then $\CDN_{t_0}(\eta)\leq 1$, and
    \begin{equation*}
      e^{is\Dirac[\nabla]_t}\xi - e^{is\Dirac[\nabla]_{t_0}}\eta = \delta_t e^{is\Dirac[\nabla]_t}\xi+(1-\delta_t)\left(e^{is\Dirac[\nabla]_t} - e^{is\Dirac[\nabla]_{t_0}}\right)\xi\text.
    \end{equation*}
    Consequently,
    \begin{equation*}
      \norm{e^{is\Dirac[\nabla]_t}\xi - e^{is\Dirac[\nabla]_{t_0}}\eta}{\Hilbert[K]} \leq \delta_t(1+|s|)\text.
    \end{equation*}
    Conversely, if $\CDN_{t_0}(\eta)\leq 1$, then
    \begin{equation*}
      \xi=\frac{1}{1+\delta_t}\eta
    \end{equation*}
    belongs to the $\CDN_t$-unit ball and
    \begin{equation*}
      e^{is\Dirac[\nabla]_t}\xi - e^{is\Dirac[\nabla]_{t_0}}\eta = \frac{1}{1+\delta_t}\left(e^{is\Dirac[\nabla]_t}-e^{is\Dirac[\nabla]_{t_0}}\right)\eta-\frac{\delta_t}{1+\delta_t}e^{is\Dirac[\nabla]_{t_0}}\eta\text.
    \end{equation*}
    Thus the same estimate holds. Therefore, for every $(\zeta,\omega)$ in the $\TDN_{t,\varepsilon_0}$-unit ball, for all $s\in\R$,
    \begin{align*}
      &\left|\inner{e^{is\Dirac[\nabla]_t}\xi}{\zeta}{\Hilbert[K]}-\inner{e^{is\Dirac[\nabla]_{t_0}}\eta}{\omega}{\Hilbert[K]}\right|\\
      &\quad\leq \left|\inner{e^{is\Dirac[\nabla]_t}\xi}{\zeta}{\Hilbert[K]}-\inner{e^{is\Dirac[\nabla]_{t_0}}\eta}{\zeta}{\Hilbert[K]}\right|+\left|\inner{e^{is\Dirac[\nabla]_{t_0}}\eta}{\omega-\zeta}{\Hilbert[K]}\right|\\
      &\quad\leq \delta_t(1+|s|)+\varepsilon_0\text.
    \end{align*}
    The same estimate holds in both directions. Hence, for all $s\in\left[0,\frac{1}{\varepsilon}\right]$,
    \begin{equation*}
      \delta_t(1+|s|)+\varepsilon_0 \leq \delta_t\left(1+\frac{1}{\varepsilon}\right)+\varepsilon_0 < \frac{5\varepsilon_0}{4} < \frac{5\varepsilon}{8} < \varepsilon\text.
    \end{equation*}
    Thus the separation of the two Dirac flows on $\left[0,\frac{1}{\varepsilon}\right]$ is less than $\varepsilon$.

    It remains to check the error on the twists. Set
    \begin{equation*}
      K_\kappa \coloneqq 1+\kappa^{-1}\text.
    \end{equation*}
    Let
    \begin{equation*}
      (b,c) \in \domsa{\Lip_t}\oplus\domsa{\Lip_{t_0}}
    \end{equation*}
    such that
    \begin{equation*}
      \max\left\{\norm{(b,c)}{\B_t\oplus\B_{t_0}},\SLip_{t,\varepsilon_0}(b,c)\right\} \leq 1\text.
    \end{equation*}
    Then
    \begin{equation*}
      \norm{\rho_t(b)}{\B(\Hilbert[K])} \leq K_\kappa \qquad\text{and}\qquad \norm{\rho_{t_0}(c)}{\B(\Hilbert[K])} \leq K_\kappa\text.
    \end{equation*}
    For the matched vectors $\xi,\eta$ above and $(\zeta,\omega)$ in the $\TDN_{t,\varepsilon_0}$-unit ball, moving the adjoints across the inner products gives
    \begin{align*}
      &\left|\inner{\rho_t(b)^*\xi}{\zeta}{\Hilbert[K]}-\inner{\rho_{t_0}(c)^*\eta}{\omega}{\Hilbert[K]}\right|\\
      &\quad\leq \norm{\xi-\eta}{\Hilbert[K]}\norm{\rho_t(b)\zeta}{\Hilbert[K]}+\norm{\eta}{\Hilbert[K]}\norm{\rho_t(b)(\zeta-\omega)}{\Hilbert[K]}\\
      &\qquad+\norm{\eta}{\Hilbert[K]}\norm{(\rho_t(b)-\rho_{t_0}(c))\omega}{\Hilbert[K]}\\
      &\quad\leq K_\kappa\delta_t +(K_\kappa+1)\varepsilon_0\text.
    \end{align*}
    The same estimate holds in the reverse direction. After division by the factor
    \begin{equation*}
      2(\kappa^{-1}+2)=2(K_\kappa+1)
    \end{equation*}
    in the twisted spectral magnitude, we conclude that the normalized twist separation is at most
    \begin{equation*}
      \frac{K_\kappa\delta_t+(K_\kappa+1)\varepsilon_0}{2(K_\kappa+1)} \leq \frac{\delta_t+\varepsilon_0}{2} \leq \varepsilon_0 < \frac{\varepsilon}{2}\text.
    \end{equation*}

    The metrical tunnel has extent less than $\varepsilon$, the separation of the Dirac flows on $\left[0,\frac{1}{\varepsilon}\right]$ is less than $\varepsilon$, and the normalized separation of the twists is less than $\frac{\varepsilon}{2}$. Its $\varepsilon$-magnitude is therefore at most $\varepsilon$. Thus, for all $t\in T_\varepsilon$,
    \begin{equation*}
      \spectralpropinquity{\kappa}\left((\B_t,\Hilbert[K],\Dirac[\nabla]_t,\rho_t),(\B_{t_0},\Hilbert[K],\Dirac[\nabla]_{t_0},\rho_{t_0})\right) \leq \varepsilon \text.
    \end{equation*}
    Since $\varepsilon>0$ was arbitrary, our proof is complete.
  \end{proof}

  \subsection{Application of our strategy to crossed products}

  We now apply the convergence criteria from the previous subsection to the twisted spectral triples constructed in this paper. We first fix the families of crossed products and twisted spectral triples under consideration.

  \begin{hypothesis}\label{crossed-product-family-hyp}
    Let $T$ be a topological space, let $t_0\in T$, and let $d\in\N\setminus\{0\}$. We fix a Hilbert space $\Hilbert$. For every $t\in T$, let
    \begin{equation*}
      (\A_t,\Hilbert,\Dirac_t)
    \end{equation*}
    be a metric spectral triple, and let
    \begin{equation*}
      \alpha_t:\Z^d\longrightarrow\operatorname{Aut}(\A_t)
    \end{equation*}
    be an action of $\Z^d$ on $\A_t$.

    We fix $\varepsilon>0$ and $\kappa\in(0,1]$, and assume that there exists one admissible weight
    \begin{equation*}
      w:\Z^d\longrightarrow(0,1]
    \end{equation*}
    which is appropriate for every action $\alpha_t$ and the Lipschitz seminorm induced by $\Dirac_t$.

    For every $t\in T$, set
    \begin{equation*}
      \B_t\coloneqq\A_t\rtimes_{\alpha_t}\Z^d
    \end{equation*}
    and represent $\B_t$ on the common Hilbert space
    \begin{equation*}
      \Hilbert[K]\coloneqq\ell^2(\Z^d,\Hilbert)\otimes\C^q
    \end{equation*}
    by the regular covariant representation associated with $\alpha_t$. For every $z\in\Z^d$, we denote the canonical unitary in $\B_t$ by $U_t^z$. Theorem~\ref{metric-thm} provides the $\kappa$-metric twisted spectral triple
    \begin{equation*}
      (\B_t,\Hilbert[K],\Dirac[\nabla]_{w,\varepsilon,t},\rho_t)
    \end{equation*}
    and its Lipschitz seminorm $\Lip_t$. We write
    \begin{equation*}
      \Dirac[\nabla]_t\coloneqq\Dirac[\nabla]_{w,\varepsilon,t}
    \end{equation*}
    for every $t\in T$.

    For every $t\in T$, fix a state $\mu_t\in\StateSpace(\A_t)$, set
    \begin{equation*}
      \nu_t\coloneqq\mu_t\circ\mathcal{F}_0,
    \end{equation*}
    and let
    \begin{equation*}
      \mathfrak L_t\coloneqq\left\{b\in\domsa{\Lip_t}:\Lip_t(b)\leq1,\ \nu_t(b)=0\right\}\text.
    \end{equation*}
  \end{hypothesis}

  We now collect the assumptions which identify this family with the abstract finite-mode framework.

  \begin{hypothesis}\label{crossed-product-convergence-hyp}
    We assume Hypothesis~\ref{crossed-product-family-hyp} and the following conditions.
    \begin{enumerate}
      \item For every $N\in\N$ and $t\in T$, there is a bounded unital linear map
      \begin{equation*}
        \Phi_{N,t}:\B_t\longrightarrow\B_t
      \end{equation*}
      which preserves adjoints and satisfies
      \begin{equation*}
        \nu_t\circ\Phi_{N,t}=\nu_t\text.
      \end{equation*}
      There exists a sequence $(\varepsilon_N)_{N\in\N}$ of strictly positive numbers converging to $0$ such that, for every $N\in\N$, $t\in T$, and $b\in\domsa{\Lip_t}$,
      \begin{equation*}
        \norm{b-\Phi_{N,t}(b)}{\B_t}\leq\varepsilon_N\Lip_t(b)
      \end{equation*}
      and
      \begin{equation*}
        \Lip_t(\Phi_{N,t}(b))\leq(1+\varepsilon_N)\Lip_t(b)\text.
      \end{equation*}

      \item For every $N\in\N$, there are a compact Hausdorff space $C_N$ and, for every $t\in T$, a norm-continuous map
      \begin{equation*}
        P_{N,t}:C_N\longrightarrow\domsa{\Lip_t}
      \end{equation*}
      such that
      \begin{equation*}
        \Phi_{N,t}(\mathfrak L_t)\subseteq P_{N,t}(C_N)
      \end{equation*}
      and
      \begin{equation*}
        \nu_t(P_{N,t}(x))=0
      \end{equation*}
      for every $x\in C_N$. We assume that
      \begin{equation*}
        \lim_{\substack{t\to t_0\\t\in T}} \sup_{x\in C_N} \left| \Lip_t(P_{N,t}(x)) - \Lip_{t_0}(P_{N,t_0}(x)) \right| =0
      \end{equation*}
      and, for every $\xi\in\Hilbert[K]$,
      \begin{equation*}
        \lim_{\substack{t\to t_0\\t\in T}} \sup_{x\in C_N} \norm{ (P_{N,t}(x)-P_{N,t_0}(x))\xi }{\Hilbert[K]} =0\text.
      \end{equation*}

      \item For every $N\in\N$, there exist a neighborhood $V_N$ of $t_0$ in $T$ and positive finite-rank contractions
      \begin{equation*}
        \Omega_{N,r}\in\B(\Hilbert[K])
      \end{equation*}
      for $r\in\N$, such that
      \begin{equation*}
        \lim_{r\to\infty} \sup_{x\in C_N} \norm{[P_{N,t_0}(x),\Omega_{N,r}]}{\B(\Hilbert[K])} =0\text.
      \end{equation*}
      For every $r\in\N$ and $t\in V_N$, we assume that
      \begin{equation*}
        \gamma_{N,r}\coloneqq(\B(\Hilbert[K]),\Omega_{N,r})
      \end{equation*}
      is a bridge from $\B_t$ to $\B_{t_0}$, and that
      \begin{equation*}
        \lim_{r\to\infty} \limsup_{\substack{t\to t_0\\t\in V_N}} \bridgeheight{\gamma_{N,r}|\Lip_t,\Lip_{t_0}}{} =0\text.
      \end{equation*}

      \item The operators $\Dirac_t$, for $t\in T$, have a common domain. There are numbers $\delta_t\geq0$, for $t\in T$, such that
      \begin{equation*}
        \lim_{\substack{t\to t_0\\t\in T}}\delta_t=0
      \end{equation*}
      and, for every $t\in T$ and $\xi\in\dom{\Dirac_{t_0}}$,
      \begin{equation*}
        \norm{(\Dirac_t-\Dirac_{t_0})\xi}{\Hilbert} \leq \delta_t \left( \norm{\xi}{\Hilbert} + \norm{\Dirac_{t_0}\xi}{\Hilbert} \right)\text.
      \end{equation*}

      \item For every $t\in T$ and $\xi\in\dom{\Dirac[\nabla]_t}$, set
      \begin{equation*}
        \CDN_t(\xi) \coloneqq \norm{\xi}{\Hilbert[K]} + \norm{\Dirac[\nabla]_t\xi}{\Hilbert[K]}\text.
      \end{equation*}
      For every $X\in\B(\Hilbert[K])$, set
      \begin{equation*}
        \opnorm{X}{}{t_0} \coloneqq \sup \left\{ \norm{X\xi}{\Hilbert[K]}: \xi\in\dom{\Dirac[\nabla]_{t_0}},\
 \CDN_{t_0}(\xi)\leq1 \right\}\text.
      \end{equation*}
      There exists a neighborhood $V$ of $t_0$ in $T$ such that
      \begin{equation*}
        \lim_{N\to\infty} \sup_{t\in V} \sup_{b\in\mathfrak L_t} \opnorm{ \rho_t(b)-\rho_t(\Phi_{N,t}(b)) }{}{t_0} =0\text.
      \end{equation*}
      Moreover, for every $N\in\N$,
      \begin{equation*}
        \lim_{\substack{t\to t_0\\t\in T}} \sup_{x\in C_N} \max \left\{ \opnorm{P_{N,t}(x)-P_{N,t_0}(x)}{}{t_0}, \opnorm{ \rho_t(P_{N,t}(x)) - \rho_{t_0}(P_{N,t_0}(x)) }{}{t_0} \right\} =0\text.
      \end{equation*}
    \end{enumerate}
  \end{hypothesis}

  \begin{theorem}[Finite-mode convergence for crossed products]
    \label{crossed-product-finite-mode-convergence-thm} Under Hypothesis~\ref{crossed-product-convergence-hyp},
    \begin{equation*}
      \spectralpropinquity{\kappa} \left( (\B_t,\Hilbert[K],\Dirac[\nabla]_t,\rho_t), (\B_{t_0},\Hilbert[K],\Dirac[\nabla]_{t_0},\rho_{t_0}) \right) \longrightarrow0
    \end{equation*}
    as $t\longrightarrow t_0$.
  \end{theorem}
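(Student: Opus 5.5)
The plan is to reduce the statement to Theorem~\ref{finite-mode-spectral-convergence-thm}: I will check that Hypothesis~\ref{crossed-product-convergence-hyp} implies Hypothesis~\ref{spectral-finite-mode-hyp}, with $\B_t=\A_t\rtimes_{\alpha_t}\Z^d$, $\mu_t$ replaced by $\nu_t=\mu_t\circ\mathcal{F}_0$, $\Phi_{m,t}=\Phi_{N,t}$, $P_{m,t}=P_{N,t}$, $C_m=C_N$, and $\omega_{m,r}=\Omega_{N,r}$. Once this is done, the conclusion is exactly that of Theorem~\ref{finite-mode-spectral-convergence-thm}.

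First I will verify Hypothesis~\ref{finite-mode-hyp}. Items (1)--(4) are copied verbatim from items (1) and (2) of Hypothesis~\ref{crossed-product-convergence-hyp}. For this I need that $(\B_t,\Lip_t)$ is a generalized {\qcms} faithfully and unitally represented on $\Hilbert[K]$, which follows from Theorem~\ref{metric-thm} together with \cite[Theorem 4.9]{Latremoliere26b}, and from the amenability of $\Z^d$ for faithfulness of the regular representation. I also need that $\nu_t$ is a state, which was noted in the proof of Proposition~\ref{Lip-metric-prop}. Item (5) of Hypothesis~\ref{finite-mode-hyp} is item (3) here. The only discrepancy is that the bridge condition is assumed only for $t\in V_N$, but Hypothesis~\ref{finite-mode-hyp} only involves $\limsup_{t\to t_0}$. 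In the proof of Theorem~\ref{finite-mode-convergence-thm}, $m$ is fixed first and the neighborhoods are then chosen, so one can intersect with $V_N$ without harm.

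For the spectral part, each $(\B_t,\Hilbert[K],\nabla_t,\rho_t)$ is a $\kappa$-metric twisted spectral triple by Theorem~\ref{metric-thm}, and items (3)--(4) of Hypothesis~\ref{spectral-finite-mode-hyp} are item (5) here. What remains is the common domain and the relative bound on $\nabla_t-\nabla_{t_0}$. Since $w$, $\varepsilon$, and the $\partial_j$ do not depend on $t$, we have
\[
\nabla_t-\nabla_{t_0}=(\Dirac_{w,t}-\Dirac_{w,t_0})\otimes\gamma_0 ,
\]
which acts fiberwise by $w(z)(\Dirac_t-\Dirac_{t_0})$. The common domain of the $\Dirac_t$ gives that the fibers lie in a common space. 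To obtain $\dom{\Dirac_{w,t}}=\dom{\Dirac_{w,t_0}}$ and the bound, I use item (4) on each fiber:
\[
w(z)\norm{(\Dirac_t-\Dirac_{t_0})\xi_z}{}\leq\delta_t\bigl(w(z)\norm{\xi_z}{}+\norm{w(z)\Dirac_{t_0}\xi_z}{}\bigr).
\]
Summing the squares with $w\leq1$ gives
\[
\norm{(\Dirac_{w,t}-\Dirac_{w,t_0})\xi}{}\leq\sqrt2\,\delta_t\bigl(\norm{\xi}{}+\norm{\Dirac_{w,t_0}\xi}{}\bigr).
\]
I then need $\norm{\Dirac_{w,t_0}\xi}{}\leq\norm{\nabla_{t_0}\xi}{}$, which follows from the identity $\norm{\nabla_{t_0}\xi}{}^2=\norm{\Dirac_{w,t_0}\xi}{}^2+\sum_j\norm{\partial_j\xi}{}^2+\varepsilon^2\norm{\xi}{}^2$ of Proposition~\ref{nabla-compact-res-prop}, extended from the core by closure. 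This yields the bound with constant $\sqrt2\,\delta_t\to0$. For $\delta_t<1/\sqrt2$, the domains coincide by symmetry of the estimate (a Kato--Rellich type argument), and one may restrict $T$ to that neighborhood.

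I expect the main obstacle to be this domain identification. The relative bound is stated only on $\dom{\Dirac_{t_0}}$, so equality of the domains of the $\Dirac_{w,t}$ needs the hypothesized common domain plus the symmetric estimate. Everything else is bookkeeping.
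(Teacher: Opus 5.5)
Your proposal is correct and follows the paper's proof. You reduce to Theorem~\ref{finite-mode-spectral-convergence-thm}, and the only real work is the common domain and the relative bound. There the paper obtains the constant $\delta_t$ (via Minkowski's inequality rather than your $\sqrt2\,\delta_t$) and proves the reverse inclusion $\dom{\Dirac_{w,t}}\subseteq\dom{\Dirac_{w,t_0}}$ by inverting the fiberwise estimate on finite truncations. Your Kato--Rellich argument achieves the same thing, using relative bound $\sqrt2\,\delta_t<1$ together with the fact that a self-adjoint operator has no proper self-adjoint extension; note it is this argument, not any "symmetry" of the one-sided estimate, that gives the reverse inclusion. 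Your remark that the bridge condition is only assumed on $V_N$ is a point the paper passes over silently, and intersecting with $V_N$ after fixing $m$ handles it.
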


  \begin{proof}
    By Theorem~\ref{metric-thm}, $(\B_t,\Lip_t)$ is a generalized {\qcms} for every $t\in T$. Hypothesis~\ref{crossed-product-family-hyp} provides the common faithful representations of the algebras $\B_t$ on $\Hilbert[K]$ and the states $\nu_t$. Items 1--3 of Hypothesis~\ref{crossed-product-convergence-hyp} provide the remaining data and estimates in Hypothesis~\ref{finite-mode-hyp}. Hence Theorem~\ref{finite-mode-convergence-thm} applies.

    It remains to verify the common-domain and relative-bound conditions in Hypothesis~\ref{spectral-finite-mode-hyp}. We may restrict $T$ to a neighborhood of $t_0$ on which $\delta_t<1$. Let $\Dirac_{w,t}$ denote the weighted diagonal lift of $\Dirac_t$ to $\ell^2(\Z^d,\Hilbert)$. If $\xi\in\dom{\Dirac_{w,t_0}}$, then Item 4 of Hypothesis~\ref{crossed-product-convergence-hyp}, applied to every component of $\xi$, gives
    \begin{equation*}
      \left( \sum_{n\in\Z^d} w(n)^2 \norm{\Dirac_t\xi_n}{\Hilbert}^2 \right)^{\frac12} \leq (1+\delta_t)\norm{\Dirac_{w,t_0}\xi}{\Hilbert[S]} + \delta_t\norm{\xi}{\Hilbert[S]}
    \end{equation*}
    and hence $\xi\in\dom{\Dirac_{w,t}}$. Conversely, let $\xi\in\dom{\Dirac_{w,t}}$. For every finite subset $E\subseteq\Z^d$, define $\xi^E\in\ell^2(\Z^d,\Hilbert)$ by
    \begin{equation*}
      \xi_n^E \coloneqq
      \begin{cases}
        \xi_n & \text{if $n\in E$,}\\
        0 & \text{if $n\notin E$.}
      \end{cases}
    \end{equation*}
    Applying Item 4 of Hypothesis~\ref{crossed-product-convergence-hyp} to the finitely many nonzero components of $\xi^E$ gives
    \begin{equation*}
      (1-\delta_t)\norm{\Dirac_{w,t_0}\xi^E}{\Hilbert[S]} \leq \norm{\Dirac_{w,t}\xi^E}{\Hilbert[S]} + \delta_t\norm{\xi^E}{\Hilbert[S]}\text.
    \end{equation*}
    Since $\delta_t<1$, taking the supremum over all finite subsets $E\subseteq\Z^d$ shows that $\xi\in\dom{\Dirac_{w,t_0}}$ and that
    \begin{equation*}
      (1-\delta_t)\norm{\Dirac_{w,t_0}\xi}{\Hilbert[S]} \leq \norm{\Dirac_{w,t}\xi}{\Hilbert[S]} + \delta_t\norm{\xi}{\Hilbert[S]}\text.
    \end{equation*}
    Consequently,
    \begin{equation*}
      \dom{\Dirac_{w,t}} = \dom{\Dirac_{w,t_0}}
    \end{equation*}
    for all $t$ in this neighborhood.

    The group-coordinate operators in $\Dirac[\nabla]_t$ are independent of $t$, and the formula for $\Dirac[\nabla]_t^2$ gives
    \begin{equation*}
      \dom{\Dirac[\nabla]_t} = \left( \dom{\Dirac_{w,t}} \cap \bigcap_{j=1}^d\dom{\partial_j} \right) \otimes\C^q\text.
    \end{equation*}
    Thus, the operators $\Dirac[\nabla]_t$ have a common domain near $t_0$.

    Let $\xi=(\xi_n)_{n\in\Z^d}$ belong to this common domain. Since $w$ takes its values in $(0,1]$, we have
    \begin{align*}
      \norm{ (\Dirac[\nabla]_t-\Dirac[\nabla]_{t_0})\xi }{\Hilbert[K]} &= \left( \sum_{n\in\Z^d} w(n)^2 \norm{(\Dirac_t-\Dirac_{t_0})\xi_n}{\Hilbert\otimes\C^q}^2 \right)^{\frac12}\\
      &\leq \delta_t \left( \norm{(w(n)\xi_n)_{n\in\Z^d}}{\Hilbert[K]} + \norm{\Dirac_{w,t_0}\xi}{\Hilbert[K]} \right)\\
      &\leq \delta_t \left( \norm{\xi}{\Hilbert[K]} + \norm{\Dirac[\nabla]_{t_0}\xi}{\Hilbert[K]} \right)\\
      &= \delta_t\CDN_{t_0}(\xi)\text.
    \end{align*}
    Item 5 of Hypothesis~\ref{crossed-product-convergence-hyp} gives the remaining finite-mode conditions for the twists and their representations in Hypothesis~\ref{spectral-finite-mode-hyp}. Therefore, Theorem~\ref{finite-mode-spectral-convergence-thm} proves the desired convergence.
  \end{proof}

\subsection{Quantum tori with a fixed Anosov automorphism}

\subsubsection{The Quantum Torus}

For every $\theta\in\R$, the quantum torus $\A_\theta$ of deformation parameter $\theta$ is the universal unital $C^\ast$-algebra generated by the Weyl unitaries $(W_\theta^z)_{z\in\Z^2}$, subject to the relations
\begin{equation*}
  W_\theta^z W_\theta^m=\exp(\pi i\theta\det(z,m))W_\theta^{m+z},\qquad (W_\theta^{z})^*=W_\theta^{-z}
\end{equation*}
for all $z,m\in\Z^2$, where $W_\theta^0=1$.

We identify $\A_\theta$ with the C*-algebra of operators on $\Hilbert\coloneqq \ell^2(\Z^2)\otimes\C^2$, by setting
\begin{equation*}
  W_\theta^z(\phi_m\otimes v)=\exp(\pi i\theta\det(z,m))\phi_{z+m}\otimes v
\end{equation*}
for $z,m\in\Z^2$ and $v\in\C^2$, where $(\phi_p)_{p\in\Z^2}$ is the canonical orthonormal basis, and by extending $W_\theta^z$ by linearity and continuity to a unitary on $\Hilbert$. It is a routine matter to check that indeed, the desired relations are satisfied.

For $x\coloneqq(x_1,x_2) \in \T^2$, and for all $z\coloneqq(z_1,z_2) \in \Z^2$, we set
\begin{equation*}
  \inner{z}{x}{} = x_1^{z_1} x_2^{z_2} \text.
\end{equation*}
We then define, for all $\xi \in \ell^2(\Z^2)\otimes\C^2$:
\begin{equation*}
  u^x \xi : z \in \Z^2 \mapsto \inner{z}{x}{} \xi(z) \text.
\end{equation*}
By construction, $x\in\T^2\mapsto u^x$ is a strongly continuous action of $\T^2$ on $\Hilbert$ by unitaries. For all $b \in \B(\Hilbert)$ and $x \in \T^2$, we set
\begin{equation*}
  \zeta^x(b) = u^{x} b (u^x)^* \text.
\end{equation*}
In particular, for all $z \in \Z^2$,
\begin{equation*}
  \zeta^x(W_\theta^z)= \inner{z}{x}{} W_\theta^z \text.
\end{equation*}
The restriction of $\zeta$ to $\A_\theta$ is a strongly continuous action of $\T^2$, called the dual action on $\A_\theta$. In particular,
\begin{equation*}
  \operatorname{tr}_\theta : a \in \A_\theta \mapsto \int_{\T^2} \zeta^x(a) \, d\mu(x)
\end{equation*}
for $\mu$ the Haar probability measure on $\T^2$, is a tracial state on $\A_\theta$ (identifying $\C 1 \subseteq \A_\theta$ with $\C$).

\medskip

The quantum tori all come equipped with a natural metric spectral triple. We however introduce a perturbation of this basic spectral triple for this work. We denote the space of all definite positive real symmetric $2\times 2$ matrices by $\mathscr{S}_+$.

We now set:
\begin{equation*}
  \sigma_1 \coloneqq
  \begin{pmatrix}
    0 & 1 \\
    1 & 0
  \end{pmatrix}
  \qquad\text{ and }\qquad \sigma_2 \coloneqq
  \begin{pmatrix}
    0 & -i \\
    i & 0
  \end{pmatrix}
  \text.
\end{equation*}

For any $x = (x_1,x_2) \in \R^2$, we define
\begin{equation*}
  \sigma(x) \coloneqq x_1 \sigma_1 + x_2 \sigma_2 \text.
\end{equation*}

The standard Euclidean norm of $\R^2$ is denoted by $\norm{\cdot}{\R^2}$, and the standard hermitian norm on $\C^2$ is denoted by $\norm{\cdot}{\C^2}$. For every $G \in \mathscr{S}_+$, we define:
\begin{equation*}
  \dom{\Dirac_G} \coloneqq \left\{ \xi=(\xi_z)_{z\in\Z^2} \in \Hilbert : \sum_{z \in \Z^2} \norm{Gz}{\R^2}^2 \norm{\xi_z}{\C^2}^2 < \infty \right\} \text,
\end{equation*}
and, for all $\xi\coloneqq(\xi_z)_{z\in\Z^2} \in \dom{\Dirac_G}$,
\begin{equation*}
  \Dirac_G\xi : z \in \Z^2 \mapsto \sigma(G z) \xi(z)\text.
\end{equation*}
For all $z \in \Z^2$, we easily compute:
\begin{equation*}
  [\Dirac_G, W_\theta^z] = W_\theta^z(1\otimes \sigma(G z)) \text.
\end{equation*}
In particular, for all $z \in \Z^2$, the unitary $W_\theta^z$ is in the algebra
\begin{equation*}
  \A_{\theta,G} \coloneqq \left\{ a \in \A_\theta : a\dom{\Dirac_G}\subseteq\dom{\Dirac_G} \text, [\Dirac_G,a] \text{ bounded } \right\}
\end{equation*}
which is therefore dense in $\A_\theta$. For all $a \in \A_{\theta,G}$, we set
\begin{equation*}
  \Lip_{\theta,G}(a)\coloneqq\norm{[\Dirac_G,a]}{\B(\Hilbert)}\text.
\end{equation*}

\begin{lemma}
  For every $\theta\in\R$ and $G\in\mathscr{S}_+$, the triple $(\A_\theta,\Hilbert,\Dirac_G)$ is a metric spectral triple; moreover $\dom{\Dirac_G} = \dom{\Dirac_{I_2}}$ and $\A_{\theta,G} = \A_{\theta,I_2}$.
\end{lemma}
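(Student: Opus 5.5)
The argument has three parts: compare $\Dirac_G$ with the flat operator $\Dirac_{I_2}$, prove the spectral triple axioms, and reduce metricity to a Fourier truncation argument with the dual action $\zeta$.

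\textbf{Domains.} Since $G$ is positive definite, there are constants $0<c\leq C$ (the extreme eigenvalues of $G$) with $c\norm{z}{\R^2}\leq\norm{Gz}{\R^2}\leq C\norm{z}{\R^2}$ for all $z\in\Z^2$. Hence the weighted $\ell^2$ condition defining $\dom{\Dirac_G}$ is equivalent to the one for $\dom{\Dirac_{I_2}}$, so $\dom{\Dirac_G}=\dom{\Dirac_{I_2}}$.

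\textbf{Spectral triple axioms.} $\Dirac_G$ is diagonal in the basis $\phi_z\otimes\C^2$, with blocks $\sigma(Gz)$. These blocks are self-adjoint with $\sigma(Gz)^2=\norm{Gz}{\R^2}^2 I_2$, since $\sigma_1,\sigma_2$ anticommute and square to $1$. Thus $\Dirac_G$ is self-adjoint on this domain, and its eigenvalues are $\pm\norm{Gz}{\R^2}$, each of finite multiplicity and tending to infinity. So $\Dirac_G$ has compact resolvent.

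\textbf{Comparison of commutators.} For the algebras, write $\Dirac_G=\sum_{j,l}G_{jl}\,X_l\otimes\sigma_j$, where $X_l$ is the coordinate multiplication $\phi_z\mapsto z_l\phi_z$. The flat commutators $[X_l\otimes 1,a]$ are recovered from $[\Dirac_{I_2},a]$ by compressing with $\sigma_j$ and taking the normalized trace on $\C^2$, exactly as in the mean value lemma above. So $a\in\A_{\theta,I_2}$ if and only if each $[X_l,a]$ is bounded, i.e.\ $a$ is in the domain of both generators $\delta_1,\delta_2$ of the dual action. Since $G$ is invertible, the same holds for $\Dirac_G$, via the linear change of coordinates $G$. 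This gives $\A_{\theta,G}=\A_{\theta,I_2}$ and the two-sided estimate
\begin{equation*}
  k\,\Lip_{\theta,I_2}\leq\Lip_{\theta,G}\leq K\,\Lip_{\theta,I_2}
\end{equation*}
for constants $0<k\leq K$ depending on $G$. One subtlety is that elements $a$ must preserve the domain. I would handle this through the identity $[\Dirac_G,W^z_\theta]=W^z_\theta(1\otimes\sigma(Gz))$ and the fact that bounded commutators with $X_l$ on the core of finitely supported vectors force domain invariance, since the $X_l$ are closed.

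\textbf{Metricity.} Equivalent Lipschitz seminorms give Monge--Kantorovich metrics that are bi-Lipschitz equivalent, so it suffices to treat $G=I_2$. This is the classical result that the flat spectral triple on $\A_\theta$ is metric. If a self-contained argument is wanted, I would repeat the proof of Proposition \ref{Lip-metric-prop} with the dual action $\zeta$ of $\T^2$ in place of $\beta$.
\begin{itemize}
  \item[(i)] \emph{Mean value inequality.} $\norm{\zeta^x(a)-a}{}\leq\ell(x)\Lip_{\theta,I_2}(a)$, obtained from the trace computation above and Fej\'er approximation.
  \item[(ii)] \emph{Fej\'er truncation.} Truncations $\zeta^{f_N}$ map the centered Lipschitz ball (trace zero, $\Lip_{\theta,I_2}\leq1$) within $1/(N+1)$ of itself, with $\Lip_{\theta,I_2}$ not increasing.
  \item[(iii)] \emph{Compactness of the truncated ball.} The truncated ball lies in a bounded set of a finite-dimensional space. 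The Fourier coefficients are scalars, and $\norm{\hat a(z)}{}\leq\norm{z}{}^{-1}\Lip_{\theta,I_2}(a)$ for $z\neq0$, via $\delta$ applied to $\hat a(z)W^z_\theta$. So the truncated ball is totally bounded.
\end{itemize}
Rieffel's criterion then gives metricity.

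The main obstacle is the domain-invariance and two-sided comparison step between $\Lip_{\theta,G}$ and the derivations $\delta_j$ for general $a$, not just for polynomials. The plan there is to pass through the Fourier-multiplier description and lower semicontinuity, using the closedness of the graph of $\Dirac_G$ as in Lemma \ref{twisted-commutator-closed-graph-lemma}.
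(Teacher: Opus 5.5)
Your proposal is correct and is essentially the paper's proof. The ingredients are the same: domain equality from the norm equivalence of $z\mapsto Gz$; self-adjointness and compact resolvent from the $2\times2$ block-diagonal structure; an algebraic identity on the common domain giving $\A_{\theta,G}=\A_{\theta,I_2}$ with two-sided seminorm bounds; and then Rieffel's result for the flat triple. For that identity the paper writes $\Dirac_G=\frac12\sum_{j,k}G_{jk}S_j(S_k\Dirac_{I_2}+\Dirac_{I_2}S_k)$, with $S_j=1\otimes\sigma_j$ commuting with $\A_\theta$, where you use the trace slice instead.

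The domain-invariance step you single out as the main obstacle is in fact immediate. Since $\dom{\Dirac_G}=\dom{\Dirac_{I_2}}$ and the identity holds for every $a$ preserving this common domain, no Fourier-multiplier or lower-semicontinuity argument is needed. One small inaccuracy: bounded $[X_l,a]$ only means the $\zeta$-orbit of $a$ is norm-Lipschitz, which is weaker than lying in the domain of the generator $\delta_l$, but you never use that identification.
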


\begin{proof}
  Since $\sigma(x)^*=\sigma(x)$ and $\sigma(x)^2=\norm{x}{\R^2}^2 1$ for all $x\in\R^2$, the operator $\Dirac_G$ is densely defined and symmetric. Set
  \begin{equation*}
    K_G\eta\coloneqq\left(\frac{\eta_z}{1+\norm{Gz}{\R^2}^2}\right)_{z\in\Z^2} \qquad(\eta\in\Hilbert).
  \end{equation*}
  Then $K_G\Hilbert\subseteq\dom{\Dirac_G^2}$ and
  \begin{equation*}
    (1+\Dirac_G^2)K_G=1,\qquad K_G(1+\Dirac_G^2)\xi=\xi \quad(\xi\in\dom{\Dirac_G^2}).
  \end{equation*}
  Moreover,
  \begin{equation*}
    \norm{Gz}{\R^2}\geq\norm{G^{-1}}{M_2(\R)}^{-1}\norm{z}{\R^2}, \qquad \lim_{\norm{z}{\R^2}\to\infty}\frac{1}{1+\norm{Gz}{\R^2}^2}=0,
  \end{equation*}
  so $K_G$ is compact. For $\eta\in\Hilbert$, the vectors
  \begin{equation*}
    \xi^\pm\coloneqq(\Dirac_G\mp i)K_G\eta =\left(\frac{\sigma(Gz)\mp i1_{\C^2}}{1+\norm{Gz}{\R^2}^2}\eta_z\right)_{z\in\Z^2}
  \end{equation*}
  satisfy
  \begin{equation*}
    \sum_{z\in\Z^2}\bigl(1+\norm{Gz}{\R^2}^2\bigr)\norm{\xi_z^\pm}{\C^2}^2 =\norm{\eta}{\Hilbert}^2, \qquad (\Dirac_G\pm i)\xi^\pm=\eta.
  \end{equation*}
  Thus $\Dirac_G\pm i$ are surjective, and $\Dirac_G$ is self-adjoint with compact resolvent. Furthermore,
  \begin{equation*}
    [\Dirac_G,W_\theta^z]=W_\theta^z(1\otimes\sigma(Gz)) \qquad(z\in\Z^2),
  \end{equation*}
  so $\A_{\theta,G}$ is dense in $\A_\theta$.

  Write $I \coloneqq I_2$.

  Since $\sigma(x)^2=\norm{x}{\R^2}^2\cdot 1$ for all $x\in\R^2$, and
  \begin{equation*}
    \frac{\norm{x}{\R^2}}{\norm{G^{-1}}{M_2(\R)}}\leq\norm{G x}{\R^2}\leq\norm{G}{M_2(\R)}\norm{x}{\R^2}
  \end{equation*}
  we conclude
  \begin{align*}
    \dom{\Dirac_G} &=\left\{\xi\in\Hilbert:\sum_{z\in\Z^2}\norm{z}{\R^2}^2\norm{\xi_z}{\C^2}^2<\infty\right\} \\
    &= \dom{\Dirac_{I}} \text.
  \end{align*}

  Write $S_j=1\otimes\sigma_j$ and $G=(G_{jk})_{1\leq j,k\leq 2}$. On the common domain $\dom{\Dirac_I}$,
  \begin{equation*}
    \Dirac_G=\frac{1}{2}\sum_{j,k=1}^2G_{jk}S_j(S_k\Dirac_I+\Dirac_IS_k), \qquad \Dirac_I=\frac{1}{2}\sum_{j,k=1}^2(G^{-1})_{jk}S_j(S_k\Dirac_G+\Dirac_GS_k).
  \end{equation*}
  Each $S_j$ commutes with $\A_\theta$; hence $\A_{\theta,G}=\A_{\theta,I}$ and
  \begin{equation*}
    \left(\sum_{j,k=1}^2|(G^{-1})_{jk}|\right)^{-1}\Lip_{\theta,I} \leq\Lip_{\theta,G} \leq\left(\sum_{j,k=1}^2|G_{jk}|\right)\Lip_{\theta,I} \text.
  \end{equation*}
  By \cite{Rieffel98a}, $\Lip_{\theta,I}$ is a Lip-norm; therefore so is $\Lip_{\theta,G}$.
\end{proof}

\begin{notation}
  Henceforth, we set $\dom{\Dirac} \coloneqq \dom{\Dirac_{I_2}}$.
\end{notation}

Notably, $(\A_\theta,\Hilbert,\Dirac_{I_2})$ is the usual ``flat'' noncommutative torus spectral triple, known to be metric by \cite{Rieffel98a}, and the object of \cite{Latremoliere26b} which initiated the theory underpinning the current work. In general, $(\A_\theta,\Hilbert,\Dirac_G)$ is the base metric spectral triple for this section. We now introduce an action of $\Z$.

We also record that, for every positive definite real symmetric matrix $G$, and for all $x \in \T^2$,
\begin{equation*}
  u^x\Dirac_G=\Dirac_G u^x \text.
\end{equation*}

For $N\in\N$, set
\begin{equation*}
  f_N: x \in \T \mapsto \sum_{|k|\leq N}\left(1-\frac{|k|}{N+1}\right)x^k \text,
\end{equation*}
\begin{equation*}
  ff_N : (x_1,x_2) \in \T^2 \mapsto f_N(x_1) f_N(x_2) \text,
\end{equation*}
and define the product Fej\'er averages by
\begin{align*}
  \zeta^{ff_N}(a) &=\int_{\T^2}f_N(x_1)f_N(x_2)\zeta^x(a)\, d\mu(x_1,x_2) \\
  &=\sum_{\substack{z=(z_1,z_2)\in\Z^2 \\ \norm{z}{\ell^\infty}\leq N}}\prod_{j=1}^2\left(1-\frac{|z_j|}{N+1}\right)\operatorname{tr}_\theta(a(W_\theta^z)^*)W_\theta^z \text.
\end{align*}
The Fej\'er kernels form a positive normalized approximate identity. Moreover, for $a\in\dom{\Lip_{\theta,G}}$,
\begin{equation*}
  [\Dirac_G,\zeta^{ff_N}(a)]=\int_{\T^2} ff_N(z) u^z [\Dirac_G,a] (u^z)^* \, d\mu(z),
\end{equation*}
where the operator integral is taken in the weak operator topology. Consequently,
\begin{equation}\label{vm:torus-fejer}
  \zeta^{ff_N}(a)\xrightarrow[N\longrightarrow\infty]{\norm{\cdot}{\A_\theta}}a,\qquad \Lip_{\theta,G}(\zeta^{ff_N}(a))\leq\Lip_{\theta,G}(a)\text.
\end{equation}

\subsubsection{The C*-crossed-product}

Let us now fix:
\begin{equation*}
  A \in SL_2(\Z) \text.
\end{equation*}

For all $\xi \in \Hilbert = \ell^2(\Z^2,\C^2)$, we define
\begin{equation*}
  V_A \xi : z \in \Z^2 \mapsto \xi(A^{-1} z) \text.
\end{equation*}
The operator $V_A$ is a unitary operator on $\Hilbert$. A simple computation shows that, for all $z \in \Z^2$,
\begin{equation*}
  V_A W_\theta^z V_A^{\ast} = W_\theta^{Az} \text.
\end{equation*}
Thus, $b \in \A_\theta \mapsto V_A b V_A^{\ast}$ is an automorphism of $\A_\theta$ (of inverse $V_A^\ast\cdot V_A$). This may also be derived from universality and the fact that $\det(Az,Am) = \det(z,m)$ for all $z,m \in \Z^2$ since $A\in\operatorname{SL}(2,\Z)$. We denote the action of $\Z$ on $\A_\theta$ generated by $V_A \cdot V_A^\ast$ by $\alpha_{A,\theta}$.

We shall prove continuity of the twisted spectral triples obtained from our construction on
\begin{equation*}
  \A_\theta\rtimes_{\alpha_{A,\theta}}\Z
\end{equation*}
when the parameter $\theta$ and the metric on the base quantum torus vary. The argument applies to every fixed $A\in\operatorname{SL}(2,\Z)$, in particular to every hyperbolic matrix, for which the action $\alpha_{A,\theta}$ is \emph{not} equicontinuous, so our framework is indeed needed.

\subsubsection{The weight}

Since $\mathscr{S}_+$ is locally compact, every point in it has a compact neighborhood, so we shall henceforth assume that we fix some nonempty compact subset $\mathscr{G}$ of $\mathscr{S}_+$. The following lemma proves that $\alpha_{A,\theta}$ preserves $\dom{\Lip_{\theta,G}}$ and provides a common adapted exponential weight for all $\theta\in\R$ and $G\in\mathscr{G}$.

\begin{lemma}
  \label{vm:relative} If $\mathscr{G}$ is a nonempty compact subset of $\mathscr{S}_+$, and if $r \in \R$ is such that
  \begin{equation*}
    r\geq\sup_{G\in\mathscr{G}}\max\left\{1,\norm{GAG^{-1}}{M_2(\R)},\norm{GA^{-1}G^{-1}}{M_2(\R)}\right\},
  \end{equation*}
  and
  \begin{equation*}
    w : z \in \Z \mapsto r^{-|z|}
  \end{equation*}
  then $w$ is an admissible, adapted weight for $\alpha_{A,\theta}$ and $\Lip_{\theta,G}$, for all $G \in \mathscr{G}$ and $\theta\in \R$.

  For $G,G_0\in\mathscr{G}$, set
  \begin{equation*}
    \delta_G\coloneqq\norm{(G-G_0)G_0^{-1}}{M_2(\R)}.
  \end{equation*}
  Then, for every $\xi\in\dom{\Dirac_{G_0}}$,
  \begin{equation*}
    \norm{(\Dirac_G-\Dirac_{G_0})\xi}{\Hilbert}\leq\delta_G\norm{\Dirac_{G_0}\xi}{\Hilbert}.
  \end{equation*}
\end{lemma}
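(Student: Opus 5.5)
The plan is to prove the Lipschitz estimate $\Lip_{\theta,G}(\alpha_{A,\theta}^{\pm1}(a))\leq r\,\Lip_{\theta,G}(a)$ for all $a\in\dom{\Lip_{\theta,G}}$, first on the unitaries $W_\theta^z$, then on Fej\'er means, then on the whole domain. The exponential weight and the relative bound then follow as in Lemma \ref{exponential-weight-lemma}.

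\emph{Conjugating the Dirac operator.} Compute $V_A^\ast\Dirac_G V_A$. For $\xi\in\dom{\Dirac}$,
\begin{equation*}
(\Dirac_G V_A\xi)(z)=\sigma(Gz)\xi(A^{-1}z),
\end{equation*}
so
\begin{equation*}
(V_A^\ast\Dirac_G V_A\xi)(z)=\sigma(GAz)\xi(z)=\sigma\bigl((GAG^{-1})Gz\bigr)\xi(z).
\end{equation*}
Writing $B\coloneqq GAG^{-1}$, we get $V_A^\ast\Dirac_GV_A=\frac12\sum_{j,k}B_{jk}S_j(S_k\Dirac_G+\Dirac_GS_k)$, where $S_j=1\otimes\sigma_j$ commutes with $\A_\theta$. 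This crude decomposition only gives the constant $\sum_{j,k}|B_{jk}|$, not $\norm{B}{}$.

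To obtain the operator-norm constant, work on Fourier polynomials $a=\sum_z a_zW_\theta^z$, where $[\Dirac_G,a]=\sum_z a_zW_\theta^z\otimes\sigma(Gz)$. Then
\begin{equation*}
[\Dirac_G,\alpha_{A,\theta}(a)]=V_A[V_A^\ast\Dirac_GV_A,a]V_A^\ast,
\end{equation*}
and $[V_A^\ast\Dirac_GV_A,a]=\sum_z a_zW_\theta^z\otimes\sigma(BGz)$. The linear map $\sigma(x)\mapsto\sigma(Bx)$ on the real span of $\sigma_1,\sigma_2$ has to be realized as a completely bounded map on $M_2$-valued operators with cb norm at most $\norm{B}{}$. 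The idea is to write $[\Dirac_G,a]=\sum_j\delta_j(a)\otimes\sigma_j$ with $\delta_j(a)=\sum_z(Gz)_ja_zW^z$, and to note that $\norm{\sum_j x_j\otimes\sigma_j}{}$ is comparable to the row/column norm of $(x_1,x_2)$. This is \textbf{the main obstacle}: the exact constant $\norm{B}{}$ versus a constant like $2\norm{B}{}$. Since the statement only requires \emph{some} adapted weight with the given $r$, I will first try to get the sharp constant by averaging over a rotation realizing the singular value decomposition of $B$. If that fails, I will note that the proof only needs $\Lip\circ\alpha^{\pm1}\leq r'\Lip$ for some $r'\leq r$, and reexamine whether the paper's $r$ suffices, possibly via the identity $\sigma(Bx)=U\sigma(Dx')U^\ast$ with unitaries $U$ in the Clifford algebra.

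\emph{Extending to the full domain.} For a general $a\in\dom{\Lip_{\theta,G}}$, use \eqref{vm:torus-fejer}: the Fej\'er means $\zeta^{ff_N}(a)$ converge to $a$ with $\Lip_{\theta,G}(\zeta^{ff_N}(a))\leq\Lip_{\theta,G}(a)$. Since $\alpha_{A,\theta}$ is isometric, $\alpha_{A,\theta}(\zeta^{ff_N}(a))\to\alpha_{A,\theta}(a)$ in norm, and lower semicontinuity of $\Lip_{\theta,G}$ transfers the estimate to $a$. Iterating in $n$ gives
\begin{equation*}
w(n)\Lip_{\theta,G}(\alpha^{-n}(a))\leq\Lip_{\theta,G}(a).
\end{equation*}
Hence $\A_w=\dom{\Lip_{\theta,G}}$ is dense, so $w$ is adapted. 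Admissibility holds because $w(m+z)/w(m)\leq r^{|z|}$.

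\emph{The relative bound.} Write
\begin{equation*}
\bigl((\Dirac_G-\Dirac_{G_0})\xi\bigr)(z)=\sigma\bigl((G-G_0)G_0^{-1}\,G_0z\bigr)\xi_z.
\end{equation*}
Since $\norm{\sigma(y)v}{\C^2}=\norm{y}{\R^2}\norm{v}{\C^2}$, each component has norm at most $\delta_G\norm{G_0z}{}\norm{\xi_z}{}=\delta_G\norm{(\Dirac_{G_0}\xi)_z}{}$. Summing the squares over $z$ gives the claimed bound.
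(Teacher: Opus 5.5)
Your relative bound, the admissibility of $w$, and the passage from Fourier polynomials to all of $\dom{\Lip_{\theta,G}}$ via Fej\'er means and lower semicontinuity are correct and match the paper. The gap is the central inequality
\[
\Lip_{\theta,G}(\alpha_{A,\theta}^{m}(a))\leq\norm{GA^{m}G^{-1}}{M_2(\R)}\Lip_{\theta,G}(a).
\]
You identify it as the main obstacle and leave it open (``I will first try \ldots; if that fails \ldots''). Yet your step ``iterating in $n$'' needs exactly this sharp per-step constant. A per-step bound $c\,r$ with $c>1$ iterates to $(cr)^{|n|}$, which $w(n)=r^{-|n|}$ does not absorb, and your fallback ``$r'\leq r$'' is the same unproven claim.

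For the lemma as literally stated you could sidestep this. Take your own identity $V_A^{-m}\Dirac_GV_A^{m}=\frac12\sum_{j,k}M_{jk}S_j(S_k\Dirac_G+\Dirac_GS_k)$ with $M=GA^mG^{-1}$, and apply it for each $m$ instead of iterating. Submultiplicativity then gives $\Lip_{\theta,G}(\alpha_{A,\theta}^m(a))\leq4\norm{GA^mG^{-1}}{}\Lip_{\theta,G}(a)\leq4r^{|m|}\Lip_{\theta,G}(a)$, hence density of $\A_w$. What the sharp constant additionally yields is $(\Lip_{\theta,G})_w=\Lip_{\theta,G}$, which the paper records.

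The missing idea is elementary. Identify $\R^2$ with $\C$; a real $2\times2$ matrix $M$ then acts as $\zeta\mapsto\alpha\zeta+\beta\bar\zeta$, and $\norm{M}{M_2(\R)}=|\alpha|+|\beta|$. Write $M=s_01+s_1\sigma_1-is_2\sigma_2+s_3\sigma_3$, so $\alpha=s_0+is_2$ and $\beta=s_3+is_1$. One checks
\[
\sigma(Mx)=(s_01-is_2\sigma_3)\sigma(x)+(s_31-is_1\sigma_3)\sigma_1\sigma(x)\sigma_1\qquad(x\in\R^2),
\]
where $s_01-is_2\sigma_3=\mathrm{diag}(\bar\alpha,\alpha)$ and $s_31-is_1\sigma_3=\mathrm{diag}(\bar\beta,\beta)$. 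These act only on the spin factor, hence commute with $\A_\theta$. So, with $C=[\Dirac_G,a]$ and $S=1\otimes\sigma_1$,
\[
[V_A^{-m}\Dirac_GV_A^{m},a]=(1\otimes\mathrm{diag}(\bar\alpha,\alpha))\,C+(1\otimes\mathrm{diag}(\bar\beta,\beta))\,SCS,
\]
whose norm is at most $(|\alpha|+|\beta|)\norm{C}{}=\norm{M}{}\Lip_{\theta,G}(a)$. This is the paper's argument, applied directly to $M=GA^mG^{-1}$ together with $\norm{GA^mG^{-1}}{}\leq r^{|m|}$.

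Your SVD idea can be completed along the same lines. Rotations are implemented by $\mathrm{Ad}(e^{-i\phi\sigma_3/2})$ on the spin factor, and $\sigma(\mathrm{diag}(d_1,d_2)x)=\frac{d_1+d_2}{2}\sigma(x)+\frac{d_1-d_2}{2}\sigma_1\sigma(x)\sigma_1$ has constant $\max_i|d_i|$. But it is precisely this diagonal decomposition, absent from your proposal, that produces the sharp constant.
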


\begin{proof}
  Set
  \begin{equation*}
    \sigma_3\coloneqq\begin{pmatrix}1&0\\0&-1\end{pmatrix}.
  \end{equation*}
  The matrices $1,\sigma_1,\sigma_2,\sigma_3$ form a basis of $M_2(\C)$. For $A\in M_2(\R)$, write
  \begin{equation*}
    A=s_0 1+s_1\sigma_1-is_2\sigma_2+s_3\sigma_3 =\begin{pmatrix}s_0+s_3&s_1-s_2\\s_1+s_2&s_0-s_3\end{pmatrix}, \qquad s_0,s_1,s_2,s_3\in\R.
  \end{equation*}
  For every $x\in\R^2$, the Pauli relations give
  \begin{equation*}
    \sigma(\sigma_1x)=-i\sigma_3\sigma_1\sigma(x)\sigma_1\text, \qquad \sigma(i\sigma_2x)=i\sigma_3\sigma(x)\text,\qquad \sigma(\sigma_3x)=\sigma_1\sigma(x)\sigma_1\text.
  \end{equation*}
  Consequently,
  \begin{equation*}
    \sigma(Ax)=(s_0 1-is_2\sigma_3)\sigma(x)+(s_3 1-is_1\sigma_3)\sigma_1\sigma(x)\sigma_1.
  \end{equation*}
  Moreover, setting $\rho=(s_0^2+s_2^2)^{1/2}$ and $\tau=(s_1^2+s_3^2)^{1/2}$, we have
  \begin{equation*}
    \norm{s_0 1-is_2\sigma_3}{M_2(\C)}=\rho,\qquad \norm{s_3 1-is_1\sigma_3}{M_2(\C)}=\tau.
  \end{equation*}
  A direct computation gives
  \begin{equation*}
    \operatorname{tr}(A^{\mathsf T}A)=2(\rho^2+\tau^2),\qquad \det(A^{\mathsf T}A)=(\rho^2-\tau^2)^2,
  \end{equation*}
  so the eigenvalues of $A^{\mathsf T}A$ are $(\rho+\tau)^2$ and $(\rho-\tau)^2$. Thus
  \begin{equation*}
    \norm{A}{M_2(\R)}=\rho+\tau.
  \end{equation*}

  Now let $a=\sum_p a_pW_\theta^p$ be a Fourier polynomial and $n\in\Z$. Since
  \begin{equation*}
    V_A^n(\phi_p\otimes v)=\phi_{A^np}\otimes v,
  \end{equation*}
  the unitary $V_A^n$ commutes with $1\otimes\sigma_j$ for $j\in\{1,2,3\}$, and
  \begin{equation*}
    V_A^{-n}\Dirac_GV_A^n(\phi_p\otimes v)=\phi_p\otimes\sigma(GA^np)v.
  \end{equation*}
  Therefore,
  \begin{equation*}
    \begin{aligned}
      \Lip_{\theta,G}(\alpha_{A,\theta}^n(a)) &=\norm{[\Dirac_G,V_A^naV_A^{-n}]}{\B(\Hilbert)}\\
      &=\norm{V_A^n[V_A^{-n}\Dirac_GV_A^n,a]V_A^{-n}}{\B(\Hilbert)}\\
      &=\norm{\sum_p a_pW_\theta^p(1\otimes\sigma(GA^np))}{\B(\Hilbert)}.
    \end{aligned}
  \end{equation*}
  Apply the preceding decomposition to $M=GA^nG^{-1}$, denoting its coefficients again by $s_0,s_1,s_2,s_3$. Set
  \begin{equation*}
    C\coloneqq[\Dirac_G,a]=\sum_p a_pW_\theta^p(1\otimes\sigma(Gp)), \qquad S\coloneqq1\otimes\sigma_1.
  \end{equation*}
  Since $GA^np=M(Gp)$, we obtain
  \begin{equation*}
    \sum_p a_pW_\theta^p(1\otimes\sigma(GA^np)) =(1\otimes(s_0 1-is_2\sigma_3))C+(1\otimes(s_3 1-is_1\sigma_3))SCS.
  \end{equation*}
  Hence
  \begin{equation}\label{vm:matrix-estimate}
    \begin{aligned}
      \Lip_{\theta,G}(\alpha_{A,\theta}^n(a)) &\leq\left(\sqrt{s_0^2+s_2^2}+\sqrt{s_1^2+s_3^2}\right)\norm{C}{\B(\Hilbert)}\\
      &=\norm{GA^nG^{-1}}{M_2(\R)}\Lip_{\theta,G}(a) \leq r^{|n|}\Lip_{\theta,G}(a).
    \end{aligned}
  \end{equation}
  For $a\in\dom{\Lip_{\theta,G}}$, lower semicontinuity and \eqref{vm:torus-fejer} give
  \begin{equation*}
    \Lip_{\theta,G}(\alpha_{A,\theta}^n(a))\leq\liminf_{N\longrightarrow\infty}\Lip_{\theta,G}(\alpha_{A,\theta}^n(\zeta^{ff_N}(a)))\leq r^{|n|}\Lip_{\theta,G}(a)\text.
  \end{equation*}
  Hence
  \begin{equation*}
    \sup_{n\in\Z}w(n)\Lip_{\theta,G}(\alpha_{A,\theta}^{-n}(a))=\Lip_{\theta,G}(a),\qquad r^{-|k|}\leq\frac{w(n+k)}{w(n)}\leq r^{|k|}\quad(n,k\in\Z).
  \end{equation*}
  Since $w(0)=1$, the weight $w$ is admissible and appropriate, and $(\Lip_{\theta,G})_w = \Lip_{\theta,G}$.

  For every $\xi\in\dom{\Dirac}$,
  \begin{equation*}
    \begin{aligned}
      \norm{(\Dirac_G-\Dirac_{G_0})\xi}{\Hilbert}^2 &=\sum_{p\in\Z^2}\norm{(G-G_0)p}{\R^2}^2\norm{\xi_p}{\C^2}^2\\
      &\leq\delta_G^2\sum_{p\in\Z^2}\norm{G_0p}{\R^2}^2\norm{\xi_p}{\C^2}^2 =\delta_G^2\norm{\Dirac_{G_0}\xi}{\Hilbert}^2\text,
    \end{aligned}
  \end{equation*}
  as claimed.
\end{proof}

We are thus able to apply Theorems \ref{twisted-spectral-triple-thm} and \ref{metric-thm}.

We fix a compact nonempty subset $\mathscr{G}$ of definite positive real symmetric matrices. We let $w : \Z \longrightarrow (0,1]$ and $r$ be defined by Lemma \ref{vm:relative}.

Fix $\tau>0$ and $\kappa\in(0,1]$, and let $\gamma_0,\gamma_1,\gamma_2\in M_2(\C)$ be the anticommuting self-adjoint unitaries used in our construction (again we can choose Pauli matrices $\sigma_1$,$\sigma_2$ and $\sigma_3$ here).

Following Theorem \ref{twisted-spectral-triple-thm}, we thus set:
\begin{equation*}
  \Hilbert[K]\coloneqq\ell^2(\Z,\Hilbert)\otimes\C^2 \text.
\end{equation*}
For $a\in\A_\theta$, $k\in\Z$, and $\xi\in\ell^2(\Z,\Hilbert)$, we define
\begin{equation*}
  (\pi_\theta(a)\xi) : z \in \Z \mapsto \alpha_{A,\theta}^{-z}(a)\xi_z,\qquad U^k\xi : z\in\Z \mapsto \xi_{z-k}\text.
\end{equation*}
In particular, $U^k\pi_\theta(a)U^{-k}=\pi_\theta(\alpha_{A,\theta}^k(a))$. This representation is faithful, since $\Z$ is amenable.

We define
\begin{equation*}
  \partial\xi : z \in \Z \mapsto z \xi_z
\end{equation*}
for all
\begin{equation*}
  \xi \in \dom{\partial} \coloneqq \{ (\xi_z)_{z\in\Z} \in \ell^2(\Z,\Hilbert) : \sum_{z\in\Z} z^2 \norm{\xi_z}{\Hilbert}^2 < \infty \} \text.
\end{equation*}
Let $M_w \xi : z\in\Z \mapsto w(z)\xi_z$ for all $\xi \in \ell^2(\Z,\Hilbert)$, i.e., the multiplication operator by the weight $w$ on $\ell^2(\Z,\Hilbert)$. For all $(\xi_z)_{z\in\Z}$ in $\ell^2(\Z,\Hilbert)$, if $\xi_z \in \dom{\Dirac}$ for all $z\in \Z$, we then set
\begin{equation*}
  \Dirac_G^{\mathrm{diag}}\xi : z \in \Z \mapsto \Dirac_G \xi_z \text.
\end{equation*}

On the dense subspace
\begin{equation*}
  \begin{aligned}
    \dom{\nabla}\coloneqq\biggl\{&(\xi_z)_{z\in\Z}\in\ell^2(\Z,\Hilbert):\xi_z\in\dom{\Dirac}\text{ for all }z\in\Z,\\
    &\sum_{z\in\Z}\left(z^2\norm{\xi_z}{\Hilbert}^2+w(z)^2\norm{\Dirac_G\xi_z}{\Hilbert}^2\right)<\infty\biggr\}\otimes\C^2
  \end{aligned}
\end{equation*}
we then define
\begin{equation*}
  \Dirac[\nabla] \coloneqq M_w \Dirac_G^{\mathrm{diag}}\otimes\gamma_0 + \partial\otimes\gamma_1 + \tau 1\otimes\gamma_2 \text.
\end{equation*}

We write $\Dirac[\nabla]_G$ for the operator $\Dirac[\nabla]$ just defined, to display its dependence on $G$. The weighted expression $M_w\Dirac_G^{\mathrm{diag}}$ is taken on its maximal domain
\begin{equation*}
  \begin{aligned}
    \dom{M_w\Dirac_G^{\mathrm{diag}}}=\biggl\{&\xi\in\ell^2(\Z,\Hilbert):\xi_n\in\dom{\Dirac}\text{ for all }n\in\Z,\\
    &\sum_{n\in\Z}w(n)^2\norm{\Dirac_G\xi_n}{\Hilbert}^2<\infty\biggr\}\text.
  \end{aligned}
\end{equation*}
Set 
\begin{equation}\label{YG-eq}
	Y_G\coloneqq(M_w\Dirac_G^{\mathrm{diag}})\otimes\gamma_0\text,
\end{equation}
so $\Dirac[\nabla]_G = Y_G + \partial\otimes\gamma_1 + \tau \gamma_2$. Then
\begin{equation*}
  \dom{\Dirac[\nabla]_G}=\dom{\nabla},\qquad \Dirac[\nabla]_G^2=(M_w\Dirac_G^{\mathrm{diag}})^2\otimes1+\partial^2\otimes1+\tau^2 1\text.
\end{equation*}
Lemma~\ref{vm:relative} implies that this domain is independent of $G\in\mathscr{G}$.

For every $k\in\Z$, let $M_{q_k}$ be multiplication by
\begin{equation*}
  q_k(n)\coloneqq\frac{w(n+k)}{w(n)}-1,\qquad C_w(k)\coloneqq\sup_{n\in\Z}|q_k(n)|\text.
\end{equation*}
The twist from Theorem~\ref{twisted-spectral-triple-thm} satisfies
\begin{equation}\label{rho-eq}
  \rho_{\theta,G}(\pi_\theta(a)U^k)=\pi_\theta(a)(U^k+\Theta_{G,k})
 \end{equation}
where
\begin{equation}\label{Theta-eq}
	\Theta_{G,k}\coloneqq U^kM_{q_k}Y_G\Dirac[\nabla]_G^{-1}\text.
\end{equation}
Indeed, on the finite-support core,
\begin{equation*}
  [M_w,U^k](\Dirac_G^{\mathrm{diag}}\otimes\gamma_0)=U^kM_{q_k}Y_G,\qquad \norm{Y_G\Dirac[\nabla]_G^{-1}}{\B(\Hilbert[K])}\leq1,
\end{equation*}
and hence $\norm{\Theta_{G,k}}{\B(\Hilbert[K])}\leq C_w(k)$. Identity operators on the spin factors are suppressed when no ambiguity arises.

We retain the twist domain from the general construction and denote the crossed-product Lipschitz seminorm by $\Lip^{\nabla}_{\theta,G}$, reserving $\Lip_{\theta,G}$ for the base algebra. By Theorem~\ref{metric-thm},
\begin{equation*}
  \left(\A_\theta\rtimes_{\alpha_{A,\theta}}\Z,\Hilbert[K],\Dirac[\nabla]_G,\rho_{\theta,G}\right)
\end{equation*}
is a $\kappa$-metric twisted spectral triple. We also write $\operatorname{tr}_\theta$ for the canonical trace of the crossed product, obtained by composing the base trace with the zeroth outer Fourier projection $\mathcal{F}_0$.

For $\xi\in\dom{\nabla}$ and $T\in\B(\Hilbert[K])$, set
\begin{equation*}
  \begin{aligned}
    &\CDN_G(\xi)\coloneqq\norm{\xi}{\Hilbert[K]}+\norm{\Dirac[\nabla]_G\xi}{\Hilbert[K]},\\
    &\opnorm{T}{}{t_0}\coloneqq\sup\left\{\norm{T\xi}{\Hilbert[K]}:\xi\in\dom{\nabla},\ \CDN_{G_0}(\xi)\leq1\right\},
  \end{aligned}
\end{equation*}
where $t_0=(\theta_0,G_0)$ is the reference parameter. We write $\CDN_{t_0}=\CDN_{G_0}$.

To prove that this construction is continuous in the parameters $\theta$ and $G$, we will emply a strategy motivated by our work in \cite{Latremoliere13c}. Thus, in particular, we will introduce a pivot which almost commutes, appropriately defined, with the canonical generators of our  C*-crossed-product, is finite rank, and has an eigenvalue of $1$.  This is best achieved when we change our representation by a well chosen unitary.

\subsubsection{Semidirect product realization}

Let $\Gamma\coloneqq\Z^2\rtimes_A\Z$, with identity $e=(0,0)$, multiplication, and cocycle given for all $(m,k), (p,n) \in \Gamma$ by
\begin{equation*}
  (m,k)(p,n)=(m+A^kp,k+n),\qquad \sigma_\theta((m,k),(p,n))=\exp\left(\pi i\theta\det(m,A^kp)\right)\text.
\end{equation*}
We denote the canonical orthonormal basis of $\ell^2(\Gamma)$ by $(\phi_g)_{g\in\Gamma}$. The group $\Gamma$ is torsion-free: $(m,k)^j=e$, for $j\geq1$, implies $jk=0$ and then $jm=0$.

For all $g \in \Gamma$, we define the operator $\lambda_\theta(g)$ on $\ell^2(\Gamma)\otimes\C^2\otimes\C^2$ as  
\begin{equation*}
	\lambda_\theta(g) \xi : h \in \Gamma \mapsto \sigma(g,g^{-1} h)\xi(g^{-1} h)
\end{equation*}
for all $\xi \in \ell^2(\Gamma)$. The canonical basis for $\ell^2(\Gamma)$ is denoted by $(\phi_g)_{g\in \Gamma}$.

\begin{lemma}\label{vm:semidirect-realization}
  The function $\sigma_\theta$ is a normalized unitary $2$-cocycle on $\Gamma$. Let $J$ be the unique unitary from $\Hilbert[K]$ onto $\ell^2(\Gamma)\otimes \C^2 \otimes \C^2$ such that
  \begin{equation*}
     J(\phi_n\otimes\phi_p\otimes v\otimes c)=\phi_{(A^np,n)}\otimes v\otimes c
  \end{equation*}
  for all $n\in\Z$, $p\in\Z^2$, and $v,c\in\C^2$. 
  
  We then observe that, for all $z \in \Z^2$ and $k\in \Z$,
  \begin{equation*}
    J\pi_\theta(W_\theta^z)U^k J^*=\lambda_\theta(z,k) \text.
  \end{equation*}
  Consequently, $\mathrm{Ad}_J$ is a *-isomorphism from $\A_\theta\rtimes_{\alpha_{A,\theta}}\Z$ onto $C^\ast_{\mathrm r}(\Gamma,\sigma_\theta)$. Moreover, $J\Dirac[\nabla]_G J^*$ is multiplication by
  \begin{equation*}
    H_G(p,n)=w(n)\sigma(GA^{-n}p)\otimes\gamma_0+n1\otimes\gamma_1+\tau1\otimes\gamma_2
  \end{equation*}
  on $J\dom{\nabla}$.
\end{lemma}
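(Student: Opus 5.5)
The proof is a sequence of direct verifications on the canonical basis vectors $\phi_n\otimes\phi_p\otimes v\otimes c$. All operators involved are either unitaries or diagonal multiplication operators, so nothing beyond bookkeeping of indices and phases should be needed.

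\textbf{Step 1: the cocycle.} We first check that $\sigma_\theta$ is a normalized unitary $2$-cocycle. Normalization holds because $\det(m,A^k0)=0=\det(0,p)$. For the cocycle identity, take $g=(m,k)$, $h=(p,n)$, $l=(s,j)$ and expand
\begin{equation*}
  \sigma_\theta(g,h)\sigma_\theta(gh,l) \qquad\text{and}\qquad \sigma_\theta(g,hl)\sigma_\theta(h,l)\text.
\end{equation*}
The exponents are, respectively,
\begin{equation*}
  \det(m,A^kp)+\det(m+A^kp,A^{k+n}s) \qquad\text{and}\qquad \det(m,A^kp+A^{k+n}s)+\det(p,A^ns)\text.
\end{equation*}
Bilinearity of $\det$ together with $\det(A^kx,A^ky)=\det(x,y)$, which holds since $A\in\operatorname{SL}(2,\Z)$, shows the two exponents agree.

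\textbf{Step 2: the unitary $J$.} The map $(n,p)\mapsto(A^np,n)$ is a bijection $\Z\times\Z^2\to\Gamma$, because $A^n$ is invertible over $\Z$. Hence $J$ sends an orthonormal basis onto an orthonormal basis and extends to a unitary.

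\textbf{Step 3: the generators.} Next we compute both sides of $J\pi_\theta(W^z_\theta)U^kJ^*=\lambda_\theta(z,k)$ on $\phi_{(A^np,n)}\otimes v\otimes c$.

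On the left-hand side, $J^*$ produces $\phi_n\otimes\phi_p$. Then $U^k$ shifts this to $\phi_{n+k}\otimes\phi_p$. The operator $\pi_\theta(W^z_\theta)$ then acts on fiber $n+k$ by
\begin{equation*}
  \alpha_{A,\theta}^{-(n+k)}(W_\theta^z)=W_\theta^{A^{-(n+k)}z}\text,
\end{equation*}
which yields the vector $\phi_{A^{-(n+k)}z+p}$ with phase $\exp\bigl(\pi i\theta\det(A^{-(n+k)}z,p)\bigr)$. Applying $J$ sends this to
\begin{equation*}
  \phi_{(A^{n+k}(A^{-(n+k)}z+p),\,n+k)}=\phi_{(z+A^{n+k}p,\,n+k)}=\phi_{(z,k)(A^np,n)}\text.
\end{equation*}

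On the right-hand side, $\lambda_\theta(g)\phi_h=\sigma_\theta(g,h)\phi_{gh}$. This formula is read off from the definition, using $\lambda_\theta(g)\xi(gh)=\sigma(g,h)\xi(h)$. Thus the right-hand side gives the phase
\begin{equation*}
  \exp\bigl(\pi i\theta\det(z,A^kA^np)\bigr)\text.
\end{equation*}
This equals the left-hand phase by $\operatorname{SL}(2,\Z)$-invariance of $\det$. The main care needed here is matching the paper's sign and ordering conventions for $\pi$ and $\alpha^{-z}$, and interpreting $\lambda_\theta$ with the correct cocycle argument. This is the step I expect to be the main (if modest) obstacle.

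\textbf{Step 4: the isomorphism.} Since $\mathrm{Ad}_J$ maps the generators $\pi_\theta(W^z_\theta)U^k$ onto the generators $\lambda_\theta(z,k)$ of $C^\ast_{\mathrm r}(\Gamma,\sigma_\theta)$, and it is an isometric $*$-homomorphism, it restricts to a $*$-isomorphism between the generated $C^\ast$-algebras.

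\textbf{Step 5: the Dirac operator.} Finally, $\Dirac[\nabla]_G$ is diagonal on $\phi_n\otimes\phi_p\otimes v\otimes c$. It acts by
\begin{equation*}
  w(n)\sigma(Gp)\otimes\gamma_0+n\otimes\gamma_1+\tau\gamma_2\text.
\end{equation*}
Relabeling $q=A^np$, so that $p=A^{-n}q$, gives exactly the multiplication operator $H_G(q,n)$. The domains correspond under $J$ by construction.
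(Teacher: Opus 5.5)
Your proof is correct. Steps 2--5 match the paper's argument: the paper evaluates on $\phi_{(p,n)}$ and uses $\det(A^{-(n+k)}z,A^{-n}p)=\det(z,A^kp)$, which is your computation after substituting $p\mapsto A^np$. The conventions in Step 3 do line up exactly as you computed, so there is no obstacle there.

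The real difference is Step 1. The paper does not check the cocycle identity by hand. It proceeds in three moves:
\begin{enumerate}
\item It first proves $J\pi_\theta(W_\theta^z)U^kJ^*=\lambda_\theta(z,k)$, which only needs $\lambda_\theta(g)\phi_h=\sigma_\theta(g,h)\phi_{gh}$ on basis vectors.
\item It then uses covariance and the Weyl relations in $\A_\theta\rtimes_{\alpha_{A,\theta}}\Z$ to get $\lambda_\theta(g)\lambda_\theta(h)=\sigma_\theta(g,h)\lambda_\theta(gh)$.
\item Finally, it compares the two bracketings of $\lambda_\theta(g)\lambda_\theta(h)\lambda_\theta(l)$ and cancels the unitary $\lambda_\theta(ghl)$.
\end{enumerate}

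Your direct check, via bilinearity of $\det$ and $\det(A^kx,A^ky)=\det(x,y)$, is more elementary and does not depend on the crossed-product realization. The paper's route avoids that computation by borrowing the known algebraic structure of the crossed product. It also gives the projective-representation identity for $\lambda_\theta$ directly; in your approach that identity follows from the cocycle identity by the standard computation.

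On the domain in Step 5, the paper makes ``by construction'' explicit through $H_G(p,n)^2=\bigl(w(n)^2\norm{GA^{-n}p}{\R^2}^2+n^2+\tau^2\bigr)1$. This identifies $J\dom{\nabla}$ with the maximal domain of multiplication by $H_G$. Your remark is also adequate, because $\Dirac[\nabla]_G$ acts fiberwise on all of $\dom{\nabla}$, not only on basis vectors.
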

\begin{proof}
  The map $(p,n)\mapsto(A^np,n)$ is a bijection of $\Z^2\times\Z$, so the stated formula defines a unique unitary $J$.  Moreover, a simple computation shows
  \begin{equation*}
  J^*(\phi_{(p,n)}\otimes v\otimes c)=\phi_n\otimes\phi_{A^{-n}p}\otimes v\otimes c
  \qquad(p\in\Z^2,\ n\in\Z,\ v,c\in\C^2)
\end{equation*}
for all $p \in \Z^2$, $n \in \Z$, $v,c \in \C^2$.

  For $z,p\in\Z^2$, $k,n\in\Z$, and $v,c\in\C^2$, since
  \begin{equation}\label{det-eq}
  \begin{split}
    \det(A^{-(n+k)}z,A^{-n}p) 
    &=\det(A^{n+k}A^{-(n+k)}z,A^{n+k}A^{-n}p) \\
    &=\underbracket{\det(A^{n+k})}_{=1}\det(z,A^k p) =\det(z,A^k p)\text,
  \end{split}
  \end{equation}
  we compute
  \begin{align*}
    &J\pi_\theta(W_\theta^z)U^k J^*(\phi_{(p,n)}\otimes v\otimes c)\\
    &=J\pi_\theta(W_\theta^z)U^k J^*(\phi_{n}\otimes\phi_{A^{-n}p}\otimes v\otimes c)\\
    &=J\pi_\theta(W_\theta^z)(\phi_{n+k}\otimes\phi_{A^{-n}p}\otimes v\otimes c)\\
    &=J W_\theta^{A^{-n-k} z}(\phi_{n+k}\otimes\phi_{A^{-n}p}\otimes v\otimes c)\\
    &=\exp\left(\pi i\theta\underbracket{\det(A^{-(n+k)}z,A^{-n}p)}_{=\det(z,A^k p) \text{ by Eq. \eqref{det-eq}}}\right)
      J(\phi_{n+k}\otimes\phi_{A^{-(n+k)}z+A^{-n}p}\otimes v\otimes c)\\
    &=\exp\left(\pi i\theta\det(z,A^kp)\right)
      \phi_{(z+A^kp,n+k)}\otimes v\otimes c\\
    &=\lambda_\theta(z,k)(\phi_{(p,n)}\otimes v\otimes c)\text.
  \end{align*}
 
  For $g=(z,k)$ and $h=(p,n)$ in $\Gamma$, covariance and the Weyl relations now give
  \begin{align*}
    \lambda_\theta(g)\lambda_\theta(h)
    &=J\pi_\theta(W_\theta^z)U^k\pi_\theta(W_\theta^p)U^nJ^*\\
    &=J\pi_\theta(W_\theta^zW_\theta^{A^kp})U^{k+n}J^*\\
    &=\exp\left(\pi i\theta\det(z,A^kp)\right)
      J\pi_\theta(W_\theta^{z+A^kp})U^{k+n}J^*\\
    &=\sigma_\theta(g,h)\lambda_\theta(gh)\text.
  \end{align*}
  Associativity therefore yields, for all $g,h,l\in\Gamma$,
  \begin{equation*}
    \sigma_\theta(g,h)\sigma_\theta(gh,l)\lambda_\theta(ghl)
    =\sigma_\theta(h,l)\sigma_\theta(g,hl)\lambda_\theta(ghl).
  \end{equation*}
  Cancelling the unitary $\lambda_\theta(ghl)$ proves the cocycle identity. The defining formula gives $|\sigma_\theta(g,h)|=1$ and $\sigma_\theta(g,e)=\sigma_\theta(e,g)=1$.  Since $\pi_\theta(W_\theta^z)U^k$, for $(z,k)\in\Gamma$, generate $\A_\theta\rtimes_{\alpha_{A,\theta}}\Z$, we obtain
  \begin{equation*}
    J\bigl(\A_\theta\rtimes_{\alpha_{A,\theta}}\Z\bigr)J^*
    =C^\ast\bigl(\lambda_\theta(z,k):(z,k)\in\Gamma\bigr)
    =C^\ast_{\mathrm r}(\Gamma,\sigma_\theta)\text.
  \end{equation*}

  Finally, for $p\in\Z^2$, $n\in\Z$, and $v,c\in\C^2$,
  \begin{align*}
    &J\Dirac[\nabla]_GJ^*(\phi_{(p,n)}\otimes v\otimes c)\\
    &=J\Dirac[\nabla]_G(\phi_n\otimes\phi_{A^{-n}p}\otimes v\otimes c)\\
    &=J\Bigl(\phi_n\otimes\phi_{A^{-n}p}\otimes
      \bigl(w(n)\sigma(GA^{-n}p)v\otimes\gamma_0c
      +nv\otimes\gamma_1c+\tau v\otimes\gamma_2c\bigr)\Bigr)\\
    &=\phi_{(p,n)}\otimes H_G(p,n)(v\otimes c)\text.
  \end{align*}
  Since
  \begin{equation*}
    H_G(p,n)^2=\left(w(n)^2\norm{GA^{-n}p}{\R^2}^2+n^2+\tau^2\right)1,
  \end{equation*}
  its domain is precisely
  \begin{equation*}
    J\dom{\nabla}
    =\left\{\xi\in\ell^2(\Gamma,\C^2\otimes\C^2):
    \sum_{(p,n)\in\Gamma}\left(w(n)^2\norm{GA^{-n}p}{\R^2}^2+n^2+\tau^2\right)
    \norm{\xi_{p,n}}{\C^2\otimes\C^2}^2<\infty\right\}\text,
  \end{equation*}
  conclusing our poof.
\end{proof}
Henceforth, we use this unitary identification and retain the notation for the transported operators and representations. In particular, $\rho_{\theta,G}$ is transported by conjugation with $\mathcal J$; these are the triples constructed above.

\begin{lemma}\label{vm:twist-norm}
  Let $G,G_0\in\mathscr{G}$ and set
  \begin{equation*}
  	\delta_G \coloneqq \norm{(G-G_0)G_0^{-1}}{M_2(\R)}
 \end{equation*}

  If $\delta_G<1$, then for every $k\in\Z$, with the notation of Eq. \eqref{Theta-eq},
    \begin{equation*}
    \norm{\Theta_{G,k}-\Theta_{G_0,k}}{\B(\Hilbert[K])} \leq 2(r^{|k|}-1)\frac{\delta_G}{1-\delta_G}\text.
  \end{equation*}
\end{lemma}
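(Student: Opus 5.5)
Everything here is diagonal, so the plan is a pointwise estimate. Since $U^k$ is unitary,
\begin{equation*}
\norm{\Theta_{G,k}-\Theta_{G_0,k}}{\B(\Hilbert[K])}\leq\norm{M_{q_k}}{}\,\norm{Y_G\Dirac[\nabla]_G^{-1}-Y_{G_0}\Dirac[\nabla]_{G_0}^{-1}}{}\text.
\end{equation*}
The first factor is $C_w(k)=\sup_n\left|r^{|n|-|n+k|}-1\right|\leq r^{|k|}-1$, because $r^{-|k|}\leq w(n+k)/w(n)\leq r^{|k|}$ and $1-r^{-|k|}\leq r^{|k|}-1$. It therefore remains to show
\begin{equation*}
\norm{Y_G\Dirac[\nabla]_G^{-1}-Y_{G_0}\Dirac[\nabla]_{G_0}^{-1}}{}\leq\frac{2\delta_G}{1-\delta_G}\text.
\end{equation*}

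For this I would work fiberwise. In the semidirect realization of Lemma \ref{vm:semidirect-realization}, or directly on the basis $\phi_n\otimes\phi_p\otimes v\otimes c$, the operators $Y_G$ and $\Dirac[\nabla]_G$ both act on the fiber $(n,p)$ as matrices on $\C^2\otimes\C^2$. Writing $y=w(n)\sigma(Gp)\otimes\gamma_0$ and $y_0=w(n)\sigma(G_0p)\otimes\gamma_0$, we have
\begin{equation*}
\Dirac[\nabla]_G|_{(n,p)}=y+R,\qquad R\coloneqq n\,1\otimes\gamma_1+\tau\,1\otimes\gamma_2,
\end{equation*}
with $R$ independent of $G$. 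Set $D=y+R$ and $D_0=y_0+R$. The resolvent-type identity
\begin{equation*}
yD^{-1}-y_0D_0^{-1}=(y-y_0)D^{-1}+y_0\left(D^{-1}-D_0^{-1}\right)=(y-y_0)D^{-1}-y_0D_0^{-1}(y-y_0)D^{-1}
\end{equation*}
reduces the bound to the estimates $\norm{y_0D_0^{-1}}{}\leq1$ (already recorded in the paper) and $\norm{(y-y_0)D^{-1}}{}\leq\delta_G/(1-\delta_G)$. Together these give $2\delta_G/(1-\delta_G)$, which is exactly the constant in the statement.

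The main step is bounding $(y-y_0)D^{-1}$. By the Clifford relations, $D^2=\left(w(n)^2\norm{Gp}{\R^2}^2+n^2+\tau^2\right)1$, so
\begin{equation*}
\norm{D^{-1}}{}=\left(w(n)^2\norm{Gp}{}^2+n^2+\tau^2\right)^{-1/2}\leq\frac{1}{w(n)\norm{Gp}{}}\text.
\end{equation*}
As in Lemma \ref{vm:relative},
\begin{equation*}
\norm{y-y_0}{}=w(n)\norm{(G-G_0)p}{}=w(n)\norm{(G-G_0)G_0^{-1}G_0p}{}\leq\delta_G\,w(n)\norm{G_0p}{}\text.
\end{equation*}
The reverse triangle inequality gives $\norm{Gp}{}\geq\norm{G_0p}{}-\norm{(G-G_0)p}{}\geq(1-\delta_G)\norm{G_0p}{}$, which is where $\delta_G<1$ is used. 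Combining these, $\norm{(y-y_0)D^{-1}}{}\leq\delta_G/(1-\delta_G)$ fiberwise. When $p=0$ both sides vanish, so that case needs no separate treatment.

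Finally, all the operators involved are block diagonal with uniformly bounded blocks, so the fiberwise bounds give the operator-norm bound on $\Hilbert[K]$. One point of care remains: the ordering in $\Theta_{G,k}=U^kM_{q_k}Y_G\Dirac[\nabla]_G^{-1}$. The factor $U^kM_{q_k}$ is common to both $\Theta_{G,k}$ and $\Theta_{G_0,k}$ and does not depend on $G$, so it factors out on the left exactly as used in the first step.
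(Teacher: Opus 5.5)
Your proof is correct and is essentially the paper's argument. You use the same decomposition $Y_G\Dirac[\nabla]_G^{-1}-Y_{G_0}\Dirac[\nabla]_{G_0}^{-1}=(1-Y_{G_0}\Dirac[\nabla]_{G_0}^{-1})(Y_G-Y_{G_0})\Dirac[\nabla]_G^{-1}$, the same bound $\norm{Y_{G_0}\Dirac[\nabla]_{G_0}^{-1}}{}\leq 1$, and the same key estimate $\norm{(Y_G-Y_{G_0})\Dirac[\nabla]_G^{-1}}{}\leq\delta_G/(1-\delta_G)$. The only difference is that you prove this last estimate fiberwise via $\norm{Gp}{\R^2}\geq(1-\delta_G)\norm{G_0p}{\R^2}$, whereas the paper proves it at the operator level: with $\Delta=Y_G-Y_{G_0}$, it absorbs $\norm{\Delta\xi}{}$ in $\norm{\Delta\xi}{}\leq\delta_G\bigl(\norm{\Dirac[\nabla]_G\xi}{}+\norm{\Delta\xi}{}\bigr)$.
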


\begin{proof}
  For every $k\in\Z$,
  \begin{equation*}
    \norm{M_{q_k}}{\B(\Hilbert[K])}
    =C_w(k)
    =\sup_{n\in\Z}\left|\frac{w(n+k)}{w(n)}-1\right|
    =r^{|k|}-1\text.
  \end{equation*}
  Set $\Delta\coloneqq\Dirac[\nabla]_G-\Dirac[\nabla]_{G_0}=Y_G-Y_{G_0}$.
  By \eqref{Theta-eq} and the resolvent identity,
  \begin{align*}
    \Theta_{G,k}-\Theta_{G_0,k}
    &=U^kM_{q_k}\bigl(Y_G\Dirac[\nabla]_G^{-1}
      -Y_{G_0}\Dirac[\nabla]_{G_0}^{-1}\bigr)\\
    &=U^kM_{q_k}\bigl(\Delta\Dirac[\nabla]_G^{-1}
      +Y_{G_0}(\Dirac[\nabla]_G^{-1}-\Dirac[\nabla]_{G_0}^{-1})\bigr)\\
    &=U^kM_{q_k}(1-Y_{G_0}\Dirac[\nabla]_{G_0}^{-1})
      \Delta\Dirac[\nabla]_G^{-1}\text.
  \end{align*}
  The formula for $\Dirac[\nabla]_{G_0}^2$ gives
  \begin{equation*}
    \norm{Y_{G_0}\Dirac[\nabla]_{G_0}^{-1}}{\B(\Hilbert[K])}\leq1\text,
  \end{equation*}
  so
  \begin{equation*}
    \norm{\Theta_{G,k}-\Theta_{G_0,k}}{\B(\Hilbert[K])}
    \leq2C_w(k)\norm{\Delta\Dirac[\nabla]_G^{-1}}{\B(\Hilbert[K])}\text.
  \end{equation*}
  To bound the remaining factor, Lemma~\ref{vm:relative} gives, for every $\xi\in\dom{\nabla}$,
  \begin{align*}
    \norm{\Delta\xi}{\Hilbert[K]}
    &\leq\delta_G\norm{\Dirac[\nabla]_{G_0}\xi}{\Hilbert[K]}\\
    &\leq\delta_G\bigl(\norm{\Dirac[\nabla]_G\xi}{\Hilbert[K]}
      +\norm{\Delta\xi}{\Hilbert[K]}\bigr)\text.
  \end{align*}
  Since $\delta_G<1$, this implies
  \begin{equation*}
    \norm{\Delta\xi}{\Hilbert[K]}
    \leq\frac{\delta_G}{1-\delta_G}\norm{\Dirac[\nabla]_G\xi}{\Hilbert[K]}\text.
  \end{equation*}
  Applying this inequality to $\xi=\Dirac[\nabla]_G^{-1}\eta$ for $\eta\in\Hilbert[K]$ yields
  \begin{equation*}
    \norm{\Delta\Dirac[\nabla]_G^{-1}}{\B(\Hilbert[K])}
    \leq\frac{\delta_G}{1-\delta_G}\text.
  \end{equation*}
  Hence
  \begin{equation*}
    \norm{\Theta_{G,k}-\Theta_{G_0,k}}{\B(\Hilbert[K])}
    \leq2C_w(k)\frac{\delta_G}{1-\delta_G}
    =2(r^{|k|}-1)\frac{\delta_G}{1-\delta_G}\text,
  \end{equation*}
  as claimed.
\end{proof}
\subsubsection{Finite Fourier approximations}

Fix the word length $|\cdot|$ on $\Gamma$ associated with the generators
\begin{equation*}
  g_0\coloneqq((0,0),1),\qquad g_1\coloneqq((1,0),0),\qquad g_2\coloneqq((0,1),0)
\end{equation*}
and their inverses.

\begin{lemma}\label{lem:folner-explicit}
  For every finite subset $S$ of $\Gamma$ and every $\varepsilon>0$, there exists a nonempty finite subset $F$ of $\Gamma$ such that
  \begin{equation*}
    \frac{|F\mathbin\triangle gF|}{|F|}<\varepsilon\qquad(g\in S)\text.
  \end{equation*}
\end{lemma}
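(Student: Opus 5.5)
We will prove the Følner property for $\Gamma=\Z^2\rtimes_A\Z$ by building explicit Følner sets, which is natural since the word length on the generators $g_0,g_1,g_2$ has just been fixed. A soft route is also available: $\Gamma$ is an extension of the amenable group $\Z$ by the amenable group $\Z^2$, hence amenable, and amenable discrete groups satisfy the Følner condition. Since the statement is labelled explicit, we take the constructive route and keep the soft argument as a fallback.

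\emph{Reduction.} First, it suffices to treat the case where $S$ is the generating set $\{g_0^{\pm1},g_1^{\pm1},g_2^{\pm1}\}$. Indeed, every $g\in S$ is a word of length at most $L$ in these generators. The elementary inequality
\begin{equation*}
  |F\triangle ghF|\leq|F\triangle gF|+|F\triangle hF|
\end{equation*}
(using $|gF\triangle ghF|=|F\triangle hF|$) then gives $|F\triangle gF|\leq L\max_s|F\triangle sF|$. So we only need sets $F$ that are almost invariant under the generators, with tolerance $\varepsilon/L$.

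\emph{Construction.} We look for sets of the form
\begin{equation*}
  F=\{(p,n): 0\leq n<N,\ p\in A^n B\},
\end{equation*}
with $B\subseteq\Z^2$ finite, so that $|F|=N|B|$.

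The group law $(m,k)(p,n)=(m+A^kp,k+n)$ gives the action of each generator:
\begin{itemize}
  \item $g_0$ maps $(p,n)$ to $(Ap,n+1)$, so $g_0F$ consists of the layers $A^{n+1}B$ at heights $1,\ldots,N$. Hence $|F\triangle g_0F|=2|B|$, and the ratio is $2/N$.
  \item $g_j$ for $j=1,2$ maps $(p,n)$ to $(p+e_j,n)$, i.e. it translates every layer $A^nB$ by $e_j$. We need each layer to be almost invariant under unit translations, i.e. $|A^nB\triangle(A^nB+e_j)|$ small compared with $|B|$ for every $n<N$.
\end{itemize}

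\emph{Choice of $B$ (the main obstacle).} Since $A$ may be hyperbolic, a square $B$ gets stretched by $A^n$ into a thin parallelogram, whose boundary relative to its area grows like $\norm{A^n}{}/\text{side}$. The remedy is to pick $N$ first and only then pick $B$. Take $B=A^{-n_0}Q\cap\Z^2$, or more simply $B=\Z^2\cap R$ for a large square $R$ of side $s$. Then $A^nB=\Z^2\cap A^nR$, because $A^n$ is a bijection of $\Z^2$. This is the lattice-point set of the convex parallelogram $A^nR$, which has area $s^2$ and perimeter at most $4\norm{A^n}{}s$. Translating by $e_j$ changes the lattice-point count by at most $C(\text{perimeter}+1)$. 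Therefore
\begin{equation*}
  \frac{|F\triangle g_jF|}{|F|}\leq\frac{C\,(\max_{n<N}\norm{A^n}{}\,s+1)}{s^2-Cs},
\end{equation*}
using a standard lattice-point estimate of the form $|\Z^2\cap P|=\mathrm{area}(P)+O(\mathrm{perimeter}(P)+1)$ for convex $P$.

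We therefore fix $N>4L/\varepsilon$, and then choose $s$ so large that $C(r^{N}s+1)/(s^2-Cs)<\varepsilon/(2L)$. The careful point to verify is this lattice-count estimate uniformly in $n<N$. It is elementary: count unit squares meeting the boundary of $P$. With it, all generator ratios are below $\varepsilon/L$, and the reduction concludes the proof.
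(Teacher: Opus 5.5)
Your proof is correct and follows the paper's route: the same layered Følner sets $\{(A^nq,n)\}$ over a square base, the same reduction to the generators and their inverses via $|F\triangle ghF|\leq|F\triangle gF|+|F\triangle hF|$, and the same order of choices (first the number of layers, then the size of the square). The only difference is that the paper avoids your lattice-point estimate for the sheared parallelograms by writing $|A^nB\triangle(A^nB+e_j)|=|B\triangle(B+A^{-n}e_j)|$ and bounding translates of a square directly. That formulation also makes explicit that the quantity to bound is the cardinality of this symmetric difference, not a change in the lattice-point count, which translation by $e_j$ leaves unchanged.
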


\begin{proof}
  Choose $L\in\N$ such that $|g|\leq L$ for all $g\in S$. For positive integers $J,H$, set
  \begin{equation*}
    B_H\coloneqq[-H,H]^2\cap\Z^2,\qquad
    F_{J,H}\coloneqq\{(A^np,n):|n|\leq J,\ p\in B_H\}\text.
  \end{equation*}
  For $k\in\Z$ and $t\in\{(k,0),(0,k)\}$,
  \begin{equation*}
    |B_H\cap(B_H+t)|=(2H+1)\max\{2H+1-|k|,0\}\text,
  \end{equation*}
  so
  \begin{equation*}
    |B_H\mathbin\triangle(B_H+t)|
    =2(2H+1)\min\{|k|,2H+1\}
    \leq2(2H+1)|k|\text.
  \end{equation*}
  For $z=(z_1,z_2)\in\Z^2$, translation invariance and the triangle inequality give
  \begin{align*}
    |B_H\mathbin\triangle(B_H+z)|
    &\leq|B_H\mathbin\triangle(B_H+(z_1,0))|
      +|B_H\mathbin\triangle(B_H+(0,z_2))|\\
    &\leq2(2H+1)\norm{z}{\ell^1}\text.
  \end{align*}
  Dividing by $|B_H|=(2H+1)^2$ yields
  \begin{equation*}
    \frac{|B_H\mathbin\triangle(B_H+z)|}{|B_H|}
    \leq\frac{2\norm{z}{\ell^1}}{2H+1}\text.
  \end{equation*}
   Since
  \begin{equation*}
    g_0 F_{J,H}=\{(A^np,n):-J+1\leq n\leq J+1,\ p\in B_H\}\text,
  \end{equation*}
  we have
  \begin{equation*}
    F_{J,H}\mathbin\triangle g_0 F_{J,H}
    =\{(A^{-J}p,-J):p\in B_H\}
      \cup\{(A^{J+1}p,J+1):p\in B_H\}\text.
  \end{equation*}
  These two sets are disjoint and each has cardinality $|B_H|$, so
  \begin{equation*}
    \frac{|F_{J,H}\mathbin\triangle g_0F_{J,H}|}{|F_{J,H}|}
    =\frac{2|B_H|}{(2J+1)|B_H|}
    =\frac{2}{2J+1}\text.
  \end{equation*}
  
  For $z\in\{(1,0),(0,1)\}$,
  \begin{equation*}
    (z,0)(A^np,n)=(A^n(p+A^{-n}z),n)\text.
  \end{equation*}
  Applying
  \begin{equation*}
    \frac{|B_H\mathbin\triangle(B_H+t)|}{|B_H|}
    \leq\frac{2\norm{t}{\ell^1}}{2H+1}
    \qquad(t\in\Z^2)
  \end{equation*}
  with $t=A^{-n}z$ gives
  \begin{align*}
    \frac{|F_{J,H}\mathbin\triangle(z,0)F_{J,H}|}{|F_{J,H}|}
    &=\frac{1}{2J+1}\sum_{|n|\leq J}
      \frac{|B_H\mathbin\triangle(B_H+A^{-n}z)|}{|B_H|}\\
    &\leq\frac{2\max_{|n|\leq J}\norm{A^{-n}z}{\ell^1}}{2H+1}\text.
  \end{align*}
  Choose $J$, then $H$, so that the bounds for all three generators are less than $\varepsilon/(L+1)$. The same bounds hold for the inverses of the three generators $g_0$, $g_1$ and $g_2$, since
  \begin{equation*}
    |F_{J,H}\mathbin\triangle g_j^{-1}F_{J,H}|
    =|g_jF_{J,H}\mathbin\triangle F_{J,H}|
    =|F_{J,H}\mathbin\triangle g_jF_{J,H}|
    \qquad(j\in\{0,1,2\})\text.
  \end{equation*}
  For $g\in S$, write $g=s_1\cdots s_l$ with $l=|g|\leq L$ and each $s_j$ a generator or its inverse. Translation invariance and the triangle inequality give
  \begin{equation*}
    |F_{J,H}\mathbin\triangle gF_{J,H}|
    \leq\sum_{j=1}^l|F_{J,H}\mathbin\triangle s_jF_{J,H}|
    \leq\frac{l\varepsilon}{L+1}|F_{J,H}|
    <\varepsilon|F_{J,H}|\text.
  \end{equation*}
  We conclude by choosing $F=F_{J,H}$.
\end{proof}

Let us explain the next lemma, which is essential to us, and requires a bit of context. We shall approximate finite Fourier sums by multiplying their coefficients by
\begin{equation*}
  \psi_F(g)\coloneqq\frac{|F\cap gF|}{|F|}\qquad(g\in\Gamma),
\end{equation*}
where $F\subseteq\Gamma$ is nonempty and finite. Indeed, for a finite subset $S\subseteq\Gamma$,
\begin{equation*}
  \norm{\sum_{g\in S}b_g\lambda_\theta(g)
    -\sum_{g\in S}\psi_F(g)b_g\lambda_\theta(g)}{}
  \leq\sum_{g\in S}(1-\psi_F(g))|b_g|\text.
\end{equation*}
Lemma~\ref{lem:folner-explicit} makes this bound arbitrarily small by choosing $F$ sufficiently invariant under the elements of $S$.

To apply the same averaging to twisted commutators, we extend it to $\B(\Hilbert[K])$. We use the standard construction of completely positive Schur multipliers from factorizations of positive definite kernels; see \cite[Sections 2.3--2.4]{MukherjeeRecke2024}. In the present case, the factorization is explicit:
\begin{equation*}
  \psi_F(sh^{-1})
  =\frac{|\{x\in\Gamma:s,h\in Fx\}|}{|F|}
  =\frac1{|F|}\sum_{x\in\Gamma}1_{Fx}(s)1_{Fx}(h)
  \qquad(s,h\in\Gamma).
\end{equation*}
Thus, if $Q_x$ denotes multiplication by $1_{Fx}$ on $\Hilbert[K]$, the corresponding map is
\begin{equation*}
  \mathcal S_F(T)\coloneqq\frac1{|F|}
    \sum_{x\in\Gamma}Q_xTQ_x
  \qquad(T\in\B(\Hilbert[K])).
\end{equation*}

We justify this sum and record the estimates needed below. Since each $h\in\Gamma$ belongs to exactly $|F|$ translates $Fx$,
\begin{equation*}
  \sum_{x\in\Gamma}\norm{Q_x\xi}{\Hilbert[K]}^2
  =|F|\norm{\xi}{\Hilbert[K]}^2
  \qquad(\xi\in\Hilbert[K]).
\end{equation*}
Consequently, for $\xi,\eta\in\Hilbert[K]$,
\begin{align*}
  \frac1{|F|}\sum_{x\in\Gamma}
    \left|\inner{Q_xTQ_x\xi}{\eta}{}\right|
  &\leq\frac{\sup_x\norm{Q_xTQ_x}{}}{|F|}
    \sum_{x\in\Gamma}\norm{Q_x\xi}{}\norm{Q_x\eta}{}\\
  &\leq\sup_x\norm{Q_xTQ_x}{}
    \norm{\xi}{\Hilbert[K]}\norm{\eta}{\Hilbert[K]}\text.
\end{align*}
The scalar series therefore defines a bounded sesquilinear form, represented by $\mathcal S_F(T)$. This defines the operator sum in the weak operator topology and proves
\begin{equation}\label{vm:compression-bound}
  \norm{\mathcal S_F(T)}{}
  \leq\sup_{x\in\Gamma}\norm{Q_xTQ_x}{}
  \leq\norm{T}{}\text.
\end{equation}
Every finite partial sum is completely positive, so the weak operator limit is completely positive at every matrix level. The same counting identity gives $\mathcal S_F(1)=1$.

For $g,h\in\Gamma$ and $v,c\in\C^2$, the vector
\begin{equation*}
  \lambda_\theta(g)(\psi_h\otimes v\otimes c)
  =\sigma_\theta(g,h)\psi_{gh}\otimes v\otimes c
\end{equation*}
is retained by $Q_x\lambda_\theta(g)Q_x$ precisely when $h,gh\in Fx$. There are $|F\cap gF|$ such translates, so
\begin{equation}\label{vm:averaging-fourier}
  \mathcal S_F(\lambda_\theta(g))
  =\psi_F(g)\lambda_\theta(g)\text.
\end{equation}
More generally, every bounded multiplication operator $M_d$ commutes with each $Q_x$, and hence
\begin{equation*}
  \mathcal S_F(M_d\lambda_\theta(g))
  =\psi_F(g)M_d\lambda_\theta(g)\text.
\end{equation*}
For $T_\theta=\sum_{g\in S}M_{d,g}\lambda_\theta(g)$ with $S$ finite, it follows that
\begin{equation*}
  \norm{T_\theta-\mathcal S_F(T_\theta)}{}
  \leq\sum_{g\in S}(1-\psi_F(g))\norm{M_{d,g}}{}\text.
\end{equation*}
Combining this with \eqref{vm:compression-bound} gives
\begin{equation}\label{vm:finite-compression-approximation}
  0\leq\norm{T_\theta}{}
    -\sup_{x\in\Gamma}\norm{Q_xT_\theta Q_x}{}
  \leq\sum_{g\in S}(1-\psi_F(g))\norm{M_{d,g}}{}\text.
\end{equation}
Thus the F{\o}lner property allows us to approximate the norm of $T_\theta$ by the supremum of its finite compression norms, uniformly in $\theta$ and in prescribed bounds on the coefficients.

\begin{lemma}\label{lem:diagonal-continuity}
  Let $S\subseteq\Gamma$ be finite and let $C:S\to[0,\infty)$. For every
  $d:S\times\Gamma\to M_4(\C)$ satisfying
  \begin{equation*}
    \sup_{h\in\Gamma}\norm{d(g,h)}{M_4(\C)}\leq C(g)
    \qquad(g\in S),
  \end{equation*}
  define the multiplication operators
  \begin{equation*}
    (M_{d,g}\xi)(h)\coloneqq d(g,h)\xi(h)
    \qquad(\xi\in\Hilbert[K],\ h\in\Gamma).
  \end{equation*}
  Then
  \begin{equation*}
    \theta\longmapsto
    \norm{\sum_{g\in S}M_{d,g}\lambda_\theta(g)}{\B(\Hilbert[K])}
  \end{equation*}
  is continuous, uniformly over all such $d$.
\end{lemma}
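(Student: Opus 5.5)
We reduce the norm of the infinite operator $T_\theta\coloneqq\sum_{g\in S}M_{d,g}\lambda_\theta(g)$ to norms of finite compressions, using the F{\o}lner estimate \eqref{vm:finite-compression-approximation}. Fix $\eta>0$. By Lemma~\ref{lem:folner-explicit} we pick a nonempty finite $F\subseteq\Gamma$ with $\sum_{g\in S}(1-\psi_F(g))C(g)<\eta$. This choice depends only on $S$ and $C$, not on $\theta$ or $d$. Then \eqref{vm:finite-compression-approximation} gives, for every $\theta$ and every admissible $d$,
\begin{equation*}
  \left|\norm{T_\theta}{}-\sup_{x\in\Gamma}\norm{Q_xT_\theta Q_x}{}\right|<\eta\text.
\end{equation*}
It therefore suffices to show that $\theta\mapsto\sup_x\norm{Q_xT_\theta Q_x}{}$ is continuous, uniformly in $d$. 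In fact we will show that it is Lipschitz in $\theta$ with a constant depending only on $F$, $S$ and $C$.

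Fix $x\in\Gamma$. The operator $Q_xT_\theta Q_x$ acts on the finite-dimensional space $\ell^2(Fx)\otimes\C^4$. In the basis $(\phi_h\otimes e)$ its only nonzero blocks are, for $h,gh\in Fx$ and $g\in S$, the $M_4(\C)$-block $d(g,gh)\sigma_\theta(g,h)$ at position $(gh,h)$. Comparing $\theta$ with $\theta'$, each block changes by at most
\begin{equation*}
  C(g)\,\bigl|\sigma_\theta(g,h)-\sigma_{\theta'}(g,h)\bigr|\leq C(g)\,\pi|\theta-\theta'|\,\bigl|\det(m,A^kp)\bigr|\text,
\end{equation*}
where $g=(m,k)$ and $h=(p,n)$. \textbf{This is the main obstacle:} $|\det(m,A^kp)|$ is unbounded as $h$ ranges over $\Gamma$, so the compressions are not uniformly close as $x$ varies. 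The cocycle must be normalized by a diagonal unitary gauge on $\ell^2(Fx)$ that depends on $x$.

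To handle this, write $h=fx$ with $f\in F$. The cocycle identity gives
\begin{equation*}
  \sigma_\theta(g,fx)=\sigma_\theta(g,f)\,\sigma_\theta(gf,x)\,\overline{\sigma_\theta(f,x)}\text.
\end{equation*}
Conjugating $Q_xT_\theta Q_x$ by the diagonal unitary $D_x$, multiplication by $\sigma_\theta(f,x)$ at the point $fx$, does not change its norm. The conjugated matrix has blocks $d(g,gfx)\sigma_\theta(g,f)$, indexed by pairs $f,gf\in F$. Since the factor $\sigma_\theta(gf,x)\overline{\sigma_\theta(f,x)}$ may not cancel exactly for non-abelian $\Gamma$, we should double-check the correct sign and side convention. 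With the right one, the gauge-transformed compression is unitarily equivalent to an operator on $\ell^2(F)\otimes\C^4$ whose phases involve only $\sigma_\theta(g,f)$ for $g\in S$ and $f\in F$. This is a finite set of pairs, so the determinant factors $|\det(m,A^kp)|$ are bounded by a constant $K_{S,F}$ independent of $x$.

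Once the gauge is in place, the Hilbert--Schmidt (hence operator) norm of the difference between the two transformed compressions is at most $\pi K_{S,F}|\theta-\theta'|\sum_{g\in S}C(g)|F|^{1/2}$, uniformly in $x$ and in $d$. Taking the supremum over $x$, and then using the $\eta$-approximation at both $\theta$ and $\theta'$, we obtain
\begin{equation*}
  \bigl|\norm{T_\theta}{}-\norm{T_{\theta'}}{}\bigr|\leq 2\eta+L_{S,F,C}|\theta-\theta'|\text.
\end{equation*}
Since $\eta$ is arbitrary, this gives continuity in $\theta$ that is uniform over all admissible $d$.
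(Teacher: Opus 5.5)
Your proposal is correct and follows essentially the paper's proof: the F{\o}lner estimate reduces $\norm{T_\theta}{}$ to $\sup_x\norm{Q_xT_\theta Q_x}{}$, and your diagonal gauge by $\sigma_\theta(f,x)$ at the point $fx$ is the paper's unitary $V_{x,\theta}$ up to relabelling. That gauge transports every compression to $\ell^2(F)\otimes\C^4$ with only the finitely many phases $\sigma_\theta(g,f)$, $g\in S$, $f\in F$. The cancellation you hesitated over is exact for $D_x^*(\cdot)D_x$ by the cocycle identity you wrote, with no commutativity needed; your Hilbert--Schmidt constant also needs an extra factor $2$ from the $4\times4$ blocks, which does not affect the conclusion.
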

\begin{proof}
  Fix $\varepsilon>0$ and put
  \begin{equation*}
    T_\theta\coloneqq\sum_{g\in S}M_{d,g}\lambda_\theta(g)\text.
  \end{equation*}
  By Lemma~\ref{lem:folner-explicit}, choose a nonempty finite $F\subseteq\Gamma$ such that
  \begin{equation*}
    \sum_{g\in S}(1-\psi_F(g))C(g)<\frac{\varepsilon}{2}\text.
  \end{equation*}
  For each $x\in\Gamma$, let $Q_x$ be the orthogonal projection onto $\ell^2(Fx)\otimes\C^2\otimes\C^2$. We first prove that
  \begin{equation*}
    \norm{T_\theta}{}\leq\sup_{x\in\Gamma}\norm{Q_xT_\theta Q_x}{}+\frac{\varepsilon}{2}\text.
  \end{equation*}

  Since $\{x:h\in Fx\}=F^{-1}h$ has cardinality $|F|$ for every $h\in\Gamma$, we have
  \begin{equation*}
    \sum_{x\in\Gamma}\norm{Q_x\xi}{\Hilbert[K]}^2
    =|F|\norm{\xi}{\Hilbert[K]}^2
    \qquad(\xi\in\Hilbert[K]).
  \end{equation*}
  Thus, for $\xi,\eta\in\Hilbert[K]$, the Cauchy--Schwarz inequality gives
  \begin{align*}
    \frac1{|F|}\sum_{x\in\Gamma}\left|\inner{Q_xT_\theta Q_x\xi}{\eta}{}\right|
    &\leq\frac{\sup_{x\in\Gamma}\norm{Q_xT_\theta Q_x}{\B(\Hilbert[K])}}{|F|}
      \sum_{x\in\Gamma}\norm{Q_x\xi}{\Hilbert[K]}\norm{Q_x\eta}{\Hilbert[K]}\\
    &\leq\sup_{x\in\Gamma}\norm{Q_xT_\theta Q_x}{\B(\Hilbert[K])}
      \norm{\xi}{\Hilbert[K]}\norm{\eta}{\Hilbert[K]}\text.
  \end{align*}
  The scalar series therefore converges absolutely and defines a bounded sesquilinear form. Hence there is a unique $R_\theta\in\B(\Hilbert[K])$ such that
  \begin{equation*}
    \inner{R_\theta\xi}{\eta}{}
    =\frac1{|F|}\sum_{x\in\Gamma}\inner{Q_xT_\theta Q_x\xi}{\eta}{}
    \qquad(\xi,\eta\in\Hilbert[K]).
  \end{equation*}
  This is the meaning of the sum $R_\theta=|F|^{-1}\sum_{x\in\Gamma}Q_xT_\theta Q_x$ in the weak operator topology, and the preceding bound gives
  \begin{equation*}
    \norm{R_\theta}{}\leq\sup_{x\in\Gamma}\norm{Q_xT_\theta Q_x}{}\text.
  \end{equation*}

  For $g\in S$, $h\in\Gamma$, and $v,c\in\C^2$,
  \begin{equation*}
    Q_xM_{d,g}\lambda_\theta(g)Q_x(\psi_h\otimes v\otimes c)
    =
    \begin{cases}
      \sigma_\theta(g,h)\psi_{gh}\otimes d(g,gh)(v\otimes c),
        &h,gh\in Fx,\\
      0,&\text{otherwise}.
    \end{cases}
  \end{equation*}
  The number of nonzero contributions as $x$ varies is
  \begin{equation*}
    |\{x:h,gh\in Fx\}|=|F\cap g^{-1}F|=|F\cap gF|=|F|\psi_F(g)\text.
  \end{equation*}
  Summing on these basis tensors therefore yields
  \begin{equation*}
    R_\theta=\sum_{g\in S}\psi_F(g)M_{d,g}\lambda_\theta(g)\text,
  \end{equation*}
  so
  \begin{equation*}
    \norm{T_\theta-R_\theta}{}
    \leq\sum_{g\in S}(1-\psi_F(g))C(g)
    <\frac{\varepsilon}{2}\text.
  \end{equation*}
  Therefore,
  \begin{equation*}
    \norm{T_\theta}{}
    \leq\norm{R_\theta}{}+\norm{T_\theta-R_\theta}{}
    <\sup_{x\in\Gamma}\norm{Q_xT_\theta Q_x}{}+\frac{\varepsilon}{2}\text.
  \end{equation*}

  We next compare these compressions on the fixed space $\ell^2(F)\otimes\C^2\otimes\C^2$. For $x\in\Gamma$, define
  \begin{equation*}
    V_{x,\theta}:\ell^2(F)\otimes\C^2\otimes\C^2
    \longrightarrow Q_x\Hilbert[K]
  \end{equation*}
  by linear extension of
  \begin{equation*}
    V_{x,\theta}(\psi_h\otimes v\otimes c)
    \coloneqq\sigma_\theta(h,x)\psi_{hx}\otimes v\otimes c
    \qquad(h\in F,\ v,c\in\C^2).
  \end{equation*}
  Since $h\mapsto hx$ is a bijection from $F$ onto $Fx$ and $|\sigma_\theta(h,x)|=1$, this map sends an orthonormal basis onto an orthonormal basis and is therefore unitary.

  For $g\in S$ and $h\in F$, the cocycle identity implies
  \begin{equation*}
    \sigma_\theta(h,x)\sigma_\theta(g,hx)\overline{\sigma_\theta(gh,x)}
    =\sigma_\theta(g,h)\text.
  \end{equation*}
  Hence, for $v,c\in\C^2$,
  \begin{equation*}
    V_{x,\theta}^*Q_xM_{d,g}\lambda_\theta(g)Q_xV_{x,\theta}
      (\psi_h\otimes v\otimes c)
    =
    \begin{cases}
      \sigma_\theta(g,h)\psi_{gh}\otimes d(g,ghx)(v\otimes c),
        &gh\in F,\\
      0,&gh\notin F.
    \end{cases}
  \end{equation*}
  Since $h\mapsto gh$ is injective, the bound on $d$ and the triangle inequality give
  \begin{align*}
    \left|\norm{Q_xT_\theta Q_x}{}-\norm{Q_xT_{\theta'}Q_x}{}\right|
    &\leq\norm{V_{x,\theta}^*Q_xT_\theta Q_xV_{x,\theta}
      -V_{x,\theta'}^*Q_xT_{\theta'}Q_xV_{x,\theta'}}{}\\
    &\leq\sum_{g\in S}C(g)\max_{h\in F}|\sigma_\theta(g,h)-\sigma_{\theta'}(g,h)|\text.
  \end{align*}
  Only finitely many cocycle phases occur in this bound. By their continuity, it is less than $\varepsilon/2$ when $\theta'$ is sufficiently close to $\theta$, independently of $x$ and $d$. For such $\theta'$,
  \begin{align*}
    \norm{T_\theta}{}
    &<\sup_{x\in\Gamma}\norm{Q_xT_\theta Q_x}{}+\frac{\varepsilon}{2}\\
    &\leq\sup_{x\in\Gamma}\norm{Q_xT_{\theta'}Q_x}{}+\varepsilon\\
    &\leq\norm{T_{\theta'}}{}+\varepsilon\text.
  \end{align*}
  Interchanging $\theta$ and $\theta'$ yields
  \begin{equation*}
    \left|\norm{T_\theta}{}-\norm{T_{\theta'}}{}\right|<\varepsilon\text,
  \end{equation*}
  uniformly over all admissible $d$.
\end{proof}

For every nonempty finite $F\subseteq\Gamma$, for any $b \in \A_\theta\rtimes_{\alpha_{A,\theta}}\Z$, and for any $g \in \Gamma$, we define
\begin{equation*}
b_g\coloneqq\operatorname{tr}_\theta(b\lambda_\theta(g)^*)
\end{equation*}
 and
\begin{equation*}
  \Phi_{F,\theta}(b)\coloneqq\sum_{g\in FF^{-1}}\phi_F(g)b_g\lambda_\theta(g)\text.
\end{equation*}

\begin{lemma}\label{lem:full-smoothing}
  The map $\Phi_{F,\theta}$ is unital and completely positive, preserves $\operatorname{tr}_\theta$, has finite-dimensional range, and satisfies
  \begin{equation*}
    \Lip^{\nabla}_{\theta,G}(\Phi_{F,\theta}(b))\leq\Lip^{\nabla}_{\theta,G}(b)
  \end{equation*}
  for all $b\in\dom{\Lip^{\nabla}_{\theta,G}}$.
\end{lemma}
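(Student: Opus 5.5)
The plan is to identify $\Phi_{F,\theta}$ with the restriction to the crossed product of the Schur-type averaging map $\mathcal S_F(T)=|F|^{-1}\sum_{x\in\Gamma}Q_xTQ_x$ (I read the coefficient $\phi_F$ in the definition of $\Phi_{F,\theta}$ as $\psi_F$). All the claimed properties except the Lipschitz estimate then follow from the facts recorded before Lemma \ref{lem:diagonal-continuity}. The Lipschitz estimate follows from the observation that $\mathcal S_F$ commutes with every multiplication operator on $\ell^2(\Gamma)\otimes\C^2\otimes\C^2$, in particular with $J\Dirac[\nabla]_GJ^*$ (multiplication by $H_G$, Lemma \ref{vm:semidirect-realization}) and with the diagonal parts of the twist.

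\textbf{Step 1: $\Phi_{F,\theta}=\mathcal S_F$ on $C^\ast_{\mathrm r}(\Gamma,\sigma_\theta)$.} By \eqref{vm:averaging-fourier}, $\mathcal S_F(\lambda_\theta(g))=\psi_F(g)\lambda_\theta(g)$. Moreover, $\psi_F(g)=0$ unless $g\in FF^{-1}$. Hence $\mathcal S_F$ and $\Phi_{F,\theta}$ agree on finite sums $\sum b_g\lambda_\theta(g)$. Both maps are norm continuous: $\mathcal S_F$ by \eqref{vm:compression-bound}, and $\Phi_{F,\theta}$ because it is a finite sum of the continuous coefficient functionals $b\mapsto\operatorname{tr}_\theta(b\lambda_\theta(g)^*)$. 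So they agree on the whole crossed product. Consequently $\Phi_{F,\theta}$ has the following properties:
\begin{itemize}
\item it is completely positive and unital, since $\mathcal S_F(1)=1$;
\item its range lies in $\operatorname{span}\{\lambda_\theta(g):g\in FF^{-1}\}$, which is finite dimensional;
\item it preserves $\operatorname{tr}_\theta$, since $\psi_F(e)=1$ and so the coefficient $b_e$ is unchanged.
\end{itemize}

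\textbf{Step 2: compatibility with the twist.} Let $b\in\dom{\rho_{\theta,G}}$. I will show that $\mathcal S_F(b)\in\dom{\psi_\varepsilon}$ and that $\psi_\varepsilon(\mathcal S_F(b))=\mathcal S_F(\rho_{\theta,G}(b))$, using the uniqueness characterization \eqref{psi-def-eq}.
\begin{enumerate}
\item The dual-action unitaries $V^z$ are multiplication operators, so they commute with each $Q_x$. Hence $\mathcal F_k\circ\mathcal S_F=\mathcal S_F\circ\mathcal F_k$.
\item By \eqref{rho-eq} and \eqref{Theta-eq}, $\psi_\varepsilon(U^k)=U^k(1+M_{q_k}Y_G\Dirac[\nabla]_G^{-1})=U^kR_k$. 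Here $R_k$ is a bounded multiplication operator in the $\Gamma$-picture, because $M_{q_k}$, $Y_G$ and $\Dirac[\nabla]_G^{-1}$ are all diagonal there.
\item Combining these with \eqref{psi-Fourier-compatibility} gives
\begin{equation*}
\mathcal F_k(\mathcal S_F(\rho(b)))=\mathcal S_F(\mathcal F_k(b)R_k)=\mathcal S_F(\mathcal F_k(b))R_k=\mathcal F_k(\mathcal S_F(b))U^{-k}\psi_\varepsilon(U^k),
\end{equation*}
which is exactly the defining condition.
\end{enumerate}
The same argument applies to $b^*$.

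\textbf{Step 3: commutator estimate (the main obstacle).} The difficulty is handling the unbounded operator $\Dirac[\nabla]_G$ inside the weak-operator sum defining $\mathcal S_F$. I would test against vectors $\xi,\eta$ in the finite-support core. Each $Q_x$ commutes with $\Dirac[\nabla]_G$ and preserves the core. Therefore
\begin{equation*}
\inner{\mathcal S_F(b)\xi}{\Dirac[\nabla]_G\eta}{}-\inner{\mathcal S_F(\rho(b))\Dirac[\nabla]_G\xi}{\eta}{}=\frac1{|F|}\sum_{x}\inner{Q_x[\Dirac[\nabla]_G,b]_\rho Q_x\xi}{\eta}{},
\end{equation*}
and only finitely many $x$ contribute. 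The right-hand side is $\inner{\mathcal S_F([\Dirac[\nabla]_G,b]_\rho)\xi}{\eta}{}$.

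From this identity, self-adjointness of $\Dirac[\nabla]_G$ and the fact that the core is a core should give $\Phi_{F,\theta}(b)\dom{\nabla}\subseteq\dom{\nabla}$ and
\begin{equation*}
[\Dirac[\nabla]_G,\Phi_{F,\theta}(b)]_\rho=\mathcal S_F([\Dirac[\nabla]_G,b]_\rho).
\end{equation*}
Plausibly, one applies the closed graph property of $\Dirac[\nabla]_G$ as in Lemma \ref{twisted-commutator-closed-graph-lemma}. If this domain step resists, the fallback is to prove the estimate for Lipschitz Fourier polynomials first, using Corollary \ref{Fejer-approx-cor}. One then extends it by lower semicontinuity (Proposition \ref{Lip-lsc-prop}) together with the norm continuity of $\Phi_{F,\theta}$.

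\textbf{Step 4: conclusion.} Steps 2 and 3 show that $\mathcal S_F$ intertwines both the twisted commutator and $b-\rho(b)$, the latter since $\mathcal S_F(b-\rho(b))=\Phi_{F,\theta}(b)-\rho(\Phi_{F,\theta}(b))$. The contraction bound \eqref{vm:compression-bound} then yields both norm estimates, for $b$ and for $b^*$. This gives $\Lip^{\nabla}_{\theta,G}(\Phi_{F,\theta}(b))\leq\Lip^{\nabla}_{\theta,G}(b)$.
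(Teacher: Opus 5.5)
Your proposal is correct and follows essentially the paper's proof. You identify $\Phi_{F,\theta}$ with the restriction of $\mathcal S_F$, show that $\mathcal S_F$ intertwines $\rho_{\theta,G}$ and the twisted commutator because $\Dirac[\nabla]_G$, the diagonal factors $1+M_{q_k}Y_G\Dirac[\nabla]_G^{-1}$ and the dual-action unitaries are all multiplication operators commuting with the $Q_x$, and conclude by contractivity of $\mathcal S_F$. The paper checks the intertwining on the blocks $P_s\,\cdot\,P_h$ rather than through $\mathcal F_k\circ\mathcal S_F=\mathcal S_F\circ\mathcal F_k$, which amounts to the same thing.

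The domain question you single out in Step 3 is in fact moot, although your weak identity combined with self-adjointness and closedness of $\Dirac[\nabla]_G$ does settle it. By your own Step 1, $\Phi_{F,\theta}(b)$ is a finite combination of the $\lambda_\theta(g)$, i.e. a Fourier polynomial with coefficients $W_\theta^m\in\A_w$, so it already lies in $\dom{\rho_{\theta,G}}$; this is exactly how the paper sidesteps the issue.
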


\begin{proof}
  For $x\in\Gamma$, let $Q_x$ project onto $\ell^2(Fx)\otimes\C^2\otimes\C^2$. The Cauchy--Schwarz estimate in the proof of Lemma~\ref{lem:diagonal-continuity} defines a bounded operator $\mathcal S_F(T)$ by
  \begin{equation*}
    \inner{\mathcal S_F(T)\xi}{\eta}{}
    \coloneqq\frac1{|F|}\sum_{x\in\Gamma}\inner{Q_xTQ_x\xi}{\eta}{}
    \qquad(T\in\B(\Hilbert[K]),\ \xi,\eta\in\Hilbert[K]).
  \end{equation*}
  In particular,
  \begin{equation*}
    \norm{\mathcal S_F(T)}{}\leq\sup_{x\in\Gamma}\norm{Q_xTQ_x}{}\leq\norm{T}{}\text.
  \end{equation*}
  Each finite partial sum is completely positive, so its weak operator limit is completely positive at every matrix level. Moreover, the identity
  \begin{equation*}
    \frac1{|F|}\sum_{x\in\Gamma}\norm{Q_x\xi}{}^2=\norm{\xi}{}^2
    \qquad(\xi\in\Hilbert[K])
  \end{equation*}
  implies $\mathcal S_F(1)=1$.

  For $h\in\Gamma$, let $P_h$ project onto $\C\psi_h\otimes\C^2\otimes\C^2$. Since $P_hQ_x=P_h$ when $h\in Fx$, and $P_hQ_x=0$ otherwise, we have
  \begin{align*}
    P_s\mathcal S_F(T)P_h
    &=\frac{|\{x:s,h\in Fx\}|}{|F|}P_sTP_h\\
    &=\psi_F(sh^{-1})P_sTP_h
    \qquad(s,h\in\Gamma).
  \end{align*}
  For $g,h\in\Gamma$ and $v,c\in\C^2$,
  \begin{equation*}
    \lambda_\theta(g)(\psi_h\otimes v\otimes c)
    =\sigma_\theta(g,h)\psi_{gh}\otimes v\otimes c\text.
  \end{equation*}
  Thus
  \begin{equation*}
    \mathcal S_F(\lambda_\theta(g))
    =\psi_F(g)\lambda_\theta(g)\text.
  \end{equation*}
  By linearity and norm continuity, the restriction of $\mathcal S_F$ to the crossed product equals $\Phi_{F,\theta}$. Consequently, $\Phi_{F,\theta}$ is unital and completely positive. Its defining formula also gives
  \begin{equation*}
    \operatorname{tr}_\theta(\Phi_{F,\theta}(b))
    =\psi_F(e)b_e
    =\operatorname{tr}_\theta(b)\text,
  \end{equation*}
  and
  \begin{equation*}
    \operatorname{ran}\Phi_{F,\theta}
    \subseteq\operatorname{span}\{\lambda_\theta(g):g\in FF^{-1}\}\text.
  \end{equation*}

  Fix $b\in\dom{\Lip^\nabla_{\theta,G}}$. For $k\in\Z$, set
  \begin{equation*}
    K_{G,k}\coloneqq1+M_{q_k}Y_G\Dirac[\nabla]_G^{-1}\text.
  \end{equation*}
   By construction, $K_{G,k}$ preserve each subspace
  $\C\psi_h\otimes\C^2\otimes\C^2$, so
  \begin{equation*}
    K_{G,k}P_h=P_hK_{G,k}\qquad(h\in\Gamma)\text.
  \end{equation*}
   By \eqref{psi-Fourier-compatibility} and \eqref{Theta-eq},
  \begin{align*}
    \mathcal F_k(\rho_{\theta,G}(b))
    &=\mathcal F_k(b)U^{-k}\rho_{\theta,G}(U^k)\\
    &=\mathcal F_k(b)K_{G,k}\text.
  \end{align*}
  If $s=(p,n)$ and $h=(q,m)$ belong to $\Gamma$, then
  \begin{equation*}
    P_s\mathcal F_k(T)P_h
    =
    \begin{cases}
      P_sTP_h,&k=n-m,\\
      0,&k\ne n-m,
    \end{cases}
    \qquad(T\in\B(\Hilbert[K])).
  \end{equation*}
  Taking $k=n-m$ therefore yields
  \begin{align*}
    P_s\rho_{\theta,G}(b)P_h
    &=P_s\mathcal F_k(b)K_{G,k}P_h\\
    &=P_sbP_hK_{G,k}\text.
  \end{align*}
  The element $\Phi_{F,\theta}(b)$ is a finite sum of group unitaries, so it belongs to $\dom{\rho_{\theta,G}}$. Applying the same identity to it gives
  \begin{align*}
    P_s\rho_{\theta,G}(\Phi_{F,\theta}(b))P_h
    &=P_s\Phi_{F,\theta}(b)P_hK_{G,k}\\
    &=\psi_F(sh^{-1})P_sbP_hK_{G,k}\\
    &=\psi_F(sh^{-1})P_s\rho_{\theta,G}(b)P_h\\
    &=P_s\mathcal S_F(\rho_{\theta,G}(b))P_h\text.
  \end{align*}
  Since this holds for all $s,h\in\Gamma$, we conclude that
  \begin{equation*}
    \rho_{\theta,G}(\Phi_{F,\theta}(b))
    =\mathcal S_F(\rho_{\theta,G}(b))\text.
  \end{equation*}
  By linearity and contractivity,
  \begin{align*}
    \norm{\Phi_{F,\theta}(b)-\rho_{\theta,G}(\Phi_{F,\theta}(b))}{}
    &=\norm{\mathcal S_F(b-\rho_{\theta,G}(b))}{}\\
    &\leq\norm{b-\rho_{\theta,G}(b)}{}\text.
  \end{align*}

  Write $D=\Dirac[\nabla]_G$. Each $P_h$ commutes with $D$ and has range contained in $\dom{D}$. Thus, for $s,h\in\Gamma$,
  \begin{align*}
    P_s[D,\Phi_{F,\theta}(b)]_{\rho_{\theta,G}}P_h
    &=DP_s\Phi_{F,\theta}(b)P_h
      -P_s\rho_{\theta,G}(\Phi_{F,\theta}(b))P_hD\\
    &=\psi_F(sh^{-1})
      \bigl(DP_sbP_h-P_s\rho_{\theta,G}(b)P_hD\bigr)\\
    &=\psi_F(sh^{-1})P_s[D,b]_{\rho_{\theta,G}}P_h\\
    &=P_s\mathcal S_F([D,b]_{\rho_{\theta,G}})P_h\text.
  \end{align*}
  Equality for all $s,h$ gives
  \begin{equation*}
    [D,\Phi_{F,\theta}(b)]_{\rho_{\theta,G}}
    =\mathcal S_F([D,b]_{\rho_{\theta,G}})\text,
  \end{equation*}
  so
  \begin{equation*}
    \norm{[D,\Phi_{F,\theta}(b)]_{\rho_{\theta,G}}}{}
    \leq\norm{[D,b]_{\rho_{\theta,G}}}{}\text.
  \end{equation*}
  The same estimates hold for $b^*$, and $\Phi_{F,\theta}(b^*)=\Phi_{F,\theta}(b)^*$. Taking the maximum of the terms defining $\Lip^\nabla_{\theta,G}$ proves
  \begin{equation*}
    \Lip^\nabla_{\theta,G}(\Phi_{F,\theta}(b))
    \leq\Lip^\nabla_{\theta,G}(b)\text.
  \end{equation*}
\end{proof}

\begin{proposition}\label{vm:polynomials}
  Let $S\subseteq\Gamma$ be finite and set $b_\theta(z)=\sum_{g\in S}z_g\lambda_\theta(g)$ for $z\in\C^S$. The functions
  \begin{equation*}
    (\theta,G,z)\longmapsto\norm{b_\theta(z)}{\A_\theta\rtimes_{\alpha_{A,\theta}}\Z},\qquad (\theta,G,z)\longmapsto\Lip^{\nabla}_{\theta,G}(b_\theta(z))
  \end{equation*}
  are jointly continuous; their continuity in $(\theta,G)$ is uniform on bounded coefficient sets.

  For every $\varepsilon>0$, there exists a nonempty finite $F\subseteq\Gamma$ such that
  \begin{equation*}
    \norm{b-\Phi_{F,\theta}(b)}{\A_\theta\rtimes_{\alpha_{A,\theta}}\Z}\leq\varepsilon\Lip^{\nabla}_{\theta,G}(b)
  \end{equation*}
  for all $(\theta,G)\in\R\times\mathscr{G}$ and $b\in\dom{\Lip^{\nabla}_{\theta,G}}$. Moreover, there exists $R_0>0$, independent of $(\theta,G)$, such that
  \begin{equation*}
    \norm{b}{\A_\theta\rtimes_{\alpha_{A,\theta}}\Z}\leq R_0\Lip^{\nabla}_{\theta,G}(b)\qquad\text{if }\operatorname{tr}_\theta(b)=0\text.
  \end{equation*}
\end{proposition}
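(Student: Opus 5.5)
The plan is to work throughout in the semidirect product picture of Lemma \ref{vm:semidirect-realization}. In that picture every operator that enters $\Lip^{\nabla}_{\theta,G}(b_\theta(z))$ is a finite sum $\sum_{g\in S}M_{d,g}\lambda_\theta(g)$ of bounded multiplication operators times twisted group unitaries. The first claim will then follow from Lemma \ref{lem:diagonal-continuity}, and the last two from Fourier smoothing estimates that are uniform in $(\theta,G)$.

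\emph{Continuity.} For the norm, take $d(g,h)=z_g 1$ in Lemma \ref{lem:diagonal-continuity}. This gives continuity in $\theta$ uniformly on bounded coefficient sets. The map is trivially independent of $G$, and it is Lipschitz in $z$ with constant $1$ for the $\ell^1$ norm, so it is jointly continuous.

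For the Lipschitz seminorm, write $g=(m,k)$, so that $\lambda_\theta(g)=J\pi_\theta(W^m_\theta)U^kJ^*$. Theorem \ref{twisted-spectral-triple-thm} gives
\begin{equation*}
[\Dirac[\nabla]_G,\lambda_\theta(g)]_{\rho_{\theta,G}}=M_{c_G(g)}\lambda_\theta(g).
\end{equation*}
Here $c_G(g)$ acts at the point $(p,n)$ by
\begin{equation*}
w(n)\sigma(GA^{-n}m)\otimes\gamma_0+k\,1\otimes\gamma_1
\end{equation*}
(the precise index shift is immaterial). Its supremum norm is at most
\begin{equation*}
\sup_n r^{-|n|}\norm{GA^{-n}G^{-1}}{M_2(\R)}\norm{Gm}{\R^2}+|k|\leq\norm{Gm}{\R^2}+|k|.
\end{equation*}
This bound is independent of $\theta$ and locally uniform in $G\in\mathscr{G}$. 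In the same way, $\lambda_\theta(g)-\rho_{\theta,G}(\lambda_\theta(g))=-\pi_\theta(W^m_\theta)\Theta_{G,k}$. The operator $\Theta_{G,k}$ is $U^k$ times an operator that is diagonal in the basis $\phi_{(p,n)}$, so this term also has the form $M_{e_G(g)}\lambda_\theta(g)$, with $\sup\norm{e_G(g)}{}\leq C_w(k)$.

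Hence $\Lip^{\nabla}_{\theta,G}(b_\theta(z))$ is the maximum of four norms of the type treated in Lemma \ref{lem:diagonal-continuity}. The terms for $b_\theta(z)^*$ are handled the same way, since $b_\theta(z)^*=\sum_{g\in S}\overline{z_g}\,\overline{\sigma_\theta(g,g^{-1})}\lambda_\theta(g^{-1})$. Continuity in $\theta$, uniform over the coefficient functions, follows from that lemma. For $G$, I will show that $\sup_h\norm{c_G(g)(h)-c_{G_0}(g)(h)}{}\leq\delta_G\norm{G_0m}{\R^2}\cdot\mathrm{const}$ and that $\norm{\Theta_{G,k}-\Theta_{G_0,k}}{}\to0$, using Lemma \ref{vm:twist-norm}. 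Together with $\norm{\sum_g M_{d,g}\lambda(g)}{}\leq\sum_g\sup\norm{d_g}{}$, these give continuity in $G$ uniformly in $\theta$. Joint continuity then follows by the triangle inequality. I expect this step to be the main obstacle. The delicate points are bookkeeping: checking that the twisted commutator of a polynomial is exactly of the multiplier form, and that the bounds hold uniformly in $n$ despite the exponential growth of $A^{-n}$. The growth is absorbed precisely because $r$ was chosen in Lemma \ref{vm:relative} with $w(n)=r^{-|n|}$.

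\emph{Uniform smoothing.} Let $b\in\dom{\Lip^{\nabla}_{\theta,G}}$ with $L=\Lip^{\nabla}_{\theta,G}(b)$. The plan is to produce an explicit approximant $c$ in three steps and then apply $\Phi_{F,\theta}$.
\begin{enumerate}
\item Apply the outer Fejér average $\beta^{f_N}$. By the corollary to the mean value inequality, $\norm{b-\beta^{f_N}(b)}{}\leq L/(N+1)$. By Lemma \ref{Fourier-coefficient-Lip-lemma}, $(\Lip_{\theta,G})_w=\Lip_{\theta,G}$ from Lemma \ref{vm:relative}, and the estimate in the proof of Proposition \ref{Lip-metric-prop}, its coefficients $a_k$ satisfy $\Lip_{\theta,G}(a_k)\leq L$ and $\norm{a_k}{}\leq L$ for $k\neq0$.
\item Replace each $a_k$ with $|k|\leq N$ by $\zeta^{ff_M}(a_k)$. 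The torus mean value inequality gives $\norm{a-\zeta^x(a)}{}\leq\ell(x)\Lip_{\theta,I}(a)$, and $\Lip_{\theta,I}\leq c_{\mathscr{G}}\Lip_{\theta,G}$ uniformly over the compact set $\mathscr{G}$. Hence this step costs at most $(2N+1)c_{\mathscr{G}}\eta_M L$, where $\eta_M=\int ff_M\,\ell\,d\mu\to0$. For $a_0$ I will subtract its trace, which is harmless: $\Phi_{F,\theta}(1)=1$, so the scalar part of $b$ cancels in $b-\Phi_{F,\theta}(b)$.
\item The result is a polynomial $c$ supported on a fixed finite set $S=S(N,M)$, with coefficients bounded by $C'L$ and $\norm{b-c}{}\leq\epsilon_{N,M}L$.
\end{enumerate}
Now $\Phi_{F,\theta}$ is unital completely positive, hence contractive, by Lemma \ref{lem:full-smoothing}. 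Also $\norm{c-\Phi_{F,\theta}(c)}{}\leq\sum_{g\in S}(1-\psi_F(g))C'L$, which is made small by choosing $F$ with Lemma \ref{lem:folner-explicit}. Therefore
\begin{equation*}
\norm{b-\Phi_{F,\theta}(b)}{}\leq2\norm{b-c}{}+\norm{c-\Phi_{F,\theta}(c)}{}\leq\varepsilon L,
\end{equation*}
and every choice made ($N$, $M$, $F$) is independent of $(\theta,G)$.

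\emph{Uniform radius.} If $\operatorname{tr}_\theta(b)=0$, then $\operatorname{tr}_\theta(a_0)=0$. Writing $a_0=\int(a_0-\zeta^x(a_0))\,d\mu(x)$ gives $\norm{a_0}{}\leq\pi\sqrt2\,c_{\mathscr{G}}L$. With $N=1$, the estimates above yield
\begin{equation*}
\norm{b}{}\leq\frac{L}{2}+\norm{a_0}{}+\norm{a_1}{}+\norm{a_{-1}}{}\leq R_0L,
\end{equation*}
where $R_0=\tfrac12+\pi\sqrt2\,c_{\mathscr{G}}+2$ is independent of $(\theta,G)$.
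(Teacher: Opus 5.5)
Your proposal is correct and follows essentially the paper's argument: the multiplier form $M_{d}\lambda_\theta(g)$ of the twisted commutators and displacements, fed into Lemma \ref{lem:diagonal-continuity} together with the $\delta_G$-estimate and Lemma \ref{vm:twist-norm}, gives continuity; an outer Fej\'er average, an inner Fej\'er average on the coefficients, and a F{\o}lner multiplier with contractivity of $\Phi_{F,\theta}$ give uniform smoothing; and mean value inequalities give the radius. The differences are minor. You take the coefficient bounds from Lemma \ref{Fourier-coefficient-Lip-lemma} and from the derivative estimate in Proposition \ref{Lip-metric-prop}, which gives $\norm{a_k}{}\leq L/|k|$, where the paper uses a direct compression and $\norm{a_k}{}\leq\norm{b}{}\leq R_0L$. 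Also, the bound $\norm{b-\beta^{f_N}(b)}{}\leq L/(N+1)$ holds for the kernels of Lemma \ref{Fejer-exist-lemma}, whose spectral support need not be $\{|k|\leq N\}$; the standard degree-one Fej\'er kernel has first moment $(\pi^2-4)/(2\pi)$, not $\tfrac12$. This only affects constants, not the uniformity of $R_0$.
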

\begin{proof}
  For $g=(m,k)\in\Gamma$, define
  \begin{equation*}
    d_{G,m,k}(p,n)\coloneqq
    w(n)\sigma(GA^{-n}m)\otimes\gamma_0+k1\otimes\gamma_1
    \qquad((p,n)\in\Gamma).
  \end{equation*}
  The twisted commutator formula in Theorem~\ref{twisted-spectral-triple-thm} gives, for $h\in\Gamma$ and $v,c\in\C^2$,
  \begin{equation*}
    [\Dirac[\nabla]_G,\lambda_\theta(g)]_{\rho_{\theta,G}}
      (\psi_h\otimes v\otimes c)
    =\sigma_\theta(g,h)\psi_{gh}\otimes
      d_{G,m,k}(gh)(v\otimes c)\text.
  \end{equation*}
  Consequently,
  \begin{equation*}
    [\Dirac[\nabla]_G,\lambda_\theta(m,k)]_{\rho_{\theta,G}}
    =M_{d_{G,m,k}}\lambda_\theta(m,k)\text.
  \end{equation*}

  For $G,G_0\in\mathscr G$, set
  \begin{equation*}
    \delta_G\coloneqq\norm{(G-G_0)G_0^{-1}}{M_2(\R)}\text.
  \end{equation*}
  By the choice of $w$,
  \begin{equation*}
    w(n)\norm{GA^{-n}m}{\R^2}
    \leq w(n)\norm{GA^{-n}G^{-1}}{M_2(\R)}\norm{Gm}{\R^2}
    \leq\norm{Gm}{\R^2}\text.
  \end{equation*}
  Applying this estimate with $G_0$ yields
  \begin{align*}
    \norm{d_{G,m,k}-d_{G_0,m,k}}{\ell^\infty(\Gamma,M_4(\C))}
    &=\sup_{n\in\Z}w(n)\norm{(G-G_0)A^{-n}m}{\R^2}\\
    &\leq\delta_G\sup_{n\in\Z}w(n)\norm{G_0A^{-n}m}{\R^2}\\
    &\leq\delta_G\norm{G_0m}{\R^2}\text.
  \end{align*}

  To treat the displacement term, put
  \begin{equation*}
    R_{G,k}\coloneqq M_{q_k}Y_G\Dirac[\nabla]_G^{-1}\text.
  \end{equation*}
  In the semidirect product representation, this operator acts by
  \begin{equation*}
    (R_{G,k}\xi)(p,n)
    =q_k(n)\bigl(w(n)\sigma(GA^{-n}p)\otimes\gamma_0\bigr)
      H_G(p,n)^{-1}\xi(p,n)\text.
  \end{equation*}
  Thus its coefficient matrices are independent of $\theta$, and
  \begin{equation*}
    \norm{R_{G,k}}{}\leq C_w(k)\text.
  \end{equation*}
  Since $\Theta_{G,k}=U^kR_{G,k}$, Lemma~\ref{vm:twist-norm} gives
  \begin{equation*}
    \norm{R_{G,k}-R_{G_0,k}}{}
    =\norm{\Theta_{G,k}-\Theta_{G_0,k}}{}
    \leq2C_w(k)\frac{\delta_G}{1-\delta_G}
    \qquad(\delta_G<1).
  \end{equation*}

  Let $P_\theta=\sum_{g=(m,k)\in S}c_g\lambda_\theta(g)$ be a polynomial with fixed finite support. By \eqref{Theta-eq},
  \begin{equation*}
    \rho_{\theta,G}(P_\theta)-P_\theta
    =\sum_{g=(m,k)\in S}c_g\lambda_\theta(g)R_{G,k}\text.
  \end{equation*}
  Taking adjoints gives
  \begin{equation*}
    \bigl(\rho_{\theta,G}(P_\theta)-P_\theta\bigr)^*
    =\sum_{g=(m,k)\in S}\overline{c_g}R_{G,k}^*\lambda_\theta(g^{-1})\text.
  \end{equation*}
  Lemma~\ref{lem:diagonal-continuity} therefore proves continuity in $\theta$ of the norms of $P_\theta$, its twisted commutator, and its displacement. For variation in $G$, the triangle inequality gives
  \begin{equation*}
    \norm{[\Dirac[\nabla]_G,P_\theta]_{\rho_{\theta,G}}
      -[\Dirac[\nabla]_{G_0},P_\theta]_{\rho_{\theta,G_0}}}{}
    \leq\delta_G\sum_{g=(m,k)\in S}|c_g|\norm{G_0m}{\R^2}\text,
  \end{equation*}
  and
  \begin{equation*}
    \norm{\rho_{\theta,G}(P_\theta)-\rho_{\theta,G_0}(P_\theta)}{}
    \leq\frac{2\delta_G}{1-\delta_G}
      \sum_{g=(m,k)\in S}|c_g|C_w(k)\text.
  \end{equation*}
  The same argument applies to $P_\theta^*$. Taking the maximum of the defining seminorm terms proves the asserted joint continuity. All bounds are uniform when the coefficients $c_g$ range over a bounded set.

  We next estimate the base Fourier coefficients. Write
  \begin{equation*}
    \mathcal F_k(b)=\pi_\theta(a_k(b))U^k\text.
  \end{equation*}
  In the original regular representation, let
  \begin{equation*}
    \iota_n:\Hilbert\otimes\C^2\longrightarrow\Hilbert[K],
    \qquad \iota_n\eta\coloneqq\phi_n\otimes\eta
    \qquad(n\in\Z).
  \end{equation*}
  By \eqref{psi-Fourier-compatibility} and the twisted commutator formula,
  \begin{align*}
    T_k\coloneqq
    \iota_0^*[\Dirac[\nabla]_G,b]_{\rho_{\theta,G}}\iota_{-k}
    &=\iota_0^*[\Dirac[\nabla]_G,\mathcal F_k(b)]_{\rho_{\theta,G}}\iota_{-k}\\
    &=[\Dirac_G,a_k(b)]\otimes\gamma_0
      +ka_k(b)\otimes\gamma_1\text.
  \end{align*}
  Since $\gamma_0\gamma_1=-\gamma_1\gamma_0$ and $\gamma_1^2=1$,
  \begin{equation*}
    [\Dirac_G,a_k(b)]\otimes\gamma_0
    =\frac12\bigl(T_k-(1\otimes\gamma_1)T_k(1\otimes\gamma_1)\bigr)\text.
  \end{equation*}
  Hence
  \begin{equation*}
    \Lip_{\theta,G}(a_k(b))
    \leq\norm{T_k}{}
    \leq\norm{[\Dirac[\nabla]_G,b]_{\rho_{\theta,G}}}{}
    \leq\Lip^\nabla_{\theta,G}(b)\text.
  \end{equation*}

  Set
  \begin{equation*}
    K\coloneqq\sup_{G\in\mathscr G}\norm{G^{-1}}{M_2(\R)}\text.
  \end{equation*}
  The Pauli-matrix computation in Lemma~\ref{vm:relative}, applied to $G^{-1}$, gives
  \begin{equation*}
    \Lip_{\theta,I_2}(a)
    \leq K\Lip_{\theta,G}(a)
    \qquad(a\in\A_{\theta,G}).
  \end{equation*}
  For $y=(y_1,y_2)\in\R^2$, write $e^{iy}=(e^{iy_1},e^{iy_2})$. If $a$ is a base Fourier polynomial, then
  \begin{equation*}
    \frac{d}{dt}\zeta^{e^{ity}}(a)
    =\frac{i}{2}\zeta^{e^{ity}}\bigl(
      (1\otimes\sigma(y))[\Dirac_{I_2},a]
      +[\Dirac_{I_2},a](1\otimes\sigma(y))\bigr)\text.
  \end{equation*}
  Integrating from $0$ to $1$ yields
  \begin{equation*}
    \norm{a-\zeta^{e^{iy}}(a)}{}
    \leq\norm{y}{\R^2}\Lip_{\theta,I_2}(a)\text.
  \end{equation*}
  Applying this inequality to $\zeta^{ff_J}(a)$ and using \eqref{vm:torus-fejer} extends it to every $a\in\A_{\theta,G}$.

  For the outer dual action $\beta$, the estimate proved in the first section gives
  \begin{equation*}
    \norm{b-\beta^{e^{ix}}(b)}{}
    \leq|x|\Lip^\nabla_{\theta,G}(b)\text.
  \end{equation*}
  Averaging this inequality over $[-\pi,\pi]$ gives
  \begin{align*}
    \norm{b-\mathcal F_0(b)}{}
    &\leq\int_{-\pi}^{\pi}\norm{b-\beta^{e^{ix}}(b)}{}\frac{dx}{2\pi}\\
    &\leq\pi\Lip^\nabla_{\theta,G}(b)\text.
  \end{align*}
  Averaging the base estimate over $[-\pi,\pi]^2$ gives
  \begin{align*}
    \norm{\mathcal F_0(b)-\operatorname{tr}_\theta(b)1}{}
    &\leq\int_{[-\pi,\pi]^2}
      \norm{a_0(b)-\zeta^{e^{iy}}(a_0(b))}{}
      \frac{dy_1dy_2}{(2\pi)^2}\\
    &\leq\pi\sqrt2 K\Lip_{\theta,G}(a_0(b))\\
    &\leq\pi\sqrt2 K\Lip^\nabla_{\theta,G}(b)\text.
  \end{align*}
  Consequently,
  \begin{equation*}
    \norm{b-\operatorname{tr}_\theta(b)1}{}
    \leq(\pi+\pi\sqrt2 K)\Lip^\nabla_{\theta,G}(b)\text,
  \end{equation*}
  so we may take $R_0=\pi+\pi\sqrt2 K$.

  To obtain a uniform finite approximation, put
  \begin{equation*}
    \varepsilon_N^{(1)}
    \coloneqq\int_{-\pi}^{\pi}|x|f_N(e^{ix})\frac{dx}{2\pi}\text,
  \end{equation*}
  and
  \begin{equation*}
    \varepsilon_J^{(2)}
    \coloneqq\int_{[-\pi,\pi]^2}\norm{y}{\R^2}ff_J(e^{iy})
      \frac{dy_1dy_2}{(2\pi)^2}\text.
  \end{equation*}
  Both sequences converge to zero because the Fej\'er kernels form approximate identities. The preceding estimates imply
  \begin{equation*}
    \norm{b-\beta^{f_N}(b)}{}
    \leq\varepsilon_N^{(1)}\Lip^\nabla_{\theta,G}(b)\text,
  \end{equation*}
  and
  \begin{equation*}
    \norm{a-\zeta^{ff_J}(a)}{}
    \leq K\varepsilon_J^{(2)}\Lip_{\theta,G}(a)\text.
  \end{equation*}
  Define
  \begin{equation*}
    p_{N,J}(b)\coloneqq
    \sum_{|k|\leq N}\left(1-\frac{|k|}{N+1}\right)
      \pi_\theta\bigl(\zeta^{ff_J}(a_k(b))\bigr)U^k\text.
  \end{equation*}
  Its group Fourier support is contained in
  \begin{equation*}
    S_{N,J}\coloneqq
    \{(m,k)\in\Gamma:\norm{m}{\ell^\infty}\leq J,\ |k|\leq N\}\text.
  \end{equation*}
  Comparing it with $\beta^{f_N}(b)$ gives
  \begin{align*}
    \norm{b-p_{N,J}(b)}{}
    &\leq\norm{b-\beta^{f_N}(b)}{}
      +\sum_{|k|\leq N}\norm{a_k(b)-\zeta^{ff_J}(a_k(b))}{}\\
    &\leq\bigl(\varepsilon_N^{(1)}
      +(2N+1)K\varepsilon_J^{(2)}\bigr)\Lip^\nabla_{\theta,G}(b)\text.
  \end{align*}

  Suppose $\operatorname{tr}_\theta(b)=0$. Since
  \begin{equation*}
    \norm{a_k(b)}{}=\norm{\mathcal F_k(b)}{}
    \leq\norm{b}{}
    \leq R_0\Lip^\nabla_{\theta,G}(b)\text,
  \end{equation*}
  the Fourier coefficient formula in \eqref{vm:torus-fejer} shows that every scalar coefficient of $p_{N,J}(b)$ has modulus at most $R_0\Lip^\nabla_{\theta,G}(b)$.

  Given $\varepsilon>0$, choose $N$, then $J$, such that
  \begin{equation*}
    \varepsilon_N^{(1)}+(2N+1)K\varepsilon_J^{(2)}
    <\frac{\varepsilon}{4}\text.
  \end{equation*}
  By Lemma~\ref{lem:folner-explicit}, choose a nonempty finite $F\subseteq\Gamma$ such that
  \begin{equation*}
    R_0\sum_{g\in S_{N,J}}(1-\psi_F(g))
    <\frac{\varepsilon}{2}\text.
  \end{equation*}
  The coefficient bound therefore gives
  \begin{equation*}
    \norm{p_{N,J}(b)-\Phi_{F,\theta}(p_{N,J}(b))}{}
    \leq R_0\Lip^\nabla_{\theta,G}(b)
      \sum_{g\in S_{N,J}}(1-\psi_F(g))
    \leq\frac{\varepsilon}{2}\Lip^\nabla_{\theta,G}(b)\text.
  \end{equation*}
  Since $\Phi_{F,\theta}$ is contractive,
  \begin{align*}
    \norm{b-\Phi_{F,\theta}(b)}{}
    &\leq\norm{b-p_{N,J}(b)}{}
      +\norm{\Phi_{F,\theta}(b-p_{N,J}(b))}{}
      +\norm{p_{N,J}(b)-\Phi_{F,\theta}(p_{N,J}(b))}{}\\
    &\leq2\norm{b-p_{N,J}(b)}{}
      +\frac{\varepsilon}{2}\Lip^\nabla_{\theta,G}(b)\\
    &\leq\varepsilon\Lip^\nabla_{\theta,G}(b)\text.
  \end{align*}
  For arbitrary $b$, apply this estimate to $b-\operatorname{tr}_\theta(b)1$. Since $\Phi_{F,\theta}(1)=1$ and subtracting a scalar does not change the seminorm, the same estimate holds for $b$.
\end{proof}

\begin{lemma}\label{vm:twist-columns}
  Fix $t_0=(\theta_0,G_0)$. Let $Q_J$ project onto the outer coordinates $|n|\leq J$, where $J\geq1$, and put $c_J=\min_{|n|\leq J}w(n)$. For every $b\in\dom{\rho_{\theta,G}}$,
  \begin{equation*}
    (\rho_{\theta,G}(b)-b)Q_J=[M_w,b](\Dirac_G^{\mathrm{diag}}\otimes\gamma_0)\Dirac[\nabla]_G^{-1}Q_J,
  \end{equation*}
  and
  \begin{equation*}
    \norm{(\Dirac_G^{\mathrm{diag}}\otimes\gamma_0)\Dirac[\nabla]_G^{-1}Q_J}{}\leq c_J^{-1},\qquad \norm{\rho_{\theta,G}(b)Q_J}{}\leq(1+2c_J^{-1})\norm{b}{}\text.
  \end{equation*}
  If $b,c\in\dom{\Lip^{\nabla}_{\theta,G}}$ satisfy $\Lip^{\nabla}_{\theta,G}(b),\Lip^{\nabla}_{\theta,G}(c)\leq1$ and $\norm{b}{},\norm{c}{}\leq R$, then
  \begin{equation*}
    \opnorm{\rho_{\theta,G}(b)-\rho_{\theta,G}(c)}{}{t_0}\leq(1+2c_J^{-1})\norm{b-c}{}+\frac{2(R+\kappa^{-1})}{J}\text.
  \end{equation*}
\end{lemma}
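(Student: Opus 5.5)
The plan is to prove the displayed identity first for Fourier polynomials, then extend it to all of $\dom{\rho_{\theta,G}}$ by comparing matrix blocks. The two norm bounds then follow directly, and the final estimate comes from splitting vectors into a low outer-frequency part and a tail.

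\emph{The identity on Fourier polynomials.} Let $b=\pi_\theta(a)U^k$. Since $M_w$ commutes with $\pi_\theta(a)$, we have $[M_w,b]=\pi_\theta(a)[M_w,U^k]$. On the finite-support core, the computation recorded before \eqref{Theta-eq} gives
\begin{equation*}
[M_w,U^k](\Dirac_G^{\mathrm{diag}}\otimes\gamma_0)=U^kM_{q_k}Y_G\text.
\end{equation*}
Hence
\begin{equation*}
[M_w,b](\Dirac_G^{\mathrm{diag}}\otimes\gamma_0)\Dirac[\nabla]_G^{-1}=\pi_\theta(a)\Theta_{G,k}=\rho_{\theta,G}(b)-b
\end{equation*}
by \eqref{rho-eq}. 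Linearity then gives the identity for all Fourier polynomials. Multiplying on the right by $Q_J$ is harmless, because $Q_J$ commutes with $\Dirac[\nabla]_G^{-1}$.

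\emph{The first norm bound.} On the outer coordinate $n$, the formula for $\Dirac[\nabla]_G^2$ gives
\begin{equation*}
w(n)^2\norm{\Dirac_G\eta_n}{}^2\leq\norm{\Dirac[\nabla]_G\eta}{}^2 .
\end{equation*}
So on the range of $Q_J$ the operator $(\Dirac_G^{\mathrm{diag}}\otimes\gamma_0)\Dirac[\nabla]_G^{-1}$ is diagonal, with blocks of norm at most $w(n)^{-1}\leq c_J^{-1}$. This proves the first inequality. In particular the right-hand side of the identity is a bounded operator for every bounded $b$.

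\emph{Extension to all of $\dom{\rho_{\theta,G}}$.} This is the step I expect to be the main obstacle. I would argue blockwise, exactly as in the proof of Lemma~\ref{lem:full-smoothing}. Let $P_h$ project onto a single basis fiber. By \eqref{psi-Fourier-compatibility},
\begin{equation*}
P_s\rho_{\theta,G}(b)P_h=P_s\mathcal F_k(b)K_{G,k}P_h\qquad(k=n-m)\text.
\end{equation*}
The operators $M_w$, $\Dirac_G^{\mathrm{diag}}$, $\Dirac[\nabla]_G^{-1}$ and $Q_J$ all commute with the projections $P_h$ and with the dual unitaries $V^z$. Therefore the same block of the right-hand side depends only on $\mathcal F_k(b)=\pi_\theta(a_k)U^k$. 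The polynomial case then shows that the blocks agree for every pair $s,h$. Since both sides are bounded on the range of $Q_J$, they coincide.

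\emph{The second norm bound.} From the identity, together with $\norm{[M_w,b]}{}\leq2\norm{M_w}{}\norm{b}{}\leq2\norm{b}{}$ and the first bound, we obtain
\begin{equation*}
\norm{\rho_{\theta,G}(b)Q_J}{}\leq\norm{b}{}+2c_J^{-1}\norm{b}{}=(1+2c_J^{-1})\norm{b}{}\text.
\end{equation*}

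\emph{The final estimate.} Let $\xi\in\dom{\nabla}$ with $\CDN_{t_0}(\xi)\leq1$, and write $\xi=Q_J\xi+(1-Q_J)\xi$.
\begin{itemize}
\item[\emph{Low part.}] Since $b-c\in\dom{\rho_{\theta,G}}$ and $\rho_{\theta,G}$ is linear, the second bound applied to $b-c$ gives a contribution of at most $(1+2c_J^{-1})\norm{b-c}{}$.
\item[\emph{Tail.}] The formula $\norm{\Dirac[\nabla]_{G_0}\xi}{}^2=\norm{Y_{G_0}\xi}{}^2+\norm{\partial\xi}{}^2+\tau^2\norm{\xi}{}^2$ gives
\begin{equation*}
\norm{(1-Q_J)\xi}{}\leq J^{-1}\norm{\partial\xi}{}\leq J^{-1}\text.
\end{equation*}
Moreover, the displacement term in $\Lip^\nabla_{\theta,G}$ gives
\begin{equation*}
\norm{\rho_{\theta,G}(b)}{}\leq\norm{b}{}+\kappa^{-1}\Lip^\nabla_{\theta,G}(b)\leq R+\kappa^{-1}\text,
\end{equation*}
and likewise for $c$. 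So the tail contributes at most $2(R+\kappa^{-1})/J$.
\end{itemize}
Adding the two contributions and taking the supremum over such $\xi$ gives the stated bound on $\opnorm{\rho_{\theta,G}(b)-\rho_{\theta,G}(c)}{}{t_0}$.
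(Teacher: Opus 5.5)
Your proposal is correct and follows essentially the same route as the paper. Both arguments obtain the monomial identity from $[M_w,U^k](\Dirac_G^{\mathrm{diag}}\otimes\gamma_0)=U^kM_{q_k}Y_G$, extend it blockwise via \eqref{psi-Fourier-compatibility}, use $\norm{[M_w,b]}{}\leq 2\norm{b}{}$, and split $\xi=Q_J\xi+(1-Q_J)\xi$ with $\norm{(1-Q_J)\xi}{}\leq J^{-1}\norm{\partial\xi}{}$. The differences are cosmetic: you use single-fiber projections $P_h$ where the paper uses outer-coordinate projections $P_n$. Also, the monomial identity is best written with $Q_J$ already on the right, since $(\Dirac_G^{\mathrm{diag}}\otimes\gamma_0)\Dirac[\nabla]_G^{-1}$ alone is unbounded as soon as $\inf_n w(n)=0$.
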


\begin{proof}
  Let $P_n$ project onto the $n$th outer coordinate, and put $T_{G,J}=(\Dirac_G^{\mathrm{diag}}\otimes\gamma_0)\Dirac[\nabla]_G^{-1}Q_J$. Since all the diagonal factors commute with $Q_J$,
  \begin{equation*}
    T_{G,J}=M_w^{-1}Q_JY_G\Dirac[\nabla]_G^{-1},\qquad \norm{T_{G,J}}{}\leq c_J^{-1}\text.
  \end{equation*}
  Here $M_w^{-1}Q_J$ is bounded. For a Fourier monomial $b=\pi_\theta(a)U^k$,
  \begin{equation*}
    [M_w,b]T_{G,J}=\pi_\theta(a)[M_w,U^k]T_{G,J}=\pi_\theta(a)\Theta_{G,k}Q_J=(\rho_{\theta,G}(b)-b)Q_J\text.
  \end{equation*}
  For general $b\in\dom{\rho_{\theta,G}}$, its outer Fourier coefficients satisfy
  \begin{equation*}
    P_n(\rho_{\theta,G}(b)-b)Q_JP_m=P_n\pi_\theta(a_{n-m}(b))\Theta_{G,n-m}Q_JP_m=P_n[M_w,b]T_{G,J}P_m\text.
  \end{equation*}
  Equality of the blocks proves the first identity. Since $\norm{M_w}{}\leq1$,
  \begin{equation*}
    \norm{\rho_{\theta,G}(b)Q_J}{}\leq\norm{b}{}+\norm{[M_w,b]}{}\norm{T_{G,J}}{}\leq(1+2c_J^{-1})\norm{b}{}\text.
  \end{equation*}
  The displacement bound gives $\norm{\rho_{\theta,G}(b)}{},\norm{\rho_{\theta,G}(c)}{}\leq R+\kappa^{-1}$. If $\CDN_{G_0}(\xi)\leq1$, then
  \begin{equation*}
    \norm{Q_J\xi}{}\leq1,\qquad \norm{(1-Q_J)\xi}{}\leq J^{-1}\norm{(\partial\otimes1)\xi}{}\leq J^{-1}\text.
  \end{equation*}
  Therefore,
  \begin{equation*}
    \norm{(\rho_{\theta,G}(b)-\rho_{\theta,G}(c))\xi}{}\leq(1+2c_J^{-1})\norm{b-c}{}+2(R+\kappa^{-1})J^{-1}\text.
  \end{equation*}
\end{proof}

\subsubsection{Finite-rank pivots}

\begin{lemma}\label{vm:states}
  Fix $t_0=(\theta_0,G_0)$. For every $\varepsilon>0$, there exist finite-support unit vectors $\xi_1,\ldots,\xi_l\in\dom{\nabla}$ and a neighborhood $V$ of $t_0$ such that every state of $\A_\theta\rtimes_{\alpha_{A,\theta}}\Z$, for $(\theta,G)\in V$, is within $\varepsilon$ for $\Kantorovich{\Lip^{\nabla}_{\theta,G}}$ of a convex combination of their vector states.
\end{lemma}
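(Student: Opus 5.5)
The plan is to push a given state through the finite-rank Schur multiplier $\Phi_{F,\theta}=\mathcal S_F$ of Lemma~\ref{lem:full-smoothing}. This turns the state into a convex combination of vector states supported on a fixed finite set $F\subseteq\Gamma$, and a finite net then makes the vectors independent of the state and of the parameters.

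\textbf{Choice of $F$ and reduction.} Since $\Kantorovich{\Lip^{\nabla}_{\theta,G}}$ only tests self-adjoint $b$ with $\Lip^\nabla_{\theta,G}(b)\leq1$, and both sides are unchanged by subtracting $\operatorname{tr}_\theta(b)1$, we may assume $\operatorname{tr}_\theta(b)=0$. Proposition~\ref{vm:polynomials} then gives $\norm{b}{}\leq R_0$. Given $\varepsilon>0$, Proposition~\ref{vm:polynomials} also provides a nonempty finite $F\subseteq\Gamma$ such that
\begin{equation*}
\norm{b-\Phi_{F,\theta}(b)}{}\leq\frac{\varepsilon}{3}
\end{equation*}
for all $(\theta,G)\in\R\times\mathscr G$ and all such $b$. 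Since $F$ and $R_0$ do not depend on $(\theta,G)$, we expect to be able to take $V$ to be the whole parameter space.

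\textbf{Disintegrating $\varphi\circ\Phi_{F,\theta}$.} Let $\varphi$ be a state of $\A_\theta\rtimes_{\alpha_{A,\theta}}\Z$ and extend it to a state $\tilde\varphi$ of $\B(\Hilbert[K])$. By Lemma~\ref{lem:full-smoothing}, $\Phi_{F,\theta}$ is the restriction of
\begin{equation*}
\mathcal S_F=\frac{1}{|F|}\sum_{x\in\Gamma}Q_x\,\cdot\,Q_x .
\end{equation*}
Hence $\varphi(\Phi_{F,\theta}(b))=\frac1{|F|}\sum_x\tilde\varphi(Q_xbQ_x)$, where each $T\mapsto\tilde\varphi(Q_xTQ_x)$ is a positive functional on the finite-dimensional corner $Q_x\B(\Hilbert[K])Q_x$.

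The key observation comes from the unitaries $V_{x,\theta}$ in the proof of Lemma~\ref{lem:diagonal-continuity}. The cocycle identity shows that $V_{x,\theta}^*Q_x\lambda_\theta(g)Q_xV_{x,\theta}$ equals $Q_e\lambda_\theta(g)Q_e$ (here $V_{e,\theta}$ is the identity because $\sigma_\theta(h,e)=1$), and this is independent of $x$. By linearity and continuity,
\begin{equation*}
V_{x,\theta}^*Q_xbQ_xV_{x,\theta}=Q_ebQ_e
\end{equation*}
for every $b$ in the crossed product. Diagonalizing each functional (a density matrix) and transporting it by $V_{x,\theta}^*$ then writes
\begin{equation*}
\varphi(\Phi_{F,\theta}(b))=\sum_i t_i\inner{b\eta_i}{\eta_i}{},
\end{equation*}
with $t_i\geq0$, $\sum_i t_i=\tilde\varphi(\mathcal S_F(1))=1$, and unit vectors $\eta_i$ in the fixed space $\ell^2(F)\otimes\C^2\otimes\C^2$. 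The weights and vectors depend on $\varphi$ and $\theta$ but not on $b$. Such vectors have finite support and so lie in $\dom{\nabla}$.

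\textbf{Finite net and conclusion.} The unit sphere of $\ell^2(F)\otimes\C^4$ is compact, so we fix a finite $\delta$-net $\xi_1,\ldots,\xi_l$ in it with $2\delta R_0<\varepsilon/3$, and replace each $\eta_i$ by a nearest $\xi_{j(i)}$. This changes each vector state by at most $2\delta\norm{b}{}\leq 2\delta R_0$. Combining the three errors gives
\begin{equation*}
\Bigl|\varphi(b)-\sum_i t_i\inner{b\xi_{j(i)}}{\xi_{j(i)}}{}\Bigr|<\varepsilon
\end{equation*}
uniformly over all admissible $b$. Since the convex combination was chosen before $b$, this is exactly the required bound for the Monge--Kantorovich distance.

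The step needing the most care is the $x$-independence of the transported compressions, since the representation of the crossed product carries the cocycle $\sigma_\theta$. It must be checked that the phases $\sigma_\theta(h,x)$ built into $V_{x,\theta}$ cancel exactly, as in the computation in Lemma~\ref{lem:diagonal-continuity} with $d$ constant. The remaining bookkeeping point is that the extension $\tilde\varphi$ is only used through positivity of compressions, so its non-uniqueness is harmless.
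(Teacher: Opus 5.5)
There is a genuine gap in the disintegration step. The operator $\mathcal S_F(T)=\frac1{|F|}\sum_{x\in\Gamma}Q_xTQ_x$ is only a weak-operator sum. The identity $\varphi(\Phi_{F,\theta}(b))=\frac1{|F|}\sum_x\tilde\varphi(Q_xbQ_x)$, and with it $\sum_it_i=\tilde\varphi(\mathcal S_F(1))=1$, therefore requires the extension $\tilde\varphi$ to be normal. Your closing remark that the choice of extension is harmless is exactly what fails.

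For a concrete failure, take $\theta=0$. Since $\Gamma$ is amenable, $\chi(\lambda_0(g))=1$ defines a state $\chi$ of the crossed product. Let $\tilde\chi$ be any extension to $\B(\Hilbert[K])$. Since $\tilde\chi(\lambda_0(g))=1$ with $\lambda_0(g)$ unitary, the Cauchy--Schwarz inequality gives $\tilde\chi(\lambda_0(g)T\lambda_0(g)^*)=\tilde\chi(T)$ for all $T$. Hence $\tilde\chi(Q_x)$ equals the value of $\tilde\chi$ on the projection onto $\ell^2(gFx)\otimes\C^2\otimes\C^2$, for every $g$. Because $\Gamma$ is infinite, for every $n$ there are $n$ pairwise disjoint translates $g_1Fx,\ldots,g_nFx$. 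So $n\tilde\chi(Q_x)\leq 1$ for all $n$, that is, $\tilde\chi(Q_x)=0$. Your right-hand side therefore vanishes at $b=1$, while $\chi(\Phi_{F,0}(1))=1$. Every extension of $\chi$ behaves this way, so no choice of $\tilde\varphi$ rescues the step.

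The conclusion you want is nevertheless true, and the rest of your argument goes through once it is established for all states. That includes the transport by $V_{x,\theta}$, which is correct: it is a restriction of the twisted right translation by $x$, hence commutes with every $\lambda_\theta(g)$. It also includes the $(\theta,G)$-independent net and the error count. There are two ways to repair the step.

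\emph{Via normal states.} Your argument is valid for normal states, because the finite partial sums of $\mathcal S_F(T)$ are bounded by $\norm{T}{}$ and converge weakly, hence $\sigma$-weakly. Consider the set of states $\varphi$ for which $\varphi\circ\Phi_{F,\theta}$ has the form $b\mapsto\mathrm{Tr}(\varrho\,Q_ebQ_e)$, with $\varrho$ a density matrix on $\ell^2(F)\otimes\C^2\otimes\C^2$. This set is weak* closed, since these functionals form a weak* compact set and $\varphi\mapsto\varphi\circ\Phi_{F,\theta}$ is weak* continuous. It contains the normal states, which are weak* dense because the representation is faithful, so it contains all states.

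\emph{Via the GNS representation.} Work in $(\pi_\varphi,\Omega_\varphi)$ and set $\zeta_h\coloneqq\pi_\varphi(\lambda_\theta(h))^*\Omega_\varphi$. The relation $\lambda_\theta(g)=\sigma_\theta(g,h)\lambda_\theta(gh)\lambda_\theta(h)^*$ gives
\begin{equation*}
\varphi(\Phi_{F,\theta}(\lambda_\theta(g)))=\frac1{|F|}\sum_{h\in F\cap g^{-1}F}\sigma_\theta(g,h)\langle\zeta_h,\zeta_{gh}\rangle .
\end{equation*}
Expanding the Gram matrix of $(\zeta_h)_{h\in F}$ in an orthonormal basis of their span exhibits $\varphi\circ\Phi_{F,\theta}$ as $\frac1{|F|}\sum_i\langle\,\cdot\,\eta_i,\eta_i\rangle$, with $\eta_i\in\ell^2(F)\otimes\C^2\otimes\C^2$ and $\sum_i\norm{\eta_i}{}^2=|F|$.

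Once repaired, your route differs genuinely from the paper's, which never disintegrates states. The paper works with the support functions $m_\theta(z)=\norm{R1+a_\theta(z)}{}-R$ on the coefficient space of the range of $\Phi_{F,\theta}$. It picks near-maximizing finite-support vector states at $\theta_0$ and propagates them to a neighborhood using the continuity in Proposition~\ref{vm:polynomials}. It finishes with a nearest-point separation argument in $\R^d$. Because it uses only norms, the paper's proof is immune to the normality issue, but its vectors are tied to $\theta_0$. Your repaired argument produces vectors independent of $(\theta,G)$, so one may take $V=\R\times\mathscr G$.
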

\begin{proof}
  We first reduce the approximation of states to a finite-dimensional space. Fix $\varepsilon>0$. By Proposition~\ref{vm:polynomials}, choose a nonempty finite $F\subseteq\Gamma$ such that
  \begin{equation*}
    \norm{b-\Phi_{F,\theta}(b)}{}
    \leq\frac{\varepsilon}{8}\Lip^\nabla_{\theta,G}(b)\text.
  \end{equation*}
  For any two states $\varphi,\omega$ and any $b$ with $\Lip^\nabla_{\theta,G}(b)\leq1$, we therefore have
  \begin{align*}
    |\varphi(b)-\omega(b)|
    &\leq2\norm{b-\Phi_{F,\theta}(b)}{}
      +|\varphi(\Phi_{F,\theta}(b))-\omega(\Phi_{F,\theta}(b))|\\
    &\leq\frac{\varepsilon}{4}
      +|\varphi(\Phi_{F,\theta}(b))-\omega(\Phi_{F,\theta}(b))|\text.
  \end{align*}

  We express the self-adjoint elements in the range of $\Phi_{F,\theta}$ using real coefficients. Choose $g_1,\ldots,g_s$ containing one element from each inverse pair in $FF^{-1}\setminus\{e\}$, and set $d=1+2s$. For $z=(z_0,x_1,y_1,\ldots,x_s,y_s)\in\R^d$, define
  \begin{equation*}
    a_\theta(z)\coloneqq z_0 1+
    \sum_{j=1}^s\bigl(
      x_j(\lambda_\theta(g_j)+\lambda_\theta(g_j)^*)
      +iy_j(\lambda_\theta(g_j)-\lambda_\theta(g_j)^*)\bigr)\text.
  \end{equation*}
  Since $\lambda_\theta(g)^*=\lambda_\theta(g^{-1})$, these polynomials form the self-adjoint part of the range of $\Phi_{F,\theta}$. Let $R_0$ be the constant from Proposition~\ref{vm:polynomials}, and put
  \begin{equation*}
    C_F\coloneqq\max\{1,\sqrt d\,R_0\}\text.
  \end{equation*}
  If $b=b^*$, $\operatorname{tr}_\theta(b)=0$, and $\Lip^\nabla_{\theta,G}(b)\leq1$, then
  \begin{equation*}
    |\psi_F(g)\operatorname{tr}_\theta(b\lambda_\theta(g)^*)|
    \leq\norm{b}{}\leq R_0
    \qquad(g\in\Gamma).
  \end{equation*}
  Consequently, $\Phi_{F,\theta}(b)=a_\theta(z)$ for some $z$ with $\norm{z}{\R^d}\leq C_F$. Every state $\varphi$ determines a unique vector $x\in\R^d$ such that
  \begin{equation*}
    \varphi(a_\theta(z))=z\cdot x
    \qquad(z\in\R^d).
  \end{equation*}
  Likewise, every unit vector $\xi\in\Hilbert[K]$ determines a vector $y_\xi\in\R^d$ by
  \begin{equation*}
    \inner{a_\theta(z)\xi}{\xi}{}=z\cdot y_\xi
    \qquad(z\in\R^d).
  \end{equation*}

  We next choose finitely many finite-support vector states that nearly attain the largest state value in every coefficient direction. For $\norm{z}{\R^d}=1$, set
  \begin{equation*}
    m_\theta(z)\coloneqq
    \sup_{\varphi\in\StateSpace(\A_\theta\rtimes_{\alpha_{A,\theta}}\Z)}
      \varphi(a_\theta(z))\text.
  \end{equation*}
  Since $\norm{a_\theta(z)}{}\leq2\sqrt d$, taking $R=1+2\sqrt d$ gives
  \begin{equation*}
    m_\theta(z)=\norm{R1+a_\theta(z)}{}-R\text.
  \end{equation*}
  Proposition~\ref{vm:polynomials} implies that $(\theta,z)\mapsto m_\theta(z)$ is continuous on $\R$ times the unit sphere of $\R^d$. Faithfulness of the representation and density of finite-support vectors give
  \begin{equation*}
    m_{\theta_0}(z)
    =\sup_{\substack{\xi\in\Hilbert[K],\ \norm{\xi}{}=1\\
      \supp{\xi}\text{ finite}}}
      \inner{a_{\theta_0}(z)\xi}{\xi}{}\text.
  \end{equation*}
  For each unit $z$, choose such a vector $\xi_z$ with
  \begin{equation*}
    m_{\theta_0}(z)-\inner{a_{\theta_0}(z)\xi_z}{\xi_z}{}
    <\frac{\varepsilon}{8C_F}\text.
  \end{equation*}
  For fixed $\xi_z$, the value $\inner{a_\theta(z')\xi_z}{\xi_z}{}$ depends on finitely many continuous cocycle phases and is linear in $z'$. Thus there are a neighborhood $I_z$ of $\theta_0$ and a neighborhood $O_z$ of $z$ in the unit sphere such that
  \begin{equation*}
    m_\theta(z')-\inner{a_\theta(z')\xi_z}{\xi_z}{}
    <\frac{\varepsilon}{4C_F}
    \qquad(\theta\in I_z,\ z'\in O_z).
  \end{equation*}
  Choose a finite subcover $O_{z_1},\ldots,O_{z_l}$, put $\xi_j=\xi_{z_j}$, and set $I=\bigcap_{j=1}^l I_{z_j}$. Then
  \begin{equation*}
    m_\theta(z)\leq
    \max_{1\leq j\leq l}\inner{a_\theta(z)\xi_j}{\xi_j}{}
      +\frac{\varepsilon}{4C_F}
    \qquad(\theta\in I,\ \norm{z}{\R^d}=1).
  \end{equation*}

  We now prove that convex combinations of these finitely many vector states approximate every state on the polynomials $a_\theta(z)$. Fix $\theta\in I$ and a state $\varphi$, and let $x,y_1,\ldots,y_l\in\R^d$ represent $\varphi$ and the vector states of $\xi_1,\ldots,\xi_l$, respectively. The preceding estimate gives
  \begin{equation*}
    z\cdot x\leq m_\theta(z)
    \leq\max_{1\leq j\leq l}z\cdot y_j+\frac{\varepsilon}{4C_F}
    \qquad(\norm{z}{\R^d}=1).
  \end{equation*}
  Let $y$ minimize the distance to $x$ in the compact convex set
  $C=\operatorname{conv}\{y_1,\ldots,y_l\}$. For every $y'\in C$, minimality along the segment from $y$ to $y'$ gives
  \begin{equation*}
    0\leq
    \left.\frac{d}{dt}\norm{x-y-t(y'-y)}{\R^d}^2\right|_{t=0^+}
    =-2(x-y)\cdot(y'-y)\text.
  \end{equation*}
  If $x\ne y$, take $z=(x-y)/\norm{x-y}{\R^d}$. Then
  \begin{equation*}
    z\cdot y=\max_{y'\in C}z\cdot y'
    =\max_{1\leq j\leq l}z\cdot y_j\text,
  \end{equation*}
  so
  \begin{equation*}
    \norm{x-y}{\R^d}=z\cdot(x-y)\leq\frac{\varepsilon}{4C_F}\text.
  \end{equation*}
  This bound also holds when $x=y$. Write $y=\sum_{j=1}^l t_jy_j$, where $t_j\geq0$ and $\sum_{j=1}^l t_j=1$, and define
  \begin{equation*}
    \omega_\theta(b)\coloneqq\sum_{j=1}^l t_j\inner{b\xi_j}{\xi_j}{}\text.
  \end{equation*}
  For every $z\in\R^d$, it follows that
  \begin{equation*}
    |\varphi(a_\theta(z))-\omega_\theta(a_\theta(z))|
    =|z\cdot(x-y)|
    \leq\frac{\varepsilon}{4C_F}\norm{z}{\R^d}\text.
  \end{equation*}

  We finally extend this approximation to the Lipschitz unit ball. If $b=b^*$, $\operatorname{tr}_\theta(b)=0$, and $\Lip^\nabla_{\theta,G}(b)\leq1$, then $\Phi_{F,\theta}(b)=a_\theta(z)$ with $\norm{z}{\R^d}\leq C_F$. Hence
  \begin{align*}
    |\varphi(b)-\omega_\theta(b)|
    &\leq2\norm{b-\Phi_{F,\theta}(b)}{}
      +|\varphi(a_\theta(z))-\omega_\theta(a_\theta(z))|\\
    &\leq\frac{\varepsilon}{4}+\frac{\varepsilon}{4}
    =\frac{\varepsilon}{2}\text.
  \end{align*}
  Subtracting $\operatorname{tr}_\theta(b)1$ does not change the seminorm or the difference of the two states. Therefore,
  \begin{equation*}
    \Kantorovich{\Lip^\nabla_{\theta,G}}(\varphi,\omega_\theta)
    \leq\frac{\varepsilon}{2}<\varepsilon\text.
  \end{equation*}
  Each $\xi_j$ has finite support and thus belongs to $\dom{\nabla}$. The neighborhood $V=I\times\mathscr G$ has the required property.
\end{proof}
For $P\in\N$ and $R\in\N\setminus\{0\}$, define
\begin{equation*}
  h_{P,R}(g)\coloneqq\max\left\{0,\min\left\{1,1-\frac{|g|-P}{R}\right\}\right\}\text,
\end{equation*}
and let $\omega_{P,R}$ be multiplication by $h_{P,R}$ on $\Hilbert[K]$.

\begin{lemma}\label{plateau-estimate-lemma}
  The operator $\omega_{P,R}$ is a positive finite-rank contraction of norm one, equals the identity on $\ell^2(\{g:|g|\leq P\})\otimes\C^2\otimes\C^2$, and satisfies
  \begin{equation*}
    \norm{[\lambda_\theta(s),\omega_{P,R}]}{\B(\Hilbert[K])}\leq\frac{|s|}{R}\qquad(s\in\Gamma)\text.
  \end{equation*}
  The same estimate holds for every normalized unitary $2$-cocycle on $\Gamma$.
\end{lemma}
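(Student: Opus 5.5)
The plan is to verify each claim directly from the definition of $h_{P,R}$ and from the fact that $\lambda_\theta(s)$ acts by left translation with a unimodular phase. The commutator bound is the only step with real content. Everything rests on the triangle inequality for word length, $\bigl||sg|-|g|\bigr|\leq|s|$.

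\emph{Elementary properties.} By construction $0\leq h_{P,R}\leq 1$, and $h_{P,R}(e)=1$, so $\omega_{P,R}$ is a positive contraction of norm exactly one. For $|g|\leq P$ we have $1-\frac{|g|-P}{R}\geq 1$, so $h_{P,R}(g)=1$ and $\omega_{P,R}$ is the identity on $\ell^2(\{g:|g|\leq P\})\otimes\C^2\otimes\C^2$. For $|g|\geq P+R$ we get $h_{P,R}(g)=0$. The word-length balls of the finitely generated group $\Gamma$ are finite, so $\omega_{P,R}$ is a diagonal operator supported on finitely many basis vectors $\phi_g\otimes v\otimes c$, hence finite rank.

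\emph{Commutator estimate.} I will compute on basis vectors. By definition $\lambda_\theta(s)(\phi_h\otimes v\otimes c)=\sigma_\theta(s,h)\phi_{sh}\otimes v\otimes c$, so
\begin{equation*}
  [\lambda_\theta(s),\omega_{P,R}](\phi_h\otimes v\otimes c)=\bigl(h_{P,R}(h)-h_{P,R}(sh)\bigr)\sigma_\theta(s,h)\phi_{sh}\otimes v\otimes c\text.
\end{equation*}
Thus the commutator equals $\lambda_\theta(s)M$, where $M$ is the diagonal operator given by multiplication by $h\mapsto h_{P,R}(h)-h_{P,R}(sh)$. Because $\lambda_\theta(s)$ is unitary, the norm of the commutator is $\sup_h|h_{P,R}(h)-h_{P,R}(sh)|$. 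The function $x\mapsto\max\{0,\min\{1,1-(x-P)/R\}\}$ is $\frac1R$-Lipschitz on $\R$, and $\bigl||sh|-|h|\bigr|\leq|s|$ by subadditivity and inversion-invariance of word length. Together these give the bound $|s|/R$.

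The only input about the cocycle is $|\sigma(s,h)|=1$, which makes $\lambda(s)$ unitary and keeps the phases out of the norm computation. The same argument therefore applies verbatim to any normalized unitary $2$-cocycle. There is no real obstacle; the one point needing care is to record that the commutator factors as a unitary times a diagonal operator, which requires only that the phases are unimodular and not the cocycle identity.
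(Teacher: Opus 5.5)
Your proposal is correct and follows the paper's proof essentially line by line: the same basis-vector computation $[\lambda_\theta(s),\omega_{P,R}](\phi_h\otimes v\otimes c)=\sigma_\theta(s,h)\bigl(h_{P,R}(h)-h_{P,R}(sh)\bigr)\phi_{sh}\otimes v\otimes c$, the same $\frac1R$-Lipschitz bound combined with $\bigl||sh|-|h|\bigr|\leq|s|$, and the same observation that only $|\sigma(s,h)|=1$ is used. Your factorization of the commutator as the unitary $\lambda_\theta(s)$ times a diagonal operator is a cleaner phrasing of the paper's ``squaring and summing over $g$'' step.
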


\begin{proof}
  We have $0\leq h_{P,R}\leq1$, $h_{P,R}=1$ on $\{|g|\leq P\}$, and $\supp{h_{P,R}}\subseteq\{|g|<P+R\}$, a finite set. Moreover,
  \begin{equation*}
    |h_{P,R}(sg)-h_{P,R}(g)|\leq\frac{\bigl||sg|-|g|\bigr|}{R}\leq\frac{|s|}{R}\text.
  \end{equation*}
  For $v\in\C^2\otimes\C^2$,
  \begin{equation*}
    [\lambda_\theta(s),\omega_{P,R}](\phi_g\otimes v)=\sigma_\theta(s,g)(h_{P,R}(g)-h_{P,R}(sg))\phi_{sg}\otimes v\text.
  \end{equation*}
  Since $|\sigma_\theta(s,g)|=1$, squaring and summing over $g$ proves the estimate for any unitary cocycle.
\end{proof}

\begin{remark}
  The parameters $P$ and $R$ independently control the one-eigenspace and the commutator bound. A projection onto a finite set has commutator norm one with any shift that does not preserve that set. The preceding proof applies to every finitely generated discrete group and every normalized unitary $2$-cocycle.
\end{remark}

\subsubsection{Continuity}

\begin{theorem}\label{quantum-torus-Anosov-cor}
  For every fixed $A\in\operatorname{SL}(2,\Z)$, the family
  \begin{equation*}
    \left(\A_\theta\rtimes_{\alpha_{A,\theta}}\Z,\Hilbert[K],\Dirac[\nabla]_G,\rho_{\theta,G}\right)_{(\theta,G)\in\R\times\mathscr{G}}
  \end{equation*}
  is jointly continuous for the twisted spectral propinquity. Its restriction to any compact parameter set has compact range. In particular, this applies when $A$ is hyperbolic.
\end{theorem}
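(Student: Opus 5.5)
We plan to apply Theorem \ref{finite-mode-spectral-convergence-thm}, through its crossed-product form Theorem \ref{crossed-product-finite-mode-convergence-thm}, at an arbitrary reference point $t_0=(\theta_0,G_0)\in\R\times\mathscr{G}$, with parameter space $T=\R\times\mathscr{G}$. The compact-range statement then follows at once, since a continuous image of a compact set is compact. Throughout we work in the semidirect product realization of Lemma \ref{vm:semidirect-realization}, with the tracial states $\operatorname{tr}_\theta$ playing the role of $\mu_t$.

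The data are chosen as follows. For $m\in\N$, Proposition \ref{vm:polynomials} provides a finite $F_m\subseteq\Gamma$ with
\begin{equation*}
  \norm{b-\Phi_{F_m,\theta}(b)}{}\leq\frac{1}{m+1}\Lip^\nabla_{\theta,G}(b)
\end{equation*}
uniformly in $(\theta,G)$. We set $\Phi_{m,t}=\Phi_{F_m,\theta}$, which by Lemma \ref{lem:full-smoothing} is unital, positive, trace preserving and does not increase $\Lip^\nabla_{\theta,G}$; we may therefore take $\varepsilon_m=\frac{1}{m+1}$. For $C_m$ we take the ball of radius $C_{F_m}$ in $\R^{d}$, with coordinates constrained by $z_0=0$, and we set $P_{m,t}(z)=a_\theta(z)$, as in the proof of Lemma \ref{vm:states}. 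That proof already shows $\Phi_{m,t}(\mathfrak L_t)\subseteq P_{m,t}(C_m)$, using the bound $R_0$. Uniform convergence of $\Lip^\nabla_{\theta,G}(P_{m,t}(z))$ on $C_m$ is exactly the joint continuity, uniform on bounded coefficients, from Proposition \ref{vm:polynomials}. Strong convergence $(P_{m,t}(z)-P_{m,t_0}(z))\xi\to0$ uniformly in $z$ reduces, by density and uniform boundedness, to finite-support $\xi$, where only finitely many continuous cocycle phases are involved.

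For the pivots we take $\omega_{m,r}=\omega_{P_r,r}$ from Lemma \ref{plateau-estimate-lemma}. The commutators with $P_{m,t_0}(z)$ are bounded by $\sum_{g\in F_mF_m^{-1}}|z_g||g|/r\to0$. For the bridge height, we use Lemma \ref{vm:states}: the finitely many finite-support vectors $\xi_j$ lie in the $1$-eigenspace of $\omega_{P,R}$ once $P$ exceeds the word length of their supports. Hence convex combinations of their vector states lie in $\StateSpace_1(\omega)$, and they approximate every state at every parameter in a neighborhood $V$ of $t_0$. This gives $\limsup_{t\to t_0}\bridgeheight{\gamma_{m,r}}\to0$ after choosing $P_r\to\infty$.

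For the spectral hypotheses, Item 4 follows from Lemma \ref{vm:relative}, with $\delta_t=\delta_G$, and the common domain was noted after \eqref{YG-eq}. The operator norm $\opnorm{P_{m,t}(z)-P_{m,t_0}(z)}{}{t_0}$ is controlled as follows. We split $\xi=Q_J\xi+(1-Q_J)\xi$. The tail is bounded by $J^{-1}$, because $\CDN_{t_0}$ controls $\partial$. On $Q_J\xi$ we further truncate in the $\Z^2$ variable, using that $\CDN_{t_0}$ controls $w(n)\norm{G_0A^{-n}p}{}$, and $w(n)\geq c_J$ there. This leaves finitely many phases, which converge.

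For the twists we use \eqref{rho-eq}: on a fixed finite support, $\rho_{\theta,G}(P)-P=\sum c_g\lambda_\theta(g)R_{G,k}$. Here $R_{G,k}$ is independent of $\theta$ and continuous in $G$ by Lemma \ref{vm:twist-norm}, and the $\lambda_\theta(g)$ part is handled by the same truncation argument. Finally, the uniform estimate $\sup_{b\in\mathfrak L_t}\opnorm{\rho_t(b)-\rho_t(\Phi_{m,t}(b))}{}{t_0}\to0$ comes from Lemma \ref{vm:twist-columns}. With $R=R_0(1+\varepsilon_m)$ we get the bound $(1+2c_J^{-1})\varepsilon_m+2(R+\kappa^{-1})/J$; we choose $J$ large first and then $m$ large.

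We expect the bridge height estimate to be the main obstacle. It requires that \emph{every} state at nearby parameters is close to states on which the pivot equals $1$, which is precisely what Lemma \ref{vm:states} was set up to provide. A second delicate point is keeping all the truncation estimates uniform over the compact sets $C_m$ in the $\opnorm{\cdot}{}{t_0}$ norms.
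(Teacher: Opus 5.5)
Your proposal is correct and follows essentially the same route as the paper: you verify Hypothesis~\ref{crossed-product-convergence-hyp} using the maps $\Phi_{F_N,\theta}$, the trace-centered real-coordinate polynomials from Lemma~\ref{vm:states}, the plateau pivots $\omega_{P,R}$ whose one-eigenspaces contain the vectors supplied by Lemma~\ref{vm:states}, Lemma~\ref{vm:relative} for the relative bound, and Lemma~\ref{vm:twist-columns} together with a finite-coordinate truncation for the twist conditions. The differences are only cosmetic. The paper builds the factor $1+2c_N^{-1}$ into the choice of $F_N$ with $J_N=N+1$, whereas you choose $J$ first and then $m$; this is equally valid because Lemma~\ref{vm:twist-columns} holds for every $J$. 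Likewise, you carry out explicitly the truncation that the paper obtains from compactness of the $\CDN_{t_0}$-unit ball.
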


\begin{proof}
  Fix $t_0=(\theta_0,G_0)$ and write $t=(\theta,G)$, $\Lip_t=\Lip^{\nabla}_{\theta,G}$, and $\rho_t=\rho_{\theta,G}$. Put
  \begin{equation*}
    \mathfrak L_t\coloneqq\{b\in\domsa{\Lip_t}:\operatorname{tr}_\theta(b)=0,\ \Lip_t(b)\leq1\}\text.
  \end{equation*}
  Lemma~\ref{vm:relative} supplies the common adapted weight, and the base triples act on $\Hilbert$, with common domain $\dom{\Dirac}$. Taking the canonical traces as the states gives Hypothesis~\ref{crossed-product-family-hyp}.

  For nonempty finite $F\subseteq\Gamma$, put $S_F=\{k:(m,k)\in FF^{-1}\text{ for some }m\}$ and define
  \begin{equation*}
    K_{F,k,\theta}(a)\coloneqq\sum_{\substack{m\in\Z^2\\(m,k)\in FF^{-1}}}\phi_F(m,k)\operatorname{tr}_\theta(a(W_\theta^m)^*)W_\theta^m\qquad(k\in S_F)\text.
  \end{equation*}
  These bounded finite-rank maps satisfy
  \begin{equation*}
    \Phi_{F,\theta}(b)=\sum_{k\in S_F}\pi_\theta(K_{F,k,\theta}(a_k(b)))U^k\text.
  \end{equation*}
  By Lemma~\ref{lem:full-smoothing}, $\Phi_{F,\theta}$ is unital, preserves adjoints and the trace, and $\Lip_t\circ\Phi_{F,\theta}\leq\Lip_t$.

  For $N\in\N$, put $J_N=N+1$ and $c_N=\min_{|n|\leq J_N}w(n)$. Proposition~\ref{vm:polynomials} provides a nonempty finite $F_N\subseteq\Gamma$ such that
  \begin{equation*}
    \norm{b-\Phi_{F_N,\theta}(b)}{}\leq\frac{\Lip_t(b)}{(N+1)(1+2c_N^{-1})}\text.
  \end{equation*}
  Set $\Phi_{N,t}=\Phi_{F_N,\theta}$ and $\varepsilon_N=(N+1)^{-1}$. These satisfy the approximation and seminorm estimates in Hypothesis~\ref{crossed-product-convergence-hyp}.

  Fix $N$ and abbreviate $F=F_N$. Choose one element from each inverse pair in $FF^{-1}\setminus\{e\}$, forming $E_F^+$. For $z=((x_g,y_g))_{g\in E_F^+}$, set
  \begin{equation*}
    \begin{aligned}
      &P_{N,t}(z)\coloneqq\sum_{g\in E_F^+}\left(x_g(\lambda_\theta(g)+\lambda_\theta(g)^*)+iy_g(\lambda_\theta(g)-\lambda_\theta(g)^*)\right),\\
      &C_N\coloneqq[-R_0,R_0]^{2|E_F^+|}\text.
    \end{aligned}
  \end{equation*}
  Every $P_{N,t}(z)$ is self-adjoint and trace centered. Proposition~\ref{vm:polynomials} and contractivity of $\Phi_{N,t}$ imply
  \begin{equation*}
    \Phi_{N,t}(\mathfrak L_t)\subseteq P_{N,t}(C_N),\qquad \sup_{z\in C_N}|\Lip_t(P_{N,t}(z))-\Lip_{t_0}(P_{N,t_0}(z))|\longrightarrow0\text.
  \end{equation*}
  For every finite-support $\xi$ and $g\in\Gamma$,
  \begin{equation*}
    \norm{(\lambda_\theta(g)-\lambda_{\theta_0}(g))\xi}{\Hilbert[K]}^2=\sum_{h\in\supp{\xi}}|\sigma_\theta(g,h)-\sigma_{\theta_0}(g,h)|^2\norm{\xi_h}{\C^2\otimes\C^2}^2\longrightarrow0\text.
  \end{equation*}
  The unitaries have norm one, so density gives
  \begin{equation*}
    \sup_{z\in C_N}\norm{(P_{N,t}(z)-P_{N,t_0}(z))\xi}{\Hilbert[K]}\longrightarrow0\qquad(\xi\in\Hilbert[K])\text.
  \end{equation*}

  Lemma~\ref{plateau-estimate-lemma} gives
  \begin{equation*}
    \sup_{z\in C_N}\norm{[P_{N,t_0}(z),\omega_{P,R}]}{}\leq\frac{R_N}{R},\qquad R_N\coloneqq\sup_{z\in C_N}2\sum_{g\in E_F^+}(|x_g|+|y_g|)|g|<\infty\text.
  \end{equation*}
  For each $j\in\N$, apply Lemma~\ref{vm:states} with tolerance $(j+1)^{-1}$ and choose $P_j$ containing the supports of its vectors. Choose $R_j\geq1$ with $R_N/R_j<(j+1)^{-1}$ and put $\Omega_{N,j}=\omega_{P_j,R_j}$. The selected vectors lie in the one-eigenspace of $\Omega_{N,j}$, so their convex combinations define one-level states. Consequently,
  \begin{equation*}
    \begin{aligned}
      &\sup_{z\in C_N}\norm{[P_{N,t_0}(z),\Omega_{N,j}]}{}<\frac1{j+1},\\
      &\limsup_{t\to t_0}\bridgeheight{(\B(\Hilbert[K]),\Omega_{N,j})|\Lip_t,\Lip_{t_0}}{}\leq\frac1{j+1}\text.
    \end{aligned}
  \end{equation*}
  Each pivot has a nonzero one-eigenspace and hence defines a bridge for every parameter. This proves the pivot conditions.

  By Lemma~\ref{vm:relative},
  \begin{equation*}
    \norm{(\Dirac_G-\Dirac_{G_0})\xi}{\Hilbert}\leq\delta_G\norm{\Dirac_{G_0}\xi}{\Hilbert},\qquad \delta_G=\norm{(G-G_0)G_0^{-1}}{M_2(\R)}\longrightarrow0\text.
  \end{equation*}
  For $b\in\mathfrak L_t$, both $b$ and $\Phi_{N,t}(b)$ have norm at most $R_0$ and seminorm at most one. Lemma~\ref{vm:twist-columns} therefore yields
  \begin{equation*}
    \begin{aligned}
      \opnorm{\rho_t(b)-\rho_t(\Phi_{N,t}(b))}{}{t_0}&\leq(1+2c_N^{-1})\norm{b-\Phi_{N,t}(b)}{}+\frac{2(R_0+\kappa^{-1})}{J_N}\\
      &\leq\frac{1+2(R_0+\kappa^{-1})}{N+1}\text.
    \end{aligned}
  \end{equation*}
  This proves the uniform twist-approximation condition.

  For fixed $N$, let $X_t(z)$ denote either
  \begin{equation*}
    P_{N,t}(z)-P_{N,t_0}(z)\qquad\text{or}\qquad \rho_t(P_{N,t}(z))-\rho_{t_0}(P_{N,t_0}(z))\text.
  \end{equation*}
  The supports and coefficient bounds are fixed. Since
  \begin{equation*}
    \rho_t(\lambda_\theta(m,k))=\lambda_\theta(m,k)(1+M_{q_k}Y_G\Dirac[\nabla]_G^{-1}),
  \end{equation*}
  Lemma~\ref{vm:twist-norm} and the cocycle formula imply, on a neighborhood of $t_0$,
  \begin{equation*}
    \sup_{z\in C_N}\norm{X_t(z)}{}\leq M_N,\qquad \sup_{z\in C_N}\norm{X_t(z)Q}{}\longrightarrow0
  \end{equation*}
  for every finite-rank coordinate projection $Q$. Indeed, the diagonal multiplier preserves the range of $Q$, its variation in $G$ tends to zero in norm, and the shifts involve only finitely many cocycle phases on that range.

  Let $\varepsilon>0$. Compactness of the $\CDN_{t_0}$-unit ball provides a finite-rank coordinate projection $Q$ such that
  \begin{equation*}
    \sup_{\CDN_{t_0}(\xi)\leq1}\norm{(1-Q)\xi}{\Hilbert[K]}<\frac{\varepsilon}{2(M_N+1)}\text.
  \end{equation*}
  For $t$ sufficiently close to $t_0$, $\sup_{z\in C_N}\norm{X_t(z)Q}{}<\varepsilon/2$, whence
  \begin{equation*}
    \sup_{z\in C_N}\opnorm{X_t(z)}{}{t_0}\leq\sup_{z\in C_N}\norm{X_t(z)Q}{}+M_N\sup_{\CDN_{t_0}(\xi)\leq1}\norm{(1-Q)\xi}{}<\varepsilon\text.
  \end{equation*}
  Thus all conditions of Hypothesis~\ref{crossed-product-convergence-hyp} hold. Theorem~\ref{crossed-product-finite-mode-convergence-thm} proves continuity, and continuity preserves compactness of the parameter sets.
\end{proof}

  \bibliographystyle{amsplain}
  \bibliography{../thesis.bib}

  \vfill

\end{document}